\theoremstyle{theorem}
\newtheorem{thm}{Theorem}[section]
\newtheorem*{exer*}{Exercise}
\newtheorem*{thm*}{Theorem}
\newtheorem{lem}[thm]{Lemma}
\newtheorem*{lem*}{Lemma}
\newtheorem{corollary}[thm]{Corollary}
\newtheorem{prop}[thm]{Proposition}
\theoremstyle{definition}
\newtheorem{definition}[thm]{Definition}
\newtheorem{remark}[thm]{Remark}
\newtheorem{example}[thm]{Example}
\newcommand{\dd}{{\rm d}}
\newcommand{\norm}[1]{\|#1\|}
\newcommand{\e}{{\rm e}}
\newcommand{\ad}{{\rm ad}}
\newcommand{\doublebracket}[2]{\llbracket #1,#2 \rrbracket}
\newcommand{\id}{{\rm id}}
\title[Quantitative Pansu and Mitchell Theorems]{Quantitative versions of Pansu Asymptotic Theorem and of Mitchell Tangent Theorem}
\author[Le Donne, Nicolussi Golo, and Tettamanti]{Enrico Le Donne, Sebastiano Nicolussi Golo, and Andrea Tettamanti}
\date{\today}
\begin{document}
\begin{abstract}
We quantitatively study the speed of convergence of geodesic Lie groups to their metric limits. For nilpotent geodesic Lie groups, we give estimates on the difference of the original metrics and the asymptotic metrics, while for general geodesic Lie groups, we give similar estimates for the difference of the original metrics and the tangent metrics. In both settings, our results sharpen existing bounds in the literature.
\end{abstract}
\maketitle
\setcounter{tocdepth}{3}
\tableofcontents

\section{Introduction}

Limits of groups equipped with distance functions have been extensively studied across several areas of mathematics, ranging from geometric group theory to harmonic analysis and control theory; see, as examples, 
\cite{Roth:Stein, Gromov-polygrowth, Folland-Stein, nagelstwe, Mit85,  Pansu, Stein:book, Bellaiche, gromov, Mon02, Bonfiglioli:et:al, Breuillard_MR3267520, Agrachev_Barilari_Boscain:book, Don25}. 
In many situations, limits of metric groups retain a group structure.
 Fundamental examples occur in geometric group theory, where the asymptotic cones of finitely generated nilpotent groups, equipped with word metrics, are Carnot groups -- simply connected nilpotent Lie groups endowed with geodesic distances admitting families of intrinsic dilations; see \cite{Gromov-polygrowth, Pan83}. 
These same metric Lie groups also arise as tangent cones of sub-Riemannian Lie groups and, more generally, of Carnot–Carathéodory spaces; see \cite{Mit85, Bellaiche, jeancontrol}.

Roughly speaking, given a metric space, the asymptotic cone can be thought of as the space viewed from infinitely far away, while the tangent cone can be interpreted as the space at an infinitesimal scale. Formally, given a metric space $(X,d)$ we define its {\em asymptotic cone} (respectively its {\em tangent cone} at $p\in X$) as the Gromov--Hausdorff limit, if it exists, of the sequence $ (X,\epsilon d, p)$ as $\epsilon \rightarrow 0$ (respectively as $\epsilon \rightarrow \infty)$.

In this work, we consider Lie groups equipped with left-invariant geodesic distances that induce the manifold topology; we refer to such objects concisely as {\em geodesic Lie groups}.
In fact, these are precisely what are also called sub-Finsler Lie groups; see Section~\ref{subsection: sF metrics on lie groups}.
Our goal is to study sequences of geodesic Lie groups and to analyze the rate of convergence toward their limits.
We focus on two cases:
\begin{enumerate}[label=(\alph*)]
    \item scaled-down versions of simply connected nilpotent geodesic Lie groups converging to their asymptotic cone;
    \item scaled-up versions of geodesic Lie groups converging to their tangent cone.
\end{enumerate}
These two situations correspond precisely to those studied qualitatively by Pansu and Mitchell in \cite{Pan83, Mit85}, respectively.
In both settings, the sequence of distance functions and the limiting distance function can be viewed as distance functions on the same underlying set, and the convergence is, in fact, uniform on compact sets.
This paper aims to provide quantitative estimates for the speed of this convergence. As a consequence, we obtain quantitative bounds on the difference between the metric of a geodesic Lie group and its tangent metric (or its asymptotic metric, in the case where the group is nilpotent).

The identification of a geodesic Lie group with its tangent (or asymptotic) cone is not canonical: classical constructions depend on the choice of particular linear subspaces of the corresponding Lie algebra.
We aim to demonstrate that different and opportune identifications can yield more precise quantitative estimates.
These choices consist of the selection of gradings, which we recall next following \cite{Don25} as a reference.

Given a vector space $V$, a {\em linear grading} of $V$ is a collection of subspaces $(V_i)_{i\in I}$, where $I$ is a set, such that $V=\bigoplus_{i\in I}V_i$. If $I\subseteq \mathbb R$, we set $V_{\leq j}:=\bigoplus\{V_i : i\in I,\; i\leq j\}$. An {\em asymptotic grading} (also called {\em compatible linear grading}) of a nilpotent Lie algebra $\mathfrak{g}$ is a linear grading $(V_i)_{i\in \mathbb N}$ of $\mathfrak{g}$, seen as a vector space, such that
 \begin{equation*}
  \mathfrak{g}^{(j)} = V_j \oplus \mathfrak{g}^{(j+1)}, \quad \forall j\in \mathbb{N},
 \end{equation*}
where $\mathfrak{g}^{(j)}$ denotes the $j$-th term of the lower central series of $\mathfrak{g}$.
Fix now a geodesic Lie group $(G,d)$ that is simply connected and nilpotent.
Provided a choice of an asymptotic grading of the Lie algebra $\mathfrak{g}$ of $G$, one can diffeomorphically identify the asymptotic cone $(G_\infty,d_\infty)$ of $(G,d)$ with $G$ itself; see Section~\ref{subsection: asymptotic carnot group} for the identification. The asymptotic distance can therefore be seen as a distance function on $G$, and we refer to it as the {\em Pansu limit metric} associated to the grading, following \cite{Pan83} as initial reference.
 The celebrated Pansu Asymptotic Theorem, see page~417 in \cite{Pan83}, can be expressed as a metric comparison theorem:
\begin{align*}
 \frac{d(1,g)}{d_\infty(1,g)} \rightarrow 1, \quad \text{as }  g \rightarrow \infty.
\end{align*}
One can then study the error term $|d(1,g)- d_\infty(1,g)|$ to quantify the speed of convergence toward the asymptotic cone. Clearly, by Pansu Asymptotic Theorem, this error is sublinear as a function of $d(1,g)$. Tashiro proved in \cite[Theorem~4.1]{Tas22} that if $G$ is $2$-step nilpotent and non-singular, then the error term is bounded by a constant. In \cite{BL12} Breuillard and Le Donne gave a bound on the error term for groups of arbitrary nilpotency step. They showed that if $G$ is $s$-step nilpotent, then
\begin{align} \label{intro: eq: BL12_bound}
  |d(1,g)- d_\infty(1,g)| = O\left(d(1,g)^{1-1/s}\right), \quad {\rm as}\; g \rightarrow \infty.
\end{align}
In Riemannian step-2 groups, we have sharper results, see \cite{LeNaNiRy25}. However, in general sub-Finsler groups, the bound \eqref{intro: eq: BL12_bound} is sharp.

Let $(G,d)$ be a simply connected nilpotent geodesic Lie group and $(V_i)_i$ be an asymptotic grading of $\mathfrak{g}$. We introduce two constants, $\alpha_\infty$ and $\beta$, to improve the bound \eqref{intro: eq: BL12_bound} of Breuillard and Le Donne. The constant $\alpha_\infty$ quantifies how close $V_1$ is to being the first stratum of a stratification of a Carnot group, in the standard sense as recalled in Section~\ref{subsection: gradings}. We begin by defining
$$\alpha_{(1,\infty)} \coloneqq \max \left\{j \in \mathbb{N}: [V_{p_1},\ldots,V_{p_k}] \subseteq V_{|p|} \oplus V_{\geq |p|+j}, \; \forall k \in \mathbb{N}, \; \forall p \in \mathbb{N}^k\right\},$$
where $[V_a,V_b]$ denotes the space spanned by brackets of the form $[v_a,v_b]$ with $v_a \in V_a$, $v_b \in V_b$ and $[V_{p_1},\ldots,V_{p_k}]$ denotes the left-iterated bracket $[V_{p_1},[\cdots,[V_{p_{k-1}},V_{p_k}]\cdots]]$. 
Next, denoting by $\pi_1$ the projection $\mathfrak{g}=V_1 \oplus [\mathfrak{g},\mathfrak{g}] \rightarrow V_1$ 
with kernel $[\mathfrak{g},\mathfrak{g}] $,
and by $\Delta\subseteq \mathfrak{g}$ the distribution associated to $d$ (see Section~\ref{subsection: sF metrics on lie groups} for the definition), we define
 \begin{align*}
  \alpha_{(2,\infty)} &\coloneqq \max \left\{j \in \mathbb{N} \, : \, v-\pi_1(v) \in \mathfrak{g}^{(j+1)}, \, \forall v \in \Delta \right\}. 
 \end{align*}
Finally, we introduce the following constant
\begin{eqnarray}\nonumber
   \alpha_{\infty}&\coloneqq & \alpha_{\infty}(\Delta, (V_i)_i) \\\nonumber
   &\coloneqq &\min\{\alpha_{(1,\infty)}, \alpha_{(2,\infty)}\}
   \\\label{def_alpha_inf}
   &=& 
   \max \left\{j \in \mathbb{N} \, : 
   \begin{cases}
    k \in \mathbb{N}, \; p \in \mathbb{N}^k, \;\\ v \in \Delta \end{cases}\implies \begin{cases}
     [V_{p_1},\ldots,V_{p_k}] \subseteq V_{|p|} \oplus V_{\geq |p|+j}, \,\\
   \, v-\pi_1(v) \in \mathfrak{g}^{(j+1)} 
   \end{cases}\right\}
   .
\end{eqnarray}
The constant $\alpha_{(1,\infty)}$ is a variation of the constant $e_D$ introduced by Cornullier in \cite{Cor17}. 
Observe that $(G, \Delta)$ is a Carnot group if and only if $\alpha_{\infty} = + \infty$. In this latter case, the distances $d$ and $d_\infty$ coincide. We refer to Section~\ref{subsection: gronwall pansu} for other properties of $\alpha_\infty$.

For a Lie group $H$ equipped with a distribution $\Delta$ and a linear grading $(D_i)_{i}$ of its Lie algebra $\mathfrak{h} $, we introduce a class of ideals of $\mathfrak{h} $, which we call Carnot quotient ideals, so that the quotient Lie algebras are Carnot in the following sense. A subspace $\mathfrak{i}\subseteq \mathfrak{h}$ is a \textit{Carnot quotient ideal of} $(H,\Delta)$ \textit{with respect to the grading $(D_i)_{i}$} if 
 \begin{itemize}
  \item $\mathfrak{i}  $ is an ideal of $\mathfrak{h}$;
  \item $\mathfrak{h}/\mathfrak{i}$ is stratified by $(D_i + \mathfrak{i})_{i}$;
  \item \label{def: CQI projection} 
  $\Delta \subseteq D_1 + \mathfrak{i}.$
 \end{itemize}
Next, we define $\beta $ as the minimal integer $k$ such that there exists a Carnot quotient ideal contained in the subspace $D_1 \oplus \dots \oplus D_k$: 
\begin{equation}\label{def_beta}
 \beta:=\beta(\Delta,(D_i)_i):= \min\{k\in \mathbb Z\,:\, \exists \mathfrak{i} \subseteq D_{\leq k} \, \text{Carnot quotient ideal}
 \}.
\end{equation}
The example that one should keep in mind for the constant $\beta$ is the following. If $G$ is a Carnot group of step $s_1$ and $H$ is a simply connected $s_2$-step nilpotent geodesic Lie group, then there exists an asymptotic grading of $G\times H$ such that $\beta \leq s_2$; see Proposition~\ref{prop: pansu convergence doesnt depend on carnot factor}.
Note that $\beta=0$ if and only if $(G,\Delta)$ is a Carnot group, in which case the following bounds will be trivial.
We are ready to state our first main result.

\begin{thm}[Quantitative Pansu]\label{main thm: pansu}
 Let $(G,d)$ be a geodesic Lie group. Assume $G$ is simply connected and nilpotent.
 After fixing an asymptotic grading, consider the associated Pansu limit metric $d_\infty$ on $G$ and the constants $\alpha_\infty$ and $\beta$ defined in \eqref{def_alpha_inf} and \eqref{def_beta}, respectively. Then, for some $C>0$, we have 
 \begin{align}\label{eq: main pansu1}
  \left|d(p,q)-d_\infty(p,q)\right| \leq C\max\left\{d_\infty(1,p),d_\infty(1,q)\right\}^{1-\alpha_\infty/\beta}, \quad \forall p\in G, \;\forall q \in G \setminus B_{d_\infty}(1,1).
 \end{align}
\end{thm}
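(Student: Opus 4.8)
\emph{Strategy.} The plan is to reduce, via left-invariance, to a single base point, then, via the dilations of the grading, to a single scale, and finally to establish a uniform comparison on the unit ball between the control systems defining $d$ and $d_\infty$ by a Gronwall-type argument, the passage from a coordinate error of size $\lambda^{-\alpha_\infty}$ to a metric error of size $\lambda^{-\alpha_\infty/\beta}$ being forced by the ball--box theorem together with the fact that the error lives in a Carnot quotient ideal.

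\emph{Reductions.} By left-invariance of $d$ one has $d(p,q)=d(1,p^{-1}q)$, while $d_\infty$ is left-invariant for the graded (Carnot) group law $\cdot_\infty$ on $G$, so $d_\infty(p,q)=d_\infty(1,p^{-1}\cdot_\infty q)$. Fix a Carnot quotient ideal $\mathfrak i\subseteq D_{\leq\beta}$ realizing $\beta$. Since the defining stratification of $\mathfrak g/\mathfrak i$ forces $\mathfrak i$ to be a graded ideal, $\mathfrak i$ is also an ideal for the graded bracket, and both group laws induce the \emph{same} law on $G/\exp(\mathfrak i)$; hence $p^{-1}q$ and $p^{-1}\cdot_\infty q$ coincide in $G/\exp(\mathfrak i)$, and by the Baker--Campbell--Hausdorff formula they differ by an element of $\exp(\mathfrak i)$ whose coordinates, which live in degrees $\le\beta$, are $O(R^{\,j-\alpha_\infty})$ in degree $j$, where $R:=\max\{d_\infty(1,p),d_\infty(1,q)\}$. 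Feeding this into the ball--box theorem for the Carnot distance $d_\infty$ gives $|d_\infty(1,p^{-1}q)-d_\infty(p,q)|\le C R^{1-\alpha_\infty/\beta}$ (here $R\ge1$ thanks to $q\notin B_{d_\infty}(1,1)$, which makes the estimate meaningful for possibly negative exponents) and $d_\infty(1,p^{-1}q)\le C R$. Thus it suffices to prove $|d(1,g)-d_\infty(1,g)|\le C\,d_\infty(1,g)^{1-\alpha_\infty/\beta}$ for $d_\infty(1,g)\ge1$. Since $d_\infty$ is $1$-homogeneous under the dilations $\delta_\lambda$ of the grading, writing $g=\delta_\lambda x$ with $\lambda=d_\infty(1,g)\ge1$ and $d_\infty(1,x)=1$, and putting $d_\lambda$ for the sub-Finsler distance whose horizontal distribution and defining group law are the $\delta_{1/\lambda}$-conjugates of those of $(G,d)$ (so $\tfrac1\lambda d(\delta_\lambda\,\cdot,\delta_\lambda\,\cdot)$), this is equivalent to the uniform estimate
\begin{equation*}
  \sup_{d_\infty(1,x)\le1}\bigl|d_\lambda(1,x)-d_\infty(1,x)\bigr|\le C\,\lambda^{-\alpha_\infty/\beta},\qquad\forall\lambda\ge1.
\end{equation*}

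\emph{Core estimate.} Both $d_\lambda$ and $d_\infty$ are left-invariant sub-Finsler distances on $G$; $d_\infty$ has horizontal distribution $V_1$ (after the identification) and the graded bracket. By the definitions of $\alpha_{(1,\infty)}$ and $\alpha_{(2,\infty)}$, on any fixed compact set the distribution and the bracket of $d_\lambda$ converge to those of $d_\infty$ at rate $O(\lambda^{-\alpha_\infty})$, with $\alpha_\infty=\min\{\alpha_{(1,\infty)},\alpha_{(2,\infty)}\}$. Fix $x$ with $d_\infty(1,x)=1$ and a unit-speed $d_\infty$-geodesic $\sigma$ from $1$ to $x$, and project it to $G/\exp(\mathfrak i)$: there, because the quotient is stratified by $(D_i+\mathfrak i)_i$ and $\Delta\subseteq D_1+\mathfrak i$, the induced sub-Finsler structure is dilation-invariant, so $d_\lambda$ and $d_\infty$ induce the \emph{same} Carnot distance. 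Now build a horizontal curve $\gamma$ in $(G,d_\lambda)$ starting at $1$ by lifting, at each time, the velocity of the projected curve to a minimal-norm horizontal vector of $d_\lambda$; then the $d_\lambda$-length of $\gamma$ is at most the $d_\infty$-length of $\sigma$, namely $d_\infty(1,x)$, and, since $\gamma$ has the same control as $\sigma$ on the quotient, $\gamma$ ends at a point $x'$ with $x'\equiv x$ modulo $\exp(\mathfrak i)$. Writing the two control systems in common exponential coordinates adapted to the grading, the difference $e(t)$ of the trajectories of $\gamma$ and $\sigma$ solves an ODE of the form $\dot e=O(\lambda^{-\alpha_\infty})+L(t)e$ with $e(0)=0$ and $L$ bounded on the unit ball, so Gronwall's inequality yields $x'-x=O(\lambda^{-\alpha_\infty})$ in coordinates.

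\emph{Conclusion and main obstacle.} Since $x'\equiv x$ modulo $\exp(\mathfrak i)$, the element $(x')^{-1}\cdot_\infty x$ lies in $\exp(\mathfrak i)$, its coordinates live in degrees $\le\beta$, and they are $O(\lambda^{-\alpha_\infty})$; hence the ball--box theorem bounds the $d_\lambda$-cost of going from $x'$ to $x$ by $C\max_{1\le j\le\beta}(\lambda^{-\alpha_\infty})^{1/j}=C\lambda^{-\alpha_\infty/\beta}$ (using $\lambda\ge1$). Therefore $d_\lambda(1,x)\le d_\lambda(1,x')+d_\lambda(x',x)\le d_\infty(1,x)+C\lambda^{-\alpha_\infty/\beta}$; exchanging the roles of $d_\lambda$ and $d_\infty$ gives the reverse inequality, which proves the uniform estimate and hence the theorem. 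The routine ingredients are the Gronwall step and the ball--box comparisons; the crux is precisely the step turning the $O(\lambda^{-\alpha_\infty})$ coordinate error into an $O(\lambda^{-\alpha_\infty/\beta})$ metric error, which relies on confining the endpoint discrepancy to the Carnot quotient ideal $\mathfrak i$, hence to degrees $\le\beta$. This is where all three defining properties of a Carnot quotient ideal, and the minimality in the definition of $\beta$, enter; it must be carried out while keeping the corrections forced by $\Delta\neq V_1$ within the order $\lambda^{-\alpha_{(2,\infty)}}$ and while reconciling the degree conventions of the ball--box theorem for $d$, for $d_\lambda$, and for $d_\infty$, as well as on the quotient $G/\exp(\mathfrak i)$.
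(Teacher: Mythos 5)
Your overall architecture (dilate to the unit scale, compare the two control systems by a Gr\"onwall argument, confine the endpoint discrepancy to a Carnot quotient ideal $\mathfrak i\subseteq V_{\leq\beta}$, convert to a metric error by Ball--Box) is the right one, but there is a genuine gap at the central step, and it comes from lifting through the wrong projection. You construct $\gamma$ by lifting the velocity of the \emph{$\mathfrak i$-quotient} of the $d_\infty$-geodesic $\sigma$ to a minimal-norm $d_\lambda$-horizontal vector. This only pins the control of $\gamma$ down modulo $\mathfrak i$: if $u(t)$ is the control of $\gamma$ and $v(t)\in V_1$ that of $\sigma$, you know $\pi(u-v)=0$, i.e.\ $u-v\in\mathfrak i$, but $\mathfrak i$ may meet $V_1$ (and $\Delta$ may have a large component inside $\mathfrak i$; see Example~\ref{example: beta}\eqref{example: CQI2}, where $\mathfrak i=N_{5,2,2}$ and $V_1\cap\mathfrak i$ is three-dimensional). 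Hence $u(t)-v(t)$ can have an order-one component in $V_1\cap\mathfrak i$, the asserted ODE $\dot e=O(\lambda^{-\alpha_\infty})+L(t)e$ for the trajectory difference is unjustified and false in general, and Gr\"onwall does not give $x'-x=O(\lambda^{-\alpha_\infty})$; the endpoint discrepancy, while in $\mathfrak i$, need not be small, and Ball--Box then yields only an $O(1)$ bound. The correct wiring is the one in the paper: lift through the submetry $\pi_1:(\mathfrak g,\rho_\epsilon)\to(V_1,\norm{\cdot}_\infty)$ (Propositions~\ref{prop: pi_1 is a submetry and homo} and~\ref{prop: lift of curves}), so that the control discrepancy lies in $V_{\geq 2}$ and is of size $O(\epsilon^{\alpha_{(2,\infty)}})$ by Lemma~\ref{lem: PANSU: bound on norm of projection}; this is exactly what feeds Lemma~\ref{lem: Pansu Gronwall}. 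The Carnot quotient ideal then enters only \emph{a posteriori}, via Lemma~\ref{lem: projection of curves pansu}, to show the endpoint discrepancy lies in $\mathfrak i\subseteq V_{\leq\beta}$ and thus to upgrade the Ball--Box exponent from $1/s$ to $1/\beta$. In your scheme $\mathfrak i$ is used both as the lifting device and as the confinement device, and it cannot play the first role.

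Two further points are asserted rather than proved, and both require work. First, in the direction where the endpoint error must be measured in $d_\lambda$ (not $d_\infty$), ``the ball--box theorem'' with exponent $1/\beta$ and a constant uniform in $\lambda$ is not the standard Ball--Box for $d$ (whose exponents involve the step of $\Delta$); one needs a uniform estimate for the whole family of contracted metrics, which the paper obtains from Guivarc'h's theorem (Theorem~\ref{thm: Guivarch}) via Lemma~\ref{lemma: uniform bound on contracted metrics} and Corollary~\ref{cor: lemma uniform bound on rho_eps}. Your closing remark about ``exchanging the roles of $d_\lambda$ and $d_\infty$'' hides this asymmetry. Second, your initial reduction to a single base point needs the estimate that $p^{-1}q$ and $p^{-1}\cdot_\infty q$ differ, as group elements, by an element of $\mathfrak i$ whose degree-$j$ coordinates are $O(R^{j-\alpha_\infty})$ uniformly for $p,q$ in $d_\infty$-balls of radius $R\to\infty$; passing from the linear BCH difference to the group difference at scale $R$ is not a compactness argument and must be done with the graded bookkeeping. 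The paper sidesteps this entirely by proving the two-point uniform statement (Theorem~\ref{thm: quantitative Pansu}) on a fixed compact set and then dilating both $p$ and $q$.
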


  The degenerate case when $\alpha_\infty/\beta=\infty$ coincides with $(G, \Delta)$ being a Carnot group, in which case, we trivially have $d=d_\infty$.
In all cases, we have  $\alpha_\infty/\beta\geq 1/s$, where $s$ is the nilpotency step of $G$, and therefore
Theorem~\ref{main thm: pansu} is an improvement of the bound~\eqref{intro: eq: BL12_bound}. From the bound~\eqref{eq: main pansu1} we directly see that $(G,d_\infty)$ is the asymptotic cone of $(G,d)$, see Theorem~\ref{thm: Pansu asymptotic thm} and Theorem~\ref{thm: quantitative Pansu}.

The second main result is about a similar bound for the rate of convergence toward the tangent cones of geodesic Lie groups. Let $\mathfrak{g}$ be a Lie algebra equipped with a bracket-generating distribution $\Delta$.
A {\em tangent grading} (also called {\em adapted linear grading}) of $(\mathfrak{g},\Delta)$ is a grading $(W_i)_{i}$ of $\mathfrak{g}$ satisfying
\[\sum_{i=1}^j \ad_\Delta^{i-1}\Delta = W_j \oplus \sum_{i=1}^{j-1} \ad_\Delta^{i-1}\Delta , \quad \forall j\in \mathbb{N},\]
where we set $\ad^k_A B\coloneqq [A,\ldots,A,B]$ with $A$ appearing $k$ times.
Notice that $W_1=\Delta$ and one such grading exists because $\Delta$ is assumed bracket generating.
Fix now a geodesic Lie group $(G,d)$, with Lie algebra $\mathfrak{g}$ and distribution $\Delta$.
After choosing a tangent grading of $(\mathfrak{g},\Delta)$, one can locally identify $G$ with the tangent cone $(G_0,d_0)$ of $(G,d)$ at 1; see Section~\ref{subsection: osculating carnot group} for the identification. Thus, we can consider the metric $d_0$, which we call \textit{tangent metric} associated to the grading, as a distance function in a neighborhood of 1 in $G$. Mitchell Tangent Theorem can be expressed as a metric comparison theorem:
\[\frac{d(1,g)}{d_0(1,g)} \rightarrow 1, \quad {\rm as }\; g \rightarrow 1.\]
Similarly to the asymptotic case, we quantify the error $|d(1,g) -d_0(1,g)|$ in terms of two constants: $\alpha_0$ and $\beta$. The constant $\beta$ coincides with the one appearing in \eqref{def_beta} for the asymptotic grading, whereas $\alpha_0$ is the analogue of $\alpha_\infty$, adapted to the context of tangent gradings. It is defined as
\begin{equation}\label{def_alpha0}
 \alpha_{0} \coloneqq \alpha_{0}((W_i)_i) \coloneqq \max \left\{j \in \mathbb{N}: [W_{p_1},\ldots,W_{p_k}] \subseteq W_{|p|} \oplus W_{\leq |p|-j}, \; \forall k \in \mathbb{N}, \; \forall p \in \mathbb{N}^k\right\}.
 \end{equation}
 Here $\alpha_0 = \infty$ if and only if $(G,\Delta)$ is a Carnot group. 
Our second main result is the following bound:
\begin{thm}[Quantitative Mitchell]\label{main thm: mitchell}
 Let $(G,d)$ be a Lie group equipped with a left-invariant geodesic metric. After fixing a tangent grading, consider the associated tangent metric $d_0$ on a bounded neighborhood $U$ of the identity in $G$ and the constants $\alpha_0$ and $\beta$ defined in \eqref{def_alpha0} and \eqref{def_beta}, respectively. Then, for some $C>0$, we have 
  \begin{align*}
  &-Cd(p,q)^{1/\beta}d_0(1,p)^{1+(\alpha_0 -1)/\beta}\\
     &\leq d(p,q)-d_0(p,q)\\
     &\leq Cd_0(p,q)^{1/\beta}d_0(1,p)^{1+(\alpha_0 -1)/\beta}  \quad \forall p,q \in U \text{ with } d_0(1,p) \geq d_0(1,q).
 \end{align*}
 In particular, we have 
\begin{align}\label{eq: cor mitchell1}
  |d(p,q)-d_0(p,q)| \leq C\max\{d_0(1,p),d_0(1,q)\}^{1+\alpha_0/\beta}, \quad \forall p,q \in U.
 \end{align}
\end{thm}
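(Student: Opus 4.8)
The plan is to parallel the proof of Theorem~\ref{main thm: pansu}, transporting the comparison to unit scale by the intrinsic dilations $(\delta_\lambda)_\lambda$ adapted to the tangent grading $(W_i)_i$, and then running a Gronwall-type estimate on horizontal curves. Fix $p,q\in U$ and set $R:=d_0(1,p)$, which we may assume to be $\le 1$; put $\bar p:=\delta_{1/R}p$ and $\bar q:=\delta_{1/R}q$, so that $d_0(1,\bar p)=1$ and, when $d_0(1,p)\ge d_0(1,q)$, also $d_0(1,\bar q)\le 1$. Let $\bar d(x,y):=\tfrac1R\,d(\delta_R x,\delta_R y)$ be the rescaled metric: it is the sub-Finsler metric of the Lie group $(G,*_R)$ obtained by conjugating the group law by $\delta_{1/R}$, with the \emph{same} horizontal distribution $\Delta=W_1$ and norm, since $W_1$ is homogeneous of degree $1$. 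By the definition of $\alpha_0$ in \eqref{def_alpha0}, the conjugated bracket satisfies $[X,Y]_R=[X,Y]_0+O(R^{\alpha_0})$ for $X\in W_a$, $Y\in W_b$ (the graded bracket, plus components in layers $\le a+b-\alpha_0$, each carrying a factor $R^{a+b-c}$ with $a+b-c\ge\alpha_0$), and hence, via Baker--Campbell--Hausdorff, the whole law $*_R$ is an $O(R^{\alpha_0})$ perturbation of the Carnot law $*_0$ of $G_0$. Since $d_0$ is exactly $\delta_R$-homogeneous, $|d(p,q)-d_0(p,q)|=R\,|\bar d(\bar p,\bar q)-d_0(\bar p,\bar q)|$, so it suffices to estimate the discrepancy at unit scale.

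For the upper bound, take a $d_0$-geodesic from $\bar p$ to $\bar q$, of length $\rho:=d_0(\bar p,\bar q)=d_0(p,q)/R$, with horizontal control $u(\cdot)\in W_1$. Feeding the same control into $(G,*_R)$ from $\bar p$ produces a curve of $\bar d$-length differing from $\rho$ by $O(R^{\alpha_0}\rho)$ and ending at a point $\bar q'$. The two curves solve ODEs whose right-hand sides differ by $O(R^{\alpha_0})$ times a polynomial in the graded coordinates, which remain bounded along the motion, so a Gronwall estimate over an arc-length interval of length $\rho\lesssim 1$ gives $\lVert \bar q^{-1}*_R\bar q'\rVert\le C\,R^{\alpha_0}\rho$ in graded coordinates. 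Now let $\mathfrak i\subseteq W_{\le\beta}$ be a Carnot quotient ideal realizing $\beta$, chosen so as to be compatible with the rescaling; the quotient is genuinely Carnot with $\Delta$ mapping into its first layer, so the quotient metric carries no error and the projections to $G/\exp\mathfrak i$ of the two curves above coincide. Hence $\bar q^{-1}*_R\bar q'$ lies in $\exp\mathfrak i$, i.e.\ its graded coordinates are supported in layers $\le\beta$, and the ball--box principle yields $\bar d(\bar q',\bar q)\le C(R^{\alpha_0}\rho)^{1/\beta}$, which dominates the $O(R^{\alpha_0}\rho)$ length discrepancy of the lift. Therefore $\bar d(\bar p,\bar q)-d_0(\bar p,\bar q)\le C\,R^{\alpha_0/\beta}\rho^{1/\beta}$; multiplying by $R$ and substituting $\rho=d_0(p,q)/R$ and $R=d_0(1,p)$ gives exactly $d(p,q)-d_0(p,q)\le C\,d_0(p,q)^{1/\beta}d_0(1,p)^{1+(\alpha_0-1)/\beta}$.

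The lower bound is obtained symmetrically: push the control of a $\bar d$-geodesic from $\bar p$ to $\bar q$ (of length $d(p,q)/R$) into $G_0$, use the same Gronwall and ball--box estimates to join its endpoint to $\bar q$ in $d_0$ at cost $\le C(R^{\alpha_0}d(p,q)/R)^{1/\beta}$, and rescale back, obtaining $d(p,q)-d_0(p,q)\ge -C\,d(p,q)^{1/\beta}d_0(1,p)^{1+(\alpha_0-1)/\beta}$. Finally, \eqref{eq: cor mitchell1} follows: when $d_0(1,p)\ge d_0(1,q)$ one has $d_0(p,q)\le 2d_0(1,p)$ and, by the bound just proved, $d(p,q)\le C\,d_0(1,p)$ as well, so substituting $\max\{d_0(1,p),d_0(1,q)\}=d_0(1,p)$ for both $d(p,q)$ and $d_0(p,q)$ in the two-sided estimate produces the exponent $\tfrac1\beta+1+\tfrac{\alpha_0-1}{\beta}=1+\tfrac{\alpha_0}{\beta}$; the case $d_0(1,q)\ge d_0(1,p)$ is identical after swapping $p$ and $q$.

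\textbf{Main obstacle.} The heart of the argument is the second step: controlling the Gronwall constants uniformly on the bounded region where everything takes place — delicate since $d_0$, hence the geodesics and their lifts, are only defined near $1$, and one must check the lifts do not leave $U$ and that the ball--box constants for $(G,*_R)$ are uniform as $R\to 0$ — and, above all, certifying that the endpoint discrepancy genuinely lands in (the subgroup generated by) a Carnot quotient ideal contained in $W_{\le\beta}$, to which the step-$\beta$ ball--box estimate then applies. It is exactly this structural input that replaces the crude bound ``the error spreads over all layers up to the step $s$'' and upgrades the exponent from $1/s$ to $1/\beta$; establishing that a Carnot quotient ideal realizing $\beta$ can be chosen compatibly with the original, rescaled, and Carnot structures, and that the corresponding quotient metric simultaneously dominates projections of $d$, $\bar d$, and $d_0$, are the supporting facts about $\beta$ that this step relies on.
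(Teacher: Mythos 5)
Your proposal is correct in outline and follows essentially the same route as the paper: your rescaling by $\delta_{1/R}$ with $R\approx d_0(1,p)$ is exactly the paper's reduction of Theorem~\ref{main thm: mitchell} to Theorem~\ref{thm: quantitative Mitchell} (the comparison of the dilated metrics $d_\epsilon$ with $d_0$ at a fixed small scale), and your three key ingredients --- transferring the horizontal control between the products $\cdot_R$ and $\cdot_0$, the Gr\"onwall estimate driven by the $O(R^{\alpha_0})$ bracket discrepancy, and the fact that the endpoint error lies in a Carnot quotient ideal $\mathfrak{i}\subseteq W_{\leq\beta}$ so that a step-$\beta$ ball--box bound applies --- are precisely Lemma~\ref{lem: Mitchell Gronwall}, Lemma~\ref{lem: projection of curves mitchell}, and Lemma~\ref{lemma: uniform bound on dilated metrics} in the paper. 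The uniformity issues you flag (ball--box and Gr\"onwall constants for $(G,\cdot_R)$ as $R\to 0$, and keeping the lifted curves inside the chart) are resolved in the paper exactly as you anticipate, via the equicontinuity bound~\eqref{eq: dilated metric are bilipschitz} and the geodetically connected neighborhood of Section~\ref{subsection: geodetically connected set}.
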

From the bound~\eqref{eq: cor mitchell1}, we directly see that $(G,d_0)$ is the tangent cone of $(G,d)$, see Theorem~\ref{thm: Mitchell tangent thm} and Theorem~\ref{thm: quantitative Mitchell}.

The degenerate case $\alpha_0/\beta=\infty$ corresponds to $(G,\Delta)$ being a Carnot group, in which case $d$ and $d_0$ coincide. While Theorem~\ref{main thm: pansu} strengthens the result by  Breuillard and Le Donne, Theorem~\ref{main thm: mitchell} is a quantification of Mitchell's result that is sharper than the ones obtained in \cite{Bellaiche}. Indeed, denoting by $s$ the nilpotency step of the tangent cone $G_0$, Bellaiche obtains in \cite[Theorem 6.4]{Bellaiche}, for $\epsilon$ small enough, the bounds 
\begin{equation}\label{eq: bellaiche bound}
    -C\epsilon d(p,q)^{1/s} \leq d(p,q)-d_0(p,q) \leq C\epsilon d_0(p,q)^{1/s}, \quad \forall p,q \in B_{d_0}(1,\epsilon).
\end{equation}
Our new bound in Theorem~\ref{main thm: mitchell} implies \eqref{eq: bellaiche bound} since $\beta \leq s$ and $(\alpha_0 -1)/\beta \geq 0$.

We briefly summarize the strategy to obtain the theorems. The two proofs are similar, so we focus on Theorem~\ref{main thm: pansu}. We consider a one-parameter family of Lie group products $(*_\epsilon)_{\epsilon\in \mathbb{R}}$ on $G$ such that $G=(G, *_1)$ and $(G, *_0)$ is isomorphic to the asymptotic cone of $G$. We proceed by introducing a one-parameter family of geodesic metrics $(\rho_\epsilon)_{\epsilon\in \mathbb{R}}$ on $G$ such that $\rho_\epsilon$ is $*_\epsilon$-left-invariant for every $\epsilon$, $\rho_1 = d$, and $\rho_0 = d_\infty.$ Theorem~\ref{main thm: pansu} is a consequence of Theorem~\ref{thm: quantitative Pansu}, which states that the metrics $\rho_\epsilon$ quantitatively converge uniformly to $\rho_0$ on compact sets. Namely, for every compact $K \subseteq G$, there exists a constant $C$ such that 
\begin{equation*}
  \left|\rho_0(p,q)-\rho_\epsilon(p,q)\right| \leq C \epsilon^{\alpha_{\infty}/\beta}, \quad \forall \epsilon \in [0,1], \; \forall p,q \in K.
 \end{equation*}
We explain how we prove the inequality $\rho_0(p,q)-\rho_\epsilon(p,q)\lesssim \epsilon^{\alpha_{\infty}/\beta}$, for a fixed compact $K$; the proof of the other inequality uses the same techniques. We start with a $\rho_\epsilon$-geodesic $\gamma$ from $p$ to $q$, we construct a curve $\gamma_0$ such that $\gamma_0(0)=p$ and $\text{Length}_{\rho_0}(\gamma_0) \leq \text{Length}_{\rho_\epsilon}(\gamma)$, and then bound the $\rho_0$-distance of the endpoints of $\gamma$ and $\gamma_0$. To bound the distance of the endpoints, we observe that for every Carnot quotient ideal $\mathfrak{i} \triangleleft \text{Lie(G)}$ and for every $\epsilon \in [0,1]$, the curve $t \mapsto\gamma_\epsilon^{-1}(t)*_\epsilon\gamma(t)$ lies in $\mathfrak{i}$. One can then use the Ball-Box Theorem and the definition of $\beta$, as well as a Grönwall-type lemma and the definition of $\alpha_\infty$, to bound the $\rho_0$-distance of the endpoints of $\gamma$ and $\gamma_0$.

This work is structured as follows. In Section~\ref{section: sub-Finsler Lie groups}, we introduce the basic definitions and known results about geodesic Lie groups. In Section~\ref{section: large and small}, we present the necessary notions and results about asymptotic cones and tangent spaces of Lie groups. Section~$\ref{section: gronwall}$ contains the proof of two Grönwall-type lemmata that are needed for the proofs of the main theorems. Finally, in Sections~\ref{section: quantitative pansu} and~\ref{section: quantitative mitchell}, we prove Theorems~\ref{main thm: pansu} and~\ref{main thm: mitchell}, respectively. We conclude with some applications and examples in Section~\ref{section: applications}.

\newpage
\section{Sub-Finsler Lie groups}\label{section: sub-Finsler Lie groups}
\subsection{Sub-Finsler metrics on Lie groups}\label{subsection: sF metrics on lie groups}
Our main objects of study are Lie groups equipped with left-invariant geodesic metrics inducing the manifold topology. By a result of Berestovskij \cite{Ber88}, these metric Lie groups are precisely the sub-Finsler Lie groups, which we now present, following the style of the monograph \cite{Don25}.

Let $G$ be a Lie group with Lie algebra $\mathfrak{g}$, viewed as the tangent space of $G$ at the identity element $1$. A \textit{left-invariant polarization} on $G$ is a sub-bundle $\Delta \subset TG$ such that for all $g\in G$ we have
$\Delta_g \coloneqq \Delta \cap T_gG = (L_g)_*\Delta_{1} \subset T_gG$, where $L_g \colon G \rightarrow G$ denote the left translation by $g$ and $\Delta_{1}$ is a vector subspace of $\mathfrak{g}$. Note that space $\Delta_{1}$ uniquely determines the left-invariant polarization $\Delta$, hence we have a bijection between the set of vector subspaces of $\mathfrak{g}$ and the set of left-invariant polarizations on $G$. To simplify the notation, we also denote $\Delta_1= \Delta$ when there is no possibility of confusion. A left-invariant polarization $\Delta$ is \textit{bracket-generating} if the Lie algebra generated by the vector fields tangent to $\Delta$ contains a frame for $TG$. Since the polarization $\Delta$ is left-invariant, it is bracket-generating if and only if the Lie subalgebra of $\mathfrak{g}$ generated by $\Delta_1$ is $\mathfrak{g}.$ 

Fix a norm $\|\cdot \|$ on $\mathfrak{g}$.
Let $\Delta_1 \subseteq \mathfrak{g}$ be the subspace corresponding to a left-invariant bracket-generating polarization $\Delta$ on $G$. We use the norm on $\mathfrak{g}$ to define a left-invariant norm on the polarization. Let $\| \cdot \|_1$ be the restriction to $\Delta_1$ of the norm on $\mathfrak{g}.$ Then for $g \in G$ we define the norm on $\Delta_g$ to be the pushforward by the left translation of the norm on $\Delta_1$
\begin{align*}
 \|v\|_g \coloneqq \|(\dd L_{g^{-1}})_gv \|_1,\quad \forall g \in G, \; \forall v \in \Delta_g.
\end{align*}

The triple $(G, \Delta, \| \cdot \|)$ is called \textit{sub-Finsler Lie group}. Every sub-Finsler Lie group has an associated left-invariant metric, known as \textit{Carnot-Carathéodory metric} or \textit{sub-Finsler metric} defined as follows. An absolutely continuous curve $\gamma : I \rightarrow G$ defined on an interval $I$ is \textit{$\Delta$-horizontal} if, for almost every $t$, its derivative at time $t$ is contained in $\Delta_{\gamma(t)}.$ Then, the sub-Finsler distance $d_{sF}$ of two points $x,y \in G$ is defined as
\begin{align*}
 d_{sF}(x,y) \coloneqq \inf \big\{L(\gamma) : \gamma \; \Delta\text{-horizontal curve from } x \text{ to } y \big\},
\end{align*}
where $L(\gamma) \coloneqq \int_I \|\Dot{\gamma}(t)\|_{\gamma(t)} \text{d}t.$ One could also consider sub-Finsler metrics associated to non-bracket-generating polarizations. However, we shall restrict our study to metrics corresponding to bracket-generating polarizations because, by Chow's theorem (also known as Chow--Rashevsky Theorem, see \cite{Cho40} and \cite{Ras38}), such metrics are finite, induce the manifold topology, and are geodesic. Therefore, every two points can be joined by a geodesic path. If the norm is induced by a scalar product, the metric is said to be \textit{sub-Riemannian}. If the space $\Delta$ is the whole Lie algebra $\mathfrak{g}$, the metric is said to be \textit{Finsler} (or \textit{Riemannian} if in addition the norm comes from a scalar product). We also refer to the metric group $(G,d_{sF})$ as a \textit{sub-Finsler Lie group}.

An important result in this context is the Ball-box Theorem, which locally compares Riemannian and sub-Finsler metrics. Suppose that $\Delta$ together with all Lie brackets of order at most $s$ of elements of $\Delta$ span $\mathfrak{g}$ linearly. The Ball-box Theorem (see for example \cite[Theorem~2.4.2]{Mon02}) implies that for every sub-Finsler metric $d_{sF}$ and every Riemannian metric $d_e$ we have, on every compact $K \subset G$:
\begin{align} \label{eq: ball-box}
 \frac{1}{C}d_{sF}(x,y)^s \leq d_e(x,y) \leq Cd_{sF}(x,y), \quad \forall x,y \in K,
\end{align}
for some constant $C \geq 1.$

\subsection{Baker--Campbell--Hausdorff formula}\label{subsection: BCH}
The Baker--Campbell--Hausdorff formula, BCH formula in short, enables us to reconstruct the product of a Lie group in a neighborhood of the identity from the structure of its Lie algebra. For simply connected nilpotent Lie groups the BCH formula allows us to identify the group with its Lie algebra and read the product only in terms of the Lie algebra operations. 

Let $G=(G,\cdot)$ be a Lie group with Lie algebra $(\mathfrak{g}, [\cdot, \cdot])$. Then there exists a neighborhood of the origin $V \subseteq \mathfrak{g}$ and a neighborhood of the identity $U \subseteq G$ such that $\exp: V \rightarrow U$ is a diffeomorphism. The BCH formula is given by
\begin{align} \label{def: BCH}
 \log(\exp(x_1)\cdot \exp(x_2)) = x_1 + x_2 + \sum_{k=2}^\infty \sum_{q \in \{1,2\}^k}b_{k,q} \lbrack x_{q_1},\ldots,x_{q_k} \rbrack, \quad \forall x_1,x_2 \in V,
\end{align}
where $b_{k,q}$ are universal constants and $\lbrack x_{q_1},\ldots,x_{q_k} \rbrack$ denotes the left-iterated bracket: $\lbrack x_{q_1},\ldots,x_{q_k} \rbrack:=\lbrack x_{q_1},\lbrack\cdots,\lbrack x_{q_{k-1}},x_{q_k} \rbrack\cdots\rbrack\rbrack.$

The BCH formula defines a local group operation $*$ on $V$, called the \textit{Dynkin product}:
\begin{align*} \label{def: Dynkin product}
 x_1 * x_2 \coloneqq \log(\exp(x_1) \cdot \exp(x_2)).
\end{align*}
We obtain a local group isomorphism (and, hence, an identification) via the exponential map:
\begin{align*} 
 \exp : V \rightarrow U, \quad (V,*)\simeq (U,\cdot).
\end{align*}

If the group $G$ is simply connected and nilpotent, the exponential map is a global diffeomorphism \cite[Theorem~1.2.1]{CG90}, and thus $(\mathfrak{g}, *) \simeq (G,\cdot)$. Therefore in this case $(\mathfrak{g}, *)$ is a Lie group with Lie algebra $(\mathfrak{g}, [\cdot, \cdot])$. Moreover, its exponential map is the identity map \cite[Proposition~9.4.4]{Don25}:
\[\id = \exp:(\mathfrak{g}, [\cdot, \cdot]) \rightarrow (\mathfrak{g}, *).\]

\subsection{Gradings of vector spaces and Lie algebras}\label{subsection: gradings}
Let $V$ be a real vector space and $k\in \mathbb{N}$. A collection of subspaces $(D_i)_{i=1}^k$ is a \textit{(linear) grading of V} if $V= \bigoplus_{i=1}^kD_i.$ For $i \in \{1,\ldots,k\}$, we call $D_i$ the \textit{$i$-th layer of the grading}. We denote by $\pi_i:V \rightarrow D_i$ the projection on the $i$-th layer with respect to the grading, and for $x \in V$ we use the notation $(x)_i \coloneqq \pi_i(x).$ We denote by $D_{\leq i}$ (respectively $D_{\geq i}$) the subspace $D_1 \oplus\cdots\oplus D_i$ (respectively $D_i \oplus\cdots\oplus D_k$).

Gradings induce linear maps as follows. Let $(D_i)_{i=1}^k$ be a grading of $V$. For $\epsilon \in \mathbb{R}$, define $\delta_\epsilon: V \rightarrow V$ to be the linear map with the property that $\delta_\epsilon(x)=\epsilon^jx$, for all $j\in \{1,\ldots,k\}$, for all $x\in D_j$. The family $(\delta_\epsilon)_{\epsilon \in \mathbb{R}}$ is called the \textit{one-parameter family of dilations associated to $(D_i)_{i=1}^k$}. 

We consider two kinds of linear gradings of polarized Lie groups, which we call asymptotic and tangent linear gradings, respectively. Asymptotic gradings were first introduced by Cornulier in \cite{Cor17}, under the name of compatible linear gradings, and were used to identify a nilpotent Lie group with its asymptotic cone. Similarly, a choice of tangent grading provides a (local) identification of a sub-Finsler Lie group with its tangent cone. These identifications are explained in Sections~\ref{subsection: asymptotic carnot group} and~\ref{subsection: osculating carnot group}.

Let $\mathfrak{g}$ be a Lie algebra. Denote by $(\mathfrak{g}^{(i)})_{i\geq1}$ the lower central series of $\mathfrak{g},$ which is the series of subspace of $\mathfrak{g}$ defined inductively by $\mathfrak{g}^{(1)} \coloneqq \mathfrak{g}$ and $\mathfrak{g}^{(i+1)} \coloneqq \lbrack \mathfrak{g}, \mathfrak{g}^{(i)} \rbrack.$
\begin{definition}[Asymptotic grading]\label{def: compatible linear grading}
 Let $\mathfrak{g}$ be an $s$-step nilpotent Lie algebra. A grading $(V_j)_{j=1}^s$ of $\mathfrak{g}$ is an \textit{asymptotic grading} if
 \begin{equation}\label{def: eq: compatible linear grading}
  \mathfrak{g}^{(j)} = V_j \oplus \mathfrak{g}^{(j+1)}, \quad \forall j\in \{1,\ldots,s\}.
 \end{equation}
\end{definition}

\begin{definition}[Tangent grading]\label{def: adapted linear grading}
 Let $\mathfrak{g}$ be a Lie algebra, let $\Delta$ be a bracket generating subspace. Define, for $k \in \mathbb{N}$:
 \[\Delta^k \coloneqq \ad_\Delta^{k-1}\Delta \coloneqq \underbrace{[\Delta,\ldots,\Delta]}_{k\text{ times}},\]
 and
 \[\Delta^{[k]} \coloneqq \sum_{i=1}^k \Delta^k.\]
 Let $s \coloneqq \min\{n\in \mathbb{N}: \Delta^{[n]} = \mathfrak{g}\}$ be the step of $\Delta$. A grading $(W_j)_{j=1}^s$ of $\mathfrak{g}$ is an \textit{tangent grading of $\mathfrak{g}$ for $\Delta$} if 
 \[\Delta^{[j]} = W_j \oplus \Delta^{[j-1]} , \quad \forall j\in \{1,\ldots,s\}.\]
\end{definition}

In what follows, a particular kind of linear gradings, called stratifications, play an important role. We say that a Lie algebra $\mathfrak{g}$ \textit{admits an s-step stratification}, for a positive integer $s$, if there is a grading $\mathfrak{g}=\bigoplus_{i=1}^s D_i$ such that $\lbrack D_1, D_j \rbrack = D_{j+1}$, for $1 \leq j \leq s-1$, $D_s \neq \{0\}$, and $\lbrack D_1, D_s \rbrack = \{0\}$. We call such a grading a \textit{stratification}. Stratifications are unique up to Lie algebra isomorphisms. The spaces $D_1,\ldots, D_s$ in the definition of a stratification are called \textit{strata} and for every $i$ we refer to the space $D_i$ as the \textit{$i$-th stratum}. Note that every Lie algebra that admits an $s$-step stratification is nilpotent with nilpotency step $s$.

An \textit{(algebraic) Carnot group} is a simply connected Lie group whose Lie algebra admits a stratification, and one such stratification is fixed. Let $G$ be a Carnot group and $\Delta$ be the first stratum of $\text{Lie}(G)$. If $d$ is a sub-Finsler metric associated to the polarization $\Delta$, we call the metric group $(G,d)$ a $\textit{(metric) Carnot group}.$

Metric Carnot groups $(G,d)$ have many nice properties. Among them are self-similarity, that is, they admit isometries $(G,d) \rightarrow (G,rd)$ for every $r>0$, and isometric homogeneity, that is, isometries act on $G$ transitively. A consequence is that the Ball-Box Theorem holds globally:
\begin{thm}[Ball-Box Theorem for Carnot groups {\cite[Theorem~11.2.3]{Don25}}]\label{thm: Ball-Box for Carnot}
 Let $(G,d)$ be a Carnot group stratified by $(V_i)_{i=1}^s$, which we identify with $(\mathfrak{g}, *)$ as explained in Section~\ref{subsection: BCH}. Then, for every norm $\norm{\cdot}$ on $\mathfrak{g}$ there is a constant $C \geq 1$ such that
 \begin{align}\label{thm: eq: Ball box for Carnot}
  \frac{1}{C}d(0,x) \leq \sum_{i=1}^s\norm{(x)_i}^{1/i}\leq Cd(0,x), \quad \forall x \in \mathfrak{g}.
 \end{align}
\end{thm}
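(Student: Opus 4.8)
The plan is to exploit the self-similarity of metric Carnot groups so that the global comparison in \eqref{thm: eq: Ball box for Carnot} reduces to a compactness argument on a single ``unit sphere''. Abbreviate $N(x)\coloneqq\sum_{i=1}^{s}\norm{(x)_i}^{1/i}$ for the quasi-norm appearing on the right-hand side. I would begin by recording three elementary properties shared by $N$ and by the function $x\mapsto d(0,x)$. First, both are continuous on $\mathfrak g$: for $N$ this is clear, since each projection $\pi_i$ is linear, $\norm{\cdot}$ is continuous, and $t\mapsto t^{1/i}$ is continuous on $[0,\infty)$; for $d(0,\cdot)$ it holds because $d$ induces the manifold topology. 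Second, both vanish precisely at the origin — if $N(x)=0$ then $\norm{(x)_i}=0$ for every $i$, hence $x=\sum_{i=1}^{s}(x)_i=0$. Third, and crucially, both are positively $1$-homogeneous with respect to the one-parameter family of dilations $(\delta_\lambda)_{\lambda>0}$: the identity $N(\delta_\lambda x)=\lambda N(x)$ follows at once from $(\delta_\lambda x)_i=\lambda^{i}(x)_i$, while $d(0,\delta_\lambda x)=\lambda\,d(0,x)$ is exactly the self-similarity of $(G,d)$ recalled just above.

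Granting these, the argument runs as follows. The set $S\coloneqq N^{-1}(1)$ is closed by continuity and bounded — if $N(x)=1$ then $\norm{(x)_i}\leq 1$ for each $i$, whence $\norm{x}\leq\sum_{i=1}^{s}\norm{(x)_i}\leq s$ — so $S$ is compact. On $S$ the continuous function $d(0,\cdot)$ is strictly positive, since $0\notin S$, so it attains a minimum $m>0$ and a maximum $M<\infty$; thus $m\,N(x)\leq d(0,x)\leq M\,N(x)$ for all $x\in S$. For an arbitrary $x\neq 0$ put $\lambda\coloneqq N(x)>0$ and $y\coloneqq\delta_{1/\lambda}x$, so that $N(y)=1$ and $\delta_\lambda y=x$; applying the $1$-homogeneity of both functions gives $m\,N(x)=\lambda m\leq\lambda\,d(0,y)=d(0,x)\leq\lambda M=M\,N(x)$. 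The case $x=0$ is trivial, so \eqref{thm: eq: Ball box for Carnot} holds with $C\coloneqq\max\{M,\,1/m,\,1\}$.

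The one point requiring genuine input is the scaling $d(0,\delta_\lambda x)=\lambda\,d(0,x)$, and it is also the step I would be most careful with. It holds because, $(G,d)$ being a \emph{metric} Carnot group, its polarization is the first stratum $V_1$ and the norm defining $d$ is the restriction of a norm to $V_1$: the dilation $\delta_\lambda$ is a Lie group automorphism of $(\mathfrak g,*)$ — a bracket of elements taken from layers $V_{i_1},\dots,V_{i_k}$ lies in $V_{i_1+\cdots+i_k}$, on which $\delta_\lambda$ acts by the matching power $\lambda^{i_1+\cdots+i_k}$, so $\delta_\lambda$ is compatible with each homogeneous term of the BCH series — it fixes $0$, preserves the left-invariant polarization, and acts as multiplication by $\lambda$ on $V_1$; hence it carries $\Delta$-horizontal curves to $\Delta$-horizontal curves, scaling sub-Finsler length by $\lambda$, and taking infima over curves from $0$ to $x$ yields the claim. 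Everything else is the routine homogeneity-plus-compactness scheme; note that the compactness of $S$ and the fact that $N$ detects the origin both rely on the finiteness of the step $s$. One may also observe that, all norms on $\mathfrak g$ being equivalent, the particular choice of $\norm{\cdot}$ entering $N$ is immaterial, so it would even be legitimate to fix a convenient one at the outset.
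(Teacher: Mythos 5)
Your argument is correct: the homogeneity-plus-compactness scheme, with the quasi-norm $N(x)=\sum_{i=1}^s\norm{(x)_i}^{1/i}$ compared to $d(0,\cdot)$ on the compact set $N^{-1}(1)$ and then extended by the dilations, is exactly the standard proof of this statement, and you rightly isolate the only nontrivial inputs (continuity and finiteness of $d(0,\cdot)$ via Chow, and the scaling $d(0,\delta_\lambda x)=\lambda d(0,x)$ coming from $\delta_\lambda$ being an automorphism preserving the horizontal structure). The paper itself does not prove this theorem but imports it from \cite[Theorem~11.2.3]{Don25}, whose proof follows the same route, so your proposal matches the intended argument.
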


\begin{prop}[Properties of linear gradings]\label{prop: properties of linear gradings}
 Let $(\mathfrak{g}, \Delta)$ be a polarized Lie algebra and let $q \in \mathbb{N}$ be the step of $\Delta$. Let $(W_i)_{i=1}^q$ be a tangent grading for $\mathfrak{g}$. Then
 \begin{enumerate}
     \item \label{prop: bracket of ALG containment}$[W_i,W_j] \subseteq W_{\leq i+j}, \; \forall i,j\in \{1,\ldots,q\}.$
     \item If $[W_1,W_j] \subseteq W_{1+j}, \; \forall j \in \{1,\ldots,q-1\}$, and $[W_1,W_q] = \{0\}$, then $(W_i)_{i=1}^q$ is a stratification.
 \end{enumerate}
 If $\mathfrak{g}$ is nilpotent of step $s\in \mathbb{N}$ and $(V_i)_{i=1}^s$ is an asymptotic grading for $\mathfrak{g}$, then
\begin{enumerate}\setcounter{enumi}{2}
    \item $q\leq s$.
    \item \label{prop: CLG inside ALG}$\mathfrak{g}^{(j)} = \Delta^j + \mathfrak{g}^{(j+1)}, \quad \forall j \in \{1,\ldots,s\}.$
    \item $\mathfrak{g} = \Delta^{[j]} + \mathfrak{g}^{(j+1)}, \quad \forall j \in \{1,\ldots,s\}.$
    \item \label{prop: bracket of CLG containment}$[V_i,V_j] \subseteq V_{\geq i+j}, \; \forall i,j\in \{1,\ldots,s\}.$
    \item \label{prop: CLG + algebra grading is stratifictaion} If $[V_1, V_j] \subseteq V_{1+j}, \; \forall j \in \{1,\ldots,s-1\}$, then $(V_i)_{i=1}^s$ is a stratification.
\end{enumerate}
\end{prop}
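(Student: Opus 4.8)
The plan is to reduce all seven statements to two filtration identities and then read them off by induction on the grading index. From the definition of a tangent grading one gets $W_1=\Delta$ and $W_{\leq j}=\Delta^{[j]}$ for all $j$; from the definition of an asymptotic grading one gets, iterating $\mathfrak{g}^{(i)}=V_i\oplus\mathfrak{g}^{(i+1)}$ and using $\mathfrak{g}^{(s+1)}=\{0\}$, that $\mathfrak{g}^{(i)}=V_{\geq i}$. Next I would record the two bracket containments $[\mathfrak{g}^{(a)},\mathfrak{g}^{(b)}]\subseteq\mathfrak{g}^{(a+b)}$ (standard, by induction via the Jacobi identity) and $[\Delta^{[i]},\Delta^{[j]}]\subseteq\Delta^{[i+j]}$. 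For the latter I would induct on $i+j$: for $i=1$ one has $[\Delta,\Delta^{[j]}]=\sum_{k\leq j}[\Delta,\Delta^k]=\sum_{k\leq j}\Delta^{k+1}\subseteq\Delta^{[j+1]}$, and for the step one writes $\Delta^{[i]}=\Delta^{[i-1]}+\Delta^i$ with $\Delta^i=[\Delta,\Delta^{i-1}]$ and applies the Jacobi identity to $[[\Delta,\Delta^{i-1}],\Delta^{[j]}]$, reducing to two instances of the inductive hypothesis. With these in hand, claim~(1) is immediate ($[W_i,W_j]\subseteq[\Delta^{[i]},\Delta^{[j]}]\subseteq\Delta^{[i+j]}=W_{\leq i+j}$) and so is claim~(6) ($[V_i,V_j]\subseteq[\mathfrak{g}^{(i)},\mathfrak{g}^{(j)}]\subseteq\mathfrak{g}^{(i+j)}=V_{\geq i+j}$).

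For claim~(4) I would induct on $j$. The case $j=1$ is $\mathfrak{g}=\Delta+[\mathfrak{g},\mathfrak{g}]$, which holds because $\Delta$ generates $\mathfrak{g}$ and every iterated bracket of length at least $2$ lies in $[\mathfrak{g},\mathfrak{g}]$. For the step, $\mathfrak{g}^{(j+1)}=[\mathfrak{g},\mathfrak{g}^{(j)}]=[\mathfrak{g},\Delta^j+\mathfrak{g}^{(j+1)}]=[\mathfrak{g},\Delta^j]+\mathfrak{g}^{(j+2)}$, and, using $\mathfrak{g}=\Delta+[\mathfrak{g},\mathfrak{g}]$ together with the first filtration containment, $[\mathfrak{g},\Delta^j]\subseteq[\Delta,\Delta^j]+[\mathfrak{g}^{(2)},\mathfrak{g}^{(j)}]\subseteq\Delta^{j+1}+\mathfrak{g}^{(j+2)}$; the reverse inclusion is obvious, giving~(4) for $j+1$. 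Summing~(4) over indices yields~(5) by induction, via $\mathfrak{g}=\Delta^{[j]}+\mathfrak{g}^{(j+1)}=\Delta^{[j]}+\Delta^{j+1}+\mathfrak{g}^{(j+2)}=\Delta^{[j+1]}+\mathfrak{g}^{(j+2)}$. Then~(3) follows at once: taking $j=s$ in~(5) and using $\mathfrak{g}^{(s+1)}=\{0\}$ gives $\Delta^{[s]}=\mathfrak{g}$, so the step $q$ of $\Delta$ satisfies $q\leq s$.

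The statements~(2) and~(7) are the only ones needing a small extra argument, and they are parallel, so I describe~(2). From $\Delta^{[j+1]}=\Delta^{[j]}+[\Delta,\Delta^{[j]}]$ (which unwinds directly from the definition of $\Delta^{[\cdot]}$), together with $W_{\leq j}=\Delta^{[j]}$ and $W_1=\Delta$, one obtains $W_{\leq j+1}=W_{\leq j}+[W_1,W_{\leq j}]$. Under the hypothesis $[W_1,W_k]\subseteq W_{k+1}$, the summands $[W_1,W_k]$ with $k<j$ lie inside $W_{\leq j}$ while $[W_1,W_j]\subseteq W_{j+1}$, so $W_{\leq j+1}=W_{\leq j}+[W_1,W_j]$; since the sum $W_{\leq j+1}=W_{\leq j}\oplus W_{j+1}$ is direct and $[W_1,W_j]\subseteq W_{j+1}$, this forces $[W_1,W_j]=W_{j+1}$ for every $j\leq q-1$. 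Combined with $[W_1,W_q]=\{0\}$ and $W_q\neq\{0\}$ (because $\Delta^{[q]}=\mathfrak{g}\neq\Delta^{[q-1]}$, $q$ being the step of $\Delta$), this is precisely the definition of a stratification. Claim~(7) is identical with $V$ in place of $W$: from $\mathfrak{g}^{(j+1)}=[\mathfrak{g},\mathfrak{g}^{(j)}]=[V_1,V_{\geq j}]+[\mathfrak{g}^{(2)},\mathfrak{g}^{(j)}]\subseteq[V_1,V_j]+V_{\geq j+2}$ (using $\mathfrak{g}=V_1+\mathfrak{g}^{(2)}$, $\mathfrak{g}^{(j)}=V_{\geq j}$, the hypothesis, claim~(6), and the first filtration containment) and the directness of $\mathfrak{g}^{(j+1)}=V_{j+1}\oplus V_{\geq j+2}$ one extracts $[V_1,V_j]=V_{j+1}$ for $j\leq s-1$, while $[V_1,V_s]\subseteq V_{\geq s+1}=\{0\}$ and $V_s=\mathfrak{g}^{(s)}\neq\{0\}$ are automatic.

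I do not anticipate a genuine obstacle: the argument is bookkeeping with the lower central series $(\mathfrak{g}^{(i)})_i$ and the filtration $(\Delta^{[i]})_i$. The only mildly delicate points are the double induction (on $i+j$, using the Jacobi identity) for $[\Delta^{[i]},\Delta^{[j]}]\subseteq\Delta^{[i+j]}$, and, in the stratification statements~(2) and~(7), the passage from a bracket \emph{containment} to the bracket \emph{equality} required by a stratification — which is handled cleanly by invoking the directness of the grading sum rather than any dimension count.
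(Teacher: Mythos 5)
Your proposal is correct and follows essentially the same route as the paper: claims (1) and (6) from the filtration containments, (4) and (5) by the same inductions, and (2) and (7) by forcing the bracket inclusions $[W_1,W_j]\subseteq W_{j+1}$, $[V_1,V_j]\subseteq V_{j+1}$ to be equalities against the direct-sum decompositions $\Delta^{[j+1]}=W_{j+1}\oplus\Delta^{[j]}$ and $\mathfrak{g}^{(j+1)}=V_{j+1}\oplus\mathfrak{g}^{(j+2)}$. The only (harmless) variations are that you argue via directness where the paper counts dimensions, you prove the auxiliary containment $[\Delta^{[i]},\Delta^{[j]}]\subseteq\Delta^{[i+j]}$ and the nondegeneracy $W_q\neq\{0\}$, $V_s\neq\{0\}$ explicitly, and you deduce (3) from (5) where the paper declares it clear from the definitions.
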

\begin{proof}
(1). Let $i,j\in \{1,\ldots,q\}.$ Then
 \begin{align*}
  [W_i,W_j] \subseteq [\Delta^{[i]},\Delta^{[j]}] \subseteq \Delta^{[i+j]} = W_{\leq i+j}.
 \end{align*}

(2). We prove by induction on $k$ that
 \begin{equation*}\label{prop: eq: CLG + algebra grading is stratifictaion}
  [W_1, W_k] = W_{k+1}, \quad \forall k \in \{1,\ldots,q-1\}.
 \end{equation*}
 
 For $k = 1$, by the definition of tangent grading:
 \begin{align*}
  \Delta^{[2]} &= W_2 \oplus \Delta, \text{ and}\\
  \Delta^{[2]} &\overset{\text{def}}{=} [\Delta, \Delta] + \Delta \\
  &= [W_1,W_1] + \Delta.
 \end{align*}
 Thus $\dim(W_2)\leq \dim([W_1,W_1])$. But $[W_1,W_1] \subseteq W_2$, hence $W_2= [W_1,W_1].$

 Now fix $k\in \{1,\ldots,q-2\}$ and suppose that the claim holds up to $k$. Again, by the definition of tangent grading:
 \begin{align*}
  \Delta^{[k+2]} &= W_{k+2} \oplus \Delta^{[k+1]}, \text{ and}\\
  \Delta^{[k+2]} &\overset{\text{def}}{=} \Delta^{[k+1]} + \Delta^{k+2} \\
  &= \Delta^{[k+1]} + [W_1,W_{k+1}],
 \end{align*}
 where in the last equation we use the induction hypothesis to deduce that
 \[\Delta^{k+2} = \underbrace{[\Delta,\ldots,\Delta]}_{k+1\text{ brackets}} = [W_1,\underbrace{[W_1,\ldots,W_1]}_{k \text{ brackets}}] \overset{\text{ind. hyp.}}{=} [W_1, W_{k+1}].\]
 Therefore $\dim(W_{k+2}) \leq \dim([W_1,W_{k+1}])$. But $[W_1,W_{k+1}] \subseteq W_{k+2}$, and thus $W_{k+2}= [W_1,W_{k+1}].$

(3). This is clear from the definitions of asymptotic and tangent gradings.

 (4). We induct on $j$. For $j=1$ it is clear, since the projection on the abelianization $\mathfrak{g}/[\mathfrak{g}, \mathfrak{g}]$ restricted to bracket generating subspaces is surjective. So, suppose the claim holds for some $j \in \{1,\ldots,s-1\}$. Then
 \begin{align*}
  \mathfrak{g}^{(j+1)} = [\mathfrak{g}^{(j)}, \mathfrak{g}] = [\Delta^j+\mathfrak{g}^{(j+1)}, \Delta+\mathfrak{g}^{(2)}] \subseteq \Delta^{j+1} + \mathfrak{g}^{(j+2)},
 \end{align*}
 and since $\Delta^{j+1}, \mathfrak{g}^{(j+2)} \subseteq \mathfrak{g}^{(j+1)}$, we have $\mathfrak{g}^{(j+1)} = \Delta^{j+1} + \mathfrak{g}^{(j+2)}.$

 (5). We induct on $j$. We proved the case $j=1$ in \eqref{prop: CLG inside ALG}. So, suppose the claim holds for some $j \in \{1,\ldots,s-1\}$. Then
 \begin{align*}
  \mathfrak{g} = \Delta^{[j]} + \mathfrak{g}^{(j+1)} \overset{\eqref{prop: CLG inside ALG}}{=}\Delta^{[j]} + \Delta^{j+1}+\mathfrak{g}^{(j+2)} = \Delta^{[j+1]} + \mathfrak{g}^{(j+2)}.
 \end{align*}

 (6). Let $i,j\in \{1,\ldots,s\}.$ Then
 \begin{align*}
  [V_i,V_j] \subseteq [\mathfrak{g}^{(i)},\mathfrak{g}^{(j)}] \subseteq \mathfrak{g}^{(i+j)} = V_{\geq i+j}.
 \end{align*}

 (7). Since $V_s = \mathfrak{g}^{(s)}$, then $[V_1,V_s]=\{0\}$. We prove by induction on $k$ that
 \begin{equation*}\label{prop: eq: CLG + algebra grading is stratifictaion}
  [V_1, V_k] = V_{k+1}, \quad \forall k \in \{1,\ldots,s-1\}.
 \end{equation*}
 
 For $k = 1$, by definition of asymptotic grading:
 \begin{align*}
  \mathfrak{g}^{(2)} &= V_2 \oplus \mathfrak{g}^{(3)}, \text{ and}\\
  \mathfrak{g}^{(2)} &= [\mathfrak{g}, \mathfrak{g}] \\
  &= [V_1 \oplus \mathfrak{g}^{(2)}, V_1 \oplus \mathfrak{g}^{(2)}]\\
  &\overset{\ref{prop: properties of linear gradings}\eqref{prop: bracket of CLG containment}}{\subseteq} [V_1,V_1] + \mathfrak{g}^{(3)}.
 \end{align*}
 Thus $\dim(V_2)\leq \dim([V_1,V_1])$. But $[V_1,V_1] \subseteq V_2$, and therefore $V_2= [V_1,V_1].$

 Now fix $k\in \{1,\ldots,s-2\}$ and suppose that the claim holds up to $k$. Again, by the definition of asymptotic grading:
 \begin{align*}
  \mathfrak{g}^{(k+2)} &= V_{k+2} \oplus \mathfrak{g}^{(k+3)}, \text{ and}\\
  \mathfrak{g}^{(k+2)} &= [\mathfrak{g}, \mathfrak{g}^{(k+1)}] \\
  &= [V_1 \oplus \mathfrak{g}^{(2)}, V_{k+1} \oplus \mathfrak{g}^{(k+2)}]\\
  &\overset{\ref{prop: properties of linear gradings}\eqref{prop: bracket of CLG containment}}{\subseteq} [V_1,V_{k+1}] + \mathfrak{g}^{(k+3)}.
 \end{align*}
 Thus $\dim(V_{k+2})\leq \dim([V_1,V_{k+1}])$. But $[V_1,V_{k+1}] \subseteq V_{k+2}$, and therefore $V_{k+2}= [V_1,V_{k+1}].$
\end{proof}

\section{Large and small scale geometry of sub-Finsler groups}\label{section: large and small}
We recall some basic notions regarding Gromov--Hausdorff convergence of pointed metric spaces.
\begin{definition}[Hausdorff approximating sequence]
 Let $(X_j,x_y)$ and $(Y_j,y_j)$, for $j\in \mathbb{N}$, be two sequences of pointed metric spaces. A sequence of maps $\phi_j : (X_j,x_j) \rightarrow (Y_j,y_j)$ is said to be \textit{Hausdorff approximating} if for all $R>0$ and all $\delta >0$ there exists a sequence $\epsilon_j$ such that
 \begin{itemize}
  \item $\epsilon_j \rightarrow 0$, as $j\rightarrow \infty$;
  \item $\phi_j|_{B(x_j,R)}$ is a $(1,\epsilon_j)$-quasi-isometric embedding;
  \item $\phi_j(B(x_j,R))$ is an $\epsilon_j$-net for $B(y_j,R-\delta).$
 \end{itemize}
 We remark that if $\phi_j : (X_j,x_j) \rightarrow (Y_j,y_j)$ is an Hausdorff approximating sequence of geodesic metric spaces, then $\phi_j(B(x_j,R))$ is an $\epsilon_j$-net for $B(y_j,R).$
 
\end{definition}
\begin{definition}[Gromov-Hausdorff limit]
 We say that a pointed metric space $(Y,y)$ is the \textit{Gromov-Hausdorff limit} of a sequence of pointed metric spaces $(X_j,x_j)$, with $j\in \mathbb{N}$, if there exists an Hausdorff approximating sequence $\phi_j : (X_j,x_j) \rightarrow (Y,y)$.
\end{definition}
We will need the following classical criterion, which can be found in \cite[Proposition~12.1.3]{Don25}:
\begin{prop}[Criterion for GH convergence]\label{prop: criterion_GH_conv}
 Let $d_j$ be a sequence of distances on a set $X$ that converges to a distance $d_\infty$ uniformly on $d_\infty$-bounded sets. Let $x_0 \in X$. If 
 \[\text{diam}_{d_\infty}\left(\bigcup_{j\in \mathbb{N}}B_{d_j}(x_0,R)\right) < \infty, \quad \forall R > 0,\]
 then ${\rm id}:(X,d_j,x_0) \rightarrow (X,d_\infty,x_0)$, is an Hausdorff approximating sequence.
\end{prop}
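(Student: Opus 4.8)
The plan is to check the two clauses in the definition of a Hausdorff approximating sequence directly, for $\phi_j=\id$, $X_j=(X,d_j)$, $Y_j=(X,d_\infty)$, $x_j=y_j=x_0$. The only place the diameter hypothesis enters is to guarantee that every ball $B_{d_j}(x_0,R)$ sits inside one fixed $d_\infty$-bounded set, so that the hypothesis "$d_j\to d_\infty$ uniformly on $d_\infty$-bounded sets" applies to all of them simultaneously; that passage is where the genuine content lies, the rest being bookkeeping.

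First I would fix $R>0$ and $\delta>0$, set $M:=\text{diam}_{d_\infty}\big(\bigcup_{j\in\mathbb N} B_{d_j}(x_0,R)\big)<\infty$, and let $S:=B_{d_\infty}(x_0,\max\{M,R\}+1)$, which is $d_\infty$-bounded. Since $x_0\in B_{d_j}(x_0,R)$ for every $j$, one has $\bigcup_j B_{d_j}(x_0,R)\subseteq S$ and also $B_{d_\infty}(x_0,R-\delta)\subseteq S$. By hypothesis the quantity $\tilde\varepsilon_j:=\sup\{\,|d_j(a,b)-d_\infty(a,b)| : a,b\in S\,\}$ tends to $0$. The quasi-isometry clause is then immediate: for $a,b\in B_{d_j}(x_0,R)\subseteq S$ we get $|d_\infty(a,b)-d_j(a,b)|\le\tilde\varepsilon_j$, i.e. $\id|_{B_{d_j}(x_0,R)}$ is a $(1,\tilde\varepsilon_j)$-quasi-isometric embedding into $(X,d_\infty)$.

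For the net clause I would observe that as soon as $\tilde\varepsilon_j<\delta$, any $y$ with $d_\infty(x_0,y)<R-\delta$ satisfies $d_j(x_0,y)\le d_\infty(x_0,y)+\tilde\varepsilon_j<R$, so $y\in B_{d_j}(x_0,R)$; hence for all large $j$ the set $B_{d_j}(x_0,R)$ actually \emph{contains} $B_{d_\infty}(x_0,R-\delta)$ and is in particular an $\tilde\varepsilon_j$-net for it in $d_\infty$. To package both clauses into one null sequence I would set $\varepsilon_j:=\tilde\varepsilon_j$ when $\tilde\varepsilon_j<\delta$ and $\varepsilon_j:=\max\{\tilde\varepsilon_j,R\}$ otherwise; the latter occurs for only finitely many $j$, so $\varepsilon_j\to0$, the embedding bound survives because $\varepsilon_j\ge\tilde\varepsilon_j$, and for the finitely many exceptional indices the crude bound $d_\infty(x_0,y)<R-\delta<R\le\varepsilon_j$ together with $x_0\in B_{d_j}(x_0,R)$ makes $B_{d_j}(x_0,R)$ an $\varepsilon_j$-net. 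Since $R$ and $\delta$ were arbitrary, this produces for every pair $(R,\delta)$ the required sequence $\varepsilon_j$, so $\id:(X,d_j,x_0)\to(X,d_\infty,x_0)$ is Hausdorff approximating. The remark in the definition about geodesic spaces plays no role here; the single subtle point is precisely the use of $\text{diam}_{d_\infty}\big(\bigcup_j B_{d_j}(x_0,R)\big)<\infty$ to replace the family of $d_j$-balls by the one $d_\infty$-bounded set $S$ on which the uniform estimate is available.
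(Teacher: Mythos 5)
Your argument is correct. The paper itself does not prove this proposition but simply cites it as \cite[Proposition~12.1.3]{Don25}, so there is no in-text proof to compare against; your direct unwinding of the definitions is exactly the standard route, and I found no gaps. The essential observation --- that the finite-diameter hypothesis forces $\bigcup_j B_{d_j}(x_0,R)$ into a single $d_\infty$-bounded set $S$ on which the uniform convergence can be invoked --- is correctly identified as the crux, and the two clauses (quasi-isometric embedding on $d_j$-balls, net property relative to $d_\infty$-balls) both follow cleanly. The packaging of the null sequence $\varepsilon_j$ to handle the finitely many indices where $\tilde\varepsilon_j\ge\delta$ is a harmless bit of bookkeeping; one could equally pass to a tail of the sequence, but your version is fine. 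The only stylistic remark is that the case $R-\delta\le 0$ is vacuous (the target ball is empty) and could have been mentioned, but omitting it does not affect correctness.
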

\subsection{Asymptotic Carnot group and Pansu Asymptotic Theorem}\label{subsection: asymptotic carnot group}
\begin{definition}[Asymptotic Carnot algebra and group]
The \textit{asymptotic Carnot algebra} of a simply connected $s$-step nilpotent Lie group $G$ with Lie algebra $(\mathfrak{g}, \lbrack \cdot, \cdot \rbrack)$ is the Lie algebra $(\mathfrak{g}_\infty, \lbrack \cdot, \cdot \rbrack^{(0)})$, where 
\begin{align*}
 \mathfrak{g}_\infty \coloneqq \bigoplus_{i=1}^s \mathfrak{g}^{(i)}/\mathfrak{g}^{(i+1)}
\end{align*}
and $\lbrack \cdot, \cdot \rbrack^{(0)}$ is the unique Lie bracket on $\mathfrak{g}_\infty$ such that for $v \in \mathfrak{g}^{(i)}$ and $w \in \mathfrak{g}^{(j)}$:
\begin{align*}
 \lbrack v + \mathfrak{g}^{(i+1)}, w + \mathfrak{g}^{(j+1)} \rbrack^{(0)} \coloneqq \lbrack v, w \rbrack + \mathfrak{g}^{(i+j+1)}.
\end{align*} 
The \textit{asymptotic Carnot group} $G_\infty$ of $G$ is the simply connected nilpotent Lie group with Lie algebra $(\mathfrak{g}_\infty, \lbrack \cdot, \cdot \rbrack^{(0)})$.
\end{definition}
 Notice that $\mathfrak{g}_\infty$ is stratifiable and the spaces $(\mathfrak{g}^{(i)}/\mathfrak{g}^{(i+1)})_{i=1}^s$ canonically provide a stratification on $\mathfrak{g}_\infty.$

If the group $G$ is equipped with a sub-Finsler metric $d$, then we can define a sub-Finsler metric $d_\infty$ on $G_\infty$ as follows. Suppose $d$ is associated to a polarization $\Delta$ and a norm $\norm{\cdot}$ on $\mathfrak{g}$, and denote by $\rho: \mathfrak{g} \rightarrow \mathfrak{g}/[\mathfrak{g}, \mathfrak{g}]$ the projection with kernel $[\mathfrak{g}, \mathfrak{g}]$. Since $\Delta$ is bracket generating, the restriction of $\rho$ to $\Delta_1$ is surjective. Define the \textit{Pansu limit norm} $\norm{\cdot}_\infty$ to be the norm on $\mathfrak{g}/[\mathfrak{g}, \mathfrak{g}]$ with unit ball at the origin $B_{\norm{\cdot}_\infty}(0,1) \coloneqq \rho(B_{\norm{\cdot}}(0,1) \cap \Delta_1).$ The \textit{Pansu limit metric} $d_\infty$ is the sub-Finsler metric on $G_\infty$ associated to $(\mathfrak{g}/[\mathfrak{g}, \mathfrak{g}], \norm{\cdot}_\infty)$. We remark that $(G_\infty, d_\infty)$ is a (metric) Carnot group, which, by the following theorem, is the asymptotic cone of $(G,d)$.
\begin{thm}[Pansu Asymptotic Theorem \cite{Pan83}]\label{thm: Pansu asymptotic thm}
 The asymptotic cone of a simply connected nilpotent sub-Finsler Lie group $(G,d)$ is the Carnot group $(G_\infty,d_\infty).$
\end{thm}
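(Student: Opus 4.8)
The plan is to obtain Theorem~\ref{thm: Pansu asymptotic thm} as a corollary of the quantitative comparison \eqref{eq: main pansu1}, the self-similarity of the limit Carnot group, and the Gromov--Hausdorff criterion of Proposition~\ref{prop: criterion_GH_conv}. First fix an asymptotic grading $(V_i)_{i=1}^s$ of $\mathfrak g$ (one exists: choose inductively a complement $V_i$ of $\mathfrak g^{(i+1)}$ in $\mathfrak g^{(i)}$) and identify $G$ with $(\mathfrak g,*)$ via $\exp$ as in Section~\ref{subsection: BCH}, so that the asymptotic Carnot group is realized on the same set $\mathfrak g$, with product $*_0$ and $\rho_0 := d_\infty$ the Pansu limit metric (Section~\ref{subsection: asymptotic carnot group}). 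Since left translations are $d$-isometries acting transitively, $(G,\epsilon d,p)$ is isometric to $(G,\epsilon d,1)$ for every $p$, so it suffices to compute $\lim_{\epsilon\to 0}(G,\epsilon d,1)$. Letting $\delta_\epsilon$ be the dilations of the grading, set $\rho_\epsilon(x,y):=\epsilon\, d(\delta_{1/\epsilon}x,\delta_{1/\epsilon}y)$ for $\epsilon>0$; then $\delta_{1/\epsilon}\colon(\mathfrak g,\rho_\epsilon,0)\to(G,\epsilon d,1)$ is a pointed isometry, so it is enough to prove that $(\mathfrak g,\rho_\epsilon,0)\to(\mathfrak g,\rho_0,0)$ in the pointed Gromov--Hausdorff sense.

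The second step is to show that $\rho_\epsilon\to\rho_0$ uniformly on $\rho_0$-bounded sets. Since $(\mathfrak g,*_0,d_\infty)$ is a metric Carnot group whose dilations are exactly the $\delta_\epsilon$, it is self-similar: $d_\infty(\delta_{1/\epsilon}x,\delta_{1/\epsilon}y)=\tfrac1\epsilon d_\infty(x,y)$, hence $\rho_0(x,y)=\epsilon\, d_\infty(\delta_{1/\epsilon}x,\delta_{1/\epsilon}y)$ and $d_\infty(1,\delta_{1/\epsilon}x)=\tfrac1\epsilon\rho_0(0,x)$. Therefore $|\rho_\epsilon(x,y)-\rho_0(x,y)|=\epsilon\,|d(\delta_{1/\epsilon}x,\delta_{1/\epsilon}y)-d_\infty(\delta_{1/\epsilon}x,\delta_{1/\epsilon}y)|$, and for $x,y\in B_{\rho_0}(0,R)$ with, say, $\rho_0(0,y)\ge\rho_0(0,x)$ and $\rho_0(0,y)>\epsilon$ (so that $\delta_{1/\epsilon}y\notin B_{d_\infty}(1,1)$), the bound \eqref{eq: main pansu1} gives $|\rho_\epsilon(x,y)-\rho_0(x,y)|\le \epsilon\cdot C(R/\epsilon)^{1-\alpha_\infty/\beta}=C R^{1-\alpha_\infty/\beta}\,\epsilon^{\alpha_\infty/\beta}$, which tends to $0$ uniformly because $\alpha_\infty/\beta\ge 1/s>0$. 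The remaining pairs have both points in $B_{\rho_0}(0,\epsilon)$, equivalently $\delta_{1/\epsilon}x,\delta_{1/\epsilon}y\in\overline{B_{d_\infty}(1,1)}$, a $d_\infty$-compact hence $d$-bounded set, so there $|\rho_\epsilon(x,y)-\rho_0(x,y)|\le(2+M)\epsilon$ with $M:=\sup_{\overline{B_{d_\infty}(1,1)}}d(1,\cdot)$; this also goes to $0$.

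Third, apply Proposition~\ref{prop: criterion_GH_conv} with $X=\mathfrak g$, $d_j=\rho_{\epsilon_j}$ for an arbitrary sequence $\epsilon_j\to 0^+$, $d_\infty=\rho_0$, $x_0=0$. Its diameter hypothesis follows from a crude a priori containment: from \eqref{eq: main pansu1} one gets $d_\infty(1,g)\le 2d(1,g)$ once $d_\infty(1,g)$ exceeds a fixed threshold $R_0$, hence $d(1,\delta_{1/\epsilon}x)<R/\epsilon$ forces $d_\infty(1,\delta_{1/\epsilon}x)<R_0+2R/\epsilon$ and so $\rho_0(0,x)=\epsilon\,d_\infty(1,\delta_{1/\epsilon}x)<R_0+2R=:R'$ for all $\epsilon\in(0,1]$; thus $\bigcup_{j}B_{\rho_{\epsilon_j}}(0,R)\subseteq B_{\rho_0}(0,R')$, which has finite $\rho_0$-diameter. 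Consequently $\mathrm{id}\colon(\mathfrak g,\rho_{\epsilon_j},0)\to(\mathfrak g,\rho_0,0)$ is a Hausdorff approximating sequence, i.e.\ $(\mathfrak g,\rho_0,0)=(G_\infty,d_\infty,1)$ is the Gromov--Hausdorff limit; composing with the isometries $\delta_{1/\epsilon_j}$ gives $(G,\epsilon_j d,1)\to(G_\infty,d_\infty)$, and since this holds for every sequence $\epsilon_j\to 0^+$, the asymptotic cone of $(G,d)$ is $(G_\infty,d_\infty)$.

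The genuinely hard input is the quantitative estimate \eqref{eq: main pansu1} (equivalently Theorem~\ref{thm: quantitative Pansu}), which I am assuming. Within the present deduction the delicate points are organizational rather than deep: one must keep straight that $\delta_\epsilon$ is a graded automorphism of the \emph{vector space} $\mathfrak g$, that it is a Lie group isomorphism only after twisting the product, and that it is a metric homothety only for the limit metric $d_\infty$ and not for $d$ itself; and one must verify the properness/diameter condition in the Gromov--Hausdorff criterion, which is exactly the place where a rough comparison between $d$ and $d_\infty$ is needed to keep the balls $B_{\rho_\epsilon}(0,R)$ from escaping to infinity. (If one only had a weaker, local a priori comparison such as Ball--Box \eqref{eq: ball-box}, some additional care with the self-similarity rescaling would be required here.)
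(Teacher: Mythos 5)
Your proof is correct and follows essentially the same route as the paper: both reduce to showing $(G,\rho_\epsilon,1)\to(G,d_\infty,1)$ in the pointed Gromov--Hausdorff sense via Proposition~\ref{prop: criterion_GH_conv}, using the dilation isometry of Remark~\ref{rem: dilations are isometries asymptotic} and a quantitative uniform convergence of the contracted metrics on bounded sets. The only cosmetic differences are that the paper invokes Theorem~\ref{thm: quantitative Pansu} directly for the uniform convergence on compacts (whereas you re-derive it from~\eqref{eq: main pansu1} and therefore have to treat the near-origin pairs separately), and the paper obtains the diameter/properness bound from the a priori comparison in Guivarc'h Theorem~\ref{thm: Guivarch}, whereas you extract a coarse linear comparison directly from~\eqref{eq: main pansu1}.
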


One can pointwise identify a simply connected nilpotent sub-Finsler Lie group with its asymptotic Carnot group and quantitatively study the speed of convergence of the metric $d$ to the metric $d_\infty$, obtaining a quantitative version of Pansu's theorem. The identification depends on the non-canonical choice of an asymptotic grading of $\mathfrak{g}.$ Let $(V_i)_{i=1}^s$ be an asymptotic grading of $\mathfrak{g}$ and let $j \in \{1,2,\ldots,s\}$. Recall that the asymptotic grading condition means that $\mathfrak{g}^{(j)} = V_j \oplus \mathfrak{g}^{(j+1)}$. Then $\pi_j \colon \mathfrak{g}^{(j)} \rightarrow V_j$ induces an isomorphism between $\mathfrak{g}^{(j)}/\mathfrak{g}^{(j+1)}$ and $V_j$ and these isomorphisms induce a vector space isomorphism $\mathfrak{g}\simeq \mathfrak{g}_\infty.$
Under this identification we can pull back the Lie bracket $\lbrack \cdot, \cdot \rbrack^{(0)}$ from $\mathfrak{g}_\infty$ to $\mathfrak{g}$.

Using exponential maps, which are global diffeomorphisms for simply connected nilpotent groups, we obtain the identifications
\begin{align*}
 G \simeq \mathfrak{g} \simeq \mathfrak{g}_\infty \simeq G_\infty.
\end{align*}
Observe that in general the identification map $G \rightarrow G_\infty$ is just a diffeomorphism, not a group isomorphism.

\subsection{Osculating Carnot group and Mitchell Tangent Theorem}\label{subsection: osculating carnot group}
Let \\ $(G,\Delta, \|\cdot\nobreak\|)$ be a sub-Finsler Lie group, not necessarily nilpotent. Denote by $[\cdot, \cdot]$ the bracket on $\mathfrak{g} \coloneqq \text{Lie}(G)$ and by $s \coloneqq \min\{n\in \mathbb{N}: \sum_{i=1}^n \ad_{\Delta}^{i-1}(\Delta) = \mathfrak{g}\}$ the step of the distribution.
\begin{definition}[Osculating Carnot algebra and group]
The \textit{osculating Carnot algebra} of $(G,\Delta)$ is the Lie algebra $(\mathfrak{g}_0, \doublebracket{\cdot}{\cdot}^{(0)})$, where, using the notation of Definition~\ref{def: adapted linear grading},
\begin{align*}
 \mathfrak{g}_0 \coloneqq \bigoplus_{i=1}^s \Delta^{[i]}/\Delta^{[i-1]},
\end{align*}
and the bracket $\doublebracket{\cdot}{\cdot}^{(0)}$ is the unique Lie bracket such that for $x\in \Delta^{[i]},\: y \in \Delta^{[j]}$:
\[\doublebracket{x+ \Delta^{[i-1]}}{y+ \Delta^{[j-1]}}^{(0)} \coloneqq [x,y] + \Delta^{[i+j-1]}.\] 
The \textit{osculating Carnot group} $G_0$ of $G$ is the simply connected nilpotent Lie group with Lie algebra $(\mathfrak{g}_0, \doublebracket{\cdot}{\cdot}^{(0)})$.
\end{definition}

The Lie algebra $\mathfrak{g}_0$ is stratifiable and $(\Delta^{[i]}/\Delta^{[i-1]})_{i=1}^s$ is a stratification, observe in addition that $\Delta^{[1]}/\Delta^{[0]} = \Delta$. Hence the pair $(\Delta, \norm{\cdot})$ defines a sub-Finsler distance $d_0$ on $G_0$, which we call the \textit{tangent metric}, and the metric group $(G_0,d_0)$ is a (metric) Carnot group. For every point $p \in G$, the group $(G_0,d_0)$ is the tangent cone of $(G,d)$ at $p$:
\begin{thm}[Mitchell Tangent Theorem]\label{thm: Mitchell tangent thm}
 Let $(G,d)$ be a sub-Finsler Lie group and $p \in G$. Then the tangent cone of $G$ at $p$ is the osculating Carnot group $(G_0,d_0).$
\end{thm}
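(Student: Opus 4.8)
The plan is to deduce the tangent-cone statement from the quantitative estimate of Theorem~\ref{main thm: mitchell} together with the GH-convergence criterion of Proposition~\ref{prop: criterion_GH_conv}. First I would reduce to the case $p = 1$: since $d$ is left-invariant, the left translation $L_{p^{-1}}\colon (G,d) \to (G,d)$ is an isometry carrying $p$ to $1$, so the pointed metric spaces $(G, \epsilon d, p)$ and $(G, \epsilon d, 1)$ are isometric for every $\epsilon$, and hence their GH limits as $\epsilon \to \infty$ coincide. Thus it suffices to show that $(G_0, d_0)$ is the GH limit of $(G, \epsilon d, 1)$ as $\epsilon \to \infty$, equivalently (after the substitution $\epsilon = 1/t$) that $(G, t^{-1} d, 1) \to (G_0, d_0)$ as $t \to 0^+$.

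Next I would fix a tangent grading $(W_i)_{i=1}^s$ of $(\mathfrak{g}, \Delta)$ and use it, as in Section~\ref{subsection: osculating carnot group}, to identify $G_0$ with $G$ as a smooth manifold (both identified with $\mathfrak{g}$ via $\exp$), so that $d_0$ becomes a distance function on a bounded neighborhood $U$ of $1$ in $G$. The dilations $\delta_\lambda$ associated to the grading are automorphisms of $(G_0, d_0)$ scaling $d_0$ by $\lambda$; I would use them to transfer the local statement to a fixed-size statement. Concretely, set $d_t \coloneqq t^{-1}\, (\delta_t)^* d$, i.e. $d_t(x,y) \coloneqq t^{-1} d(\delta_t x, \delta_t y)$ for $x,y$ in a fixed $d_0$-bounded set (so that $\delta_t x, \delta_t y \in U$ for $t$ small). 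Because $\delta_t$ scales $d_0$ exactly by $t$, we have $t^{-1}(\delta_t)^* d_0 = d_0$, so Theorem~\ref{main thm: mitchell} applied to the points $\delta_t x, \delta_t y$ yields, for every $d_0$-bounded set $K$ and all $x,y \in K$ with $d_0(1,x)\geq d_0(1,y)$,
\begin{align*}
 |d_t(x,y) - d_0(x,y)| &= t^{-1}\,|d(\delta_t x,\delta_t y) - d_0(\delta_t x,\delta_t y)| \\
 &\leq t^{-1}\, C \max\{d_0(1,\delta_t x), d_0(1,\delta_t y)\}^{1+\alpha_0/\beta} \\
 &= C\, t^{\alpha_0/\beta}\, \max\{d_0(1,x),d_0(1,y)\}^{1+\alpha_0/\beta} \xrightarrow[t\to 0^+]{} 0,
\end{align*}
uniformly on $K$ (when $\alpha_0 = \infty$ the statement is the trivial one $d = d_0$). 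Hence $d_t \to d_0$ uniformly on $d_0$-bounded sets as $t \to 0^+$. Since $\delta_t$ is an isometry from $(G, t^{-1}d, 1)$ onto $(G, d_t, 1)$, it remains to check that $d_t \to d_0$ uniformly on $d_0$-bounded sets implies GH convergence of the pointed spaces, which is exactly Proposition~\ref{prop: criterion_GH_conv}: its hypothesis $\mathrm{diam}_{d_0}\big(\bigcup_t B_{d_t}(1,R)\big) < \infty$ follows from the uniform bound above (for $t$ small all these balls sit inside a fixed $d_0$-ball, and the finitely remaining $t$ away from $0$ contribute bounded sets individually, or one restricts attention to $t$ small since only the limit $t\to 0$ matters for the GH limit). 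Therefore $\mathrm{id}\colon (G, d_t, 1) \to (G_0, d_0, 1)$ is a Hausdorff approximating family, proving the claim.

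The main obstacle I expect is not conceptual but bookkeeping: one must make sure the scaling identity $t^{-1}(\delta_t)^* d_0 = d_0$ is literally correct for the tangent metric $d_0$ realized on $G$ via the chosen grading (this is where one uses that $(G_0,d_0)$ is a metric Carnot group with $\delta_t$ acting as a metric dilation, i.e. the self-similarity recalled after Theorem~\ref{thm: Ball-Box for Carnot}), and that Theorem~\ref{main thm: mitchell} is applicable on the shrinking neighborhoods $\delta_t(K) \subseteq U$ with a constant $C$ independent of $t$ — which it is, since $C$ there depends only on $(G,d)$, the grading, and the fixed neighborhood $U$, not on the points. One should also handle the degenerate case $\alpha_0 = \infty$ separately, but there $d = d_0$ identically and nothing is to prove. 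Finally, a small point: to invoke Proposition~\ref{prop: criterion_GH_conv} verbatim one needs a sequence $t_j \to 0$ rather than a continuous parameter, but this is immediate since GH convergence along the continuous family is equivalent to convergence along every such sequence.
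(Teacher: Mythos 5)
Your proposal is correct and rests on the same two ingredients as the paper's argument: the quantitative bound \eqref{eq: cor mitchell1} of Theorem~\ref{main thm: mitchell} and the exact homogeneity $d_0\circ(\delta_t\times\delta_t)=t\,d_0$ of the tangent metric under the dilations of the chosen grading. The packaging differs slightly: the paper applies \eqref{eq: cor mitchell1} directly to points of $B_d(1,\epsilon R)$ and verifies by hand that $\delta_{1/\epsilon}\colon(B_{\frac{1}{\epsilon}d}(1,R),\frac{1}{\epsilon}d)\to(G_0,d_0)$ is a $(1,\tau)$-quasi-isometric embedding whose image is a net, i.e.\ it checks the Hausdorff-approximating conditions directly; you instead rescale to the dilated metrics $d_t$, show $d_t\to d_0$ uniformly on bounded sets (in effect re-deriving the conclusion of Theorem~\ref{thm: quantitative Mitchell} from Theorem~\ref{main thm: mitchell}, a reversal the paper itself remarks is possible), and then invoke Proposition~\ref{prop: criterion_GH_conv}, which is exactly how the paper handles the Pansu case (Theorem~\ref{thm: Pansu asymptotic thm}). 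The one point you should make explicit is that $d_t$ is not defined on all of $G_0$ but only on $\delta_{1/t}(U)$, so Proposition~\ref{prop: criterion_GH_conv} does not apply verbatim; this is harmless because $\delta_{1/t}(U)\supseteq B_{d_0}(1,r/t)$ exhausts $G_0$, so every fixed $d_0$-bounded set is eventually in the domain, and the containment of $B_{d_t}(1,R)$ in a fixed $d_0$-ball (your diameter hypothesis) follows from the local biLipschitz comparison \eqref{eq: BB for Lie gps} together with the scaling of $d_0$; the paper's direct verification of the approximating-map conditions is precisely how it sidesteps this local-definedness issue. Your reduction to $p=1$ by left-invariance and the separate treatment of the degenerate case $\alpha_0=\infty$ are fine.
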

In fact, we will provide a quantitative statement of Theorem~\ref{thm: Mitchell tangent thm}: for all $R>0$ the Gromov--Hausdorff distance between the $R$-balls admits a bound of the form
\begin{equation}\label{eq: thm Mitchell tangent thm}
    d_{{\rm GH}}\left(\left(B_{\frac{1}{\epsilon}d}(1,R), \frac{1}{\epsilon}d\right), \left(B_{d_0}(1,R), d_0\right)  \right)= O_R(\epsilon^{\alpha_0/\beta}).
\end{equation}

Analogously to the identification of a nilpotent Lie algebra with its asymptotic Carnot algebra, one can locally identify each sub-Finsler Lie group with its osculating Carnot group, provided a choice of tangent grading, to quantitatively study the speed of convergence of the metric $d$ to the metric $d_0$. Let $(W_i)_{i=1}^s$ be a tangent grading of $(\mathfrak{g}, \Delta)$. Then $\Delta^{[i]}/\Delta^{[i-1]}$ is isomorphic as vector space to $W_i$ for every $i\in \{1,\ldots,s\}.$ These isomorphisms induce a vector space isomorphism $\mathfrak{g}\simeq \mathfrak{g}_0$, and under this identification we can pull back the bracket $\llbracket \cdot, \cdot \rrbracket^{(0)}$ to a bracket on $\mathfrak{g}$. Next, using the fact that the exponential map is a local diffeomorphism around the origin, we identify a neighborhood of the identity in $G$ with a neighborhood of the identity in $G_0$:
\begin{align*}
 G \overset{\text{locally}}{\simeq} \mathfrak{g} \simeq \mathfrak{g}_0 \simeq G_0.
\end{align*}
Again, we remark that in general this is not a local group isomorphism.

There is a version of the Ball-Box Theorem in terms of the tangent metric. Let $(G,\Delta, \norm{\cdot})$ be a sub-Finsler Lie group and $(W_i)_{i=1}^s$ be a tangent grading. We identify a neighborhood of the identity in $G$ with a neighborhood of the origin in $\mathfrak{g}$ equipped with the Dynkin product. As a consequence of a general Ball-Box Theorem, as in \cite[Theorem~12.5.3]{Don25}, the functions $d_0(0, \cdot)$ and $d(0, \cdot)$ on $\mathfrak{g}$ are locally biLipschitz comparable, i.e., there are a neighborhood $\Omega$ of $0$ in $\mathfrak{g}$ and a constant $C>0$ such that
 \begin{equation}\label{eq: BB for Lie gps}
  \frac{1}{C}d_0(0,p)\leq d(0,p) \leq C d_0(0,p), \quad \forall p \in \Omega.
 \end{equation}

\subsection{One-parameter families of sub-Finsler Lie group structures}
Let $G$ be a Lie group and let $(D_i)_{i=1}^k$ be a grading of $\mathfrak{g} \coloneqq \text{Lie}(G)$. Using the family of dilations associated to the grading, we can define two one-parameter families of Lie brackets on $\mathfrak{g}$. Let $\epsilon\in \mathbb{R}$, $\epsilon \neq 0$, then we define
\begin{align}\nonumber
 [x,y]^{(\epsilon)} &\coloneqq \delta_\epsilon[\delta_\epsilon^{-1}x,\delta_\epsilon^{-1}y],\\
  \nonumber \llbracket x,y \rrbracket^{(\epsilon)} &\coloneqq [x,y]^{(1/\epsilon)}.
\end{align}
\begin{lem}\label{lem: brackets extend to eps=0}
 Let $G$ be a Lie group and let $(D_i)_{i=1}^s$ be a grading. Then
 \begin{enumerate}
  \item If $(D_i)_{i=1}^s$ is a tangent grading, then the brackets $\nonumber \llbracket \cdot,\cdot \rrbracket^{(\epsilon)}$ depend polynomially on $\epsilon$, and therefore extend to $\epsilon=0$. Moreover
 \[\lim_{\epsilon \rightarrow 0}\llbracket x,y \rrbracket^{(\epsilon)} = \llbracket x,y\rrbracket^{(0)}, \quad \forall x,y \in \mathfrak{g},\]
 where $\nonumber \llbracket \cdot,\cdot \rrbracket^{(0)}$ is the Lie bracket defined in Section~\ref{subsection: osculating carnot group}.
 \item If $G$ is nilpotent and $(D_i)_{i=1}^s$ is an asymptotic grading, then the brackets $[\cdot,\cdot]^{(\epsilon)}$ depend polynomially on $\epsilon$, and therefore extend to $\epsilon=0$. Moreover
 \[\lim_{\epsilon \rightarrow 0}[x,y]^{(\epsilon)} = [x,y]^{(0)}, \quad \forall x,y \in \mathfrak{g},\]
 where $[\cdot, \cdot]^{(0)}$ is the Lie bracket defined in Section~\ref{subsection: asymptotic carnot group}.
 \end{enumerate}
\end{lem}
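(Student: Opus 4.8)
The plan is to reduce everything, by bilinearity, to pairs of homogeneous elements and then read off the $\epsilon$-dependence explicitly from the bracket-containment properties of Proposition~\ref{prop: properties of linear gradings}. I would treat (1) in full and note that (2) is its mirror image.

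For (1), write $(D_i)_i=(W_i)_i$ for the tangent grading and $\pi_k\colon\mathfrak{g}\to W_k$ for the layer projections. For each fixed $\epsilon\neq0$ the map $\llbracket\cdot,\cdot\rrbracket^{(\epsilon)}=\delta_{1/\epsilon}\circ[\cdot,\cdot]\circ(\delta_\epsilon\times\delta_\epsilon)$ is bilinear, so it suffices to evaluate it on $x\in W_i$, $y\in W_j$. There $\delta_\epsilon x=\epsilon^i x$ and $\delta_\epsilon y=\epsilon^j y$, hence $\llbracket x,y\rrbracket^{(\epsilon)}=\epsilon^{i+j}\,\delta_{1/\epsilon}[x,y]=\sum_k\epsilon^{i+j-k}\pi_k([x,y])$. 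The decisive input is Proposition~\ref{prop: properties of linear gradings}\eqref{prop: bracket of ALG containment}, stating $[x,y]\in W_{\leq i+j}$: only indices $k\leq i+j$ occur, so every exponent $i+j-k$ is $\geq 0$ and $\epsilon\mapsto\llbracket x,y\rrbracket^{(\epsilon)}$ is a polynomial in $\epsilon$ with $\mathfrak{g}$-valued coefficients. By bilinearity the structure constants of $\llbracket\cdot,\cdot\rrbracket^{(\epsilon)}$ in a fixed basis are polynomials in $\epsilon$, so these brackets extend to $\epsilon=0$, and on homogeneous pairs only the $k=i+j$ term survives: $\lim_{\epsilon\to0}\llbracket x,y\rrbracket^{(\epsilon)}=\pi_{i+j}([x,y])$.

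It then remains to identify $\pi_{i+j}([x,y])$ with $\llbracket x,y\rrbracket^{(0)}$. Iterating $\Delta^{[i]}=W_i\oplus\Delta^{[i-1]}$ gives $\Delta^{[i]}=W_{\leq i}$, so the identification $\mathfrak{g}\simeq\mathfrak{g}_0$ of Section~\ref{subsection: osculating carnot group} sends $x\in W_i$ to the class $x+\Delta^{[i-1]}\in\Delta^{[i]}/\Delta^{[i-1]}$. Unwinding the definition of $\llbracket\cdot,\cdot\rrbracket^{(0)}$, for $x\in W_i$ and $y\in W_j$ the element $\llbracket x,y\rrbracket^{(0)}$ corresponds to $[x,y]+\Delta^{[i+j-1]}$; since $\Delta^{[i+j-1]}=W_{\leq i+j-1}$ while $[x,y]\in W_{\leq i+j}$, the representative of this class in $W_{i+j}$ is exactly $\pi_{i+j}([x,y])$. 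Hence the limit equals $\llbracket x,y\rrbracket^{(0)}$ on homogeneous pairs, and bilinearity extends the identity to all of $\mathfrak{g}$.

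For (2), $\mathfrak{g}$ is nilpotent of step $s$ and $(D_i)_i=(V_i)_i$ is an asymptotic grading; iterating \eqref{def: eq: compatible linear grading} yields $\mathfrak{g}^{(k)}=V_{\geq k}$. The same computation with $\delta_\epsilon^{-1}x=\epsilon^{-i}x$ gives, for $x\in V_i$ and $y\in V_j$, the formula $[x,y]^{(\epsilon)}=\sum_k\epsilon^{k-i-j}\pi_k([x,y])$; now Proposition~\ref{prop: properties of linear gradings}\eqref{prop: bracket of CLG containment} gives $[x,y]\in V_{\geq i+j}$, so only $k\geq i+j$ occur, the exponents are again $\geq 0$, the bracket extends polynomially, and its value at $\epsilon=0$ is $\pi_{i+j}([x,y])$. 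Under the identification $\mathfrak{g}\simeq\mathfrak{g}_\infty$ of Section~\ref{subsection: asymptotic carnot group}, which sends $x\in V_i$ to $x+\mathfrak{g}^{(i+1)}\in\mathfrak{g}^{(i)}/\mathfrak{g}^{(i+1)}$, the definition of $[\cdot,\cdot]^{(0)}$ identifies $[x,y]^{(0)}$ with the class of $[x,y]$ in $\mathfrak{g}^{(i+j)}/\mathfrak{g}^{(i+j+1)}$, i.e.\ with $\pi_{i+j}([x,y])$; bilinearity concludes. I do not expect a genuine obstacle: the only thing to watch is the bookkeeping, since brackets decrease degrees along a tangent grading but increase them along an asymptotic one, and it is exactly this opposite behaviour that makes the relevant exponents nonnegative in each case and allows the extension to $\epsilon=0$; one also has to check that $\pi_{i+j}$ genuinely realizes the quotient onto the $(i+j)$-th layer in each of the two identifications.
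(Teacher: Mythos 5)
Your proof is correct and follows essentially the same route as the paper's: expand by bilinearity into homogeneous layers, use Proposition~\ref{prop: properties of linear gradings}\eqref{prop: bracket of ALG containment} (resp.\ \eqref{prop: bracket of CLG containment}) to see that all powers of $\epsilon$ are nonnegative, and read off the limit as the projection $\pi_{i+j}([x,y])$, which under the identification $\mathfrak{g}\simeq\mathfrak{g}_0$ (resp.\ $\mathfrak{g}\simeq\mathfrak{g}_\infty$) is exactly $\llbracket x,y\rrbracket^{(0)}$ (resp.\ $[x,y]^{(0)}$). The only differences are cosmetic: you spell out the identification of $\pi_{i+j}([x,y])$ with the quotient bracket, and you write out case (2) in full where the paper just says it is analogous.
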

\begin{proof} We only prove (1), claim (2) follows from a similar argument. Let $x,y \in \mathfrak{g}$, then
 \begin{align*}
  \llbracket x,y \rrbracket^{(\epsilon)} &= \left\llbracket \sum_{i=1}^s(x)_i,\sum_{i=1}^s(y)_i \right\rrbracket^{(\epsilon)} \\
  &= \sum_{i,j} \delta_\epsilon^{-1}[\delta_\epsilon (x)_i,\delta_\epsilon (y)_j]\\
  &\overset{\ref{prop: properties of linear gradings}\eqref{prop: bracket of ALG containment}}{=} \sum_{i,j}\sum_{k\leq i+j}\delta_\epsilon^{-1}([\epsilon^i(x)_i,\epsilon^j(y)_j])_k\\
  &=\sum_{i,j}\sum_{k\leq i+j}\epsilon^{i+j-k}([(x)_i,(y)_i])_k.
 \end{align*}
 So $\llbracket \cdot,\cdot \rrbracket^{(\epsilon)}$ is polynomial in $\epsilon$. Next, let $i,j \in \{1,\ldots,s\}$ and $w\in D_i, z\in D_j$. Then
 \begin{align*}
  \llbracket w,z \rrbracket^{(\epsilon)} \overset{\ref{prop: properties of linear gradings}\eqref{prop: bracket of ALG containment}}{=} \sum_{k\leq i+j}\delta_\epsilon^{-1}([\delta_\epsilon w, \delta_\epsilon z])_k = \sum_{k\leq i+j}\epsilon^{i+j-k}([w,z])_k \overset{\epsilon \rightarrow 0}{\longrightarrow} ([w,z])_{i+j} = \llbracket w,z \rrbracket^{(0)}.
 \end{align*}
 
\end{proof}
\begin{definition} For $\epsilon \in \mathbb{R}$, we denote with $*_\epsilon$ (respectively $\cdot_\epsilon$) the Dynkin product associated with $[\cdot, \cdot]^{(\epsilon)}$ (respectively $\llbracket\cdot, \cdot\rrbracket^{(\epsilon)}$), whenever the Dynkin series converges. 
\end{definition}
 Recall that if the Lie algebra $\mathfrak{g}$ is nilpotent, the BCH formula is polynomial and thus the Dynkin product is a (global) group product. The next lemma shows that the maps $\epsilon \mapsto *_\epsilon$ and $\epsilon \mapsto \cdot_\epsilon$ are analytic under suitable conditions.
\begin{lem}[{\cite[Lemma~12.2.4]{Don25}}]\label{lem: analyticity of product}
 Let $V$ be a vector space, $\Lambda \subseteq \mathbb{R}$ open, and $([\cdot, \cdot]_\lambda)_{\lambda \in \Lambda}$ a family of Lie brackets on $V$. Assume that the map $\lambda \mapsto [\cdot, \cdot]_\lambda$ is analytic. Denote by $\star_\lambda$ the Dynkin product associated with $[\cdot, \cdot]_\lambda$. Then, the maps
 \begin{align}\nonumber
  (\lambda,x,y) \mapsto x \star_\lambda y \quad \text{and} \quad (\lambda,x,y) \mapsto \left.\frac{\dd}{\dd s} x \star_\lambda (sy)\right|_{s=0}
 \end{align}
 are analytic on some open subset of $\Lambda \times V \times V$ that contains $\Lambda \times \{0\}\times \{0\}.$
\end{lem}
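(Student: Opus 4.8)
The plan is to write the Dynkin product as its defining series,
\[
 x \star_\lambda y \;=\; x + y + \sum_{k=2}^{\infty}\ \sum_{q\in\{1,2\}^k} b_{k,q}\,[x_{q_1},\dots,x_{q_k}]_\lambda,\qquad x_1:=x,\ x_2:=y,
\]
which is the same formula as \eqref{def: BCH} but with the bracket $[\cdot,\cdot]_\lambda$, and to show that this series converges locally uniformly, with each summand analytic, on an open neighbourhood of $\Lambda\times\{0\}\times\{0\}$. Analyticity of the sum is then automatic, since a locally uniform limit of (real- or complex-)analytic maps is analytic.

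First I would establish a quantitative convergence estimate that is uniform in the parameter. Fix a norm $\norm{\cdot}$ on $V$. For a compact set $K\subseteq\Lambda$ the operator norm $M_K:=\sup_{\lambda\in K}\sup_{v,w\neq 0}\norm{[v,w]_\lambda}/(\norm{v}\norm{w})$ is finite, because $\lambda\mapsto[\cdot,\cdot]_\lambda$ is continuous. A left-iterated bracket of length $k$ then satisfies $\norm{[x_{q_1},\dots,x_{q_k}]_\lambda}\le M_K^{k-1}\rho^k$ with $\rho:=\max\{\norm{x},\norm{y}\}$, so the degree-$k$ homogeneous part of the series is bounded by $\big(\sum_{q\in\{1,2\}^k}|b_{k,q}|\big)M_K^{k-1}\rho^k$. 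Since the scalar power series $\sum_k\big(\sum_{q\in\{1,2\}^k}|b_{k,q}|\big)z^k$ has positive radius of convergence $r_0$ -- the classical quantitative form of the convergence of the Baker--Campbell--Hausdorff series -- the Dynkin series converges absolutely and uniformly on $\{(\lambda,x,y):\lambda\in K,\ M_K\rho< r_0\}$. Covering $\Lambda$ by relatively compact open subintervals $K$ and taking for each one the slab $\mathrm{int}(K)\times B(0,\varepsilon_K)\times B(0,\varepsilon_K)$ with $2\varepsilon_K M_K< r_0$, the union $\Omega$ of these slabs is an open neighbourhood of $\Lambda\times\{0\}\times\{0\}$ on which the series converges locally uniformly; on overlaps the slabs carry the same series, so there is no consistency issue.

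Next I would upgrade continuity to analyticity by complexifying. Because $\lambda\mapsto[\cdot,\cdot]_\lambda$ is real-analytic, it extends to a holomorphic family of $\mathbb{C}$-bilinear brackets on $V_\mathbb{C}:=V\otimes_\mathbb{R}\mathbb{C}$ over a complex neighbourhood $\Lambda_\mathbb{C}$ of $\Lambda$, and the bounds above persist over compact subsets of $\Lambda_\mathbb{C}$ after a harmless shrinking. Each summand of the series is holomorphic in $(\lambda,x,y)\in\Lambda_\mathbb{C}\times V_\mathbb{C}\times V_\mathbb{C}$ (polynomial in $(x,y)$, holomorphic in $\lambda$), and by the estimate the series converges locally uniformly on a complex open set $\Omega_\mathbb{C}\supseteq\Lambda_\mathbb{C}\times\{0\}\times\{0\}$, hence has holomorphic sum there; restricting to reals gives real-analyticity of $(\lambda,x,y)\mapsto x\star_\lambda y$ on $\Omega_\mathbb{C}\cap(\Lambda\times V\times V)$. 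For the second map I would observe that $(\lambda,x,s,y)\mapsto x\star_\lambda(sy)$ is the analytic map $\star$ precomposed with the polynomial map $(\lambda,x,s,y)\mapsto(\lambda,x,sy)$, hence analytic on an open set containing $\Lambda\times\{0\}\times\mathbb{R}\times\{0\}$; consequently its partial derivative in $s$ at $s=0$ is again analytic in $(\lambda,x,y)$. Equivalently, one differentiates the series term by term and keeps only the monomials linear in $y$, to which the same majorant applies.

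The main difficulty will be the quantitative convergence input and its uniformity in the parameter: one must combine the uniform operator bound $M_K$ on the brackets over compacta with the classical estimate on the growth of the Dynkin coefficients, and then patch the $\lambda$-dependent balls of convergence into a single open set containing all of $\Lambda\times\{0\}\times\{0\}$. Once the complexification is in place, term-by-term holomorphy and the passage to the limit are routine.
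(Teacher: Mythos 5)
Your proposal is essentially correct, but note that the paper does not prove this lemma at all: it is quoted verbatim from the monograph \cite[Lemma~12.2.4]{Don25}, so there is no in-paper argument to compare against. What you give is a self-contained proof along the standard lines one would expect behind the cited result: majorize the Dynkin series by the classical scalar BCH majorant, uniformly in $\lambda$ over compacta via the operator bound $M_K$ (this implicitly uses that $V$ is finite-dimensional, which is the setting of the paper, so that $M_K<\infty$ and continuity of $\lambda\mapsto[\cdot,\cdot]_\lambda$ in operator norm is automatic), patch the resulting slabs into an open neighbourhood of $\Lambda\times\{0\}\times\{0\}$, and deduce analyticity of the sum. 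One sentence in your first paragraph is literally false as stated -- a locally uniform limit of \emph{real}-analytic maps need not be real-analytic (Weierstrass approximation gives counterexamples) -- but your proof does not actually rely on it: the complexification step, together with the Weierstrass convergence theorem for holomorphic functions and restriction to the real points, is exactly the correct repair, and your treatment of the second map (composing with $(\lambda,x,s,y)\mapsto(\lambda,x,sy)$, which sends a neighbourhood of $\Lambda\times\{0\}\times\mathbb{R}\times\{0\}$ into the domain of analyticity since $x\star_\lambda 0$ corresponds to the point $(\lambda,x,0)$, and then differentiating in $s$ at $s=0$) is sound. So the argument stands, provided you phrase the limit step only for the holomorphic extensions and record the finite-dimensionality hypothesis.
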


We also introduce two one-parameter families of sub-Finsler metrics. 
\subsubsection{Contracted metrics on simply connected nilpotent sub-Finsler Lie groups}
Suppose that $(G,d)$ is a simply connected nilpotent sub-Finsler Lie group associated to $(\Delta, \norm{\cdot})$, endowed with an asymptotic grading $(V_i)_{i=1}^s$. We define the $\textit{contracted merics}$ as 
\begin{align}\label{def: contracted metric}
 \rho_\epsilon(x,y) \coloneqq \begin{cases}
  |\epsilon|d(\delta_\epsilon^{-1}x, \delta_\epsilon^{-1}y),\; \epsilon \neq 0,\\
  d_\infty(x,y), \; \epsilon = 0.
 \end{cases}\quad,\quad \forall x,y \in G.
\end{align}
The functions $\rho_\epsilon$ are in fact metrics and, for $\epsilon \neq 0$, the metric $\rho_\epsilon$ is the $*_\epsilon$-left-invariant sub-Finsler metric on $G$ associated to 
\begin{equation}\label{def: contracted structure}
 \Delta^{(\epsilon)} \coloneqq \delta_\epsilon \Delta, \quad \norm{v}^{(\epsilon)} \coloneqq \epsilon \norm{\delta_\epsilon^{-1}v}, \;\forall v \in \Delta^{(\epsilon)}.
\end{equation}
For completeness, we also consider the case $\epsilon = 0$ and define 
\begin{equation}\label{def: contracted structure for eps = 0}
 \Delta^{(0)} \coloneqq V_1, \text{ and } \norm{\cdot}^{(0)}\coloneqq \norm{\cdot}_\infty.
\end{equation}
\begin{remark}\label{rem: dilations are isometries asymptotic}
The definition of contracted metrics implies that, for $\epsilon \neq 0$, the dilation $\delta_\epsilon : (G, |\epsilon| d) \rightarrow (G, \rho_\epsilon)$ is an isometry.
\end{remark}

\begin{prop}\label{prop: if G carnot the the contracted metric are equal}
 If $(G,d)$ is a Carnot group and $(\rho_\epsilon)_{\epsilon}$ are the contracted metrics defined in \eqref{def: contracted metric}, then 
 \begin{equation}\label{eq: prop: if G carnot the the contracted metric are equal}
  \rho_\epsilon = d, \quad \forall \epsilon \in [0,1].
 \end{equation}
\end{prop}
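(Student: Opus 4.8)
The plan is to split the parameter range into $\epsilon\in(0,1]$ and $\epsilon=0$ and, in both cases, simply to unwind the definitions. The first step I would take is to make the hypothesis explicit: since $(G,d)$ is a (metric) Carnot group, its Lie algebra $\mathfrak g$ carries a fixed stratification, which is the asymptotic grading $(V_i)_{i=1}^s$ appearing in the definition~\eqref{def: contracted metric} of the contracted metrics, and moreover $\Delta=V_1$. A stratification is in particular a graded Lie algebra structure, $[V_i,V_j]\subseteq V_{i+j}$, so for every $\epsilon\neq0$ the dilation $\delta_\epsilon$ is a Lie algebra automorphism of $(\mathfrak g,[\cdot,\cdot])$; hence $[x,y]^{(\epsilon)}=\delta_\epsilon[\delta_\epsilon^{-1}x,\delta_\epsilon^{-1}y]=[x,y]$ for all $x,y\in\mathfrak g$, so $*_\epsilon=*_1$ for every $\epsilon$ and all the group structures $(G,*_\epsilon)$ are literally $G$.

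For $\epsilon\in(0,1]$ I would then observe that, by~\eqref{def: contracted structure}, $\rho_\epsilon$ is the $*_1$-left-invariant sub-Finsler metric of the structure $(\Delta^{(\epsilon)},\norm{\cdot}^{(\epsilon)})$, and that this structure is nothing but $(\Delta,\norm{\cdot})$: indeed $\Delta^{(\epsilon)}=\delta_\epsilon\Delta=\delta_\epsilon V_1=\epsilon V_1=V_1=\Delta$ because $\delta_\epsilon$ acts on $V_1$ as multiplication by $\epsilon\neq0$, and for $v\in\Delta=V_1$ we have $\delta_\epsilon^{-1}v=\epsilon^{-1}v$, so $\norm{v}^{(\epsilon)}=\epsilon\norm{\delta_\epsilon^{-1}v}=\epsilon\norm{\epsilon^{-1}v}=\norm{v}$ by homogeneity of the norm. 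Since $d$ is itself the $*_1$-left-invariant sub-Finsler metric of $(\Delta,\norm{\cdot})$, this gives $\rho_\epsilon=d$. Equivalently, one can note that $\delta_\epsilon$ is the Carnot dilation of ratio $\epsilon$ and invoke self-similarity of Carnot metrics: $\rho_\epsilon(x,y)=\epsilon\,d(\delta_\epsilon^{-1}x,\delta_\epsilon^{-1}y)=\epsilon\cdot\tfrac1\epsilon\,d(x,y)=d(x,y)$.

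For $\epsilon=0$ we have $\rho_0=d_\infty$ by definition, so it remains to check that the Pansu limit metric attached to the stratification $(V_i)$ coincides with $d$. Here I would unwind the construction of Section~\ref{subsection: asymptotic carnot group}: the projections $\pi_i\colon\mathfrak g^{(i)}=V_i\oplus\mathfrak g^{(i+1)}\to V_i$ realize the identification $\mathfrak g_\infty\simeq\mathfrak g$, and under it the bracket $[\cdot,\cdot]^{(0)}$ becomes $(v,w)\mapsto\pi_{i+j}([v,w])$ for $v\in V_i$, $w\in V_j$; since $[v,w]\in[V_i,V_j]\subseteq V_{i+j}$ this is just $[v,w]$, so the pulled-back bracket is $[\cdot,\cdot]$ and $G_\infty\simeq G$. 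Likewise $\mathfrak g/[\mathfrak g,\mathfrak g]=\mathfrak g^{(1)}/\mathfrak g^{(2)}$ identifies with $V_1=\Delta$ and the projection $\rho\colon\mathfrak g\to\mathfrak g/[\mathfrak g,\mathfrak g]$ becomes $\pi_1|_\Delta=\id_\Delta$, so the Pansu limit norm has unit ball $\rho(B_{\norm{\cdot}}(0,1)\cap\Delta)=B_{\norm{\cdot}}(0,1)\cap\Delta$, i.e.\ it equals $\norm{\cdot}$; hence $d_\infty$ is the sub-Finsler metric of $(\Delta,\norm{\cdot})$ on $G_\infty\simeq G$, namely $d$, and $\rho_0=d$.

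None of the individual steps is hard; the one thing to get right is the reduction at the start — that the asymptotic grading in play is the stratification of the Carnot group, with $\Delta=V_1$, which is exactly what makes $\delta_\epsilon$ an automorphism and the Carnot dilation. For a non-adapted asymptotic grading the $\delta_\epsilon$ need not be homotheties of $(G,d)$ and the identity $\rho_\epsilon=d$ could fail, so I would spell this point out explicitly.
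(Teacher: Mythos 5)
Your proposal is correct and follows essentially the same route as the paper's proof: for $\epsilon\neq 0$ you use that the stratification's dilations are automorphisms to get $*_\epsilon=*_1$ and that $(\Delta^{(\epsilon)},\norm{\cdot}^{(\epsilon)})=(V_1,\norm{\cdot})$, and for $\epsilon=0$ you identify $\mathfrak g_\infty$ with $\mathfrak g$ and check that the Pansu limit structure reduces to the original one. The extra details you spell out (homogeneity of the norm, the identification of the limit norm, and the caveat that the grading must be the stratification) are consistent with, and only expand on, the paper's more terse argument.
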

\begin{proof}
 The dilations $(\delta_\epsilon)$ associated to the stratification $(V_i)_{i=1}^s$ are Lie algebra isomorphisms of $(\mathfrak{g},[\cdot,\cdot])$, for $\epsilon\neq 0$. Hence, 
 $$[\cdot,\cdot]^{(\epsilon)} = [\cdot, \cdot], \quad \forall \epsilon \neq 0.$$
 It follows that $*_\epsilon = *_1$, $\forall \epsilon \neq 0$. Moreover, since the horizontal subspace of $d$ is $V_1$, we also have 
 $$\Delta^{(\epsilon)} = V_1, \quad \norm{\cdot}^{(\epsilon)} = \norm{\cdot}, \quad \forall \epsilon \in (0,1].$$
 Thus,
 $$\rho_\epsilon = d, \quad \forall \epsilon \neq 0.$$
 Next, after identifying $\mathfrak{g}$ with $\mathfrak{g}_\infty$, we also obtain
 $$[\cdot, \cdot]^{(0)} = [\cdot, \cdot], \quad \mathfrak{g}/[\mathfrak{g}, \mathfrak{g}] = V_1, \quad \norm{\cdot}_\infty = \norm{\cdot},$$
 and thus $\rho_0 = d.$
\end{proof} 
The next result shows that the distances from the origin of the contracted metrics $\rho_\epsilon$ and $\rho_0$ are equivalent, up to a universal multiplicative constant, and an additive constant that depends linearly on $\epsilon$.
\begin{thm}[Guivarc'h Theorem in terms of contracted metrics {\cite[Corollary~12.4.4]{Don25}}]\label{thm: Guivarch}
 Let $G$ be a simply connected nilpotent sub-Finsler Lie group equipped with an asymptotic grading $(V_i)_{i=1}^s$. Then, there is a constant $C\geq 1$ such that 
 \begin{align}\label{eq: Guivarch equation}
  \frac{1}{C}\rho_0(1,x)-C\epsilon \leq \rho_\epsilon(1,x) \leq C\rho_0(1,x) + C\epsilon, \quad \forall x\in G, \; \forall \epsilon \geq 0.
 \end{align}
\end{thm}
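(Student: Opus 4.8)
The plan is to remove the parameter $\epsilon$ by a dilation argument, thereby reducing \eqref{eq: Guivarch equation} to a single comparison between $d=\rho_1$ and $d_\infty=\rho_0$ valid on all of $G$, and then to prove that comparison by hand. By Remark~\ref{rem: dilations are isometries asymptotic} (equivalently, by the definition of $\rho_\epsilon$), for $\epsilon>0$ one has $\rho_\epsilon(1,x)=\epsilon\,d(1,\delta_\epsilon^{-1}x)$. On the other hand, under the identification $G\simeq G_\infty$ furnished by the asymptotic grading, $\rho_0=d_\infty$ is the Carnot metric of $G_\infty$, whose stratification is $(V_i)_i$ and whose Carnot dilations are exactly the $\delta_\epsilon$; hence self-similarity of metric Carnot groups gives $\rho_0(1,\delta_\epsilon y)=\epsilon\,\rho_0(1,y)$. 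Substituting $y=\delta_\epsilon^{-1}x$ and dividing by $\epsilon$, the asserted bounds for all $x$ and all $\epsilon>0$ become equivalent to the $\epsilon$-free statement: there is $C\ge1$ with
\[
\tfrac1C\,\rho_0(1,y)-C\;\le\; d(1,y)\;\le\; C\,\rho_0(1,y)+C,\qquad\forall\,y\in G
\]
(the case $\epsilon=0$ being trivial as $C\ge1$). Working in exponential coordinates, $G\simeq\mathfrak g\simeq G_\infty=(\mathfrak g,*_0)$ with identity exponential map, Theorem~\ref{thm: Ball-Box for Carnot} gives $\rho_0(1,y)\asymp N(y):=\sum_{i=1}^s\norm{\pi_i(\log y)}^{1/i}$ with $\pi_i\colon\mathfrak g\to V_i$ the grading projections, so it suffices to prove $\tfrac1C N(y)-C\le d(1,y)\le C N(y)+C$ for all $y$.

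For the lower bound $d(1,y)\ge\tfrac1C N(y)-C$ I would argue directly. Given $y$, let $\gamma\colon[0,L]\to G$ be a unit-speed $\Delta$-horizontal geodesic from $1$ to $y$, so $L=d(1,y)$; writing $\gamma(t)=\exp(\xi(t))$ and letting $v(t)\in\Delta_1$, $\norm{v(t)}\le1$, be its left-invariant velocity, the curve $\xi$ solves $\dot\xi=v+\sum_{k\ge1}c_k(\ad\xi)^k v$, a finite sum by nilpotency with universal constants $c_k$. I claim $\norm{\pi_j(\xi(t))}\le C_j(t^j+1)$ for every $j$, by induction on $j$: in $\pi_j\big((\ad\xi)^k v\big)$ only the left-iterated brackets $[\pi_{a_1}\xi,\dots,\pi_{a_k}\xi,\pi_b v]$ with $a_1+\dots+a_k+b\le j$ have nonzero image, by iterating Proposition~\ref{prop: properties of linear gradings}\eqref{prop: bracket of CLG containment}, and these have all $a_\ell<j$, so the inductive hypothesis bounds each $\norm{\pi_{a_\ell}\xi(t)}$ by $C(t^{a_\ell}+1)$ while $\norm{\pi_b v(t)}$ is bounded by a constant; hence $\norm{\pi_j(\dot\xi(t))}\le C(t^{j-1}+1)$, and integrating from $t=0$ (where $\xi(0)=0$) yields the claim. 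Evaluating at $t=L$ and taking $i$-th roots gives $\norm{\pi_i(\log y)}^{1/i}\le C(L+1)$ for each $i$, hence $N(y)\le C(d(1,y)+1)$.

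For the upper bound $d(1,y)\le C N(y)+C$ I would induct on the nilpotency step $s$; the case $s=1$ reduces to comparing a norm metric with itself. For the inductive step, consider $\bar G:=G/\exp(\mathfrak g^{(s)})$, a simply connected nilpotent sub-Finsler group of step $\le s-1$ whose asymptotic grading is the image of $V_1,\dots,V_{s-1}$. Lifting a $\bar d$-geodesic from the identity to the image $\bar y$ of $y$ to a $\Delta$-horizontal curve of the same length, one reaches a point $\tilde y$ with $\pi(\tilde y)=\bar y$ and $d(1,\tilde y)\le\bar d(\bar1,\bar y)\le C\bar N(\bar y)+C\le C N(y)+C$ by the inductive hypothesis. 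Since $\mathfrak g^{(s)}$ is central and equals $V_s$, one has $y=\tilde y\cdot z$ with $\log z=\pi_s(\log y)-\pi_s(\log\tilde y)\in V_s$, and the lower bound just proven controls the drift, $\norm{\pi_s(\log\tilde y)}\le C(d(1,\tilde y)^s+1)\le C(N(y)^s+1)$, so $\norm{\log z}\le C(N(y)^s+1)$. Finally $\mathfrak g^{(s)}$ is spanned by length-$s$ brackets of $\Delta_1$ and all Baker--Campbell--Hausdorff corrections of a length-$s$ horizontal commutator lie in $\mathfrak g^{(s+1)}=\{0\}$, so the classical commutator construction behind the upper bound in the Ball-Box Theorem (cf.\ \cite[Theorem~2.4.2]{Mon02}) gives $d(1,\exp(w))\le C\norm{w}^{1/s}$ for every $w\in V_s$, with no corrections to make; hence $d(\tilde y,y)=d(1,\exp(\log z))\le C\norm{\log z}^{1/s}\le C(N(y)+1)$ and $d(1,y)\le d(1,\tilde y)+d(\tilde y,y)\le C N(y)+C$.

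The reduction by dilations and the lower-bound induction are essentially formal once set up; I expect the upper bound to be the delicate step, precisely because it is not self-contained: it feeds on the lower bound (to control how far the horizontal lift drifts into the central stratum $V_s$) and on the classical horizontal-commutator construction for the top stratum. Conversely, the displayed $\epsilon$-free comparison yields the full statement \eqref{eq: Guivarch equation} by the same substitution, and thereby gives an independent verification that $(G,d_\infty)$ is the asymptotic cone of $(G,d)$.
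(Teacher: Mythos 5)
Your proposal is correct in substance, but note that the paper does not prove this statement at all: Theorem~\ref{thm: Guivarch} is quoted from \cite[Corollary~12.4.4]{Don25}, so there is no internal proof to match. What you have written is a self-contained derivation of the underlying Guivarc'h-type estimate, and it hangs together: the dilation reduction is exactly right (homogeneity of $\rho_0$ under $\delta_\epsilon$ plus $\rho_\epsilon(1,\delta_\epsilon y)=\epsilon\, d(1,y)$ turns \eqref{eq: Guivarch equation} into the $\epsilon$-free comparison $\tfrac1C\rho_0(1,y)-C\le d(1,y)\le C\rho_0(1,y)+C$); the lower bound via the grading-filtered ODE for the logarithmic coordinates of a horizontal curve, using Proposition~\ref{prop: properties of linear gradings}\eqref{prop: bracket of CLG containment} to force $a_1+\dots+a_k+b\le j$ and hence $a_\ell<j$, is a standard and correct induction; and the upper bound by induction on the step, lifting through $G/\exp(\mathfrak g^{(s)})$ and absorbing the central drift with the exact (no-correction) length-$s$ group-commutator identity, is the classical argument and does close, precisely because you first established the lower bound to control $\pi_s(\log\tilde y)$. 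Compared with the paper's choice, your route buys self-containedness using only tools already in the paper (Theorem~\ref{thm: Ball-Box for Carnot}, Proposition~\ref{prop: properties of linear gradings}, Propositions~\ref{prop: pi_1 is a submetry and homo}--\ref{prop: lift of curves} in quotient form), at the cost of two steps you leave as sketches and should spell out if this were to replace the citation: (i) that the quotient sub-Finsler structure on $\bar G=G/\exp(\mathfrak g^{(s)})$ (projected distribution with the quotient norm) makes the projection a submetry, so that $\bar d$-geodesics lift to horizontal curves of the same length and the induced grading is again asymptotic; and (ii) the global (not merely local) bound $d(1,\exp w)\le C\norm{w}^{1/s}$ for $w\in V_s$, which requires decomposing $w$ along a fixed spanning set of length-$s$ horizontal brackets with linearly controlled coefficients and then using centrality of $V_s$ together with the exact commutator identity, as you indicate. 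Neither point is a genuine gap, but both deserve a few lines.
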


We can lift $\rho_0$-geodesics to $\rho_\epsilon$-admissible curves (of the same length) using the fact that the map $\pi_1$ is a submetry, as we now explain. A \textit{submetry} $f:X \rightarrow Y$ between metric spaces is a map such that 
\[f(\overline{B}_{d_X}(x,r)) = \overline{B}{_{d_Y}}(f(x),r), \quad \forall x\in X,\; \forall r\geq 0.\]
\begin{prop}\label{prop: pi_1 is a submetry and homo}
 Let $\mathfrak{g}$ be a nilpotent Lie algebra. Then the projection map $\pi_1 : (\mathfrak{g}, *_\epsilon, \norm{\cdot}^{(\epsilon)}) \rightarrow (V_1, +,\norm{\cdot}_\infty)$ is a group homomorphism and a submetry.
\end{prop}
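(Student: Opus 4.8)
The plan is to prove the homomorphism statement first and then deduce the submetry property by the standard mechanism: push $\rho_\epsilon$-horizontal curves down through $\pi_1$ to get $1$-Lipschitzness (hence one inclusion), and lift straight segments of $(V_1,\norm{\cdot}_\infty)$ to horizontal one-parameter subgroups of $(\mathfrak g,{*}_\epsilon)$ to get the other inclusion. The single point that needs care is that the linear projection $\pi_1\colon\mathfrak g\to V_1$ maps the $\norm{\cdot}^{(\epsilon)}$-unit ball of $\Delta^{(\epsilon)}$ onto the $\norm{\cdot}_\infty$-unit ball of $V_1$, uniformly in $\epsilon$.

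First I would verify that $\pi_1$ is a group homomorphism $(\mathfrak g,{*}_\epsilon)\to(V_1,+)$. By the BCH formula, $x{*}_\epsilon y-(x+y)$ is a sum of iterated brackets of length $\geq2$ for $[\cdot,\cdot]^{(\epsilon)}$; for $\epsilon\neq0$ we have $\delta_\epsilon V_k=V_k$, so Proposition~\ref{prop: properties of linear gradings}\eqref{prop: bracket of CLG containment} gives $[V_i,V_j]^{(\epsilon)}=\delta_\epsilon[V_i,V_j]\subseteq V_{\geq i+j}$, and hence every bracket of length $\geq2$ lies in $V_{\geq2}=[\mathfrak g,\mathfrak g]=\ker\pi_1$. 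Therefore $\pi_1(x{*}_\epsilon y)=\pi_1(x)+\pi_1(y)$; the case $\epsilon=0$ is identical, using that $[\cdot,\cdot]^{(0)}$ makes $(V_i)_i$ a stratification (or, equivalently, letting $\epsilon\to0$). I would also record that the metric of $(V_1,+,\norm{\cdot}_\infty)$ is $(a,b)\mapsto\norm{a-b}_\infty$ (the Finsler metric of a normed vector space), and that differentiating $\pi_1\circ L^\epsilon_g=\tau_{\pi_1(g)}\circ\pi_1$ at $0$ — the identity of $(\mathfrak g,{*}_\epsilon)$, with $L^\epsilon_g$ the left translation by $g$ there and $\tau_a$ translation by $a$ in $V_1$ — yields $(\dd\pi_1)_g\circ(\dd L^\epsilon_g)_0=\pi_1$ for every $g$, since $\tau_a$ has identity differential and $(\dd\pi_1)_0=\pi_1$ by linearity.

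Next I would establish the horizontal-ball comparison: $\norm{\pi_1 v}_\infty\leq\norm{v}^{(\epsilon)}$ for all $v\in\Delta^{(\epsilon)}$, and every $u\in V_1$ with $\norm{u}_\infty\leq1$ equals $\pi_1(v)$ for some $v\in\Delta^{(\epsilon)}$ with $\norm{v}^{(\epsilon)}\leq1$. For $\epsilon=0$ both are immediate since $\Delta^{(0)}=V_1$, $\norm{\cdot}^{(0)}=\norm{\cdot}_\infty$ and $\pi_1|_{V_1}=\id$. For $\epsilon\neq0$, write $v=\delta_\epsilon w$ with $w\in\Delta$; then $\pi_1 v=\epsilon\,\pi_1 w$ and $\norm{v}^{(\epsilon)}=\epsilon\norm{w}$, so the first inequality reduces to $\norm{\pi_1 w}_\infty\leq\norm{w}$ for $w\in\Delta$, which is precisely the statement that under the identification $\mathfrak g/[\mathfrak g,\mathfrak g]\cong V_1$ given by $\pi_1$ the unit ball of $\norm{\cdot}_\infty$ is $\pi_1\bigl(B_{\norm{\cdot}}(0,1)\cap\Delta\bigr)$. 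For the surjectivity, given such $u$ choose $w\in\Delta$ with $\norm{w}\leq1$ and $\pi_1 w=u$, and set $v:=\epsilon^{-1}\delta_\epsilon w=\delta_\epsilon(\epsilon^{-1}w)\in\Delta^{(\epsilon)}$, so that $\pi_1 v=u$ and $\norm{v}^{(\epsilon)}=\epsilon\norm{\epsilon^{-1}w}=\norm{w}\leq1$.

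Finally, for the submetry property I would reduce, using left-invariance of $\rho_\epsilon$ and of the metric on $V_1$ together with the homomorphism property, to the two inclusions $\pi_1\bigl(\overline B_{\rho_\epsilon}(0,r)\bigr)\subseteq\overline B_{\norm{\cdot}_\infty}(0,r)$ and $\overline B_{\norm{\cdot}_\infty}(0,r)\subseteq\pi_1\bigl(\overline B_{\rho_\epsilon}(0,r)\bigr)$, for all $r\geq0$. For the first, along a $\rho_\epsilon$-horizontal curve $\gamma$ from $0$ to $g$, write $\dot\gamma(t)=(\dd L^\epsilon_{\gamma(t)})_0\, v(t)$ with $v(t)\in\Delta^{(\epsilon)}$; then $(\pi_1\circ\gamma)'(t)=\pi_1(v(t))$ by the differentiated identity, with $\norm{\pi_1(v(t))}_\infty\leq\norm{v(t)}^{(\epsilon)}=\norm{\dot\gamma(t)}^{(\epsilon)}_{\gamma(t)}$, so $\norm{\pi_1 g}_\infty\leq\text{Length}_{\norm{\cdot}_\infty}(\pi_1\circ\gamma)\leq\text{Length}_{\rho_\epsilon}(\gamma)$; taking the infimum shows $\pi_1$ is $1$-Lipschitz, giving the inclusion. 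For the second, given $u$ with $\norm{u}_\infty\leq r$ and $r>0$, pick $v\in\Delta^{(\epsilon)}$ with $\pi_1 v=u/r$ and $\norm{v}^{(\epsilon)}\leq1$ as above, and let $\gamma(t):=tv$ for $t\in[0,r]$, the one-parameter subgroup of $(\mathfrak g,{*}_\epsilon)$ with initial velocity $v$ (recall $(\mathfrak g,{*}_\epsilon)$ is simply connected nilpotent, so $\exp=\id$); it is $\Delta^{(\epsilon)}$-horizontal with $\norm{\dot\gamma(t)}^{(\epsilon)}_{\gamma(t)}=\norm{v}^{(\epsilon)}\leq1$, whence $\rho_\epsilon(0,rv)\leq r$ while $\pi_1(rv)=u$. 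I expect the only delicate point to be the dilation bookkeeping in the third paragraph: the lift of $w\in\Delta$ into $\Delta^{(\epsilon)}=\delta_\epsilon\Delta$ compatible with both $\norm{\cdot}^{(\epsilon)}$ and $\pi_1$ is $\epsilon^{-1}\delta_\epsilon w$, not $\delta_\epsilon w$, and this is exactly what makes the horizontal unit balls correspond uniformly down to $\epsilon=0$; with that in hand the rest are the routine lift/project arguments.
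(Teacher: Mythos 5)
Your proposal is correct and follows essentially the same route as the paper: the homomorphism property via the BCH formula (all brackets land in $V_{\geq 2}=\ker\pi_1$) and the dilation computation showing $\pi_1\bigl(B_{\norm{\cdot}^{(\epsilon)}}(0,1)\bigr)=B_{\norm{\cdot}_\infty}(0,1)$. The paper stops at this unit-ball identity and leaves implicit the standard project/lift deduction of the submetry property, which you spell out explicitly; that extra detail is harmless and changes nothing essential.
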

\begin{proof} The projection map $\pi_1$ is a group homomorphism since 
\[(x *_\epsilon y)_1 = (x)_1+(y)_1, \quad \forall x,y \in \mathfrak{g}.\]

Regarding the claim that $\pi_1$ is a submetry, we use the definitions of the norms $\norm{\cdot}_\infty$ and $\norm{\cdot}^{(\epsilon)}$ to calculate
 \begin{align*}
  \pi_1 \left( B_{\norm{\cdot}^{(\epsilon)}}(0,1)\right) &=\pi_1 \left( \delta_\epsilon B_{\norm{\cdot}^{(1)}}(0,1/\epsilon)\right) \\
  &= \epsilon\pi_1 \left( B_{\norm{\cdot}^{(1)}}(0,1/\epsilon)\right) \\
  &= \epsilon B_{\norm{\cdot}_\infty}(0,1/\epsilon)\\
  &=B_{\norm{\cdot}_\infty}(0,1).
 \end{align*}
\end{proof}
It is a standard fact in metric geometry that submetries lift $1$-Lipschitz curves from their target space to their domain:
\begin{prop}\label{prop: lift of curves}
 Let $f: X \rightarrow Y$ be a submetry between metric spaces. Assume that $X$ is boundedly compact. Then for every $1$-Lipschitz curve $\gamma:[0, T] \rightarrow Y$ and every $x \in f^{-1}(\gamma(0))$, there exists a $1$-Lipschitz curve $\tilde{\gamma}:[0,T] \rightarrow X$ such that $\tilde{\gamma}(0)=x$ and $f \circ \tilde{\gamma} = \gamma.$
\end{prop}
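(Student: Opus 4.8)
The statement is the standard curve-lifting property of submetries, and the plan is to prove it by discretization together with the compactness of closed balls in $X$. First I would record that a submetry is automatically $1$-Lipschitz: taking $r=d_X(x,x')$ in the defining identity $f(\overline{B}_{d_X}(x,r))=\overline{B}_{d_Y}(f(x),r)$ gives $f(x')\in\overline{B}_{d_Y}(f(x),r)$, i.e.\ $d_Y(f(x),f(x'))\le d_X(x,x')$. Consequently, once we produce a $1$-Lipschitz lift $\tilde\gamma$ of $\gamma$ the identity $f\circ\tilde\gamma=\gamma$ is the whole content, and a Lipschitz constant below $1$ is impossible anyway.

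For each $n\in\mathbb N$ I would build a lift along the dyadic mesh $t^n_k:=kT/2^n$, $k=0,\dots,2^n$. Set $x^n_0:=x$; given $x^n_k$ with $f(x^n_k)=\gamma(t^n_k)$, the $1$-Lipschitz hypothesis gives $d_Y(\gamma(t^n_k),\gamma(t^n_{k+1}))\le 2^{-n}T$, so $\gamma(t^n_{k+1})\in\overline{B}_{d_Y}(\gamma(t^n_k),2^{-n}T)=f(\overline{B}_{d_X}(x^n_k,2^{-n}T))$ by the submetry property, and one may choose $x^n_{k+1}\in\overline{B}_{d_X}(x^n_k,2^{-n}T)$ with $f(x^n_{k+1})=\gamma(t^n_{k+1})$. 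Telescoping the increment bounds shows that every $x^n_k$ lies in $\overline{B}_{d_X}(x,T)$, which is compact because $X$ is boundedly compact, and that $d_X(x^n_i,x^n_j)\le|t^n_i-t^n_j|$ for all $i,j$.

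Finally I would pass to a limit over the countable dense set $D:=\bigcup_n\{kT/2^n:0\le k\le 2^n\}$. For $q\in D$ and all large $n$ the point $x^n_{q2^n/T}$ is defined and lies in the compact set $\overline{B}_{d_X}(x,T)$, so a diagonal extraction yields a subsequence along which these points converge to some $\sigma(q)\in X$ for every $q\in D$; the inequality $d_X(x^n_i,x^n_j)\le|t^n_i-t^n_j|$ passes to the limit to give that $\sigma\colon D\to X$ is $1$-Lipschitz. Since $X$ is complete (being boundedly compact), $\sigma$ extends uniquely to a $1$-Lipschitz curve $\tilde\gamma\colon[0,T]\to X$, which satisfies $\tilde\gamma(0)=x$, and $f\circ\tilde\gamma$ agrees with $\gamma$ on the dense set $D$ by continuity of $f$, hence on all of $[0,T]$. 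The one point to keep in mind, and the reason bounded compactness is needed, is that $X$ need not be geodesic, so there is no way to interpolate the mesh points into a continuous curve at a finite stage $n$; compactness is precisely what converts the discrete dyadic data into an honest continuous limit curve, and completeness fills in the non-dyadic times. (Alternatively one can run a Zorn's-lemma maximal-extension argument, but it reduces to the same local construction and compactness input.)
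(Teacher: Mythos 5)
Your proof is correct. Note that the paper itself gives no argument for this proposition: it is invoked as a standard fact of metric geometry, so there is no authorial proof to compare against. Your route — lifting along dyadic meshes via the closed-ball submetry identity, observing the discrete lifts are $1$-Lipschitz on the mesh and confined to $\overline{B}_{d_X}(x,T)$, extracting a diagonal limit on the dyadic times using bounded compactness, and extending by completeness — is exactly the standard discretize-and-compactness proof, and every step checks out (in particular, the submetry identity with closed balls does furnish a preimage of $\gamma(t^n_{k+1})$ inside $\overline{B}_{d_X}(x^n_k,2^{-n}T)$, and you correctly do not assume the lifts at different levels are nested). The only blemish is the aside that ``a Lipschitz constant below $1$ is impossible anyway'': this is false for, say, a constant curve $\gamma$, but it is not used anywhere and does not affect the argument.
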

\begin{prop}
    Let $G$ be a simply connected nilpotent sub-Finsler Lie group equipped with an asymptotic grading $(V_1,\ldots, V_s)$ such that the sub-Finsler structure associated to the contracted metrics $\rho_\epsilon$ are $(\Delta^{(\epsilon)}, \norm{\cdot}^{(\epsilon)})$. Then for every Euclidean metric $\norm{\cdot}_{\rm E}$ there exists a constant $C>0$ depending only on $G, \Delta, \norm{\cdot}, \norm{\cdot}_{\rm E}$, and $(V_1,\ldots, V_s)$ such that we have 
\begin{equation}\label{eq: upper bound of euclidean norm of layer wrt to norm eps}
 \norm{(u)_k}_{\rm E} \leq C\epsilon^{k-1} \norm{(u)_k}^{(\epsilon)}, \quad \forall k \in \{1,\ldots,s\}, \; \forall \epsilon \in [0,1], \; \forall u \in \Delta^{(\epsilon)},
\end{equation}
using the convention that $0^0=1$, and 
\begin{equation}\label{eq: upper bound of euclidean norm wrt to norm eps}
 \norm{u}_{\rm E} \leq C \norm{u}^{(\epsilon)}, \quad \forall \epsilon \in [0,1] , \; \forall u \in \Delta^{(\epsilon)}.
\end{equation}
\end{prop}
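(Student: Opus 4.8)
The plan is to reduce both inequalities to the equivalence of any two norms on the finite-dimensional space $\mathfrak{g}$ (and on each layer $V_k$), combined with the explicit way the dilations $\delta_\epsilon$ and the norms $\norm{\cdot}^{(\epsilon)}$ interact with the grading $(V_i)_{i=1}^s$. First I would fix, using that all norms on a finite-dimensional real vector space are equivalent, a constant $C_0 \geq 1$ depending only on $\mathfrak{g},\Delta,\norm{\cdot},\norm{\cdot}_{\rm E},(V_i)_i$ such that $\norm{x}_{\rm E} \leq C_0\norm{x}$ and $\sum_{k=1}^s\norm{(x)_k}_{\rm E} \leq C_0\norm{x}$ for all $x\in\mathfrak{g}$, and also $\norm{x}_{\rm E}\leq C_0\norm{x}_\infty$ for all $x\in V_1$ (the middle inequality because $x\mapsto\sum_k\norm{(x)_k}_{\rm E}$ is itself a norm on $\mathfrak{g}$). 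The output constant $C$ will be taken equal to $C_0$, which is in particular independent of $\epsilon$, $k$ and $u$; I read $\norm{\cdot}^{(\epsilon)}$ in \eqref{eq: upper bound of euclidean norm of layer wrt to norm eps} as the norm $\norm{v}^{(\epsilon)}:=\epsilon\norm{\delta_\epsilon^{-1}v}$ extended to all of $\mathfrak{g}$ (for $\epsilon\neq0$), which agrees with \eqref{def: contracted structure} on $\Delta^{(\epsilon)}$.

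For $\epsilon\in(0,1]$: by \eqref{def: contracted structure} one has $\Delta^{(\epsilon)}=\delta_\epsilon\Delta$, so every $u\in\Delta^{(\epsilon)}$ is $u=\delta_\epsilon w$ with $w\in\Delta$ unique and $\norm{u}^{(\epsilon)}=\epsilon\norm{w}$. Since $\delta_\epsilon$ acts on $V_k$ as multiplication by $\epsilon^k$, we get $(u)_k=\epsilon^k(w)_k$, hence $\norm{(u)_k}^{(\epsilon)}=\epsilon\norm{\delta_\epsilon^{-1}(u)_k}=\epsilon\norm{(w)_k}$ and $\norm{(u)_k}_{\rm E}=\epsilon^k\norm{(w)_k}_{\rm E}\leq C_0\epsilon^k\norm{(w)_k}=C_0\epsilon^{k-1}\norm{(u)_k}^{(\epsilon)}$, which is \eqref{eq: upper bound of euclidean norm of layer wrt to norm eps}. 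Summing over $k$ and using $\epsilon^k\leq\epsilon$ for $k\geq1$ yields $\norm{u}_{\rm E}\leq\sum_{k=1}^s\epsilon^k\norm{(w)_k}_{\rm E}\leq\epsilon\sum_{k=1}^s\norm{(w)_k}_{\rm E}\leq C_0\epsilon\norm{w}=C_0\norm{u}^{(\epsilon)}$, which is \eqref{eq: upper bound of euclidean norm wrt to norm eps}. For $\epsilon=0$: by \eqref{def: contracted structure for eps = 0} one has $\Delta^{(0)}=V_1$ and $\norm{\cdot}^{(0)}=\norm{\cdot}_\infty$, so $u\in\Delta^{(0)}$ gives $(u)_1=u$ and $(u)_k=0$ for $k\geq2$; the $k\geq2$ cases of \eqref{eq: upper bound of euclidean norm of layer wrt to norm eps} are the trivial $0\leq0$ (note $0^{k-1}=0$), while for $k=1$, using $0^0=1$, they read $\norm{u}_{\rm E}\leq C_0\norm{u}_\infty$, which is also \eqref{eq: upper bound of euclidean norm wrt to norm eps} at $\epsilon=0$ and holds by the choice of $C_0$.

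I do not expect a genuine obstacle here: the argument is bookkeeping around the explicit scalings of the dilations, the equivalence of norms, and the elementary bound $\epsilon^k\leq\epsilon^{k-1}\leq1$ on $[0,1]$ for $k\geq1$. The only points that need a little care are the boundary case $\epsilon=0$ together with the $0^0=1$ convention, and the observation that a single constant $C$ works uniformly in $\epsilon$, $k$ and $u$ because it is obtained as the maximum of the finitely many norm-equivalence constants on $\mathfrak{g}$ and on the layers $V_1,\dots,V_s$.
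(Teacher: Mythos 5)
Your proof is correct, and for the bound \eqref{eq: upper bound of euclidean norm of layer wrt to norm eps} it is essentially the paper's argument: write $u=\delta_\epsilon w$ with $w\in\Delta$, use $(u)_k=\epsilon^k(w)_k$ and the equivalence of $\norm{\cdot}_{\rm E}$ with $\norm{\cdot}$ on $\mathfrak{g}$, and handle $\epsilon=0$ separately via the equivalence of $\norm{\cdot}_{\rm E}$ and $\norm{\cdot}_\infty$ on $V_1$. Where you diverge is in \eqref{eq: upper bound of euclidean norm wrt to norm eps}: the paper bounds $\norm{u}^{(\epsilon)}\geq C\norm{\delta_\epsilon^{-1}u}_{\rm E}$ and then uses that $\norm{(u)_1+\epsilon^{-1}(u)_2+\cdots+\epsilon^{-s+1}(u)_s}_{\rm E}\geq\norm{(u)_1+\cdots+(u)_s}_{\rm E}$, i.e.\ a monotonicity of the Euclidean norm under inflating the layer components (this is where the Euclidean/orthogonal-layer structure of $\norm{\cdot}_{\rm E}$ is used), whereas you instead sum the layer-wise estimates through the triangle inequality and the equivalence of $x\mapsto\sum_k\norm{(x)_k}_{\rm E}$ with $\norm{\cdot}$, using only $\epsilon^k\leq\epsilon$ for $k\geq1$. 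Your route is marginally longer but more robust: it needs no compatibility between $\norm{\cdot}_{\rm E}$ and the grading, so it proves the statement verbatim for an arbitrary auxiliary norm, while the paper's one-line argument is slightly slicker but implicitly relies on the layers being orthogonal (or at least on that monotonicity property) for $\norm{\cdot}_{\rm E}$. Your treatment of the degenerate cases ($\epsilon=0$, the convention $0^0=1$, and the trivial $k\geq2$ cases when $u\in V_1$) is also complete and matches the paper's.
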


\begin{proof}
    Regarding \eqref{eq: upper bound of euclidean norm of layer wrt to norm eps}, let $\epsilon \in (0,1]$ and $u \in \Delta^{(\epsilon)}$. Since $\norm{\cdot}^{(1)}$ and $\norm{\cdot}_{\rm E}$ are norms on $\mathfrak{g}$, there exists a constant $C\leq 1$ such that $\norm{\cdot}^{(1)} \geq C\norm{\cdot}_{\rm E}$. Thus
\begin{align*}
 \norm{(u)_k}^{(\epsilon)} &\overset{\text{def}}{=} \epsilon \norm{\delta_\epsilon^{-1} (u)_k}^{(1)}\\
 &\geq C \epsilon \norm{\delta_\epsilon^{-1} (u)_k}_{\rm E}\\
 &=C\epsilon^{1-k} \norm{ (u)_k}_{\rm E}.
\end{align*}
For $\epsilon = 0$, since $\Delta^{(0)} \overset{\eqref{def: contracted structure for eps = 0}}{=}V_1$ and $\left.\norm{\cdot}_{\rm E}\right|_{V_1}$ is a norm on $V_1$, there exists a constant $C'$ so that
\begin{equation}\label{eq: upper bound on euclidean norm with 0 norm}
\norm{u}_{\rm E} \leq C' \norm{u}^{(0)} , \quad \forall u \in V_1.
\end{equation}

Regarding \eqref{eq: upper bound of euclidean norm wrt to norm eps}, let $\epsilon \neq 0$ and $u \in \Delta^{(\epsilon)}$. Then
\begin{align*}
 \norm{u}^{(\epsilon)} &\overset{\text{def}}{=} \epsilon \norm{\delta_\epsilon^{-1}u}^{(1)}\\
 &\geq C\epsilon\norm{\delta_\epsilon^{-1}u}_{\rm E}\\
 &=C \norm{(u)_1 + \epsilon^{-1}(u)_2+\cdots+\epsilon^{-s+1}(u)_s}_{\rm E}\\
 &\geq C\norm{(u)_1 + (u)_2+\cdots+(u)_s}_{\rm E}\\
 &= C\norm{u}_{\rm E}.
\end{align*}
If $\epsilon = 0$, then by~\eqref{eq: upper bound on euclidean norm with 0 norm}, $\norm{u}_{\rm E} \leq C' \norm{u}^{(0)} ,\; \forall u \in V_1 = \Delta^{(0)}.$
\end{proof}

\subsubsection{Dilated metrics on sub-Finsler Lie groups}
Suppose that $(G,d)$ is a sub-Finsler Lie group (not necessarily nilpotent) with distance associated to $(\Delta, \|\cdot \nobreak\|)$, endowed with a tangent grading $(W_i)_{i=1}^s$. Let $r>0$ be small enough so that the exponential map restricted to $B_{d_0}(0,r)$ is a diffeomorphism. We define the $\textit{dilated metrics}$ as
\begin{align*}
 d_\epsilon(x,y) \coloneqq \begin{cases}
  \frac{1}{|\epsilon|}d(\delta_\epsilon x, \delta_\epsilon y),\; \epsilon \neq 0,\\
  d_0(x,y), \; \epsilon = 0.
 \end{cases}\quad,\quad \forall x,y \in B_{d_0}(0,r).
\end{align*}
Locally, for $\epsilon \neq 0$, the metric $d_\epsilon$ is the $\cdot_\epsilon$-left-invariant sub-Finsler metric on a neighborhood of the identity in $G$ associated to $(\Delta, \norm{\cdot}).$
\begin{remark}\label{rem: dilations are isometries tangent}
The definition of dilated metrics implies that, for $\epsilon \neq 0$, the dilation $\delta_{\epsilon^{-1}} : (B_{d_0}(0,|\epsilon| r), \frac{1}{|\epsilon|} d) \rightarrow (B_{d_0}(0,r), d_\epsilon)$ is an isometry.
\end{remark}
Applying dilations $\delta_\epsilon$ to~\eqref{eq: BB for Lie gps} we obtain some equicontinuity property of the family of distances: there are $r>0,C\geq 1$ such that
\begin{align}\label{eq: dilated metric are bilipschitz}
  \frac{1}{C}d_0(0,p) \leq d_\epsilon(0,p) \leq C d_0(0,p), \quad \forall p\in B_{d_0}(0, r), \; \forall \epsilon \in (0,1].
 \end{align}
 
Moreover, since $d_\epsilon$ are geodesic distances, we further have
 \begin{equation}\label{cor: equicontinuity of distances}
  B_{d_\epsilon}(0,r'/C) \subseteq B_{d_0}(0,r'), \quad \forall r'\leq r, \forall \epsilon \in (0,1].
 \end{equation}
\subsection{Carnot quotient ideals} In the proofs of Theorem~\ref{thm: quantitative Pansu} and Theorem~\ref{thm: quantitative Mitchell}, we need to bound the norm of the difference of two curves $\gamma_0$ and $\gamma_\epsilon$. To this end, we introduce a class of ideals $\mathfrak{i}$ of $\mathfrak{g}$, which we call Carnot quotient ideals, such that the projection of $\gamma_0$ and $\gamma_\epsilon$ on the quotient $\mathfrak{g}/\mathfrak{i}$ coincide. Thus their difference $v \coloneqq \gamma_0(1)^{-1}*_\epsilon\gamma_\epsilon(1)$ lies in $\mathfrak{i}$. We then use this fact to obtain a bound on the norm of $v$, better than the bound one would obtain by directly applying the Ball-Box Theorem (see Lemma~\ref{lemma: uniform bound on contracted metrics} and Lemma~\ref{lemma: uniform bound on dilated metrics}).

\begin{definition} \label{def: Carnot quotient ideal} 
 Let $(G,\Delta)$ be a polarized group and let $(D_i)_{i=1}^s$ be a grading.
 We say that a subspace $\mathfrak{i}\subseteq \mathfrak{g}$ is a \textit{Carnot quotient ideal of} $(G,\Delta)$ \textit{with respect to the grading $(D_i)_{i=1}^s$} if 
 \begin{enumerate}
  \item \label{def: CQI 1 ideal}$\mathfrak{i}$ is an ideal of $\mathfrak{g}$.
  \item \label{def: CQI 2 stratification}$\mathfrak{g}/\mathfrak{i}$ is stratified by $(\pi(D_j))_{1\leq j\leq s}$, where $\pi:\mathfrak{g}\rightarrow \mathfrak{g}/\mathfrak{i}$ is the quotient map.
  \item \label{def: CQI projection} 
  $\Delta \subseteq D_1 + \mathfrak{i}.$
 \end{enumerate}
\end{definition}

We claim that condition~\eqref{def: CQI projection} is equivalent to
\begin{equation}\label{def: CQI: first equivalence of condition 3}
 \pi(u) = \pi \circ \pi_1(u), \quad \forall u \in \Delta.
\end{equation}
Indeed, let $u=\sum_{i=1}^s(u)_i \in \Delta$. Then
\begin{align*}
 \pi(u) = \pi(\pi_1(u))&\iff \pi\left(\sum_{i=2}^s(u)_i\right) = 0 \iff \sum_{i=2}^s(u)_i\in \mathfrak{i.}
\end{align*}

Moreover, we claim that condition~\eqref{def: CQI projection} is also equivalent to 
\begin{equation}\label{def: CQI: second equivalence of condition 3}
 \Delta + \mathfrak{i} = D_1 + \mathfrak{i}.
\end{equation}
Indeed, if $\Delta + \mathfrak{i} = D_1 + \mathfrak{i}$, then clearly $\Delta \subseteq D_1 + \mathfrak{i}$. Now assume that $\Delta \subseteq D_1 + \mathfrak{i}$, and hence $\Delta + \mathfrak{i} \subseteq D_1 + \mathfrak{i}$. The quotient map $\pi : \mathfrak{g} \rightarrow \mathfrak{g}/\mathfrak{i}$ is a Lie algebra homomorphism, and thus, since $\Delta$ is bracket generating, $\pi(\Delta) = \Delta+\mathfrak{i}$ is also bracket generating. Therefore, $\Delta + \mathfrak{i}$ is a bracket generating subspace of the first stratum $D_1 + \mathfrak{i}$ of the stratified Lie algebra $\mathfrak{g}/\mathfrak{i}$, so it must coincide with $D_1+\mathfrak{i}.$

Note in addition that if $(D_i)_{i=1}^s$ is a tangent grading, condition~\eqref{def: CQI projection} is satisfied for any ideal, since $\Delta = D_1$ and $\pi_1|_{D_1} = \id$.
\begin{example}\label{example: CQI}
\begin{enumerate}
 \item \label{example: CQI 1}Let $(G,\Delta, \norm{\cdot})$ be a sub-Finsler Lie group, and let $(D_i)_{i=1}^s$ be either an asymptotic or a tangent grading. Denote by $d$ the sub-Finsler metric on $G$. Then
 \[(G,d) \text{ is a Carnot group} \iff \{0\} \text{ is a Carnot quotient ideal.}\]
 Indeed, if $(G,d)$ is a Carnot group, then clearly $\{0\}$ is a Carnot quotient ideal. Conversely, if $\{0\}$ is a Carnot quotient ideal, the grading $(D_i)_{i=1}^s$ is a stratification of $\text{Lie}(G)$, and by \eqref{def: CQI: second equivalence of condition 3} we have $\Delta = D_1$.
 \item For every nilpotent Lie algebra $\mathfrak{g}$ with asymptotic grading $(V_1)_{i=1}^s$, the ideal $V_2 \oplus\cdots\oplus V_s$ is a Carnot quotient ideal, for every polarization $\Delta$.
 \item Consider the non-stratifiable Lie algebra $N_{5,2,2}$ from \cite{LDT22}. It is the Lie algebra $\mathbb{R}^5=\text{span}\{e_1,\ldots,e_5\}$ with non-trivial brackets $[e_1,e_2]=e_4$, $[e_1,e_4]=[e_2,e_3]=e_5$. Consider the asymptotic grading $V_1 = \text{span}\{e_1,e_2,e_3\}$, $V_2 = \text{span}\{e_4\}$, $V_3 = \text{span}\{e_5\}$ and distribution at the identity $\Delta = V_1$. Then $\mathfrak{i} = \text{span}\{e_5\}$ is a Carnot quotient ideal.
\end{enumerate}
\end{example}

\begin{definition}\label{def: constant beta}
 Let $(D_i)_{i=1}^s$ be a grading of the Lie algebra of a polarized Lie group $(G,\Delta)$ such that $\text{step}(\Delta) \leq s$. We define 
\[\beta:= \min\{k\in \mathbb Z\,:\, \exists \mathfrak{i} \subseteq D_{\leq k} \, \text{Carnot quotient ideal}
 \}\in \{0,\ldots,s\}.\]
\end{definition}
Note that by Example~\ref{example: CQI}~\eqref{example: CQI 1}, the metric space $(G,d)$ is a Carnot group if and only if $\beta=0$.
\begin{example}\label{example: beta}
 \begin{enumerate}
  \item For every Riemannian group $G$ with Lie algebra $\mathfrak{g}$ and tangent grading $W_1 = \mathfrak{g}$ we have $\beta\leq1$, since $\mathfrak{g}$ is a Carnot quotient ideal of itself. 
  \item\label{example: CQI2} An example where $1 < \beta < s$. Consider the Lie algebra $N_{5,2,2} = \text{span}\{e_1,e_2,e_3,e_4,e_5\}$ and the \textit{filiform algebra of dimension 5}: $N_{5,2,1} = \text{span}\{e'_1,e'_2,e'_3,e'_4,e'_5\}$ from \cite{LDT22}, which is the $5$-dimensional Lie algebra with only non-trivial brackets $[e'_1,e'_2] = e'_3, \;[e'_1,e'_3] = e'_4,\; [e'_1,e'_4] = e'_5$. Let $\mathfrak{g} = N_{5,1} \times N_{5,2,1}$ be the product Lie algebra. Equip $\mathfrak{g}$ with the asymptotic grading $(V_i)_{i=1}^4$ defined by
  \begin{align*}
   V_1 &\coloneqq \text{span}\{e_1,e_2,e_3,e'_1,e'_2\},\\
   V_2 &\coloneqq \text{span}\{e_4,e'_3\},\\
   V_3 &\coloneqq \text{span}\{e_5,e'_4\},\\
   V_4 &\coloneqq \text{span}\{e'_5\},
  \end{align*}
  and the bracket generating subspace $\Delta\coloneqq N_{5,2,2} + V_1$. Then $\mathfrak{i} \coloneqq N_{5,2,2}$ is a Carnot quotient ideal. Indeed, $\mathfrak{i}$ is obviously an ideal of $\mathfrak{g}$, and $\mathfrak{g}/\mathfrak{i} \simeq N_{5,2,1}$ is stratified by $(V_i + \mathfrak{i})_{i=1}^4$. Therefore $\beta \leq 3$.
  Moreover, every Carnot quotient ideal $\mathfrak{j}$ of $(\mathfrak{g}, \Delta)$ with respect to $(V_i)_{i=1}^4$ must contain $e_5$, otherwise $\mathfrak{g}/\mathfrak{j}$ would not be stratifiable (since it would contain $N_{5,2,2}$ as a subalgebra). Hence $\beta = 3$.
 \end{enumerate}
\end{example}

We record some properties of Carnot quotient ideals.
\begin{lem}[Properties of Carnot quotient ideals associated to asymptotic gradings]\label{lem: properties of CQI associated to CLG}
  Let $(\mathfrak{g}, \Delta)$ be a nilpotent polarized Lie algebra and $(V_i)_i$ an asymptotic grading. Let $\mathfrak{i} \triangleleft \mathfrak{g}$ be a Carnot quotient ideal with respect to $(V_i)_i$. Then
 \begin{enumerate}
  \item $\mathfrak{i}$ is dilation invariant.
  \item \label{lem: properties of CQI associated to CLG 2}$\mathfrak{i} \triangleleft (\mathfrak{g},[\cdot, \cdot]^{(\epsilon)})$ is an ideal, for every $\epsilon \in [0,1].$ 
  \item \label{projection of distribution is the same}$\pi|_{\Delta^{(\epsilon)}} = \pi \circ \pi_1|_{\Delta^{(\epsilon)}}$, for every $\epsilon \in (0,1]$.
  \item $(\mathfrak{g},[\cdot,\cdot]^{(\epsilon)})/\mathfrak{i}$ is stratified by $(\pi(V_j))_{1\leq j\leq s}$, for every $\epsilon \in [0,1]$.
  \item \label{lemma equal brackets}Denote by $[\cdot, \cdot]^{(\epsilon, \mathfrak{i})}$ the quotient bracket on $(\mathfrak{g},[\cdot,\cdot]^{(\epsilon)})/\mathfrak{i}$. Then \[[\cdot,\cdot]^{(\epsilon,\mathfrak{i})} = [\cdot,\cdot]^{(\tau,\mathfrak{i})}, \quad \forall \epsilon,\tau \in [0,1].\]
  \item \label{pi homomorphism} For $\epsilon \in [0,1],$ equip $(\mathfrak{g},[\cdot,\cdot]^{(\epsilon)})$ and $(\mathfrak{g}/\mathfrak{i}, [\cdot,\cdot]^{(\epsilon,\mathfrak{i})})$ with the respective Dynkin products $*_\epsilon$ and $*_{\epsilon,\mathfrak{i}}$. Then $\pi:(\mathfrak{g}, *_{\epsilon}) \rightarrow (\mathfrak{g}/\mathfrak{i}, *_{\epsilon,\mathfrak{i}})$ is a Lie group homomorphism.
 \end{enumerate}
\end{lem}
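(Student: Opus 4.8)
The plan is to establish the six items essentially in the stated order, treating \emph{item~(1)} as the structural input from which everything else follows by formal manipulation. For item~(1), I would argue by a dimension count: since $(\pi(V_j))_{j=1}^s$ is a linear grading of $\mathfrak{g}/\mathfrak{i}$ by Definition~\ref{def: Carnot quotient ideal}\eqref{def: CQI 2 stratification}, and since $\ker(\pi|_{V_j}) = V_j \cap \mathfrak{i}$, we get $\dim(\mathfrak{g}/\mathfrak{i}) = \sum_j \dim\pi(V_j) = \sum_j\big(\dim V_j - \dim(V_j\cap\mathfrak{i})\big) = \dim\mathfrak{g} - \sum_j\dim(V_j\cap\mathfrak{i})$, whence $\dim\mathfrak{i} = \sum_j\dim(V_j\cap\mathfrak{i})$. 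As $\bigoplus_j(V_j\cap\mathfrak{i})$ is a direct sum contained in $\mathfrak{i}$ of the same dimension as $\mathfrak{i}$, we conclude $\mathfrak{i} = \bigoplus_{j=1}^s(V_j\cap\mathfrak{i})$, which is visibly $\delta_\epsilon$-invariant.

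Item~(2) is then immediate: for $\epsilon\neq0$, $x\in\mathfrak{g}$ and $y\in\mathfrak{i}$ we have $[x,y]^{(\epsilon)}=\delta_\epsilon[\delta_\epsilon^{-1}x,\delta_\epsilon^{-1}y]\in\delta_\epsilon[\mathfrak{g},\mathfrak{i}]\subseteq\delta_\epsilon\mathfrak{i}=\mathfrak{i}$ by item~(1); and for $\epsilon=0$ one passes to the limit, using that $\mathfrak{i}$ is closed and that $[\cdot,\cdot]^{(\epsilon)}\to[\cdot,\cdot]^{(0)}$ by Lemma~\ref{lem: brackets extend to eps=0}. For item~(3), given $\epsilon\in(0,1]$ and $w=\delta_\epsilon u\in\Delta^{(\epsilon)}$ with $u\in\Delta$, I would compute $w-\pi_1(w)=\delta_\epsilon\big(\sum_{k\geq2}(u)_k\big)=\delta_\epsilon(u-\pi_1(u))$; since $u-\pi_1(u)\in\mathfrak{i}$, which is the content of the equivalence~\eqref{def: CQI: first equivalence of condition 3} of condition~\eqref{def: CQI projection}, and since $\mathfrak{i}$ is dilation invariant, we get $w-\pi_1(w)\in\mathfrak{i}$, i.e.\ $\pi(w)=\pi(\pi_1(w))$.

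For items~(4) and~(5), which I would prove together, the point is that item~(1) lets $\delta_\epsilon$ descend to a dilation $\bar\delta_\epsilon$ of $\mathfrak{g}/\mathfrak{i}$ adapted to the grading $(\pi(V_j))_j$, so that $\pi\circ\delta_\epsilon=\bar\delta_\epsilon\circ\pi$. Using that $\pi$ is a Lie algebra homomorphism for the brackets at $\epsilon=1$, a direct computation gives $\pi\big([x,y]^{(\epsilon)}\big)=\bar\delta_\epsilon\big[\bar\delta_\epsilon^{-1}\pi(x),\bar\delta_\epsilon^{-1}\pi(y)\big]^{(1,\mathfrak{i})}$; in other words the quotient bracket $[\cdot,\cdot]^{(\epsilon,\mathfrak{i})}$ is exactly the $\epsilon$-member of the dilation family of brackets on $\mathfrak{g}/\mathfrak{i}$ built from $[\cdot,\cdot]^{(1,\mathfrak{i})}$ and $(\pi(V_j))_j$. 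Since $(\mathfrak{g}/\mathfrak{i},[\cdot,\cdot]^{(1,\mathfrak{i})})$ is stratified by $(\pi(V_j))_j$, its dilations $\bar\delta_\epsilon$ are Lie algebra automorphisms for $\epsilon\neq0$, so $[\cdot,\cdot]^{(\epsilon,\mathfrak{i})}=[\cdot,\cdot]^{(1,\mathfrak{i})}$ for all $\epsilon\neq0$; the value at $\epsilon=0$ then agrees by continuity (Lemma~\ref{lem: brackets extend to eps=0} applied to $\mathfrak{g}/\mathfrak{i}$ with the asymptotic grading $(\pi(V_j))_j$, noting that $\pi$ commutes with the limit in $\epsilon$). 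This gives~(5), and since $[\cdot,\cdot]^{(1,\mathfrak{i})}$ is a stratification bracket, it gives~(4). Finally, item~(6) follows because $\pi$ is a Lie algebra homomorphism $(\mathfrak{g},[\cdot,\cdot]^{(\epsilon)})\to(\mathfrak{g}/\mathfrak{i},[\cdot,\cdot]^{(\epsilon,\mathfrak{i})})$ — the defining property of the quotient bracket, legitimate by item~(2) — and the Dynkin product is a universal expression in iterated brackets (a finite sum, since all algebras in sight are nilpotent), so $\pi$ intertwines $*_\epsilon$ with $*_{\epsilon,\mathfrak{i}}$.

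The only genuinely delicate step is item~(1): once one knows that $\mathfrak{i}$ splits as a direct sum of subspaces of the layers $V_j$, every other assertion reduces to a diagram chase combined with the already-established polynomial dependence of $[\cdot,\cdot]^{(\epsilon)}$ on $\epsilon$. The secondary technical nuisance is the handling of $\epsilon=0$ in items~(2), (4) and~(5), where one must check that passing to the quotient commutes with the limit of brackets as $\epsilon\to0$; this is routine because $\mathfrak{i}$ is closed and $\pi$ is linear, but it deserves to be written out.
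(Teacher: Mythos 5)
Your argument is correct, and while its overall skeleton matches the paper's (dilation invariance of $\mathfrak{i}$ first, then everything else by formal manipulation plus the $\epsilon\to 0$ limit), several steps take a genuinely different route. For item (1), the paper checks the commutation $\pi\circ\delta_\tau=\overline{\delta_\tau}\circ\pi$ on each layer and concludes $\delta_\tau(\mathfrak{i})\subseteq\ker\pi=\mathfrak{i}$; you instead prove the stronger statement $\mathfrak{i}=\bigoplus_j (V_j\cap\mathfrak{i})$ by a dimension count using that $(\pi(V_j))_j$ is a grading of $\mathfrak{g}/\mathfrak{i}$. This buys you gradedness of $\mathfrak{i}$, which you then exploit in item (3) via the identity $w-\pi_1(w)=\delta_\epsilon\bigl(u-\pi_1(u)\bigr)$ together with \eqref{def: CQI: first equivalence of condition 3}, whereas the paper re-runs the commutation argument with $\pi\circ\pi_1$; both are fine. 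For items (4)--(5) the paper proves stratification of the quotient for each fixed $\epsilon$ and then compares brackets componentwise, while you identify $[\cdot,\cdot]^{(\epsilon,\mathfrak{i})}$ with the $\epsilon$-member of the dilation family generated by $[\cdot,\cdot]^{(1,\mathfrak{i})}$ on the stratified quotient, so constancy in $\epsilon\neq 0$ follows from dilations being automorphisms, and $\epsilon=0$ from continuity of $\pi$; a small remark is that the descent of $\delta_\epsilon$ to $\overline{\delta_\epsilon}$ needs only the quotient grading from Definition~\ref{def: Carnot quotient ideal}, not item (1), but since you have (1) anyway this is harmless. Finally, for item (6) the paper integrates $\pi$ using simple connectedness and the identity exponential maps, while you invoke the universality of the (finite, by nilpotency) Dynkin series under Lie algebra homomorphisms; your version is more elementary and equally valid. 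As you note, the $\epsilon=0$ cases in (2), (4), (5) reduce to closedness of $\mathfrak{i}$ and linearity of $\pi$ together with Lemma~\ref{lem: brackets extend to eps=0}, exactly as in the paper.
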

\begin{proof}
 (1). For $\tau >0$, denote by $\overline{\delta_\tau}: \mathfrak{g}/\mathfrak{i} \rightarrow \mathfrak{g}/\mathfrak{i}$ the dilation with respect to $(\pi(V_j))_{j=1}^s$ on the quotient. We claim that
 \begin{align*}
  \overline{\delta_\tau} \circ \pi = \pi \circ \delta_\tau, \quad \forall \tau \in \mathbb{R}_{> 0}.
 \end{align*}
 Indeed, for $v\in V_j$, we have
 \begin{align*}
  \pi(\delta_\tau v)=\pi(\tau^jv)=\tau^j\pi(v)=\overline{\delta_\tau} (\pi(v)),
 \end{align*}
since $\overline{\delta_\tau}|_{\pi(V_j)}$ is a multiplication by $\tau^j$. The claim follows by linearity. Consequently
\[\{0\} = \overline{\delta_\tau}(\pi(\mathfrak{i))=\pi(\delta_\tau(\mathfrak{i))}}.\]
Thus $\delta_\tau(\mathfrak{i})\subseteq \ker\pi = \mathfrak{i}$.

(2). For $\epsilon>0$, it follows from the dilation invariance of $\mathfrak{i}$ that
\begin{align*}
 [\mathfrak{g}, \mathfrak{i}]^{(\epsilon)} &\overset{\text{def}}{=} \delta_\epsilon[\delta_{\epsilon^{-1}}\mathfrak{g}, \delta_{\epsilon^{-1}}\mathfrak{i}] 
 \subseteq \delta_\epsilon[\mathfrak{g}, \mathfrak{i}]
 \subseteq \delta_\epsilon \mathfrak{i}
 \subseteq \mathfrak{i}.
\end{align*}
By Lemma~\ref{lem: brackets extend to eps=0} we also obtain the case $\epsilon = 0.$

(3). Clearly, for every $\epsilon >0$ we have $\pi_1 \circ \delta_\epsilon = \delta_\epsilon \circ \pi_1$. Therefore, for $u \in \Delta$, we have 
\begin{align*}
 \pi \circ \pi_1 \circ \delta_\epsilon (u) &= \pi \circ \delta_\epsilon \circ \pi_1 (u) \\
 &= \overline{\delta_\epsilon} \circ \pi \circ \pi_1 (u) \\
 &\overset{\eqref{def: CQI: first equivalence of condition 3}}{=} \overline{\delta_\epsilon} \circ \pi (u) \\
 &= \pi \circ \delta_\epsilon(u),
\end{align*}
that is, $\pi|_{\Delta^{(\epsilon)}} = \pi \circ \pi_1|_{\Delta^{(\epsilon)}}$.

(4). For $\epsilon=0$, it is clear. For $\epsilon>0$, using that $\mathfrak{g}/\mathfrak{i}$ is stratified by $(\pi(V_j))_{j=1}^s$ and thus $\pi([V_1,V_k]) = \pi(V_{k+1})$ we get
\begin{align*}
 [\pi(V_1),\pi(V_k)]^{(\epsilon, \mathfrak{i})} &= \pi([V_1,V_k]^{(\epsilon)}) \\
 &=\pi(\delta_\epsilon[\delta_{\epsilon^{-1}}V_1,\delta_{\epsilon^{-1}}V_k]) \\
 &=\overline{\delta_\epsilon}(\pi([V_1,V_k])) \\
 &= \overline{\delta_\epsilon}(\pi(V_{k+1}))\\
 &= \pi(V_{k+1}).
\end{align*}

(5). Let $\epsilon \in (0,1]$ and $v\in V_i, w\in V_j$. Then
\begin{align*}
 [\pi(v),\pi(w)]^{(\epsilon,\mathfrak{i})} &= \pi([v,w]^{(\epsilon)}) \\
 &= ([v,w]^{(\epsilon)})_{i+j} + \mathfrak{i} \\
 &= ([v,w])_{i+j} + \mathfrak{i} \\
 &=[v,w]^{(0)} + \mathfrak{i} \\
 &=[\pi(v),\pi(w)]^{(0,\mathfrak{i})},
\end{align*}
where in the second equality we have used that $[\pi(V_1),\pi(V_k)]^{(\epsilon, \mathfrak{i})} = \pi(V_{i+j}).$

(6). Both exponential maps $\exp_{\mathfrak{g}}: \mathfrak{g} \rightarrow \mathfrak{g}$ and $\exp_{\mathfrak{g}/\mathfrak{i}} : \mathfrak{g}/\mathfrak{i} \rightarrow \mathfrak{g}/\mathfrak{i}$ are global diffeomorphisms and are such that $\exp_{\mathfrak{g}} = \text{id}_\mathfrak{g}$ and $\exp_{\mathfrak{g}/\mathfrak{i}} = \text{id}_{\mathfrak{g}/\mathfrak{i}}$. Since $\mathfrak{g}$ is simply connected, there exists a unique Lie group homomorphism $\rho:\mathfrak{g} \rightarrow \mathfrak{g}/\mathfrak{i}$ with $\rho_* = \pi$. We have
\[\rho = \exp_{\mathfrak{g}/\mathfrak{i}} \circ \pi \circ \exp_{\mathfrak{g}}^{-1} = \pi.\]
\end{proof}
Similar properties hold if we replace the asymptotic grading with a tangent grading and the brackets $[\cdot, \cdot]^{(\epsilon)}$ by brackets $\llbracket \cdot, \cdot \rrbracket^{(\epsilon)}$:
\begin{lem}[Properties of Carnot quotient ideals associated to tangent gradings]\label{lem: properties of CQI associated to ALG}
  Let $(\mathfrak{g}, \Delta)$ be a polarized Lie algebra and $(W_i)_i$ a tangent grading with respect to $\Delta$. Let $\mathfrak{i} \triangleleft \mathfrak{g}$ be a Carnot quotient ideal with respect to $(W_i)_i$. Then
 \begin{enumerate}
  \item $\mathfrak{i}$ is dilation invariant.
  \item \label{lem: properties of CQI associated to ALG 2} $\mathfrak{i} \triangleleft (\mathfrak{g},\llbracket \cdot, \cdot\rrbracket^{(\epsilon)})$ is an ideal, for every $\epsilon \in [0,1].$ 
  \item $(\mathfrak{g},\llbracket \cdot,\cdot \rrbracket^{(\epsilon)})/\mathfrak{i}$ is stratified by $(\pi(W_j))_{1\leq j\leq s}$, for every $\epsilon \in [0,1]$.
  \item \label{lem: properties of CQI associated to ALG 4}Denote by $\llbracket\cdot, \cdot\rrbracket^{(\epsilon, \mathfrak{i})}$ the quotient bracket on $(\mathfrak{g},\llbracket\cdot,\cdot\rrbracket^{(\epsilon)})/\mathfrak{i}$. Then \[\llbracket\cdot,\cdot\rrbracket^{(\epsilon,\mathfrak{i})} = \llbracket\cdot,\cdot\rrbracket^{(\tau,\mathfrak{i})}, \quad \forall \epsilon,\tau \in [0,1].\]
  \item \label{lem: properties of CQI associated to ALG 5} For $\epsilon \in [0,1],$ equip $(\mathfrak{g},\llbracket\cdot,\cdot \rrbracket^{(\epsilon)})$ and $(\mathfrak{g}/\mathfrak{i}, \llbracket\cdot,\cdot\rrbracket^{(\epsilon,\mathfrak{i})})$ with the Dynkin products $\cdot_\epsilon$ and $\cdot_{\epsilon,\mathfrak{i}}$. Then $\pi:(\mathfrak{g}, \cdot_{\epsilon}) \rightarrow (\mathfrak{g}/\mathfrak{i}, \cdot_{\epsilon,\mathfrak{i}})$ is a local Lie group homomorphism.
 \end{enumerate}
\end{lem}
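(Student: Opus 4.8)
The plan is to follow the proof of Lemma~\ref{lem: properties of CQI associated to CLG} almost verbatim, replacing the asymptotic grading by the tangent grading $(W_i)_i$ and the brackets $[\cdot,\cdot]^{(\epsilon)}$ by $\llbracket\cdot,\cdot\rrbracket^{(\epsilon)}=[\cdot,\cdot]^{(1/\epsilon)}$. The one substantive difference is that $G$ is not assumed nilpotent, so in item~(5) the Dynkin products are only locally defined and the conclusion is correspondingly a \emph{local} homomorphism.

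For item~(1): since $\mathfrak{g}/\mathfrak{i}$ is stratified by $(\pi(W_j))_j$ it carries its own dilations $\overline{\delta_\tau}$, and evaluating on each layer gives $\overline{\delta_\tau}\circ\pi=\pi\circ\delta_\tau$, whence $\pi(\delta_\tau\mathfrak{i})=\overline{\delta_\tau}(\pi(\mathfrak{i}))=\{0\}$ and so $\delta_\tau\mathfrak{i}\subseteq\ker\pi=\mathfrak{i}$; equivalently $\mathfrak{i}$ is graded, $\mathfrak{i}=\bigoplus_j(\mathfrak{i}\cap W_j)$, a fact used below. For item~(2) with $\epsilon>0$, the identity $\llbracket\mathfrak{g},\mathfrak{i}\rrbracket^{(\epsilon)}=\delta_{1/\epsilon}[\delta_\epsilon\mathfrak{g},\delta_\epsilon\mathfrak{i}]\subseteq\delta_{1/\epsilon}[\mathfrak{g},\mathfrak{i}]\subseteq\delta_{1/\epsilon}\mathfrak{i}\subseteq\mathfrak{i}$ uses only that $\mathfrak{i}$ is an ideal and dilation invariant; the case $\epsilon=0$ follows by letting $\epsilon\to0$, using that $\llbracket\cdot,\cdot\rrbracket^{(\epsilon)}$ depends continuously on $\epsilon$ (Lemma~\ref{lem: brackets extend to eps=0}) and that $\mathfrak{i}$ is closed.

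For item~(3) with $\epsilon\in(0,1]$ I would use the defining property of a Carnot quotient ideal relative to the original bracket $[\cdot,\cdot]=\llbracket\cdot,\cdot\rrbracket^{(1)}$, namely $\pi([W_1,W_k])=\pi(W_{k+1})$ for $k<s$ and $\pi([W_1,W_s])=0$; then $[\pi(W_1),\pi(W_k)]^{(\epsilon,\mathfrak{i})}=\pi(\llbracket W_1,W_k\rrbracket^{(\epsilon)})=\pi(\delta_{1/\epsilon}[W_1,W_k])=\overline{\delta_{1/\epsilon}}(\pi(W_{k+1}))=\pi(W_{k+1})$, while $[W_1,W_s]\subseteq\mathfrak{i}$ forces $\llbracket W_1,W_s\rrbracket^{(\epsilon)}=\delta_{1/\epsilon}[W_1,W_s]\subseteq\mathfrak{i}$ by dilation invariance, so $[\pi(W_1),\pi(W_s)]^{(\epsilon,\mathfrak{i})}=0$; the case $\epsilon=0$ is immediate, since $(\mathfrak{g},\llbracket\cdot,\cdot\rrbracket^{(0)})$ is by construction (Section~\ref{subsection: osculating carnot group}, Lemma~\ref{lem: brackets extend to eps=0}) stratified by $(W_j)_j$ and $\mathfrak{i}$ is a graded ideal of it. For item~(4), item~(3) tells us that for each $\epsilon$ the quotient bracket is graded, $[\pi(W_i),\pi(W_j)]^{(\epsilon,\mathfrak{i})}\subseteq\pi(W_{i+j})$; combining this with the expansion $\llbracket w,z\rrbracket^{(\epsilon)}=\sum_{k\le i+j}\epsilon^{i+j-k}([w,z])_k$ for $w\in W_i$, $z\in W_j$ taken from the proof of Lemma~\ref{lem: brackets extend to eps=0}, and using that $\mathfrak{g}/\mathfrak{i}=\bigoplus_\ell\pi(W_\ell)$ is a direct sum (item~(1)), all summands with $k<i+j$ are forced to map to $0$ under $\pi$, leaving $\llbracket\pi(w),\pi(z)\rrbracket^{(\epsilon,\mathfrak{i})}=\pi(([w,z])_{i+j})$, which is independent of $\epsilon\in(0,1]$ and agrees with the $\epsilon=0$ value $\pi(\llbracket w,z\rrbracket^{(0)})$.

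Finally, for item~(5): by definition of the quotient bracket, $\pi$ is a surjective Lie algebra homomorphism $(\mathfrak{g},\llbracket\cdot,\cdot\rrbracket^{(\epsilon)})\to(\mathfrak{g}/\mathfrak{i},\llbracket\cdot,\cdot\rrbracket^{(\epsilon,\mathfrak{i})})$, and both Dynkin products $\cdot_\epsilon$ and $\cdot_{\epsilon,\mathfrak{i}}$ are given by the same universal series in the respective brackets; applying $\pi$ termwise yields $\pi(x\cdot_\epsilon y)=\pi(x)\cdot_{\epsilon,\mathfrak{i}}\pi(y)$ on a neighborhood of $0$ where the series converge, i.e. $\pi$ is a local Lie group homomorphism. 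This is the only point where non-nilpotency intervenes: unlike in the nilpotent setting of Lemma~\ref{lem: properties of CQI associated to CLG}, one cannot upgrade this to a global homomorphism via the identity exponential map. I expect no serious obstacle; the only care needed is to track the inversion $\epsilon\leftrightarrow1/\epsilon$ consistently and to observe that the low-order terms in $[W_i,W_j]\subseteq W_{\le i+j}$ — rather than in $W_{\ge i+j}$, as for asymptotic gradings — are killed by $\pi$, which is exactly what the direct-sum decomposition invoked in item~(4) provides.
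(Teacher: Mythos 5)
Your proof is correct and follows essentially the same route as the paper: items (1)--(4) transpose the corresponding arguments of Lemma~\ref{lem: properties of CQI associated to CLG} to the inverted dilations, exactly as the paper does, and the extra details you supply (gradedness of $\mathfrak{i}$, the top-stratum bracket in (3), the direct-sum cancellation in (4)) are the same reasoning the paper carries out in the asymptotic case. For item (5) the paper phrases the argument as $\exp_{\mathfrak{g}/\mathfrak{i}} \circ \pi \circ \exp_{\mathfrak{g}}^{-1} = \pi$, using that both exponentials are the identity near the origin, whereas you push $\pi$ termwise through the Dynkin series; these are interchangeable formulations of the same functoriality of the BCH product under the Lie algebra homomorphism $\pi$, so the approach is not genuinely different.
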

\begin{proof}
 Claims (1), (2), (3), and (4) follow from the same arguments of the proofs of Lemma~\ref{lem: properties of CQI associated to CLG} (1), (2), (4), and (5) respectively.

 Regarding (5), both exponential maps $\exp_{\mathfrak{g}}: \mathfrak{g} \rightarrow \mathfrak{g}$ and $\exp_{\mathfrak{g}/\mathfrak{i}} : \mathfrak{g}/\mathfrak{i} \rightarrow \mathfrak{g}/\mathfrak{i}$ are local diffeomorphisms and are such that $\exp_{\mathfrak{g}} = \text{id}_\mathfrak{g}$ and $\exp_{\mathfrak{g}/\mathfrak{i}} = \text{id}_{\mathfrak{g}/\mathfrak{i}}$ near the origin. Hence the map $\exp_{\mathfrak{g}/\mathfrak{i}} \circ \pi \circ \exp_{\mathfrak{g}}^{-1} = \pi$ is a local Lie group homomorphism.
\end{proof}

\section{Grönwall Lemma type results}\label{section: gronwall}
Grönwall Lemma-type results are used to bound from above the distance of the endpoints of two curves starting from the same point, given a uniform bound on the difference of their derivatives. The main technical difficulty in this regard is the fact that we need to compare derivatives at distinct points. The prototypical Grönwall Lemma in our context is the following version, whose proof can be found in \cite[Lemma~A.1]{ALDNG22} or in \cite[Lemma~12.2.2]{Don25}.
\begin{lem}[Grönwall]\label{lem: base Gronwall}
 Let $\Omega \subseteq \mathbb{R}^n$ and $X,Y: \Omega \times[0,T] \rightarrow \mathbb{R}^n$. Let $\norm{\cdot}$ be a norm on $\mathbb{R}^n$. Suppose that there are $E,K>0$ such that for all $p,q \in \Omega$ and all $t\in [0,T]$
 \[\norm{X(p,t)-Y(q,t)} \leq E + K\norm{p-q}.\]
If $\eta,\gamma : [0,T] \rightarrow \Omega$ are integral curves of $X$ and $Y$, respectively, and satisfy $\eta(0)=\gamma(0)$, then
 \[\norm{\gamma(t)-\eta(t)} \leq E \frac{\e^{Kt}-1}{K}, \quad \forall t\in [0,T].\]
\end{lem}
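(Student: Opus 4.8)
The plan is to reduce the vector inequality to a scalar differential inequality for the function $\phi(t):=\norm{\gamma(t)-\eta(t)}$ and then run the classical one-dimensional Grönwall argument by means of an integrating factor. First I would observe that $\gamma$ and $\eta$ are absolutely continuous (being integral curves of $X$ and $Y$), hence so is $t\mapsto\gamma(t)-\eta(t)$, and therefore $\phi$ is absolutely continuous as well, since it is the composition of an absolutely continuous curve with the norm, which is globally Lipschitz. In particular $\phi$ is differentiable at almost every $t$, we have $\phi(t)=\int_0^t\phi'(s)\,\dd s$, and $\phi(0)=0$ by the hypothesis $\eta(0)=\gamma(0)$.

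Next I would bound $\phi'$ pointwise. Using the elementary inequality $\bigl|\norm{a}-\norm{b}\bigr|\le\norm{a-b}$ on difference quotients, at every $t$ where $\phi$, $\gamma$ and $\eta$ are differentiable one gets
\[\phi'(t)\le\norm{\gamma'(t)-\eta'(t)}=\norm{Y(\gamma(t),t)-X(\eta(t),t)}.\]
Applying the assumed estimate with $p=\eta(t)$ and $q=\gamma(t)$ (and symmetry of the norm) yields the scalar differential inequality $\phi'(t)\le E+K\phi(t)$ for almost every $t\in[0,T]$, together with the initial condition $\phi(0)=0$.

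Finally I would integrate this inequality. Set $\psi(t):=\e^{-Kt}\bigl(\phi(t)+\tfrac{E}{K}\bigr)$, which is absolutely continuous with
\[\psi'(t)=\e^{-Kt}\bigl(\phi'(t)-K\phi(t)-E\bigr)\le 0\quad\text{for a.e. }t,\]
so $\psi$ is non-increasing and $\psi(t)\le\psi(0)=\tfrac{E}{K}$. Rearranging gives $\phi(t)\le\tfrac{E}{K}\bigl(\e^{Kt}-1\bigr)$, which is exactly the asserted bound. The only mildly delicate step is the passage from the hypothesis, stated at distinct points $p\neq q$, to the pointwise derivative estimate for $\phi$ when the integral curves are merely absolutely continuous rather than $C^1$; this is precisely where one uses that the norm is $1$-Lipschitz, so that $\phi$ inherits absolute continuity and its a.e. derivative is controlled by $\norm{\gamma'-\eta'}$. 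Everything after that is the textbook integrating-factor computation.
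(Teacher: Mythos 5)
Your proof is correct: reducing to the scalar differential inequality $\phi'\le E+K\phi$ via the $1$-Lipschitz property of the norm, and then closing with an integrating factor, is the standard argument, and you have handled the a.e.\ differentiability of $\phi$ carefully. Note that the paper does not prove this lemma itself but only cites \cite[Lemma~A.1]{ALDNG22} and \cite[Lemma~12.2.2]{Don25}, whose proofs proceed along the same lines as yours.
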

In this section, we prove two sharper versions of Lemma~\ref{lem: base Gronwall} under some more restrictive hypotheses that are sufficient for the application of the results to the proofs of the sharper Pansu's and Mitchell's theorems.

\subsection{Grönwall Lemma for Pansu Asymptotic Theorem}\label{subsection: gronwall pansu}
We prove a version of Grönwall Lemma adapted to the context of Pansu's asymptotic theorem (see Lemma~\ref{lem: Pansu Gronwall}). First, we need to introduce some constants.

\begin{definition}\label{def: constant alpha pansu}
 Let $(\mathfrak{g},\Delta)$ be a polarized nilpotent Lie algebra $\mathfrak{g}$ and $(V_i)_{i=1}^s$ an asymptotic grading. We define
 \begin{itemize}
  \item $\alpha_{(1,\infty)} \coloneqq \max \{j \in \mathbb{N}: [V_{p_1},\ldots,V_{p_k}] \subseteq V_{|p|} \oplus V_{\geq |p|+j}, \; \forall k \in \mathbb{N}, \; \forall p \in \mathbb{N}^k\}.$
  \item $\alpha_{(2,\infty)} \coloneqq \max \{j \in \mathbb{N} \, : \, v-\pi_1(v) \in \mathfrak{g}^{(j+1)}, \, \forall v \in \Delta \}.$
  \item $\alpha_{\infty} \coloneqq \min\{\alpha_{(1,\infty)}, \alpha_{(2,\infty)}\}.$
 \end{itemize}
\end{definition}
The constant $\alpha_{(1,\infty)}$ is a slight variation of the constant $e_D$ introduced by Cornullier in \cite{Cor17}, and can be thought of as quantifying how stratifiable the asymptotic grading is. Observe that $(V_1,\ldots,V_s)$ is a stratification if and only if $\alpha_{(1,\infty)}= + \infty.$ The constant $\alpha_{(2,\infty)}$ can be viewed as quantifying how close $\Delta$ is from being the first strata of a stratification. Note that $\alpha_{(2,\infty)} = + \infty$ if and only if $\Delta = V_1.$ It follows that $(G, \Delta)$ is a (metric) Carnot group if and only if $\alpha_{\infty} = + \infty.$
\begin{example}
    \begin{enumerate}
        \item Let $G$ be a simply connected nilpotent stratifiable Lie group equipped with a stratification $(V_i)_i$. Suppose $G$ is equipped with the polarization $\Delta = V_1 \oplus V_2$. Then 
        \[\alpha_{(1,\infty)} = + \infty, \; \alpha_{(2,\infty)} = \alpha_\infty = 1.\]
        \item Let $G$ be a simply connected nilpotent non-stratifiable Lie group equipped with an asymptotic grading $(V_i)_i$. Suppose $G$ is equipped with the polarization $\Delta = V_1.$ Then
        \[\alpha_\infty= \alpha_{(1,\infty)} < \alpha_{(2,\infty)} = +\infty.\]
    \end{enumerate}
\end{example}

\begin{prop}\label{prop: alpha_inf < beta}
 If a nilpotent sub-Finsler Lie group $(G,\Delta, \norm{\cdot})$ with an asymptotic grading $(V_i)_{i=1}^s$ is not a Carnot group, then
 \begin{equation}\label{prop: eq: alpha_inf < beta}
  \nonumber \alpha_\infty < \beta,
 \end{equation}
 where $\alpha_\infty$ and $\beta$ are the constants from Definition~\ref{def: constant alpha pansu} and~\ref{def: constant beta}.
\end{prop}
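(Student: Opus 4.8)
The plan is as follows. Since $(G,\Delta,\norm{\cdot})$ is not a Carnot group, Example~\ref{example: CQI}~\eqref{example: CQI 1} shows that $\{0\}$ is not a Carnot quotient ideal, so $\beta\geq 1$, and the discussion after Definition~\ref{def: constant alpha pansu} shows that $\alpha_\infty<\infty$; in particular at least one of $\alpha_{(1,\infty)},\alpha_{(2,\infty)}$ is finite. Since $\alpha_\infty=\min\{\alpha_{(1,\infty)},\alpha_{(2,\infty)}\}$, it therefore suffices to establish the two implications
\begin{equation*}
 \alpha_{(1,\infty)}<\infty \implies \alpha_{(1,\infty)}<\beta
 \qquad\text{and}\qquad
 \alpha_{(2,\infty)}<\infty \implies \alpha_{(2,\infty)}<\beta.
\end{equation*}
For both I fix a Carnot quotient ideal $\mathfrak{i}\subseteq V_{\leq\beta}$ realizing the minimum in Definition~\ref{def: constant beta}, denote by $\pi\colon\mathfrak{g}\to\mathfrak{g}/\mathfrak{i}$ the quotient homomorphism, and recall that $\mathfrak{g}/\mathfrak{i}$ is stratified by $(\pi(V_j))_j$ and that $\Delta\subseteq V_1+\mathfrak{i}$. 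I will use repeatedly that the asymptotic grading condition yields $\mathfrak{g}^{(j)}=V_j\oplus V_{j+1}\oplus\cdots\oplus V_s$ for all $j$, so that $\pi_1$ is precisely the projection onto the layer $V_1$ along $\mathfrak{g}^{(2)}$.

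For the implication on $\alpha_{(2,\infty)}$: assume it is finite and set $a\coloneqq\alpha_{(2,\infty)}$. By definition there is $v\in\Delta$ with $v-\pi_1(v)\in\mathfrak{g}^{(a+1)}\setminus\mathfrak{g}^{(a+2)}$, so in particular $v-\pi_1(v)\neq 0$. Writing $v=u+x$ with $u\in V_1$ and $x\in\mathfrak{i}$, and using $\pi_1|_{V_1}=\id$ together with $\mathfrak{i}\subseteq V_{\leq\beta}$, we obtain $v-\pi_1(v)=x-\pi_1(x)=(x)_2+\cdots+(x)_\beta\in V_2\oplus\cdots\oplus V_\beta$. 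On the other hand $v-\pi_1(v)\in\mathfrak{g}^{(a+1)}=V_{a+1}\oplus\cdots\oplus V_s$. A nonzero element cannot lie in both $V_2\oplus\cdots\oplus V_\beta$ and $V_{a+1}\oplus\cdots\oplus V_s$ unless these layer ranges overlap, which forces $a+1\leq\beta$, i.e.\ $\alpha_{(2,\infty)}<\beta$.

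For the implication on $\alpha_{(1,\infty)}$: assume it is finite and set $a\coloneqq\alpha_{(1,\infty)}$. Unwinding the definition at the indices $j=a$ and $j=a+1$ produces $k\in\mathbb{N}$, $p\in\mathbb{N}^k$ and vectors $v_{p_i}\in V_{p_i}$ such that the iterated bracket $w\coloneqq[v_{p_1},\ldots,v_{p_k}]$ satisfies $(w)_m=0$ for $|p|<m<|p|+a$ while $(w)_{|p|+a}\neq 0$; moreover $(w)_m=0$ for $m<|p|$ by Proposition~\ref{prop: properties of linear gradings}~\eqref{prop: bracket of CLG containment}. Applying the homomorphism $\pi$ and using that $(\pi(V_j))_j$ stratifies $\mathfrak{g}/\mathfrak{i}$ — so that iterated brackets of strata obey $[\pi(V_{p_1}),\ldots,\pi(V_{p_k})]\subseteq\pi(V_{|p|})$, with layers of index exceeding $s$ understood to be zero — we get $\pi(w)\in\pi(V_{|p|})$. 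Comparing layers in the direct sum $\mathfrak{g}/\mathfrak{i}=\bigoplus_j\pi(V_j)$ then forces $\pi\big((w)_{|p|+a}\big)=0$, i.e.\ $0\neq(w)_{|p|+a}\in\mathfrak{i}\cap V_{|p|+a}$. Since $\mathfrak{i}\subseteq V_{\leq\beta}$ and the $V_j$ are in direct sum, this is possible only if $|p|+a\leq\beta$, and as $|p|\geq 1$ we conclude $\alpha_{(1,\infty)}=a<\beta$.

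I expect the main obstacle to be the $\alpha_{(1,\infty)}$ step: one must read off precisely that the first possibly-nonzero off-diagonal layer of an iterated bracket occurs exactly at index $|p|+\alpha_{(1,\infty)}$ (which is exactly the content of the defining conditions at $j=\alpha_{(1,\infty)}$ and $j=\alpha_{(1,\infty)}+1$), and one needs the standard fact that in a stratified Lie algebra $[\pi(V_i),\pi(V_j)]\subseteq\pi(V_{i+j})$, which follows from the stratification axioms by a short induction using the Jacobi identity. Everything else reduces to bookkeeping with the decomposition $\mathfrak{g}=\bigoplus_i V_i$ and to the fact that $\pi$ is a Lie algebra homomorphism because $\mathfrak{i}$ is an ideal; the edge case $\beta=1$ is handled automatically, since then each displayed inequality forces the corresponding $\alpha$ to be infinite.
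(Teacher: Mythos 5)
Your proof is correct and takes a somewhat different route from the paper. The paper splits into two cases according to whether $\Delta = V_1$: when $\Delta \neq V_1$ it proves $\alpha_{(2,\infty)} < \beta$ (hence $\alpha_\infty < \beta$), and when $\Delta = V_1$ it establishes the uniform containment $[V_{p_1},\ldots,V_{p_k}] \subseteq V_{|p|} \oplus V_{|p|+\alpha_{(1,\infty)}} \oplus \cdots \oplus V_{|p|+\beta-2}$ for all iterated brackets and concludes $\alpha_{(1,\infty)} \leq \beta - 2$ by noting that otherwise Proposition~\ref{prop: properties of linear gradings}\eqref{prop: CLG + algebra grading is stratifictaion} would make $(V_i)$ a stratification, contradicting the non-Carnot hypothesis. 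You avoid the case split and prove the slightly stronger fact that each of $\alpha_{(1,\infty)}$ and $\alpha_{(2,\infty)}$ is $<\beta$ whenever finite. For $\alpha_{(2,\infty)}$ the two arguments are essentially the same. For $\alpha_{(1,\infty)}$, instead of the contradiction via the stratification criterion, you exhibit a witness $w$ whose $(|p|+\alpha_{(1,\infty)})$-th layer is nonzero, push it through $\pi$, and observe that the off-diagonal piece must land in $\mathfrak{i}\cap V_{|p|+\alpha_{(1,\infty)}}$, forcing $|p|+\alpha_{(1,\infty)} \leq \beta$; this is more constructive and bypasses Proposition~\ref{prop: properties of linear gradings}\eqref{prop: CLG + algebra grading is stratifictaion} entirely. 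One small sharpening: since a nontrivial witness requires $k \geq 2$, you actually have $|p| \geq 2$ and hence $\alpha_{(1,\infty)} \leq \beta - 2$, matching the paper's tighter bound; your stated ``$|p| \geq 1$'' still yields the required strict inequality, so nothing is lost.
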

\begin{proof}
 Since $(G,\Delta, \norm{\cdot})$ is not Carnot, $\beta \geq 1$. Let $\mathfrak{i} \subseteq V_{\leq \beta}$ be a Carnot quotient ideal. We distinguish two cases.
 
 \textbf{Case 1}: $\Delta \neq V_1$. On the one hand, since $\mathfrak{i}$ is a Carnot quotient ideal, we have
 \[\Delta \overset{\ref{def: Carnot quotient ideal}\eqref{def: CQI projection}}{\subseteq} V_1 + \mathfrak{i} \subseteq V_{\leq \beta}.\]
 On the other hand, by definition of $\alpha_{(2,\infty)}$ we have
 \[\Delta \subseteq V_1 \oplus V_{\geq \alpha_{(2,\infty)} + 1}.\]
 Therefore, since $\Delta \neq V_1$, we have $\alpha_{(2,\infty)} +1\leq \beta$, and thus $\alpha_\infty \leq \alpha_{(2,\infty)} < \beta.$

 \textbf{Case 2}: $\Delta = V_1$. Let $k \in \{2,\ldots,s-1\}$ and $p \in \mathbb{N}^k$. By Definition~\ref{def: Carnot quotient ideal}:
\begin{align*}
 [V_{p_1},\ldots,V_{p_k}] +\mathfrak{i} &\overset{\ref{def: Carnot quotient ideal}\eqref{def: CQI 1 ideal}}{=} [V_{p_1}+ \mathfrak{i},\ldots, V_{p_k}+ \mathfrak{i}]\\
 &\overset{\ref{def: Carnot quotient ideal}\eqref{def: CQI 2 stratification}}{\subseteq} V_{|p|}+ \mathfrak{i}.
\end{align*}
Therefore,
\[[V_{p_1},\ldots,V_{p_k}] \subseteq V_{|p|} + \mathfrak{i} \subseteq V_{|p|} + V_{\leq \beta}.\]
Using Proposition~\ref{prop: properties of linear gradings}, we obtain
\[[V_{p_1},\ldots,V_{p_k}] \overset{\ref{prop: properties of linear gradings}\eqref{prop: bracket of CLG containment}}{\subseteq}V_{|p|} \oplus V_{|p|+1} \oplus\cdots\oplus V_{\beta}.\]
In particular, since $|p|\geq 2$, we have
\[[V_{p_1},\ldots,V_{p_k}] \subseteq V_{|p|} \oplus V_{|p|+1} \oplus\cdots\oplus V_{|p| + \beta-2}.\]
Moreover, by definition of $\alpha_{(1,\infty)}$:
\[[V_{p_1},\ldots,V_{p_k}] \subseteq V_{|p|} \oplus V_{|p|+\alpha_{(1,\infty)}} \oplus\cdots\oplus V_{|p| + \beta-2}.\]
So $\alpha_{(1,\infty)} \leq \beta -2$, otherwise $\text{Lie}(G)$ would be stratifiable (and thus $(G,d)$ would be a Carnot group) by Proposition~\ref{prop: properties of linear gradings}\eqref{prop: CLG + algebra grading is stratifictaion}.
\end{proof}
\begin{lem}\label{lem: difference of brackets pansu}
 Let $\mathfrak{g}$ be a polarized nilpotent Lie algebra equipped with an asymptotic grading which induces projections $x\mapsto(x)_j$. Let $n \in \mathbb{N}, p\in \mathbb{N}^n,$ and $x_1,\ldots, x_n \in \mathfrak{g}.$ Then
 \begin{align}\label{difference of brackets}
  [(x_1)_{p_1},\ldots,(x_n)_{p_n}]-[(x_1)_{p_1},\ldots,(x_n)_{p_n}]^{(0)} = \sum_{j={|p|+\alpha_{(1,\infty)}}}^s([(x_1)_{p_1},\ldots,(x_n)_{p_n}])_j.
 \end{align}
\end{lem}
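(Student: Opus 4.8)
The plan is to introduce the shorthand $X \coloneqq [(x_1)_{p_1},\ldots,(x_n)_{p_n}]$ for the left-iterated bracket on the left-hand side of \eqref{difference of brackets} and $X^{(0)} \coloneqq [(x_1)_{p_1},\ldots,(x_n)_{p_n}]^{(0)}$ for the same iterated bracket computed with the limit bracket $[\cdot,\cdot]^{(0)}$, and to reduce the identity to two claims: \emph{(A)} $X^{(0)} = (X)_{|p|}$, and \emph{(B)} $(X)_j = 0$ whenever $|p| < j < |p| + \alpha_{(1,\infty)}$. Granting (A) and (B), the identity follows at once: since each $(x_i)_{p_i}\in V_{p_i}\subseteq\mathfrak{g}^{(p_i)}$ and $[\mathfrak{g}^{(a)},\mathfrak{g}^{(b)}]\subseteq\mathfrak{g}^{(a+b)}$, we get $X\in\mathfrak{g}^{(|p|)}=V_{\ge|p|}$, so $(X)_j=0$ for $j<|p|$ as well, and therefore $X - X^{(0)} = X - (X)_{|p|} = \sum_{j=|p|+\alpha_{(1,\infty)}}^{s}(X)_j$. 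This also covers the degenerate ranges $|p|>s$ and $|p|+\alpha_{(1,\infty)}>s$ with no extra work.

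Claim (B) is essentially a restatement of Definition~\ref{def: constant alpha pansu}: the element $X$ lies in $[V_{p_1},\ldots,V_{p_n}]$, which by the very definition of $\alpha_{(1,\infty)}$ is contained in $V_{|p|}\oplus V_{\ge|p|+\alpha_{(1,\infty)}}$, so no layer $V_j$ with $|p|<j<|p|+\alpha_{(1,\infty)}$ receives a component of $X$.

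Claim (A) I would prove by induction on $n$, peeling off the leftmost entry; the case $n=1$ is trivial. For the inductive step put $W \coloneqq [(x_2)_{p_2},\ldots,(x_n)_{p_n}]$ and $q \coloneqq p_2+\cdots+p_n$, so that $W\in\mathfrak{g}^{(q)}=V_{\ge q}$, whence $W-(W)_q\in\mathfrak{g}^{(q+1)}$, and by the inductive hypothesis $W^{(0)}=(W)_q$. The two ingredients are: (i) for $v\in V_i$, $w\in V_j$ one has $[v,w]^{(0)}=([v,w])_{i+j}$ — this is how the limit bracket was built in Section~\ref{subsection: asymptotic carnot group}, cf.\ the explicit computation in the proof of Lemma~\ref{lem: brackets extend to eps=0}; and (ii) $[V_{p_1},\mathfrak{g}^{(q+1)}]\subseteq\mathfrak{g}^{(p_1+q+1)}=V_{\ge|p|+1}$. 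Using (i) with $v=(x_1)_{p_1}$ and $w=(W)_q$,
\[
 X^{(0)} = \bigl[(x_1)_{p_1},W^{(0)}\bigr]^{(0)} = \bigl[(x_1)_{p_1},(W)_q\bigr]^{(0)} = \Bigl(\bigl[(x_1)_{p_1},(W)_q\bigr]\Bigr)_{|p|},
\]
while on the other hand $X = \bigl[(x_1)_{p_1},W\bigr] = \bigl[(x_1)_{p_1},(W)_q\bigr] + \bigl[(x_1)_{p_1},W-(W)_q\bigr]$, and by (ii) the second summand lies in $V_{\ge|p|+1}$, hence does not contribute to the $V_{|p|}$-layer. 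Comparing, $(X)_{|p|} = \bigl(\bigl[(x_1)_{p_1},(W)_q\bigr]\bigr)_{|p|} = X^{(0)}$, which is (A).

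The only point that needs care is the bookkeeping of layers — repeatedly invoking $\mathfrak{g}^{(j)}=V_{\ge j}$ (a consequence of Definition~\ref{def: compatible linear grading}) and $[\mathfrak{g}^{(a)},\mathfrak{g}^{(b)}]\subseteq\mathfrak{g}^{(a+b)}$ to see which layers an iterated bracket can occupy — and I do not anticipate any genuine obstacle beyond carefully tracking indices.
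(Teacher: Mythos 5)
Your proof is correct and follows essentially the same route as the paper: an induction on $n$ showing $[(x_1)_{p_1},\ldots,(x_n)_{p_n}]^{(0)}=([(x_1)_{p_1},\ldots,(x_n)_{p_n}])_{|p|}$, combined with the defining property of $\alpha_{(1,\infty)}$ to eliminate the layers strictly between $|p|$ and $|p|+\alpha_{(1,\infty)}$. Your decomposition $W=(W)_q+(W-(W)_q)$ in the inductive step is just a notational variant of the paper's layer-by-layer argument via Proposition~\ref{prop: properties of linear gradings}\eqref{prop: bracket of CLG containment}.
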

\begin{proof}
We show by induction on $n\in \mathbb{N}$ that 
\begin{equation}\label{eq: lem: difference of brackets pansu 1}
 [(x_1)_{p_1},\ldots,(x_n)_{p_n}]^{(0)} = ([(x_1)_{p_1},\ldots,(x_n)_{p_n}])_{|p|}, \quad \forall p \in \mathbb{N}^n, \; \forall x_1,\ldots,x_n \in \mathfrak{g}.
\end{equation}
By definition of the bracket $[\cdot, \cdot]^{(0)}$, we have for $x,y \in \mathfrak{g}$ and $j,k \in \mathbb{N}$:
\begin{equation}\label{eq: lem: difference of brackets pansu 2}
 [(x)_j, (y)_k]^{(0)} = ([(x)_j,(y)_k])_{j+k}.
\end{equation}
Now suppose that the claim is true for brackets of length up to $n-1$. Using the properties of asymptotic gradings, we obtain
\begin{align*}
 ([(x_1)_{p_1},\ldots,(x_n)_{p_n}])_{|p|} &\overset{\ref{prop: properties of linear gradings} \eqref{prop: bracket of CLG containment}}{=} \sum_{j=|p|-p_1}^{s}([(x_1)_{p_1},([(x_2)_{p_2},\ldots,(x_n)_{p_n}])_{j}])_{|p|} \\
 &\overset{\ref{prop: properties of linear gradings} \eqref{prop: bracket of CLG containment}}{=} ([(x_1)_{p_1},([(x_2)_{p_2},\ldots,(x_n)_{p_n}])_{|p|-p_1}])_{|p|}\\
 &\overset{\eqref{eq: lem: difference of brackets pansu 1}}{=} ([(x_1)_{p_1},([(x_2)_{p_2},\ldots,(x_n)_{p_n}]^{(0)})_{|p|-p_1}])_{|p|} \\
 &\overset{\eqref{eq: lem: difference of brackets pansu 2}}{=}[(x_1)_{p_1},\ldots,(x_n)_{p_n}]^{(0)}.
\end{align*}
Now, by definition of $\alpha_{(1,\infty)}$, we have
\begin{align*}
  [(x_1)_{p_1},\ldots,(x_n)_{p_n}]-[(x_1)_{p_1},\ldots,(x_n)_{p_n}]^{(0)} &\overset{\ref{prop: properties of linear gradings} \eqref{prop: bracket of CLG containment}, \eqref{eq: lem: difference of brackets pansu 1}}{=} \sum_{j={|p|+1}}^s([(x_1)_{p_1},\ldots,(x_n)_{p_n}])_j \\
  &= \sum_{j={|p|+\alpha_{(1,\infty)}}}^s([(x_1)_{p_1},\ldots,(x_n)_{p_n}])_j.
\end{align*}
\end{proof}
\begin{lem}\label{lem: difference of product Pansu}
 Let $\mathfrak{g}$ be a polarized nilpotent Lie algebra equipped with an asymptotic grading which induces projections $x\mapsto(x)_j$. Let $x_1,x_2 \in \mathfrak{g}$ and $\epsilon \in [0,1].$ Then 
 \begin{align}\label{lem: eq: difference of product Pansu}
   x_1*_\epsilon x_2 - x_1 *_0 x_2 = \sum_{k = 2}^{s} \sum_{q \in \{1,2\}^k} \sum_{p \in \{1,\ldots,s\}^k } \sum_{j=|p|+\alpha_{(1,\infty)}}^s b_{k,q} \epsilon^{j-|p|} (\lbrack (x_{q_1})_{p_1},\ldots,(x_{q_k})_{p_k} \rbrack)_j,
 \end{align}
 where $b_{k,q}$ are the constants in the BCH formula~\eqref{def: BCH}.
\end{lem}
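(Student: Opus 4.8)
The plan is to expand both Dynkin products through the Baker--Campbell--Hausdorff formula, cancel the common linear part, and reduce the claim to an identity about iterated brackets of homogeneous components. Since $\mathfrak{g}$ is $s$-step nilpotent and, for $\epsilon\neq 0$, the bracket $[\cdot,\cdot]^{(\epsilon)}$ is the conjugate of $[\cdot,\cdot]$ by the linear isomorphism $\delta_\epsilon$ (hence nilpotent of step $\leq s$, a property that persists at $\epsilon=0$ by Lemma~\ref{lem: brackets extend to eps=0}), the Dynkin series for $*_\epsilon$ truncates:
\[
 x_1 *_\epsilon x_2 = x_1 + x_2 + \sum_{k=2}^{s}\sum_{q\in\{1,2\}^k} b_{k,q}\,[x_{q_1},\ldots,x_{q_k}]^{(\epsilon)},
\]
where the iterated bracket is taken with respect to $[\cdot,\cdot]^{(\epsilon)}$, and likewise for $*_0$. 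Subtracting and using multilinearity of $[\cdot,\cdot]^{(\epsilon)}$ and $[\cdot,\cdot]^{(0)}$ to expand each $x_{q_i}=\sum_{p_i=1}^s (x_{q_i})_{p_i}$, the statement reduces to showing that for $w_i\in V_{p_i}$ one has
\[
 [w_1,\ldots,w_k]^{(\epsilon)} - [w_1,\ldots,w_k]^{(0)} = \sum_{j=|p|+\alpha_{(1,\infty)}}^{s} \epsilon^{\,j-|p|}\,([w_1,\ldots,w_k])_j .
\]

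To obtain this identity I would first note, by an easy induction on $k$ from the definition $[x,y]^{(\epsilon)} = \delta_\epsilon[\delta_\epsilon^{-1}x,\delta_\epsilon^{-1}y]$, that $[w_1,\ldots,w_k]^{(\epsilon)} = \delta_\epsilon[\delta_\epsilon^{-1}w_1,\ldots,\delta_\epsilon^{-1}w_k] = \epsilon^{-|p|}\delta_\epsilon[w_1,\ldots,w_k]$ for $\epsilon\neq 0$. Iterating Proposition~\ref{prop: properties of linear gradings}\eqref{prop: bracket of CLG containment} shows $[w_1,\ldots,w_k]\in V_{\geq|p|}$, so expanding $\delta_\epsilon$ layerwise gives $[w_1,\ldots,w_k]^{(\epsilon)} = \sum_{j\geq|p|}\epsilon^{j-|p|}([w_1,\ldots,w_k])_j$; by Lemma~\ref{lem: brackets extend to eps=0} this polynomial identity extends to $\epsilon=0$, where it selects the single term $([w_1,\ldots,w_k])_{|p|}$, which equals $[w_1,\ldots,w_k]^{(0)}$ by \eqref{eq: lem: difference of brackets pansu 1}. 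Hence the difference is $\sum_{j>|p|}\epsilon^{j-|p|}([w_1,\ldots,w_k])_j$, and Lemma~\ref{lem: difference of brackets pansu} guarantees $([w_1,\ldots,w_k])_j=0$ for $|p|<j<|p|+\alpha_{(1,\infty)}$, so the sum truly begins at $j=|p|+\alpha_{(1,\infty)}$.

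Finally, I would reassemble: plugging the homogeneous-component identity back into the truncated Dynkin expansion of $x_1*_\epsilon x_2 - x_1*_0 x_2$ and summing over $p\in\{1,\ldots,s\}^k$, $q\in\{1,2\}^k$, and $k\in\{2,\ldots,s\}$ yields exactly \eqref{lem: eq: difference of product Pansu}, the linear terms having cancelled. The conceptual content is entirely contained in Lemmas~\ref{lem: brackets extend to eps=0} and~\ref{lem: difference of brackets pansu}; the main (and essentially only) obstacle is the bookkeeping required to lift the single-bracket computations to the $k$-fold iterated brackets of the BCH formula and to keep track of the truncation uniformly in $\epsilon\in[0,1]$. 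I expect no deeper difficulty.
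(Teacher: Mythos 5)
Your proposal is correct and follows essentially the same route as the paper: expand both products term by term via the BCH formula, use the dilation-conjugation form of $[\cdot,\cdot]^{(\epsilon)}$ together with Lemma~\ref{lem: difference of brackets pansu} (equivalently the defining property of $\alpha_{(1,\infty)}$) to identify which homogeneous components survive, and read off the powers $\epsilon^{j-|p|}$ from homogeneity. The only difference is cosmetic bookkeeping — you decompose into layers $V_{p_i}$ before applying the dilation identity, whereas the paper applies Lemma~\ref{lem: difference of brackets pansu} to the dilated arguments $\delta_{\epsilon^{-1}}x_{q_i}$ and then pushes $\delta_\epsilon$ through — so the two arguments are the same computation in a different order.
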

\begin{proof}
 For $y_1,\ldots, y_n \in \mathfrak{g}$ and $n\in \mathbb{N}$, by definition of the bracket $[\cdot,\cdot]^{(\epsilon)}$ we have
 \[[y_1,\ldots,y_n]^{(\epsilon)} = \delta_\epsilon [\delta_\epsilon^{-1} y_1,\ldots,\delta_\epsilon^{-1}y_n],\]
 and from the fact that dilations are Lie algebra automorphisms of $(\mathfrak{g}, [\cdot, \cdot]^{(0)})$, we have 
  \begin{align*}
   [y_1,\ldots,y_n]^{(0)} &= \delta_\epsilon [\delta_\epsilon^{-1} y_1,\ldots,\delta_\epsilon^{-1}y_n]^{(0)}. 
  \end{align*}
  Now we use the BCH formula and Lemma~\ref{lem: difference of brackets pansu} to calculate
  \begin{align*}
   x_1*_\epsilon x_2 &- x_1 *_0 x_2 \overset{\eqref{def: BCH}}{=} \sum_{k = 2}^{s} \sum_{q \in \{1,2\}^k}b_{k,q} (\lbrack x_{q_1},\ldots,x_{q_k} \rbrack^{(\epsilon)}-\lbrack x_{q_1},\ldots,x_{q_k} \rbrack^{(0)}) \\
   &=\sum_{k = 2}^{s} \sum_{q \in \{1,2\}^k}b_{k,q}\delta_\epsilon (\lbrack \delta_{\epsilon^{-1}}x_{q_1},\ldots,\delta_{\epsilon^{-1}}x_{q_k} \rbrack-\lbrack \delta_{\epsilon^{-1}}x_{q_1},\ldots,\delta_{\epsilon^{-1}}x_{q_k} \rbrack^{(0)}) \\
   &\overset{\eqref{difference of brackets}}{=}\sum_{k = 2}^{s} \sum_{q \in \{1,2\}^k} \sum_{p \in \{1,\ldots,s\}^k } \sum_{j=|p|+\alpha_{(1,\infty)}}^s b_{k,q} \delta_{\epsilon} (\lbrack (\delta_{\epsilon^{-1}}x_{q_1})_{p_1},\ldots,(\delta_{\epsilon^{-1}}x_{q_k})_{p_k} \rbrack)_j\\
   &=\sum_{k = 2}^{s} \sum_{q \in \{1,2\}^k} \sum_{p \in \{1,\ldots,s\}^k } \sum_{j=|p|+\alpha_{(1,\infty)}}^s b_{k,q} \epsilon^{j-|p|} (\lbrack (x_{q_1})_{p_1},\ldots,(x_{q_k})_{p_k} \rbrack)_j.
  \end{align*}
\end{proof}

\begin{lem}\label{lem: PANSU: bound on norm of projection}
 Let $(\mathfrak{g},\Delta)$ be a polarized nilpotent Lie algebra equipped with a norm $\norm{\cdot}$ and an asymptotic grading which induces projections $x\mapsto(x)_j = \pi_j(x)$ and dilations $(\delta_\lambda)_{\lambda \in \mathbb{R}}$. For every norm $\|\cdot\|_{\rm E}$ on $\mathfrak{g}$ there exists a constant $C$ such that
 \begin{align}\label{PANSU: bound on norm of projection}
  \|u-\pi_1(u)\|_{\rm E} \leq C \epsilon^{\alpha_{(2,\infty)}}\|u\|^{(\epsilon)}, \quad\forall \epsilon \in [0,1], \forall u \in \Delta^{(\epsilon)}
 \end{align}
\end{lem}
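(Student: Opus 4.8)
The plan is to reduce the whole estimate to the single structural fact that the vector $u-\pi_1(u)$, for $u\in\Delta^{(\epsilon)}$, is the $\delta_\epsilon$-image of a vector lying in $\mathfrak{g}^{(\alpha_{(2,\infty)}+1)}$, which by the asymptotic grading property is supported in the layers of index $\geq\alpha_{(2,\infty)}+1$; the $\delta_\epsilon$ then produces the factor $\epsilon^{\alpha_{(2,\infty)}+1}$, and comparing with $\norm{u}^{(\epsilon)}=\epsilon\norm{\delta_\epsilon^{-1}u}$ gives the desired $\epsilon^{\alpha_{(2,\infty)}}$. First I would dispose of the degenerate situations. If $\Delta=V_1$, then $\Delta^{(\epsilon)}=\delta_\epsilon V_1=V_1$ for every $\epsilon\in(0,1]$ and $\Delta^{(0)}=V_1$, so $u=\pi_1(u)$ for every admissible $u$ and the left-hand side of \eqref{PANSU: bound on norm of projection} vanishes identically; the same happens at $\epsilon=0$ in general, since $\Delta^{(0)}=V_1$ by \eqref{def: contracted structure for eps = 0}. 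Hence from now on one may assume $\epsilon\in(0,1]$ and $\Delta\neq V_1$, so that $\alpha_{(2,\infty)}\in\mathbb{N}$ (and in fact $1\le\alpha_{(2,\infty)}\le s-1$).

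Then I would record the two elementary ingredients. Since $\delta_\epsilon$ acts as multiplication by $\epsilon^j$ on $V_j$, one has $\pi_j\circ\delta_\epsilon=\epsilon^j\,\pi_j$ for every $j$; writing $u=\delta_\epsilon v$ with $v\coloneqq\delta_\epsilon^{-1}u\in\Delta$ (possible because $\Delta^{(\epsilon)}=\delta_\epsilon\Delta$), this gives
\[
 u-\pi_1(u)=\delta_\epsilon v-\epsilon\,\pi_1(v)=\delta_\epsilon\bigl(v-\pi_1(v)\bigr).
\]
On the other hand, iterating the defining identity $\mathfrak{g}^{(j)}=V_j\oplus\mathfrak{g}^{(j+1)}$ of an asymptotic grading yields $\mathfrak{g}^{(j)}=V_{\geq j}$ for all $j$, so the inclusion $v-\pi_1(v)\in\mathfrak{g}^{(\alpha_{(2,\infty)}+1)}$ coming from the definition of $\alpha_{(2,\infty)}$ (Definition~\ref{def: constant alpha pansu}) says precisely that $\pi_i\bigl(v-\pi_1(v)\bigr)=0$ for all $i\le\alpha_{(2,\infty)}$, i.e.\ $v-\pi_1(v)=\sum_{i=\alpha_{(2,\infty)}+1}^{s}\pi_i(v)$.

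Finally I would combine the two: applying $\delta_\epsilon$ to the last identity gives $u-\pi_1(u)=\sum_{i=\alpha_{(2,\infty)}+1}^{s}\epsilon^i\,\pi_i(v)$, and since $\epsilon\in(0,1]$ and every exponent satisfies $i\ge\alpha_{(2,\infty)}+1$, the triangle inequality together with the boundedness of each linear map $\pi_i\colon(\mathfrak{g},\norm{\cdot})\to(\mathfrak{g},\norm{\cdot}_{\rm E})$ yields
\[
 \norm{u-\pi_1(u)}_{\rm E}\le \epsilon^{\alpha_{(2,\infty)}+1}\sum_{i=\alpha_{(2,\infty)}+1}^{s}\norm{\pi_i(v)}_{\rm E}\le C'\,\epsilon^{\alpha_{(2,\infty)}+1}\norm{v}
\]
for a constant $C'$ depending only on the two norms and the grading. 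As $\norm{u}^{(\epsilon)}=\epsilon\norm{\delta_\epsilon^{-1}u}=\epsilon\norm{v}$, the right-hand side equals $C'\,\epsilon^{\alpha_{(2,\infty)}}\norm{u}^{(\epsilon)}$, which is \eqref{PANSU: bound on norm of projection} with $C=C'$. I do not expect any real obstacle here: the only point that needs care is the bookkeeping with the dilation exponents — in particular, checking that $\epsilon\in[0,1]$ is used to bound \emph{all} the powers $\epsilon^i$ appearing in the sum by $\epsilon^{\alpha_{(2,\infty)}+1}$ — and making sure the sum $\sum_{i=\alpha_{(2,\infty)}+1}^{s}$ is non-empty, which is guaranteed by $\Delta\neq V_1$.
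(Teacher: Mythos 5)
Your proposal is correct and follows essentially the same route as the paper's proof: decompose $u-\pi_1(u)$ into the layers of index $\geq \alpha_{(2,\infty)}+1$ (which the paper invokes implicitly ``by definition of $\alpha_{(2,\infty)}$'' and you justify explicitly via $u-\pi_1(u)=\delta_\epsilon(v-\pi_1(v))$ with $v=\delta_\epsilon^{-1}u\in\Delta$), then bound each layer's Euclidean norm by $\epsilon^{k-1}\norm{u}^{(\epsilon)}$ and sum. The only cosmetic difference is that you re-derive this layer-wise estimate directly from $\norm{u}^{(\epsilon)}=\epsilon\norm{\delta_\epsilon^{-1}u}$ instead of citing the paper's inequality \eqref{eq: upper bound of euclidean norm of layer wrt to norm eps}, and you treat the degenerate cases $\epsilon=0$ and $\Delta=V_1$ exactly as the paper does.
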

\begin{proof}
By \eqref{eq: upper bound of euclidean norm of layer wrt to norm eps}, there is a $C$ such that
 \[\|(u)_k\|_{\rm E} \overset{\eqref{eq: upper bound of euclidean norm of layer wrt to norm eps}}{\leq} C\epsilon^{k-1}\|u\|^{(\epsilon)}, \quad \forall \epsilon \in (0,1], \; \forall u \in \Delta^{(\epsilon)}, \; \forall k \in \{2,\dots,s\}.\]
 Hence by definition of $\alpha_{(2,\infty)}$ and triangle inequality 
 \begin{align*}
  \|u-\pi_1(u)\|_{\rm E} &\leq \|(u)_{\alpha_{(2,\infty)} + 1}\|_{\rm E} + \|(u)_{\alpha_{(2,\infty)} + 2}\|_{\rm E}+ \dots + \|(u)_{s}\|_{\rm E} \\ 
  &\leq C\epsilon^{\alpha_{(2,\infty)}}\|u\|^{(\epsilon)}+ \dots + C\epsilon^{s-1}\|u\|^{(\epsilon)} \\
  &\leq C\epsilon^{\alpha_{(2,\infty)}}\|u\|^{(\epsilon)}.
 \end{align*}
 For $\epsilon = 0$, since $\Delta^{(0)} = V_1$:
 \[\norm{u-\pi_1(u)}_{\rm E} = 0 \leq C\epsilon^{\alpha_{(2,\infty)}}\|u\|^{(0)}, \quad \forall u \in \Delta^{(0)}.\]
\end{proof}
We are ready to state and prove the version of Grönwall Lemma adapted to Pansu Theorem.

\begin{definition} \label{def: VF Pansu}
 Given $u \in L^{2}(\lbrack 0,1 \rbrack, \mathfrak{g})$ and $\epsilon \in [0,1]$, we define the time-dependent vector field $X^{u,\epsilon}: \; \mathfrak{g} \times \lbrack 0,1 \rbrack \rightarrow \mathfrak{g}$ as
 \begin{align*}
 &X^{u,\epsilon}(g,t) \coloneqq (L_g^{*_\epsilon})_* u(t).
\end{align*}
\end{definition}

\begin{lem}[Grönwall Lemma for Pansu Theorem]\label{lem: Pansu Gronwall}
 Let $(G,\Delta, \norm{\cdot})$ be a nilpotent sub-Finsler Lie group equipped with an asymptotic grading $(V_i)_{i=1}^s$. Let $\|\cdot\|_{\rm E}$ be a norm on $\mathfrak{g}$. There is a constant $C>0$ such that $\forall \epsilon \in [0,1]$, for all $L>0$, and for all $u \in L^{2}(\lbrack 0,1 \rbrack, \Delta^{(\epsilon)})$ with $\|u(t)\|^{(\epsilon)} \leq L$, for almost every $t \in [0,1]$, we have:

 \begin{enumerate}
  \item \label{lem: Pansu Gronwall1}For the vector fields defined in Definition~\ref{def: VF Pansu}, we have that, for $g,h \in B_{\rm E}(0,1)$ and almost every $t \in [0,1]$:
  \[\|X^{u,\epsilon}(g,t) - X^{\pi_1 \circ u,0}(h,t)\|_{\rm E} \leq CL (\epsilon^{\alpha_{\infty}} + \|g-h\|_{\rm E}),\]
  where $\alpha_{\infty}$ is the constant from Definition~\ref{def: constant alpha pansu}.
  \item \label{lem: Pansu Gronwall2} Let $\gamma, \eta \colon [0,1] \rightarrow \mathfrak{g}$ be integral curves of $X^{u,\epsilon}$ and $X^{\pi_1 \circ u,0}$, respectively, with $\gamma(0)=\eta(0)$ and $\gamma(t),\eta(t)\in B_{\rm E}(0,1), \forall t \in [0,1]$. Then 
  \[\|\gamma(1)-\eta(1)\|_{\rm E} \leq \epsilon^{\alpha_\infty}(\e^{CL} -1) \] 
 \end{enumerate}
\end{lem}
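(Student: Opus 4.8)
The plan is to first establish the pointwise vector-field estimate of claim~\eqref{lem: Pansu Gronwall1} and then deduce claim~\eqref{lem: Pansu Gronwall2} as an immediate application of the base Grönwall Lemma~\ref{lem: base Gronwall}. For claim~\eqref{lem: Pansu Gronwall1}, fix $t$, write $v\coloneqq u(t)\in\Delta^{(\epsilon)}$ (so $\norm{v}^{(\epsilon)}\le L$), and split the difference into three pieces:
\begin{align*}
 X^{u,\epsilon}(g,t)-X^{\pi_1\circ u,0}(h,t)
 &=\underbrace{(L_g^{*_\epsilon})_*\bigl(v-\pi_1(v)\bigr)}_{A_1}\\
 &\quad+\underbrace{\bigl((L_g^{*_\epsilon})_*-(L_g^{*_0})_*\bigr)\pi_1(v)}_{A_2}
  +\underbrace{\bigl((L_g^{*_0})_*-(L_h^{*_0})_*\bigr)\pi_1(v)}_{B}.
\end{align*}
As a preliminary remark, since $\mathfrak{g}$ is nilpotent the Dynkin products are globally polynomial and, by Lemma~\ref{lem: brackets extend to eps=0}, the brackets $[\cdot,\cdot]^{(\epsilon)}$ depend polynomially on $\epsilon$; hence $(\epsilon,g,w)\mapsto(L_g^{*_\epsilon})_*w=\tfrac{\dd}{\dd s}\bigl(g*_\epsilon(sw)\bigr)\big|_{s=0}$ is polynomial, so the operator norms $\norm{(L_g^{*_\epsilon})_*}$ are uniformly bounded for $(g,\epsilon)$ in the compact set $\overline{B_{\rm E}(0,1)}\times[0,1]$, and $\norm{\pi_1(v)}_{\rm E}\le C\norm{v}_{\rm E}\le CL$ by~\eqref{eq: upper bound of euclidean norm wrt to norm eps}. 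All constants below depend only on $G,\Delta,\norm{\cdot},\norm{\cdot}_{\rm E}$ and $(V_i)_i$.

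The terms $A_1$ and $B$ are the soft ones. For $A_1$, Lemma~\ref{lem: PANSU: bound on norm of projection} gives $\norm{A_1}_{\rm E}\le C\norm{v-\pi_1(v)}_{\rm E}\le C\epsilon^{\alpha_{(2,\infty)}}\norm{v}^{(\epsilon)}\le CL\,\epsilon^{\alpha_{(2,\infty)}}$. For $B$, the polynomial map $g\mapsto(L_g^{*_0})_*w$ has $g$-derivative linear in $w$, hence is Lipschitz on $\overline{B_{\rm E}(0,1)}$ with constant $\le C\norm{w}_{\rm E}$; taking $w=\pi_1(v)$ yields $\norm{B}_{\rm E}\le CL\,\norm{g-h}_{\rm E}$. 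The main term is $A_2$: I would apply Lemma~\ref{lem: difference of product Pansu} with $x_1=g$ and $x_2=s\,\pi_1(v)$ and differentiate in $s$ at $0$. By multilinearity of the iterated brackets, only the summands whose index word $q$ contains exactly one letter equal to $2$ survive (and, since $\pi_1(v)\in V_1$, the layer of that slot is forced to be $1$), leaving a finite sum of terms $b_{k,q}\,\epsilon^{j-|p|}\bigl([(g)_{p_1},\ldots,\pi_1(v),\ldots,(g)_{p_k}]\bigr)_j$ with $j-|p|\ge\alpha_{(1,\infty)}$. Using $\epsilon\le1$, $g\in\overline{B_{\rm E}(0,1)}$, boundedness of the iterated (multilinear) brackets, and a bound on the number of summands depending only on $s$, this gives $\norm{A_2}_{\rm E}\le C\,\epsilon^{\alpha_{(1,\infty)}}\norm{\pi_1(v)}_{\rm E}\le CL\,\epsilon^{\alpha_{(1,\infty)}}$. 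Summing the three estimates and using $\epsilon\le1$ together with $\alpha_\infty=\min\{\alpha_{(1,\infty)},\alpha_{(2,\infty)}\}$ (Definition~\ref{def: constant alpha pansu}) proves claim~\eqref{lem: Pansu Gronwall1}; in the degenerate case $\alpha_\infty=+\infty$ one has $\Delta=V_1$ and $(V_i)_i$ a stratification, so $v=\pi_1(v)$, $*_\epsilon=*_0$, $A_1=A_2=0$, and only $B$ remains.

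For claim~\eqref{lem: Pansu Gronwall2}, let $C$ be the constant just produced and apply Lemma~\ref{lem: base Gronwall} on $\Omega=B_{\rm E}(0,1)\subseteq\mathfrak{g}$ endowed with $\norm{\cdot}_{\rm E}$, taking $X=X^{u,\epsilon}$, $Y=X^{\pi_1\circ u,0}$, horizon $T=1$, and $E=CL\,\epsilon^{\alpha_\infty}$, $K=CL$: the hypothesis $\norm{X(g,t)-Y(h,t)}_{\rm E}\le E+K\norm{g-h}_{\rm E}$ for $g,h\in\Omega$ is exactly claim~\eqref{lem: Pansu Gronwall1}, while $\gamma,\eta$ are integral curves of $X,Y$ with $\gamma(0)=\eta(0)$ and values in $\Omega$ by hypothesis. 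The base lemma then gives $\norm{\gamma(1)-\eta(1)}_{\rm E}\le CL\,\epsilon^{\alpha_\infty}\tfrac{\e^{CL}-1}{CL}=\epsilon^{\alpha_\infty}(\e^{CL}-1)$.

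The hard part will be the estimate for $A_2$: it is the only step that uses the precise algebraic structure of the family $(*_\epsilon)_\epsilon$, and it is where the exponent $\alpha_{(1,\infty)}$, hence $\alpha_\infty$, enters; by contrast $A_1$ and $B$ follow from polynomiality, compactness of $\overline{B_{\rm E}(0,1)}\times[0,1]$, and the already proved Lemma~\ref{lem: PANSU: bound on norm of projection}. One should also keep track that all constants are independent of $L$ and $\epsilon$, which is automatic because each of $X^{u,\epsilon}$ and $X^{\pi_1\circ u,0}$ is linear in $u$, so the whole difference scales linearly in $L$.
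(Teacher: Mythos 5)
Your proof is correct and follows essentially the same strategy as the paper's: a three-term telescoping of $X^{u,\epsilon}(g,t)-X^{\pi_1\circ u,0}(h,t)$, with the ``change of product'' term controlled via Lemma~\ref{lem: difference of product Pansu} and $\alpha_{(1,\infty)}$, the ``projection'' term via Lemma~\ref{lem: PANSU: bound on norm of projection} and $\alpha_{(2,\infty)}$, the ``change of base point'' term via Lipschitzness of $g\mapsto(L_g^{*_0})_*$, and part~(2) by plugging part~(1) into Lemma~\ref{lem: base Gronwall}. The only difference is the order of the telescoping: you project $v\mapsto\pi_1(v)$ first (at $g$, with $*_\epsilon$) and then compare $*_\epsilon$ with $*_0$ acting on $\pi_1(v)\in V_1$, whereas the paper compares $*_\epsilon$ with $*_0$ on the full $v$ first and projects last (at $h$, with $*_0$); your ordering has the minor cosmetic advantage that in the $A_2$-term the layer index $p_m$ is forced to equal $1$, but both orderings rely on the same two lemmata and give the same bound, so this is a permutation rather than a different argument.
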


\begin{proof}
 \textbf{Proof of \eqref{lem: Pansu Gronwall1}}. Let $g,h \in B_{\rm E}(0,1)$ and $t \in [0,1]$ such that $\|u(t)\|^{(\epsilon)} \leq L$. We have $\norm{\cdot}_{\rm E} \overset{\eqref{eq: upper bound of euclidean norm wrt to norm eps}}{\lesssim} \norm{\cdot}^{(\epsilon)}$, and consequently $\norm{u(t)}_{\rm E} \lesssim L$, where the inequalities are up to a multiplicative constant that does not depend on $\epsilon$ or $u$. We have
 \begin{align*}
  \|X^{u,\epsilon}(g,t) - X^{\pi_1 \circ u,0}(h,t)\|_{\rm E} &\leq \|X^{u,\epsilon}(g,t) - X^{u,0}(g,t)\|_{\rm E} \\
  &+ \|X^{u,0}(g,t) - X^{ u,0}(h,t)\|_{\rm E} \\
  &+\|X^{u,0}(h,t) - X^{ \pi_1 \circ u,0}(h,t)\|_{\rm E}.
 \end{align*}
 We bound the three quantities on the right-hand side separately.

 \textbf{We bound} $\|X^{u,\epsilon}(g,t) - X^{u,0}(g,t)\|_{\rm E}$: Recall that that exponential map $\exp : (\mathfrak{g},[\cdot,\cdot]^{(\tau)}) \rightarrow (\mathfrak{g}, *_\tau)$ is the identity map for every $\tau \in [0,1].$ Therefore
 \begin{align*}
  X^{u,\epsilon}(g,t) - X^{u,0}(g,t) = (L_g^{*_\epsilon})_*u(t)-(L_g^{*_0})_*u(t) = \left.\frac{\dd}{\dd r}g*_\epsilon ru(t) - g*_0 ru(t) \right|_{r=0}.
 \end{align*}
 In order to simplify the notation, we define two functions $x_1,x_2 : (-1,1) \rightarrow \mathfrak{g}$ by $x_1(r) \coloneqq g$, $x_2(r) \coloneqq ru(t).$ We exploit the property of the constant $\alpha_{(1,\infty)}$, and by Lemma~\ref{lem: difference of product Pansu} we get:
 \begin{align*}
  &X^{u,\epsilon}(g,t) - X^{u,0}(g,t) \\ &= \left.\frac{\dd}{\dd r}x_1(r) *_\epsilon x_2(r) - x_1(r) *_0 x_2(r)\right|_{r=0} \\
  &\overset{\eqref{lem: eq: difference of product Pansu}}{=}\left.\frac{\dd}{\dd r} \sum_{k = 2}^{s} \sum_{q \in \{1,2\}^k} \sum_{p \in \{1,\ldots,s\}^k } \sum_{j=|p|+\alpha_{(1,\infty)}}^s b_{k,q} \epsilon^{j-|p|} (\lbrack (x_{q_1}(r))_{p_1},\ldots,(x_{q_k}(r))_{p_k} \rbrack)_j\right|_{r=0}\\
  &= \sum_{k = 2}^{s} \sum_{m=1}^k \sum_{p \in \{1,\ldots,s\}^k } \sum_{j=|p|+\alpha_{(1,\infty)}}^s b_{k,q} \epsilon^{j-|p|} (\lbrack (g)_{p_1},\ldots,(u(t))_{p_m},\ldots,(g)_{p_k} \rbrack)_j,
 \end{align*}
 where we used that after derivation and evaluation at $r = 0$, the only non-zero iterated brackets of $x_1(r)$ and $x_2(r)$ are those where $x_2(r)$ appears exactly once. We point out that since $\epsilon \in [0,1]$ and $j \in \{|p|+\alpha_{(1,\infty)},\ldots,s\}$ then $\epsilon^{j-|p|}$ is at most $\epsilon^{|p|+\alpha_{(1,\infty)}-|p|} = \epsilon^{\alpha_{(1,\infty)}}$. Finally, recalling that $\|[a,b]\|_{\rm E} \lesssim\|a\|_{\rm E}\|b\|_{\rm E}, \; \forall a,b \in \mathfrak{g}$, where the inequality is up to a multiplicative constant depending only on $G$, and $\norm{\cdot}_{\rm E}$, we bound
 \begin{align*}
  \|X^{u,\epsilon}(g,t) - X^{u,0}(g,t)\|_{\rm E} \lesssim \sum_{k = 2}^{s} \sum_{m=1}^k \sum_{p \in \{1,\ldots,s\}^k }L \epsilon^{|p|+\alpha_{(1,\infty)}-|p|} \lesssim L\epsilon^{\alpha_{(1,\infty)}}. 
 \end{align*}

\textbf{We bound} $\|X^{u,0}(g,t) - X^{ u,0}(h,t)\|_{\rm E}$: Notice that the map $\phi: g \mapsto (L_g^{*_0})_* = \dd(L_g^{*_0})_0$ is smooth. Hence, it is Lipschitz on compact sets, and thus there exists a constant $C$ such that 
\[\|\phi(g')-\phi(h')\|_{\rm op} \leq C\| g'-h'\|_{\rm E}, \quad \forall g',h' \in B_{\rm E}(0,1),\]
where $\norm{\cdot}_{\rm op}$ is the operator norm for linear operators $\mathfrak{g}\rightarrow \mathfrak{g}$.
It follows that 
\begin{align*}
 \|X^{u,0}(g,t) - X^{u,0}(h,t)\|_{\rm E} &= \|\phi(g)u(t)-\phi(h)u(t)\|_{\rm E}\\
 &\leq \|\phi(g)-\phi(h)\|_{\rm op}\|u(t)\|_{\rm E}\\
 &\lesssim L\|g-h\|_{\rm E},
\end{align*}
where the inequality is up to a multiplicative factor that does not depend on $\epsilon$, $u$, or $L$.

\textbf{We bound} $\|X^{u,0}(h,t) - X^{ \pi_1 \circ u,0}(h,t)\|_{\rm E}$: Since the map $(L_h^{*_0})_*$ is linear and smooth in $h$, there exists a constant $C$ such that
\[\|(L_{h'}^{*_0})_*(u(t)) - (L_{h'}^{*_0})_*(\pi_1(u(t)))\|_{\rm E} \leq C \|u(t) -\pi_1(u(t))\|_{\rm E}, \quad \forall h' \in B_{\rm E}(0,1).\] We exploit the property of the constant $\alpha_{(2,\infty)}$, and by Lemma~\ref{lem: PANSU: bound on norm of projection} we get:
\begin{align*}
 \|X^{u,0}(h,t) - X^{ \pi_1 \circ u,0}(h,t)\|_{\rm E} &\overset{\text{def}}{=} \|(L_h^{*_0})_*(u(t)) - (L_h^{*_0})_*(\pi_1(u(t))\|_{\rm E} \\
 &\leq C \|u(t) -\pi_1(u(t))\|_{\rm E}\\
 &\overset{\eqref{PANSU: bound on norm of projection}}{\leq} C L \epsilon^{\alpha_{(2,\infty)}}.
\end{align*}

\textbf{We conlude} that there exists a constant $C$ depending only on $G,d,(V_i), L$, and $\|\cdot\|_{\rm E}$ such that
\begin{align*}
 \|X^{u,\epsilon}(g,t) - X^{\pi_1 \circ u,0}(h,t)\|_{\rm E} &\leq \|X^{u,\epsilon}(g,t) - X^{u,0}(g,t)\|_{\rm E} \\
  &\quad+ \|X^{u,0}(g,t) - X^{ u,0}(h,t)\|_{\rm E} \\
  &\quad+\|X^{u,0}(h,t) - X^{ \pi_1 \circ u,0}(h,t)\|_{\rm E}.\\
  &\leq CL(\epsilon^{\min\{\alpha_{(1,\infty)},\alpha_{(2,\infty)}\}} + \|g-h\|_{\rm E}).
\end{align*}

\textbf{Proof of }\eqref{lem: Pansu Gronwall2}. Apply Grönwall Lemma~\ref{lem: base Gronwall} and Lemma~\ref{lem: Pansu Gronwall}\eqref{lem: Pansu Gronwall1}.
\end{proof}

\subsection{Grönwall Lemma for Mitchell Tangent Theorem}
We now prove a version of Grönwall Lemma adapted to the context of Mitchell Theorem (see Lemma~\ref{lem: Mitchell Gronwall}).

\begin{definition}\label{def: constant alpha mitchell}
 Let $(\mathfrak{g},\Delta)$ be a polarized Lie algebra $\mathfrak{g}$ and $(W_i)_{i=1}^s$ a tangent grading. We define
 \begin{align*}
  \alpha_{0} \coloneqq \max \{j \in \mathbb{N}: [W_{p_1},\ldots,W_{p_k}] \subseteq W_{|p|} \oplus W_{\leq |p|-j}, \; \forall k \in \mathbb{N}, \; \forall p \in \mathbb{N}^k\}.
 \end{align*}
\end{definition}
The constant $\alpha_0$ is the analog of the constant $\alpha_{(1,\infty)}$ in the context of tangent gradings. It can be thought of as quantifying how stratifiable a tangent grading is; a tangent grading is a stratification if and only if $\alpha_0=+\infty$. Note that by definition of tangent grading we have $\Delta = W_1$, hence there is no analog of the constant $\alpha_{(2,\infty)}$ in this context.
\begin{lem}\label{lem: difference of brackets mitchell}
 Let $\mathfrak{g}$ be a polarized Lie algebra equipped with a tangent grading which induces projections $x\mapsto(x)_j$. Let $n \in \mathbb{N},\; p\in \mathbb{N}^n,$ and $x_1,\ldots, x_n \in \mathfrak{g}.$ Then
 \begin{align}\label{eq: difference of brackets mitchell}
  [(x_1)_{p_1},\ldots,(x_n)_{p_n}]-\llbracket(x_1)_{p_1},\ldots,(x_n)_{p_n}\rrbracket^{(0)} = \sum_{j={1}}^{|p|-\alpha_{0}}([(x_1)_{p_1},\ldots,(x_n)_{p_n}])_j.
 \end{align}
\end{lem}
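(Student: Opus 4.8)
The plan is to mirror the proof of Lemma~\ref{lem: difference of brackets pansu}, with two changes: the ``downward'' containment $[W_i,W_j]\subseteq W_{\leq i+j}$ from Proposition~\ref{prop: properties of linear gradings}\eqref{prop: bracket of ALG containment} takes the place of the ``upward'' containment $[V_i,V_j]\subseteq V_{\geq i+j}$ available for asymptotic gradings, and the constant $\alpha_0$ takes the place of $\alpha_{(1,\infty)}$. First I would record the two-element identity
\[\llbracket (x)_i,(y)_j\rrbracket^{(0)} = \bigl([(x)_i,(y)_j]\bigr)_{i+j}, \qquad \forall x,y\in\mathfrak{g},\ \forall i,j,\]
with the convention that projections onto non-existent layers are zero. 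This follows from the definition of $\llbracket\cdot,\cdot\rrbracket^{(0)}$ in Section~\ref{subsection: osculating carnot group}: under the identification $W_\ell\simeq\Delta^{[\ell]}/\Delta^{[\ell-1]}$ (legitimate because $\Delta^{[\ell]}=W_{\leq\ell}$), for $x\in W_i$ and $y\in W_j$ the class of $[x,y]$ in $\Delta^{[i+j]}/\Delta^{[i+j-1]}$ corresponds to $([x,y])_{i+j}$, since $[x,y]\in W_{\leq i+j}$ by Proposition~\ref{prop: properties of linear gradings}\eqref{prop: bracket of ALG containment}. Alternatively, one reads it off from the computation in the proof of Lemma~\ref{lem: brackets extend to eps=0}, which gives $\llbracket (x)_i,(y)_j\rrbracket^{(\epsilon)}=\sum_{\ell\leq i+j}\epsilon^{i+j-\ell}([(x)_i,(y)_j])_\ell$, whose limit as $\epsilon\to 0$ is the $\ell=i+j$ term.

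Next I would prove, by induction on $n$, that
\[\llbracket (x_1)_{p_1},\ldots,(x_n)_{p_n}\rrbracket^{(0)} = \bigl([(x_1)_{p_1},\ldots,(x_n)_{p_n}]\bigr)_{|p|}, \qquad \forall p\in\mathbb{N}^n,\ \forall x_1,\ldots,x_n\in\mathfrak{g}.\]
The case $n=1$ is trivial and $n=2$ is the two-element identity. For the inductive step, set $B\coloneqq[(x_2)_{p_2},\ldots,(x_n)_{p_n}]$, so $B\in W_{\leq|p|-p_1}$ by Proposition~\ref{prop: properties of linear gradings}\eqref{prop: bracket of ALG containment}; for $\ell<|p|-p_1$ one has $[(x_1)_{p_1},(B)_\ell]\in W_{\leq p_1+\ell}$, which has no component in $W_{|p|}$, so only the top layer of $B$ contributes: $([(x_1)_{p_1},B])_{|p|}=([(x_1)_{p_1},(B)_{|p|-p_1}])_{|p|}$. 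By the induction hypothesis $(B)_{|p|-p_1}=\llbracket (x_2)_{p_2},\ldots,(x_n)_{p_n}\rrbracket^{(0)}\in W_{|p|-p_1}$, and applying the two-element identity with $x=(x_1)_{p_1}$, $y=(B)_{|p|-p_1}$ gives precisely $\llbracket (x_1)_{p_1},\ldots,(x_n)_{p_n}\rrbracket^{(0)}$, completing the induction.

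Finally, since $[(x_1)_{p_1},\ldots,(x_n)_{p_n}]\in W_{\leq|p|}$, subtracting its top layer and using the identity just proved yields
\[[(x_1)_{p_1},\ldots,(x_n)_{p_n}]-\llbracket(x_1)_{p_1},\ldots,(x_n)_{p_n}\rrbracket^{(0)} = \sum_{j=1}^{|p|-1}\bigl([(x_1)_{p_1},\ldots,(x_n)_{p_n}]\bigr)_j,\]
and by the definition of $\alpha_0$ (Definition~\ref{def: constant alpha mitchell}) the components in layers $|p|-\alpha_0+1,\ldots,|p|-1$ vanish, so the sum collapses to $\sum_{j=1}^{|p|-\alpha_0}$, as claimed. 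I do not expect a serious obstacle: the argument is the exact transpose of the Pansu case, and the only points requiring care are tracking the direction of the tangent grading (so that $\alpha_0$ is \emph{subtracted} from $|p|$ rather than added) and fixing conventions for layer indices outside the grading's range.
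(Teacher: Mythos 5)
Your proposal is correct and follows essentially the same route as the paper: the same two-element identity $\llbracket (x)_i,(y)_j\rrbracket^{(0)}=([(x)_i,(y)_j])_{i+j}$, the same induction on $n$ showing $\llbracket (x_1)_{p_1},\ldots,(x_n)_{p_n}\rrbracket^{(0)}=([(x_1)_{p_1},\ldots,(x_n)_{p_n}])_{|p|}$ via Proposition~\ref{prop: properties of linear gradings}\eqref{prop: bracket of ALG containment} to isolate the top layer of the inner bracket, and the same final collapse of $\sum_{j=1}^{|p|-1}$ to $\sum_{j=1}^{|p|-\alpha_0}$ using Definition~\ref{def: constant alpha mitchell}. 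The extra justification you give for the two-element identity (via the quotient identification or the $\epsilon\to 0$ limit from Lemma~\ref{lem: brackets extend to eps=0}) is a harmless elaboration of what the paper takes as the definition of $\llbracket\cdot,\cdot\rrbracket^{(0)}$.
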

\begin{proof}
 We show by induction on $n\in \mathbb{N}$ that 
\begin{equation}\label{eq: lem: difference of brackets mitchell 1}
 \llbracket(x_1)_{p_1},\ldots,(x_n)_{p_n}\rrbracket^{(0)} = ([(x_1)_{p_1},\ldots,(x_n)_{p_n}])_{|p|}, \quad \forall p \in \mathbb{N}^n, \; \forall x_1,\ldots,x_n \in \mathfrak{g}.
\end{equation}
By definition of the bracket $\llbracket\cdot, \cdot\rrbracket^{(0)}$, we have for $x,y \in \mathfrak{g}$ and $j,k \in \mathbb{N}$:
\begin{equation}\label{eq: lem: difference of brackets mitchell 2}
 \llbracket(x)_j, (y)_k\rrbracket^{(0)} = ([(x)_j,(y)_k])_{j+k}.
\end{equation}
Next, suppose that the claim is true for brackets of length up to $n-1$. Using the properties of tangent gradings, we obtain
\begin{align*}
 ([(x_1)_{p_1},\ldots,(x_n)_{p_n}])_{|p|} &\overset{\ref{prop: properties of linear gradings} \eqref{prop: bracket of ALG containment}}{=} \sum_{j=1}^{|p|-p_1}([(x_1)_{p_1},([(x_2)_{p_2},\ldots,(x_n)_{p_n}])_{j}])_{|p|} \\
 &\overset{\ref{prop: properties of linear gradings} \eqref{prop: bracket of ALG containment}}{=} ([(x_1)_{p_1},([(x_2)_{p_2},\ldots,(x_n)_{p_n}])_{|p|-p_1}])_{|p|}\\
 &\overset{\eqref{eq: lem: difference of brackets mitchell 1}}{=} ([(x_1)_{p_1},(\llbracket(x_2)_{p_2},\ldots,(x_n)_{p_n}\rrbracket^{(0)})_{|p|-p_1}])_{|p|} \\
 &\overset{\eqref{eq: lem: difference of brackets mitchell 2}}{=}\llbracket(x_1)_{p_1},\ldots,(x_n)_{p_n}\rrbracket^{(0)}.
\end{align*}

Now, by definition of $\alpha_0$, we have
\begin{align*}
  [(x_1)_{p_1},\ldots,(x_n)_{p_n}]-\llbracket(x_1)_{p_1},\ldots,(x_n)_{p_n}\rrbracket^{(0)} &\overset{\ref{prop: properties of linear gradings} \eqref{prop: bracket of ALG containment}, \eqref{eq: lem: difference of brackets mitchell 1}}{=} \sum_{j=1}^{|p|-1}([(x_1)_{p_1},\ldots,(x_n)_{p_n}])_j \\
  &= \sum_{j=1}^{|p|-\alpha_0}([(x_1)_{p_1},\ldots,(x_n)_{p_n}])_j.
\end{align*}
\end{proof}

\begin{lem}\label{lem: difference of products mitchell}
Let $\mathfrak{g}$ be a polarized Lie algebra equipped with a tangent grading which induces projections $x\mapsto(x)_j$. Let $x_1,x_2 \in \mathfrak{g}$ and $\epsilon \in [0,1].$ Then 
 \begin{align}\label{lem: eq: difference of product Mitchell}
   x_1\cdot_\epsilon x_2 - x_1 \cdot_0 x_2 = \sum_{k = 2}^{s} \sum_{q \in \{1,2\}^k} \sum_{p \in \{1,\ldots,s\}^k } \sum_{j=1}^{|p|-\alpha_0} b_{k,q} \epsilon^{|p|-j} (\lbrack (x_{q_1})_{p_1},\ldots,(x_{q_k})_{p_k} \rbrack)_j,
 \end{align}
 where $b_{k,p}$ are the constants in the BCH formula~\eqref{def: BCH}.
\end{lem}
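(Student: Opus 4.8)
The plan is to mirror the proof of Lemma~\ref{lem: difference of product Pansu}, replacing the brackets $[\cdot,\cdot]^{(\epsilon)}$ by $\llbracket\cdot,\cdot\rrbracket^{(\epsilon)}$, the decomposition in Lemma~\ref{lem: difference of brackets pansu} by that in Lemma~\ref{lem: difference of brackets mitchell}, and the products $*_\epsilon$ by $\cdot_\epsilon$. First I would record the two elementary identities that drive the computation. Since $\llbracket\cdot,\cdot\rrbracket^{(\epsilon)}=[\cdot,\cdot]^{(1/\epsilon)}$, for all $y_1,\dots,y_n\in\mathfrak g$ and all $\epsilon\in(0,1]$ we have $\llbracket y_1,\dots,y_n\rrbracket^{(\epsilon)}=\delta_{\epsilon^{-1}}[\delta_\epsilon y_1,\dots,\delta_\epsilon y_n]$, which follows from the $n=2$ case by induction on $n$ using the linearity of the dilations. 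Moreover the dilations $\delta_\lambda$ are Lie algebra automorphisms of $(\mathfrak g,\llbracket\cdot,\cdot\rrbracket^{(0)})$: for $v\in W_i$ and $w\in W_j$ one has $\llbracket v,w\rrbracket^{(0)}=([v,w])_{i+j}\in W_{i+j}$, so both sides of $\delta_\lambda\llbracket v,w\rrbracket^{(0)}=\llbracket\delta_\lambda v,\delta_\lambda w\rrbracket^{(0)}$ equal $\lambda^{i+j}([v,w])_{i+j}$; consequently $\llbracket y_1,\dots,y_n\rrbracket^{(0)}=\delta_{\epsilon^{-1}}\llbracket\delta_\epsilon y_1,\dots,\delta_\epsilon y_n\rrbracket^{(0)}$ as well.

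Next I would feed these identities into the Dynkin expansion. Applying the BCH formula~\eqref{def: BCH} to $x_1\cdot_\epsilon x_2$ and to $x_1\cdot_0 x_2$ and cancelling the linear terms, the difference equals $\sum_{k=2}^{s}\sum_{q\in\{1,2\}^k}b_{k,q}\bigl(\llbracket x_{q_1},\dots,x_{q_k}\rrbracket^{(\epsilon)}-\llbracket x_{q_1},\dots,x_{q_k}\rrbracket^{(0)}\bigr)$, which by the first paragraph is $\sum_{k,q}b_{k,q}\,\delta_{\epsilon^{-1}}\bigl([\delta_\epsilon x_{q_1},\dots,\delta_\epsilon x_{q_k}]-\llbracket\delta_\epsilon x_{q_1},\dots,\delta_\epsilon x_{q_k}\rrbracket^{(0)}\bigr)$. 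I would then expand each argument $\delta_\epsilon x_{q_m}=\sum_{p_m=1}^{s}\epsilon^{p_m}(x_{q_m})_{p_m}$ by multilinearity, which pulls a scalar $\epsilon^{|p|}$ out of the summand indexed by $p\in\{1,\dots,s\}^k$, and apply Lemma~\ref{lem: difference of brackets mitchell} layer by layer: by~\eqref{eq: difference of brackets mitchell}, $[(x_{q_1})_{p_1},\dots,(x_{q_k})_{p_k}]-\llbracket(x_{q_1})_{p_1},\dots,(x_{q_k})_{p_k}\rrbracket^{(0)}=\sum_{j=1}^{|p|-\alpha_0}([(x_{q_1})_{p_1},\dots,(x_{q_k})_{p_k}])_j$. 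Finally, since $\delta_{\epsilon^{-1}}$ multiplies a $W_j$-component by $\epsilon^{-j}$, the $j$-th term turns into $\epsilon^{|p|-j}([(x_{q_1})_{p_1},\dots,(x_{q_k})_{p_k}])_j$; summing over $k$, $q$ and $p$ gives exactly~\eqref{lem: eq: difference of product Mitchell}.

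I do not expect a genuine obstacle, since the argument is a bookkeeping of dilation weights; the two points that need care are the asymmetry with the Pansu formula and the endpoint $\epsilon=0$. The asymmetry comes from $\llbracket\cdot,\cdot\rrbracket^{(\epsilon)}=[\cdot,\cdot]^{(1/\epsilon)}$: in the Pansu computation $\delta_\epsilon$ is outermost and $\delta_{\epsilon^{-1}}$ acts on the arguments, so one gets the exponent $\epsilon^{j-|p|}$ with the high layers $j\geq|p|+\alpha_{(1,\infty)}$ surviving, whereas here $\delta_{\epsilon^{-1}}$ is outermost and $\delta_\epsilon$ acts on the arguments, so one gets $\epsilon^{|p|-j}$ with the low layers $j\leq|p|-\alpha_0$ surviving. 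For the endpoint, note $\alpha_0\geq1$ because the iterated inclusion $[W_{p_1},\dots,W_{p_k}]\subseteq W_{\leq|p|}$ (from Proposition~\ref{prop: properties of linear gradings}) already forces the condition defining $\alpha_0$ to hold for $j=1$; hence when $j\leq|p|-\alpha_0$ the exponent $|p|-j$ is at least $\alpha_0\geq1$, every monomial on the right-hand side is $O(\epsilon^{\alpha_0})$, and in particular vanishes at $\epsilon=0$, consistently with $x_1\cdot_0 x_2-x_1\cdot_0 x_2=0$. Thus the identity, established for $\epsilon\in(0,1]$, extends continuously to all of $[0,1]$.
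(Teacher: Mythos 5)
Your proposal is correct and follows essentially the same route as the paper: express $\llbracket\cdot,\ldots,\cdot\rrbracket^{(\epsilon)}$ and $\llbracket\cdot,\ldots,\cdot\rrbracket^{(0)}$ through dilations, apply the BCH expansion, invoke Lemma~\ref{lem: difference of brackets mitchell} after splitting the arguments into layers, and read off the weight $\epsilon^{|p|-j}$ from the dilations. Your extra remarks on the sign of the exponent versus the Pansu case and on the extension to $\epsilon=0$ (using $\alpha_0\geq 1$) are consistent with, and slightly more careful than, the paper's argument, which works the identity out for $\epsilon\neq 0$ and tacitly uses that both sides vanish at $\epsilon=0$.
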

\begin{proof}
  For $y_1,\ldots, y_n \in \mathfrak{g}$ and $n\in \mathbb{N}$, by definition of the bracket $\llbracket\cdot,\cdot\rrbracket^{(\epsilon)}$ we have
 \[\llbracket y_1,\ldots,y_n\rrbracket^{(\epsilon)} = \delta_\epsilon^{-1} [\delta_\epsilon y_1,\ldots,\delta_\epsilon y_n],\]
 and from the fact that dilations are Lie algebra automorphisms of $(\mathfrak{g}, \llbracket \cdot, \cdot\rrbracket^{(0)})$, we have 
  \begin{align*}
   \llbracket y_1,\ldots,y_n \rrbracket^{(0)} &= \delta_\epsilon^{-1} \llbracket \delta_\epsilon y_1,\ldots,\delta_\epsilon y_n\rrbracket^{(0)}. 
  \end{align*}
  Now we use the BCH formula \eqref{def: BCH} and Lemma~\ref{lem: difference of brackets mitchell} to calculate
  \begin{align*}
   x_1 \cdot_\epsilon x_2 - x_1 \cdot_0 x_2 &\overset{\eqref{def: BCH}}{=} \sum_{k = 2}^{s} \sum_{q \in \{1,2\}^k}b_{k,q} (\llbracket x_{q_1},\ldots,x_{q_k} \rrbracket^{(\epsilon)}-\llbracket x_{q_1},\ldots,x_{q_k} \rrbracket^{(0)}) \\
   &=\sum_{k = 2}^{s} \sum_{q \in \{1,2\}^k}b_{k,q}\delta_\epsilon^{-1} (\lbrack \delta_{\epsilon}x_{q_1},\ldots,\delta_{\epsilon}x_{q_k} \rbrack -\llbracket \delta_{\epsilon}x_{q_1},\ldots,\delta_{\epsilon}x_{q_k} \rrbracket^{(0)}) \\
   &\overset{\eqref{eq: difference of brackets mitchell}}{=}\sum_{k = 2}^{s} \sum_{q \in \{1,2\}^k} \sum_{p \in \{1,\ldots,s\}^k } \sum_{j=1}^{|p|-\alpha_0} b_{k,q} \delta_{\epsilon}^{-1} (\lbrack (\delta_{\epsilon}x_{q_1})_{p_1},\ldots,(\delta_{\epsilon}x_{q_k})_{p_k} \rbrack)_j\\
   &=\sum_{k = 2}^{s} \sum_{q \in \{1,2\}^k} \sum_{p \in \{1,\ldots,s\}^k } \sum_{j=1}^{|p|-\alpha_0} b_{k,q} \epsilon^{|p|-j} (\lbrack (x_{q_1})_{p_1},\ldots,(x_{q_k})_{p_k} \rbrack)_j.
  \end{align*}
\end{proof}
We are ready to state and prove the version of Grönwall Lemma adapted to Mitchell Theorem.

\begin{definition} \label{def: VF Mitchell}
 Given $u \in L^{2}(\lbrack 0,1 \rbrack, \mathfrak{g})$ and $\epsilon \in [0,1]$, we define the time-dependent vector field $Y^{u,\epsilon}: \; \mathfrak{g} \times \lbrack 0,1 \rbrack \rightarrow \mathfrak{g}$ as
 \begin{align*}
 &Y^{u,\epsilon}(g,t) \coloneqq (L_g^{\cdot_\epsilon})_* u(t).
\end{align*}
\end{definition}

\begin{lem}[Grönwall Lemma for Mitchell Theorem]\label{lem: Mitchell Gronwall}
Let $(G,\Delta, \norm{\cdot})$ be a sub-Finsler Lie group equipped with a tangent grading $(W_i)_{i=1}^s$. Let $\|\cdot\|_{\rm E}$ be a norm on $\mathfrak{g}$. There is a constant $C>0$ such that for every $\epsilon \in [0,1]$, for every $L>0$, and for all $u \in L^{2}(\lbrack 0,1 \rbrack, \Delta)$ with $\|u(t)\|_{\rm E} \leq L$, for almost every $t \in [0,1]$, we have:

 \begin{enumerate}
  \item \label{lem: Mitchell Gronwall1}For the vector fields from Definition~\ref{def: VF Mitchell}, we have that, for $g,h \in B_{\rm E}(0,1)$ and almost every $t\in [0,1]$:
  \[\|Y^{u,\epsilon}(g,t) - Y^{u,0}(h,t)\|_{\rm E} \leq CL(\epsilon^{\alpha_{0}} + \|g-h\|_{\rm E}),\]
  where $\alpha_{0}$ is the constant from Definition~\ref{def: constant alpha mitchell}.
  \item \label{lem: Mitchell Gronwall2} Let $\gamma, \eta \colon [0,1] \rightarrow \mathfrak{g}$ be integral curves of $Y^{u,\epsilon}$ and $Y^{u,0}$, respectively, with $\gamma(0)=\eta(0)$ and $\gamma(t),\eta(t)\in B_{\rm E}(0,1), \forall t\in [0,1]$. Then 
  \[\|\gamma(1)-\eta(1)\|_{\rm E} \leq  \epsilon^{\alpha_0}(\e^{CL}-1).\]
 \end{enumerate}
\end{lem}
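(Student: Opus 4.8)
The plan is to mirror the proof of the Pansu-side Grönwall Lemma~\ref{lem: Pansu Gronwall}, replacing the ingredients on asymptotic gradings by their tangent-grading counterparts (Lemmas~\ref{lem: difference of brackets mitchell} and~\ref{lem: difference of products mitchell}); the argument is in fact slightly simpler, since a tangent grading satisfies $\Delta = W_1$, so there is no projection term analogous to $X^{\pi_1\circ u,0}$, no constant analogous to $\alpha_{(2,\infty)}$, and the hypothesis $\norm{u(t)}_{\rm E}\le L$ is already phrased in the very norm we work with, so no conversion step is needed.

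For part~\eqref{lem: Mitchell Gronwall1}, I would use the triangle inequality to split
\[
\norm{Y^{u,\epsilon}(g,t) - Y^{u,0}(h,t)}_{\rm E} \le \norm{Y^{u,\epsilon}(g,t) - Y^{u,0}(g,t)}_{\rm E} + \norm{Y^{u,0}(g,t) - Y^{u,0}(h,t)}_{\rm E}
\]
and estimate the two summands separately. For the first, I would use that $\exp\colon(\mathfrak{g},\llbracket\cdot,\cdot\rrbracket^{(\tau)})\to(\mathfrak{g},\cdot_\tau)$ is the identity map, so that $Y^{u,\epsilon}(g,t)-Y^{u,0}(g,t) = \left.\frac{\dd}{\dd r}\bigl(g\cdot_\epsilon(r u(t)) - g\cdot_0(r u(t))\bigr)\right|_{r=0}$, and then expand the difference of products by Lemma~\ref{lem: difference of products mitchell}. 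After differentiating at $r=0$, only the monomials in which $ru(t)$ appears exactly once survive, yielding a finite sum of iterated brackets of $g$ and $u(t)$ with coefficients $b_{k,q}\epsilon^{|p|-j}$ over indices with $j\le |p|-\alpha_0$; since then $|p|-j\ge\alpha_0$ and $\epsilon\in[0,1]$, each factor obeys $\epsilon^{|p|-j}\le\epsilon^{\alpha_0}$, and using $\norm{[a,b]}_{\rm E}\lesssim\norm{a}_{\rm E}\norm{b}_{\rm E}$, $\norm{g}_{\rm E}\le1$ and $\norm{u(t)}_{\rm E}\le L$ bounds this summand by $\lesssim L\epsilon^{\alpha_0}$. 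For the second summand I would observe that $\phi\colon g\mapsto(L_g^{\cdot_0})_* = \dd(L_g^{\cdot_0})_0$ is smooth, hence Lipschitz on the compact set $\overline{B}_{\rm E}(0,1)$ in the operator norm, so $\norm{Y^{u,0}(g,t) - Y^{u,0}(h,t)}_{\rm E} \le \norm{\phi(g)-\phi(h)}_{\rm op}\norm{u(t)}_{\rm E} \lesssim L\norm{g-h}_{\rm E}$. Adding the two estimates and letting $C$ absorb the multiplicative constants --- which depend only on $G$, $\Delta$, $(W_i)_i$ and $\norm{\cdot}_{\rm E}$, and not on $\epsilon$, $u$ or $L$ --- gives \eqref{lem: Mitchell Gronwall1}.

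For part~\eqref{lem: Mitchell Gronwall2}, I would feed \eqref{lem: Mitchell Gronwall1} into the base Grönwall Lemma~\ref{lem: base Gronwall}, applied on $\Omega = B_{\rm E}(0,1)$ with $X = Y^{u,\epsilon}$, $Y = Y^{u,0}$, $E = CL\epsilon^{\alpha_0}$ and $K = CL$: since $\gamma(0)=\eta(0)$ and both curves remain in $B_{\rm E}(0,1)$, the conclusion gives $\norm{\gamma(1)-\eta(1)}_{\rm E}\le E\,\tfrac{\e^{K}-1}{K} = \epsilon^{\alpha_0}(\e^{CL}-1)$. The only genuinely delicate point is the bracket bookkeeping in the first summand of \eqref{lem: Mitchell Gronwall1}: one must check that after differentiating at $r=0$ every surviving monomial still carries a factor $\epsilon^{|p|-j}$ with $|p|-j\ge\alpha_0$, which is precisely what the index range $j\le|p|-\alpha_0$ in Lemma~\ref{lem: difference of products mitchell} guarantees; everything else parallels the Pansu case verbatim. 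I would also note that, although the Dynkin products $\cdot_\epsilon$ are only locally defined when $\mathfrak{g}$ is non-nilpotent, this causes no trouble, since $g\cdot_\epsilon(ru(t))$ only needs to be evaluated for $g$ near the origin and $r$ near $0$, and the whole statement concerns $g,h$ and the curves inside $B_{\rm E}(0,1)$.
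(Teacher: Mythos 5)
Your proposal is correct and follows essentially the same route as the paper's own proof: the same two-term triangle-inequality split, the expansion of $g\cdot_\epsilon(ru(t))-g\cdot_0(ru(t))$ via Lemma~\ref{lem: difference of products mitchell} with the observation that $\epsilon^{|p|-j}\le\epsilon^{\alpha_0}$ on the surviving monomials, the Lipschitz estimate for $g\mapsto(L_g^{\cdot_0})_*$, and the application of the base Grönwall Lemma~\ref{lem: base Gronwall} for part~\eqref{lem: Mitchell Gronwall2}. Your remarks on the simplifications relative to the Pansu case (no $\pi_1$ term, no $\alpha_{(2,\infty)}$) and on the local validity of the Dynkin products match the paper's treatment.
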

\begin{proof}
 \textbf{Proof of }\eqref{lem: Mitchell Gronwall1}. We follow the same strategy of the proof of Lemma~\ref{lem: Pansu Gronwall}. Let $g,h \in B_{\rm E}(0,1)$ and $t\in [0,1]$ such that $\|u(t)\|_{\rm E} \leq L$. We have
 \begin{align*}
  \|Y^{u,\epsilon}(g,t) - Y^{u,0}(h,t)\|_{\rm E} &\leq \|Y^{u,\epsilon}(g,t) - Y^{u,0}(g,t)\|_{\rm E} + \|Y^{u,0}(g,t) - Y^{ u,0}(h,t)\|_{\rm E}.
 \end{align*}
 We separately bound the two quantities on the right-hand side.

 \textbf{We bound} $\|Y^{u,\epsilon}(g,t) - Y^{u,0}(g,t)\|_{\rm E}$: Recall that that exponential map $\exp : (\mathfrak{g},\llbracket\cdot,\cdot\rrbracket^{(\tau)}) \rightarrow (\mathfrak{g}, \cdot_\tau)$ is the identity map for every $\tau \in [0,1].$ Therefore
 \begin{align*}
  Y^{u,\epsilon}(g,t) - Y^{u,0}(g,t) = (L_g^{\cdot_\epsilon})_*u(t)-(L_g^{\cdot_0})_*u(t) = \left.\frac{\dd}{\dd r}g\cdot_\epsilon ru(t) - g\cdot_0 ru(t) \right|_{r=0}.
 \end{align*}
 In order to simplify the notation, we define two functions $x_1,x_2 : (-1,1) \rightarrow \mathfrak{g}$ by $x_1(r) \coloneqq g$, $x_2(r) \coloneqq ru(t).$ We exploit the property of the constant $\alpha_0$, and by Lemma~\ref{lem: difference of products mitchell} we get: 
 \begin{align*}
  &Y^{u,\epsilon}(g,t) - Y^{u,0}(g,t) \\ &= \left.\frac{\dd}{\dd r}x_1(r) \cdot_\epsilon x_2(r) - x_1(r) \cdot_0 x_2(r)\right|_{r=0} \\
  &\overset{\eqref{lem: eq: difference of product Mitchell}}{=}\left.\frac{\dd}{\dd r} \sum_{k = 2}^{s} \sum_{q \in \{1,2\}^k} \sum_{p \in \{1,\ldots,s\}^k } \sum_{j=1}^{|p|-\alpha_0} b_{k,q} \epsilon^{|p|-j} (\lbrack (x_{q_1}(r))_{p_1},\ldots,(x_{q_k}(r))_{p_k} \rbrack)_j\right|_{r=0}\\
  &= \sum_{k = 2}^{s} \sum_{m=1}^k \sum_{p \in \{1,\ldots,s\}^k } \sum_{j=1}^{|p|-\alpha_0} b_{k,q} \epsilon^{|p|-j} (\lbrack (g)_{p_1},\ldots,(u(t))_{p_m},\ldots,(g)_{p_k} \rbrack)_j,
 \end{align*}
 where we used that after derivation and evaluation at $r = 0$, the only non-zero iterated brackets of $x_1(r)$ and $x_2(r)$ are those where $x_2(r)$ appears exactly once. We point out that since $\epsilon \in [0,1]$ and $j\in \{1,\ldots,|p|-\alpha_0\}$ then $\epsilon^{|p|-j}$ is at most $\epsilon^{|p|-(|p|-\alpha_0)}= \epsilon^{\alpha_0}$. Finally, recalling that $\|[a,b]\|_{\rm E} \lesssim \|a\|_{\rm E}\|b\|_{\rm E}, \; \forall a,b \in \mathfrak{g}$, where the inequality is up to a multiplicative constant depending only on $G$ and $\norm{\cdot}_{\rm E}$, we bound
 \begin{align*}
  \|Y^{u,\epsilon}(g,t) - Y^{u,0}(g,t)\|_{\rm E} \lesssim L\sum_{k = 2}^{s} \sum_{m=1}^k \sum_{p \in \{1,\ldots,s\}^k } \epsilon^{|p|-(|p|- \alpha_0)} \lesssim L\epsilon^{\alpha_0}, 
 \end{align*}
 where the inequalities are up to a multiplicative constant depending only on $G,(W_i)$, and $\|\cdot\|_{\rm E}$.

\textbf{We bound} $\|Y^{u,0}(g,t) - Y^{ u,0}(h,t)\|_{\rm E}$: Notice that the map $\phi: g \mapsto (L_g^{\cdot_0})_* = \dd(L_g^{\cdot_0})_0$ is smooth. Hence, it is Lipschitz on compact sets, and thus there exists a constant $C$ such that 
\[\|\phi(g)-\phi(h)\|_{\rm op} \leq C\| g-h\|_{\rm E}. \]
It follows that 
\begin{align*}
 \|Y^{u,0}(g,t) - Y^{u,0}(h,t)\|_{\rm E} &= \|\phi(g)u(t)-\phi(h)u(t)\|_{\rm E}\\
 &\leq \|\phi(g)-\phi(h)\|_{\rm op}\|u(t)\|_{\rm E}\\
 &\leq CL\|g-h\|_{\rm E}.
\end{align*}

\textbf{We conclude} that there exists a constant $C'$ depending only on $G,d,(W_i)$, and $\|\cdot\|_{\rm E}$ such that 
\begin{align*}
  \|Y^{u,\epsilon}(g,t) - Y^{u,0}(h,t)\|_{\rm E} &\leq \|Y^{u,\epsilon}(g,t) - Y^{u,0}(g,t)\|_{\rm E} 
  + \|Y^{u,0}(g,t) - Y^{ u,0}(h,t)\|_{\rm E} \\
  &\leq C'L(\epsilon^{\alpha_0}+ \norm{g-h}_{\rm E}).
 \end{align*}

\textbf{Proof of }\eqref{lem: Mitchell Gronwall2}. Apply Grönwall Lemma~\ref{lem: base Gronwall} and Lemma~\ref{lem: Mitchell Gronwall}\eqref{lem: Mitchell Gronwall1}.
\end{proof}

\section{Quantitative Pansu Asymptotic Theorem}\label{section: quantitative pansu}
Theorem~\ref{main thm: pansu} is a consequence of the following equivalent result, which compares contracted metrics. After completing the proof of Theorem~\ref{main thm: pansu}, we show how it implies Theorem~\ref{thm: Pansu asymptotic thm}.

\begin{thm}\label{thm: quantitative Pansu}
 Let $G$ be a simply connected nilpotent sub-Finsler Lie group. With respect to an asymptotic grading, consider the Pansu limit metric $\rho_0$ on $G$ and the contracted metrics $\rho_\epsilon$. Then, for every compact set $K \subseteq G$, there is a constant $C>0$ such that
 \begin{equation}\label{eq: thm: quantitative pansu}
  \left|\rho_0(p,q)-\rho_\epsilon(p,q)\right| \leq C \epsilon^{\alpha_{\infty}/\beta}, \quad \forall \epsilon \in [0,1], \; \forall p,q \in K,
 \end{equation}
  where $\alpha_{\infty}$ and $\beta$ are the constants from Definition~\ref{def: constant alpha pansu} and~\ref{def: constant beta}.
\end{thm}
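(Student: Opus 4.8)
The plan is to establish the two one-sided estimates $\rho_0(p,q)-\rho_\epsilon(p,q)\le C\epsilon^{\alpha_\infty/\beta}$ and $\rho_\epsilon(p,q)-\rho_0(p,q)\le C\epsilon^{\alpha_\infty/\beta}$ separately, in each case by comparing a geodesic for one of the metrics with a curve built from the $\pi_1$‑projection (resp.\ a $\pi_1$‑lift) of its control. First I would dispose of the trivial case: if $G$ is a Carnot group then $\beta=0$, $\alpha_\infty=+\infty$ and $\rho_\epsilon=\rho_0$ for all $\epsilon$ by Proposition~\ref{prop: if G carnot the the contracted metric are equal}, so the estimate is vacuous; from now on $G$ is not Carnot, hence $\beta\ge1$ and, by Proposition~\ref{prop: alpha_inf < beta}, $\alpha_\infty<\beta$, so $\alpha_\infty/\beta<1$. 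I would then fix, once and for all, a Carnot quotient ideal $\mathfrak i\subseteq V_{\le\beta}$ realizing $\beta$; it is dilation invariant (Lemma~\ref{lem: properties of CQI associated to CLG}), so $\mathfrak i=\bigoplus_{i\le\beta}(\mathfrak i\cap V_i)$ and in particular $\pi_i(w)=0$ for $i>\beta$ whenever $w\in\mathfrak i$. Given the compact set $K$, using $*_\epsilon$‑left‑invariance of $\rho_\epsilon$, Guivarc'h's Theorem~\ref{thm: Guivarch} and properness of $\mathfrak g$, one finds a compact set $\tilde K\subseteq\mathfrak g$ containing the image of every $\rho_\epsilon$‑geodesic and every $\rho_0$‑geodesic between points of $K$, uniformly in $\epsilon\in[0,1]$; set $L_0:=\sup\{\rho_\lambda(p,q):\lambda\in[0,1],\,p,q\in K\}<\infty$ and fix a norm $\|\cdot\|_{\rm E}$ on $\mathfrak g$, depending on $K$, with $\tilde K\subseteq B_{\rm E}(0,1/2)$.

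For the bound $\rho_0\le\rho_\epsilon+C\epsilon^{\alpha_\infty/\beta}$, fix $p,q\in K$ and $\epsilon\in(0,1]$ (the case $\epsilon=0$ being trivial). I would take a constant‑speed $\rho_\epsilon$‑geodesic $\gamma\colon[0,1]\to G$ from $p$ to $q$, so $\gamma$ is an integral curve of the field $X^{u,\epsilon}$ of Definition~\ref{def: VF Pansu} for some $u\in L^\infty([0,1],\Delta^{(\epsilon)})$ with $\|u(t)\|^{(\epsilon)}=\rho_\epsilon(p,q)\le L_0$, and let $\eta$ be the integral curve of $X^{\pi_1\circ u,0}$ with $\eta(0)=p$. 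Since $\pi_1$ is a submetry (Proposition~\ref{prop: pi_1 is a submetry and homo}), $\|\pi_1(u(t))\|_\infty\le\|u(t)\|^{(\epsilon)}$, so $\eta$ is $\rho_0$‑admissible with $\mathrm{Length}_{\rho_0}(\eta)\le\rho_\epsilon(p,q)$, whence $\rho_0(p,q)\le\rho_\epsilon(p,q)+\rho_0(\eta(1),q)$. To estimate $\rho_0(\eta(1),q)$, I would project both defining ODEs by the quotient map $\pi\colon\mathfrak g\to\mathfrak g/\mathfrak i$: by Lemma~\ref{lem: properties of CQI associated to CLG}, items~\eqref{projection of distribution is the same}, \eqref{lemma equal brackets} and~\eqref{pi homomorphism}, one has $\pi\circ\pi_1|_{\Delta^{(\epsilon)}}=\pi|_{\Delta^{(\epsilon)}}$, $*_{\epsilon,\mathfrak i}=*_{0,\mathfrak i}$, and $\pi$ is a $*_\epsilon$‑homomorphism, so $\pi\circ\gamma$ and $\pi\circ\eta$ are integral curves of the same left‑invariant field and agree at $t=0$; hence $\pi(\eta(1))=\pi(\gamma(1))=\pi(q)$ and $w:=(\eta(1))^{-1}*_0 q\in\ker\pi=\mathfrak i\subseteq V_{\le\beta}$. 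On the other hand, the Grönwall Lemma for Pansu (Lemma~\ref{lem: Pansu Gronwall}\eqref{lem: Pansu Gronwall2}) gives $\|\gamma(1)-\eta(1)\|_{\rm E}\le\epsilon^{\alpha_\infty}(\e^{CL_0}-1)$, and since $(a,b)\mapsto a^{-1}*_0 b$ is smooth and vanishes on the diagonal, $\|w\|_{\rm E}\lesssim\|\gamma(1)-\eta(1)\|_{\rm E}\lesssim\epsilon^{\alpha_\infty}$ on $\tilde K$. Then the Ball‑Box Theorem for the Carnot group $(G,\rho_0)$ (Theorem~\ref{thm: Ball-Box for Carnot}), together with $\pi_i(w)=0$ for $i>\beta$, yields
\[
\rho_0(\eta(1),q)=\rho_0(0,w)\le C\sum_{i=1}^{\beta}\|\pi_i(w)\|^{1/i}\lesssim\sum_{i=1}^{\beta}\epsilon^{\alpha_\infty/i}\lesssim\epsilon^{\alpha_\infty/\beta},
\]
the last step because $\epsilon\in(0,1]$ and $i\le\beta$; this proves the first inequality.

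For the reverse bound $\rho_\epsilon\le\rho_0+C\epsilon^{\alpha_\infty/\beta}$ I would run the symmetric argument. Take a constant‑speed $\rho_0$‑geodesic $\eta$ from $p$ to $q$, an integral curve of $X^{v,0}$ with $v(t)\in V_1$ and $\|v(t)\|_\infty=\rho_0(p,q)\le L_0$; since $\pi_1$ maps the $\|\cdot\|^{(\epsilon)}$‑unit ball onto the $\|\cdot\|_\infty$‑unit ball, lift $v$ to $w\in L^\infty([0,1],\Delta^{(\epsilon)})$ with $\pi_1\circ w=v$ and $\|w(t)\|^{(\epsilon)}=\|v(t)\|_\infty$ (measurable selection, or Proposition~\ref{prop: lift of curves}), and let $\gamma$ be the integral curve of $X^{w,\epsilon}$ from $p$, so $\mathrm{Length}_{\rho_\epsilon}(\gamma)=\rho_0(p,q)$. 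As before $\pi\circ\gamma=\pi\circ\eta$ for the ideal $\mathfrak i$, so $w':=q^{-1}*_\epsilon\gamma(1)\in\mathfrak i\subseteq V_{\le\beta}$; Grönwall gives $\|\gamma(1)-\eta(1)\|_{\rm E}\le\epsilon^{\alpha_\infty}(\e^{CL_0}-1)$, and by the local Lipschitz dependence of $(\lambda,a,b)\mapsto a^{-1}*_\lambda b$, uniform for $\lambda\in[0,1]$ (Lemma~\ref{lem: analyticity of product} and nilpotency), $\|w'\|_{\rm E}\lesssim\epsilon^{\alpha_\infty}$. Combining the Ball‑Box Theorem for $(G,\rho_0)$ with Guivarc'h's Theorem~\ref{thm: Guivarch} and $\alpha_\infty/\beta<1$,
\[
\rho_\epsilon(q,\gamma(1))=\rho_\epsilon(0,w')\le C\rho_0(0,w')+C\epsilon\lesssim\sum_{i=1}^{\beta}\|\pi_i(w')\|^{1/i}+\epsilon\lesssim\epsilon^{\alpha_\infty/\beta},
\]
so $\rho_\epsilon(p,q)\le\mathrm{Length}_{\rho_\epsilon}(\gamma)+\rho_\epsilon(\gamma(1),q)\le\rho_0(p,q)+C\epsilon^{\alpha_\infty/\beta}$. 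Together the two bounds give~\eqref{eq: thm: quantitative pansu}.

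I expect the main obstacle to be the passage from $\|w\|_{\rm E}\lesssim\epsilon^{\alpha_\infty}$ to $\rho_0(\eta(1),q)\lesssim\epsilon^{\alpha_\infty/\beta}$ (and its analogue): a naive application of the Ball‑Box Theorem would only give the weaker exponent $\alpha_\infty/s$. The improvement relies on showing that the endpoint discrepancy $w$ (resp.\ $w'$) actually lies in the Carnot quotient ideal $\mathfrak i\subseteq V_{\le\beta}$, so that only its bottom $\beta$ layers are nonzero; this is precisely what the commuting‑ODE argument through the quotient map $\pi$ provides, and it is the reason the notions of Carnot quotient ideal and of $\beta$ are defined as they are. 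A secondary, more bookkeeping‑type difficulty is to ensure that all constants involved — the Grönwall constant, the local Lipschitz constants of $a^{-1}*_\lambda b$, and the compact set $\tilde K$ — can be chosen uniformly in $\epsilon\in[0,1]$, which is where Guivarc'h's theorem and the analytic dependence of $\epsilon\mapsto{*_\epsilon}$ are used.
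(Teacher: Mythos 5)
Your argument is correct and follows essentially the same route as the paper: split into two one-sided estimates, compare a geodesic for one metric with the integral curve obtained from the $\pi_1$-projection (resp.\ $\pi_1$-lift) of its control, use the Gr\"onwall Lemma~\ref{lem: Pansu Gronwall} to bound the Euclidean endpoint gap by $\epsilon^{\alpha_\infty}$, and exploit that the endpoint discrepancy lies in the Carnot quotient ideal $\mathfrak i\subseteq V_{\le\beta}$ to convert this to an $\epsilon^{\alpha_\infty/\beta}$ bound via Ball--Box. The only cosmetic difference is that where the paper packages the $\rho_\epsilon$-estimate of the discrepancy in Lemma~\ref{lemma: uniform bound on contracted metrics} and Corollary~\ref{cor: lemma uniform bound on rho_eps}, you apply Guivarc'h (Theorem~\ref{thm: Guivarch}) and Ball--Box directly, which is equivalent.
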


From now to the end of this section, we are in the assumption of Theorem~\ref{thm: quantitative Pansu}. In addition, we consider the sub-Finsler structures $(\Delta^{(\epsilon)}, \norm{\cdot}^{(\epsilon)})$ defined in \eqref{def: contracted structure}, which are associated to the contracted metrics $\rho_\epsilon$. We denote the asymptotic grading by $(V_1,\ldots,V_s).$

The strategy of the proof is to take a geodesic for one of the two structures, modify it so that it becomes an admissible curve for the other structure, maintaining the length controlled, and then bound the distance of the endpoints. The next lemma is used to show that the difference between these two curves lies in a Carnot quotient ideal.

\begin{lem}\label{lem: projection of curves pansu}
 Let $\epsilon \in (0,1]$, let $\gamma_\epsilon$ be a $\rho_\epsilon$-admissible curve with control $u \in L^2([0,1],\Delta^{(\epsilon)})$, let $\gamma_0$ be the solution of \begin{equation*}
 \begin{cases}
  \gamma_0(0) = \gamma_\epsilon(0)\\
  \dot{\gamma}_0(t) = (L^{*_0}_{\gamma_0(t)})_*\pi_1(u(t)),
 \end{cases}
\end{equation*}
and let $\mathfrak{i}$ be a Carnot quotient ideal (according to Definition~\ref{def: Carnot quotient ideal}). Denote by $\pi : \mathfrak{g} \rightarrow \mathfrak{g}/\mathfrak{i}$ the quotient map. Then
\begin{enumerate}
 \item $\pi \circ \gamma_0 = \pi \circ \gamma_\epsilon$.
 \item $\gamma_0(1)-\gamma_\epsilon(1), \; \gamma_0(1)^{-1} *_0\gamma_\epsilon(1) , \;\gamma_0(1)^{-1} *_\epsilon\gamma_\epsilon(1) \in \mathfrak{i}$.
\end{enumerate} 
\end{lem}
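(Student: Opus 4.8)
The plan is to reduce both claims to the quotient Lie group $\mathfrak{g}/\mathfrak{i}$ and then invoke uniqueness of solutions of Carathéodory ODEs. Throughout, identify $G$ with $(\mathfrak{g},*_1)$ as in Section~\ref{subsection: BCH}. By Lemma~\ref{lem: properties of CQI associated to CLG}\eqref{pi homomorphism}, for every $\epsilon\in[0,1]$ the quotient map $\pi$ is a Lie group homomorphism $(\mathfrak{g},*_\epsilon)\to(\mathfrak{g}/\mathfrak{i},*_{\epsilon,\mathfrak{i}})$, and by item~\eqref{lemma equal brackets} the quotient brackets $[\cdot,\cdot]^{(\epsilon,\mathfrak{i})}$, hence the quotient products $*_{\epsilon,\mathfrak{i}}$, are independent of $\epsilon$; write $\bar{\ast}$ for this common product on $\mathfrak{g}/\mathfrak{i}$. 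Since $\pi\colon\mathfrak{g}\to\mathfrak{g}/\mathfrak{i}$ is linear, its differential at every point equals $\pi$ itself.

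For part (1), I would differentiate the two projected curves. Using $\pi\circ L^{*_\epsilon}_g=L^{\bar{\ast}}_{\pi(g)}\circ\pi$ together with the defining ODE of $\gamma_\epsilon$, one gets that $\pi\circ\gamma_\epsilon$ is an absolutely continuous integral curve of the time-dependent vector field $Z(t,y)\coloneqq(L^{\bar{\ast}}_{y})_*\,\pi(u(t))$ on $\mathfrak{g}/\mathfrak{i}$; likewise $\pi\circ\gamma_0$ is an integral curve of $(t,y)\mapsto(L^{\bar{\ast}}_{y})_*\,\pi(\pi_1(u(t)))$. Now, because $u(t)\in\Delta^{(\epsilon)}$ for a.e.\ $t$ and $\epsilon\in(0,1]$, Lemma~\ref{lem: properties of CQI associated to CLG}\eqref{projection of distribution is the same} yields $\pi(u(t))=\pi(\pi_1(u(t)))$ for a.e.\ $t$, so the two curves solve the \emph{same} ODE and start at the same point $\pi(\gamma_\epsilon(0))$. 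The right-hand side $Z$ is smooth in $y$, lies in $L^2$ (hence $L^1$) in $t$, and is locally Lipschitz in $y$ uniformly in $t$; moreover a time-dependent left-invariant vector field on the simply connected nilpotent Lie group $(\mathfrak{g}/\mathfrak{i},\bar{\ast})$ has complete flow, so both solutions are defined on all of $[0,1]$. Carathéodory uniqueness then gives $\pi\circ\gamma_\epsilon=\pi\circ\gamma_0$ on $[0,1]$, which is (1).

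For part (2), evaluating (1) at $t=1$ gives $\pi(\gamma_0(1))=\pi(\gamma_\epsilon(1))$, i.e.\ $\gamma_0(1)-\gamma_\epsilon(1)\in\ker\pi=\mathfrak{i}$. For the remaining two inclusions, recall that in any Dynkin product the inverse of an element $g$ is $-g$, so $\gamma_0(1)^{-1}$ is the same whether computed in $(\mathfrak{g},*_0)$ or in $(\mathfrak{g},*_\epsilon)$. Applying the homomorphism $\pi\colon(\mathfrak{g},*_\tau)\to(\mathfrak{g}/\mathfrak{i},\bar{\ast})$ with $\tau\in\{0,\epsilon\}$ to $\gamma_0(1)^{-1}*_\tau\gamma_\epsilon(1)$ gives $\pi(\gamma_0(1))^{-1}\,\bar{\ast}\,\pi(\gamma_\epsilon(1))=\pi(\gamma_\epsilon(1))^{-1}\,\bar{\ast}\,\pi(\gamma_\epsilon(1))=0$, whence $\gamma_0(1)^{-1}*_0\gamma_\epsilon(1)\in\mathfrak{i}$ and $\gamma_0(1)^{-1}*_\epsilon\gamma_\epsilon(1)\in\mathfrak{i}$. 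The only delicate point in this argument is the ODE step in part (1): one must check that the projected curves are genuine Carathéodory integral curves of \emph{the same} (merely $L^1$-in-time) vector field on $\mathfrak{g}/\mathfrak{i}$ and that existence and uniqueness hold on all of $[0,1]$; this is precisely where completeness of left-invariant flows on the nilpotent quotient and the identity $\pi|_{\Delta^{(\epsilon)}}=\pi\circ\pi_1|_{\Delta^{(\epsilon)}}$ from Lemma~\ref{lem: properties of CQI associated to CLG} do the work, with everything else being formal manipulation of group homomorphisms.
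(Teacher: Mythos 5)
Your proposal is correct and follows essentially the same route as the paper: project both curves via $\pi$, use Lemma~\ref{lem: properties of CQI associated to CLG} (that $\pi$ is a homomorphism for every $*_\epsilon$, that the quotient product is independent of $\epsilon$, and that $\pi|_{\Delta^{(\epsilon)}}=\pi\circ\pi_1|_{\Delta^{(\epsilon)}}$) to see that $\pi\circ\gamma_\epsilon$ and $\pi\circ\gamma_0$ solve the same Cauchy problem, and then deduce part (2) by applying the homomorphism $\pi$. The only difference is that you spell out the Carath\'eodory uniqueness/completeness step, which the paper leaves implicit.
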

\begin{proof}
 (1). It is sufficient to show that the projected curves are solutions to the same Cauchy problem. Recall that $\mathfrak{i}$ is an ideal for each Lie algebra $(\mathfrak{g},[\cdot,\cdot]^{(\epsilon)})$, see Lemma~\ref{lem: properties of CQI associated to CLG}\eqref{lem: properties of CQI associated to CLG 2}. Moreover, the quotient bracket is independent of $\epsilon$, see Lemma~\ref{lem: properties of CQI associated to CLG}\eqref{lemma equal brackets}. We denote by $*_\mathfrak{i}$ the Dynkin product on $\mathfrak{g}/\mathfrak{i}$, and we denote the left translation by an element $v \in \mathfrak{g}/\mathfrak{i}$ with respect to this product by $L_v^{\mathfrak{i}}$. Observe also that by Lemma~\ref{lem: properties of CQI associated to CLG}\eqref{pi homomorphism}, the quotient map $\pi : \mathfrak{g} \rightarrow \mathfrak{g}/\mathfrak{i}$ is a Lie group homomorphism. Therefore we have 
 \begin{align*}
  \frac{\dd}{\dd t}{(\pi \circ \gamma_\epsilon)}(t) &= (\dd\pi)\dot{\gamma_\epsilon}(t) \\
  &= (\dd\pi)(\dd L_{\gamma_\epsilon(t)}^{*_\epsilon})u(t) \\
  &=\dd(\pi \circ L_{\gamma_\epsilon(t)}^{*_\epsilon})u(t) \\
  &=\dd(L_{\pi(\gamma_\epsilon(t))}^{\mathfrak{i}}\circ \pi)u(t)\\
  &=\dd(L_{\pi(\gamma_\epsilon(t))}^{\mathfrak{i}})\circ \pi(u(t)).
 \end{align*}
 And
 \begin{align*}
  \frac{\dd}{\dd t}{(\pi \circ \gamma_0)}(t) &= (\dd\pi)\dot{\gamma_0}(t) \\
  &= (\dd\pi)(\dd L_{\gamma_0(t)}^{*_0})\pi_1(u(t)) \\
  &=\dd(\pi \circ L_{\gamma_0(t)}^{*_0})\pi_1(u(t)) \\
  &= \dd(L_{\pi(\gamma_0(t))}^{\mathfrak{i}}\circ \pi)\pi_1(u(t)) \\
  &=\dd(L_{\pi(\gamma_0(t))}^{\mathfrak{i}})\circ \pi \circ \pi_1( u(t)),\\
  &=\dd(L_{\pi(\gamma_0(t))}^{\mathfrak{i}})\circ \pi (u(t)),
 \end{align*}
 where in the last step we used Lemma \ref{lem: properties of CQI associated to CLG}\eqref{projection of distribution is the same}. Hence, $\pi \circ \gamma_0 = \pi \circ \gamma_\epsilon$.

 (2). The fact that $\gamma_0(1)-\gamma_\epsilon(1) \in \mathfrak{i}$ is an immediate consequence of (1). Using (1) and the fact that $\pi$ is a Lie group homomorphism, we obtain for $\tau \in \{0, \epsilon\}$:
 \begin{align*}
  (\pi \circ\gamma_0(1))^{-1} &*_{\mathfrak{i}} (\pi \circ\gamma_\epsilon(1)) = 0 \\
  \iff \pi(\gamma_0(1)^{-1}&*_\tau \gamma_\epsilon(1)) = 0,
 \end{align*}
 so $\gamma_0(1)^{-1} *_\tau \gamma_\epsilon(1) \in \mathfrak{i}.$
\end{proof}

For the proof of Theorem~\ref{thm: quantitative Pansu}, we need one more result.
\begin{lem}\label{lemma: uniform bound on contracted metrics}
 For every $L \geq 1$ and for every norm $\norm{\cdot}$ on $\mathfrak{g}$, there is a constant $C>0$ such that we have
 \begin{align}\label{eq: lemma unif bound}
  \rho_\epsilon(0,v) \leq C \norm{v}^{1/\beta}, \quad \forall \epsilon \in [0,1], \; \forall v \in (B_{\rho_0}(0,L)\setminus B_{\rho_0}(0,\epsilon)) \cap V_{\leq \beta}.
 \end{align}
 where $\beta$ is the constant from Definition~\ref{def: constant beta}.
\end{lem}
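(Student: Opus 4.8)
The plan is to derive the estimate purely from the Ball--Box Theorem on the Carnot group $(G,\rho_0)$ together with the Guivarc'h-type comparison of Theorem~\ref{thm: Guivarch}; the hypothesis $v\in V_{\leq\beta}$ enters only through the fact that it annihilates the top layers, which is exactly what turns the naive Ball--Box exponent $1/s$ into $1/\beta$. In particular, for this lemma the Carnot quotient ideal intervenes only through the definition of $\beta$, i.e.\ only to pin down which subspace $v$ ranges over.

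First I would unwind the identifications: under the vector space isomorphism $\mathfrak g\simeq\mathfrak g_\infty$ induced by the chosen asymptotic grading, the metric space $(G,\rho_0)$ is identified with the metric Carnot group $(G_\infty,d_\infty)$, and this identification carries the grading $(V_i)_{i=1}^s$ to the canonical stratification of $\mathfrak g_\infty$. Hence Theorem~\ref{thm: Ball-Box for Carnot} applies with the given norm $\norm{\cdot}$: there is $C_0\geq1$, depending only on $G,\Delta,\norm{\cdot},(V_i)_i$, with
\[ \tfrac1{C_0}\,\rho_0(0,x)\ \leq\ \sum_{i=1}^s\norm{(x)_i}^{1/i}\ \leq\ C_0\,\rho_0(0,x),\qquad\forall x\in\mathfrak g. \]
Now fix $v\in\bigl(B_{\rho_0}(0,L)\setminus B_{\rho_0}(0,\epsilon)\bigr)\cap V_{\leq\beta}$. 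Since $v\in V_{\leq\beta}$ we have $(v)_i=0$ for $i>\beta$, and since $\rho_0(0,v)\leq L$ the right-hand inequality above bounds every $\norm{(v)_i}$, hence $\norm v$, by a constant $R=R(L)$. Writing $\norm{(v)_i}\leq M\norm v$ with $M\coloneqq\max_i\norm{\pi_i}_{\rm op}$, I would then estimate
\[ \sum_{i=1}^\beta\norm{(v)_i}^{1/i}\ \leq\ \sum_{i=1}^\beta(M\norm v)^{1/i}\ =\ (M\norm v)^{1/\beta}\sum_{i=1}^\beta(M\norm v)^{\,1/i-1/\beta}\ \leq\ C_1\,\norm v^{1/\beta}, \]
where the last step uses that $1/i-1/\beta\geq0$ for $i\leq\beta$ and that $M\norm v$ stays in the bounded interval $[0,MR]$, so each factor $(M\norm v)^{1/i-1/\beta}$ is bounded; thus $C_1$ depends only on $L,\beta,M$. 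Combining with the left-hand Ball--Box inequality yields $\rho_0(0,v)\leq C_0C_1\norm v^{1/\beta}$.

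For the passage to general $\epsilon\in[0,1]$ I would invoke Theorem~\ref{thm: Guivarch}: there is $C_2\geq1$ with $\rho_\epsilon(0,v)\leq C_2\,\rho_0(0,v)+C_2\epsilon$ for all $\epsilon\geq0$. The condition $v\notin B_{\rho_0}(0,\epsilon)$ is exactly $\rho_0(0,v)\geq\epsilon$, so $C_2\epsilon\leq C_2\,\rho_0(0,v)$, whence
\[ \rho_\epsilon(0,v)\ \leq\ 2C_2\,\rho_0(0,v)\ \leq\ 2C_0C_1C_2\,\norm v^{1/\beta}, \]
which is the desired estimate with $C=2C_0C_1C_2$; the case $\epsilon=0$ is already covered by the estimate on $\rho_0$.

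I do not expect a genuine obstacle. The two inputs, Theorem~\ref{thm: Ball-Box for Carnot} and Theorem~\ref{thm: Guivarch}, supply constants depending only on the fixed data, so uniformity in $\epsilon$ and in $v$ is automatic. The only steps needing a little care are the elementary power-sum inequality $\sum_{i=1}^\beta(M\norm v)^{1/i}\lesssim\norm v^{1/\beta}$ — valid because $\norm v$ stays bounded and only exponents $1/i$ with $i\leq\beta$ occur, which is precisely where the restriction $v\in V_{\leq\beta}$ (and hence the constant $\beta$) is used — and the verification that the identification of $(G,\rho_0)$ with $(G_\infty,d_\infty)$ transports $(V_i)_i$ to the canonical stratification, so that Theorem~\ref{thm: Ball-Box for Carnot} is legitimately applicable with the given norm.
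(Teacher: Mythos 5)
Your proof is correct and rests on exactly the same two inputs as the paper's: the Ball--Box Theorem for the Carnot metric $\rho_0$ applied to $v\in V_{\leq\beta}$ (giving $\rho_0(0,v)\leq C_1\norm{v}^{1/\beta}$ on the bounded set), and Guivarc'h's comparison $\rho_\epsilon(0,v)\leq C\rho_0(0,v)+C\epsilon$. The only difference is the final step: you absorb the additive error directly via $\epsilon\leq\rho_0(0,v)$ (which is exactly what $v\notin B_{\rho_0}(0,\epsilon)$ provides), whereas the paper reaches the same conclusion by contradiction with sequences $\epsilon_n$, $v_n$ and the observation that $\epsilon_n/\rho_0(0,v_n)\to\infty$ would violate that same condition; your direct version is slightly cleaner and yields an explicit constant $2C_0C_1C_2$. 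Two minor points: you should dispose of the degenerate case $\beta=0$ separately (as the paper does, since then $V_{\leq\beta}=\{0\}$ and the exponent $1/\beta$ is meaningless), and your power-sum estimate and the identification of $(G,\rho_0,(V_i)_i)$ with the stratified $(G_\infty,d_\infty)$ are exactly the details the paper leaves implicit when citing Theorem~\ref{thm: Ball-Box for Carnot}.
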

\begin{proof}If $\beta = 0$, then $V_{\leq \beta} = \{0\}$ and \eqref{eq: lemma unif bound} trivially holds. So suppose $\beta \geq 1$, fix $\epsilon \in [0,1]$, and let $B_\epsilon \coloneqq (B_{\rho_0}(0,L)\setminus B_{\rho_0}(0,\epsilon)) \cap V_{\leq \beta}$.
Since $B_\epsilon \subseteq V_{\leq \beta}$, by the Ball-Box Theorem for Carnot groups, see Theorem~\ref{thm: Ball-Box for Carnot}, there is a constant $C_1>0$ such that 
\begin{equation}\label{lem: bound on contracted metric equation 1}
 \rho_0(0,v) \overset{\eqref{thm: eq: Ball box for Carnot}}{\leq} C_1 \norm{v}^{1/\beta}, \quad \forall v \in B_\epsilon.
\end{equation}

 We now prove~\eqref{eq: lemma unif bound} by contradiction. Assume that there exist sequences $(\epsilon_n)_{n \in \mathbb{N}} \subseteq [0,1]$ and $(v_n)_{n \in \mathbb{N}}$, $v_n \in B_{\epsilon_n}$, such that for every $n \in \mathbb{N}$:
 \begin{equation}\label{exercise: weak uniform bound on contracted metrics equation}
  \rho_{\epsilon_n}(0,v_n) > n\norm{v_n}^{1/\beta}.
 \end{equation}
 Then, by Guivarc'h Theorem~\ref{thm: Guivarch}, there is a constant $D \geq 1$ such that 
 \begin{align*}
  n \overset{\eqref{exercise: weak uniform bound on contracted metrics equation}}{<} \frac{\rho_{\epsilon_n}(0,v_n)}{\norm{v_n}^{1/\beta}} \overset{\eqref{lem: bound on contracted metric equation 1}}{\leq } C_1 \frac{\rho_{\epsilon_n}(0,v_n)}{\rho_0(0,v_n)} \overset{\eqref{eq: Guivarch equation}}{\leq} C_1D + \frac{C_1D\epsilon_n}{\rho_0(0,v_n)}.
 \end{align*}
 Hence $\frac{\epsilon_n}{\rho_0(0,v_n)} \rightarrow \infty.$ In particular for $n$ large enough 
 \[\epsilon_n > \rho_0(0,v_n),\]
 contradicting the fact that $v_n \in B_{\epsilon_n}$.
\end{proof}
We obtain the following corollary.

\begin{corollary}\label{cor: lemma uniform bound on rho_eps}
 For every $L \geq 1$ and for every norm $\norm{\cdot}$ on $\mathfrak{g}$, there is a constant $C>0$ such that we have
 \begin{align}\label{cor: eq: unif bound on rho_eps}
  \rho_\epsilon(0,v) \leq C\max\{\norm{v}^{1/\beta}, \epsilon\}, \quad \forall \epsilon \in [0,1], \; \forall v \in B_{\rho_0}(0,L) \cap V_{\leq \beta}.
 \end{align}
 where $\beta$ is the constant from Definition~\ref{def: constant beta}.
\end{corollary}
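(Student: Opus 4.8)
The plan is to split the set $B_{\rho_0}(0,L)\cap V_{\leq\beta}$ into the region where $\rho_0(0,v)\geq\epsilon$ and the region where $\rho_0(0,v)<\epsilon$, treating the first with Lemma~\ref{lemma: uniform bound on contracted metrics} and the second with Guivarc'h Theorem~\ref{thm: Guivarch}. If $\beta=0$ then $V_{\leq\beta}=\{0\}$ and the bound is trivial, so assume $\beta\geq1$; fix $L\geq1$ and a norm $\norm{\cdot}$ on $\mathfrak{g}$.

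For $v\in B_{\rho_0}(0,L)\cap V_{\leq\beta}$ with $\rho_0(0,v)\geq\epsilon$, that is $v\in(B_{\rho_0}(0,L)\setminus B_{\rho_0}(0,\epsilon))\cap V_{\leq\beta}$, Lemma~\ref{lemma: uniform bound on contracted metrics} produces a constant $C_1>0$, depending only on $L$ and $\norm{\cdot}$ (and the fixed structure), with
\[\rho_\epsilon(0,v)\leq C_1\norm{v}^{1/\beta}\leq C_1\max\{\norm{v}^{1/\beta},\epsilon\}.\]
For $v\in B_{\rho_0}(0,L)\cap V_{\leq\beta}$ with $\rho_0(0,v)<\epsilon$, the upper bound in \eqref{eq: Guivarch equation} (with the identity $1$ read as $0\in\mathfrak{g}$ and $x=v$) gives a universal constant $C_2\geq1$ with
\[\rho_\epsilon(0,v)\leq C_2\rho_0(0,v)+C_2\epsilon\leq 2C_2\epsilon\leq 2C_2\max\{\norm{v}^{1/\beta},\epsilon\}.\]
Setting $C\coloneqq\max\{C_1,2C_2\}$ then yields \eqref{cor: eq: unif bound on rho_eps} for all $v\in B_{\rho_0}(0,L)\cap V_{\leq\beta}$ and all $\epsilon\in[0,1]$.

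There is essentially no obstacle here: the substance is the uniform-in-$\epsilon$ estimate already encoded in Lemma~\ref{lemma: uniform bound on contracted metrics}, and the corollary merely extends it to the small ball $B_{\rho_0}(0,\epsilon)$, where $\rho_\epsilon(0,\cdot)$ is $O(\epsilon)$ by Guivarc'h. The only point to keep track of is that the constant in Theorem~\ref{thm: Guivarch} does not depend on $\epsilon$, so that the two case-estimates can be amalgamated into a single constant $C$.
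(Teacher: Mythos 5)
Your proof is correct and follows essentially the same route as the paper: the paper likewise handles $v\in B_{\rho_0}(0,\epsilon)\cap V_{\leq\beta}$ via the Guivarc'h bound \eqref{eq: Guivarch equation} to get $\rho_\epsilon(0,v)\lesssim\epsilon$, and combines this with Lemma~\ref{lemma: uniform bound on contracted metrics} on the complementary region. The only difference is cosmetic bookkeeping of the constants.
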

\begin{proof}
 Fix $\epsilon \in [0,1]$ and let $v \in B_{\rho_0}(0,\epsilon) \cap V_{\leq \beta}$. Then by Guivarc'h Theorem~\ref{thm: Guivarch}, there exists a constant $C\geq 1$ such that
 \[\rho_\epsilon(0,v) \overset{\eqref{eq: Guivarch equation}}{\leq} C(\rho_0(0,v)+C\epsilon) \leq C^2\epsilon.\]
 This fact, combined with~\eqref{eq: lemma unif bound}, completes the proof.
\end{proof}
\subsection{Proofs of Theorem~\ref{thm: quantitative Pansu} and Theorem~\ref{main thm: pansu}}
\begin{proof}[Proof of Theorem~\ref{thm: quantitative Pansu}]
If $(G,d)$ is a Carnot group, then by Proposition $\ref{prop: if G carnot the the contracted metric are equal}$ we have
\[\rho_\epsilon \overset{\eqref{eq: prop: if G carnot the the contracted metric are equal}}{=} d, \quad \forall \epsilon \in [0,1].\]

So assume that $(G,d)$ is not Carnot. We begin by establishing some bounds. By definition of $\beta$, there is a Carnot quotient ideal (according to Definition~\ref{def: Carnot quotient ideal}) $\mathfrak{i} \triangleleft \mathfrak{g}$ with $\mathfrak{i} \subseteq V_{\leq \beta}$. Let $C \geq 1$ be the constant from Guivarc'h Theorem~\ref{thm: Guivarch}. Let $\| \cdot \|$ be a norm on $\mathfrak{g}$ with unit ball so large that
\begin{equation}\label{eq: Euclidean ball large Pansu}
 B_{\rho_0}(0,2+4C+6C^2) \subseteq B_{\|\cdot\|}(0,1).
\end{equation}
By continuity of the operations and the compactness of the domains, there is an $L\geq 1$ such that
\begin{align*}
 \rho_0(0,v^{-1}*_\epsilon w) \leq L, \quad \forall \epsilon \in [0,1], \;\forall v,w \in \overline{B}_{\norm{\cdot}}(0,1).
\end{align*}
Consequently, by Corollary~\ref{lemma: uniform bound on contracted metrics}, there is a $C'>0$ such that $ \forall\epsilon, \tau \in [0,1]$, $\forall v,w \in B_{\norm{\cdot}}(0,1)$ with $v^{-1}*_\epsilon w \in V_{\leq \beta}$, we have 
\begin{align}\label{eq: Pansu: bound of rho}
 \rho_\tau(0,v^{-1}*_\epsilon w) \overset{\eqref{cor: eq: unif bound on rho_eps}}{\leq} C'\max\{ \norm{v^{-1}*_\epsilon w}^{1/\beta}, \tau\}.
\end{align}
By Lemma~\ref{lem: analyticity of product}, the map $(\lambda,x,y) \mapsto x*_\lambda y$ is analytic on $[0,1] \times B_{\norm{\cdot}}(0,2)^{-1} \times B_{\norm{\cdot}}(0,2)$ (since the series in the Dynkin product is a polynomial, and thus converges globally). Therefore, since analytic maps are Lipschitz on compact sets, there is a constant $\tilde{C}>0$ such that
\begin{align}\label{eq: Pansu: bound difference of products}
 \norm{v^{-1}*_\tau w - v^{-1}*_\tau z} \leq \tilde{C}\norm{w-z}, \quad \forall \tau \in [0,1],\;\forall v,w,z \in \overline{B}_{\norm{\cdot}}(0,1).
\end{align}

Let $K \subseteq \mathfrak{g}$ be a compact set, $\epsilon \in [0,1]$, and $p,q \in K$. We define $L \coloneqq \max\{\rho_\epsilon(p,q), \rho_0(p,q)\}$. Without loss of generality, we may assume $K \subseteq B_{\rho_0}(0,1).$

We prove the \textbf{first inequality}: $\rho_0(p,q) \leq \rho_\epsilon(p,q) +C\epsilon^{\alpha_{\infty}/\beta}.$ Let $\gamma_\epsilon:[0,1]\rightarrow\mathfrak{g}$ be a $\rho_\epsilon$-geodesic from $p$ to $q$ parametrized by constant speed with control $u \in L^2([0,1], \Delta^{(\epsilon)})$. Since $\gamma_\epsilon$ is parametrized by constant speed, we have $\norm{u(t)}^{(\epsilon)} = \rho_\epsilon(p,q) \leq L$ for almost every $t$. We have the uniform bound:
\begin{align}\label{eq: uniform bound of rho_epsilon distance}
 \nonumber\rho_\epsilon(p,q) &\leq \rho_\epsilon(0,p) +\rho_\epsilon(0,q)\\
 \nonumber&\overset{\eqref{eq: Guivarch equation}}{\leq} C\rho_0(0,p)+C\epsilon +\rho_0(0,q)+C\epsilon\\
 &\leq 4C.
\end{align}
We check that independently on $\epsilon \in (0,1)$, the curve $\gamma_\epsilon$ is inside the bounded set $B_{\rho_0}(0,6C^2+C)$:
\begin{align*}\label{eq: bound of rho_0 distance of difference}
 \rho_0(0, \gamma_\epsilon(t)) &\overset{\eqref{eq: Guivarch equation}}{\leq} C\rho_\epsilon(0,\gamma_\epsilon(t))+C\epsilon \\
 &\leq C(\rho_\epsilon(0,p) + \rho_\epsilon(p,\gamma_\epsilon(t)) + C\epsilon \\
 &\overset{\eqref{eq: Guivarch equation}}{\leq} C(\rho_0(0,p) + C\epsilon+\rho_\epsilon(p,q))+C\epsilon\\
 &\overset{\eqref{eq: uniform bound of rho_epsilon distance}}{\leq} 6C^2+C.
\end{align*}

Let $\gamma_0:[0,1] \rightarrow \mathfrak{g}$ be the solution of \begin{equation*}
 \begin{cases}
  \gamma_0(0) = p\\
  \dot{\gamma}_0(t) = (L^{*_0}_{\gamma_0(t)})_*\pi_1(u(t)).
 \end{cases}
\end{equation*}
Note that $\gamma_0$ is a $V_1$-admissible curve in $(\mathfrak{g}_\infty, *_0)$ with control $\pi_1 \circ u$. In particular, the curves $\gamma_0$ and $\pi_1\circ \gamma_\epsilon$ have the same length in their respective spaces: $(\mathfrak{g}_\infty, \rho_0)$ and $(V_1, \norm{\cdot}_\infty)$. Therefore, using in addition that $\pi_1 : (\mathfrak{g}, \rho_\epsilon) \rightarrow (V_1, \norm{\cdot}_\infty)$ is a $1$-Lipschitz map by Proposition~\ref{prop: pi_1 is a submetry and homo}, we have
\begin{align}
 \nonumber L_{\rho_0}(\gamma_0) &= L_{\norm{\cdot}_\infty}(\pi_1 \circ \gamma_\epsilon)\\
 \nonumber &\leq L_{\rho_\epsilon}(\gamma_\epsilon)\\
 \label{eq: length rho_0 < length rho_epsilon}&= \rho_\epsilon(p,q)\\
 \label{eq: lenght gamma_0 wrt rho_0 is bounded} &\overset{\eqref{eq: uniform bound of rho_epsilon distance}}{\leq}4C.
\end{align}
Moreover, since $\gamma_\epsilon$ is parametrized by constant speed, 
\begin{align*}\label{eq: bound on epsilon norm}
 \norm{u(t)}^{(\epsilon)} \overset{\eqref{eq: uniform bound of rho_epsilon distance}}{\leq} 4C, \quad \text{for a.e. } t\in [0,1]
\end{align*}
and, also, 
\begin{align*}
 \rho_0(0,\gamma_0(t)) &\leq \rho_0(0,p)+\rho_0(p,\gamma_0(t)) \\
 &\leq 1+L_{\rho_0}(\gamma_0)\\
 &\overset{\eqref{eq: lenght gamma_0 wrt rho_0 is bounded}}{\leq} 1+4C.
\end{align*}
We have established that the curves $\gamma_\epsilon$ and $\gamma_0$ lie in the bounded set 
\[B_{\rho_0}(0,1+4C+6C^2) \overset{\eqref{eq: Euclidean ball large Pansu}}{\subseteq} B_{\norm{\cdot}}(0,1).\]
In addition we observe that $\gamma_0$ and $\gamma_\epsilon$ are integral curves of the vector fields $X^{\pi_1 \circ u,0}$ and $X^{u,\epsilon}$, respectively, as in Definition~\ref{def: VF Pansu}. Hence, we may apply Grönwall Lemma~\ref{lem: Pansu Gronwall} to bound 
\begin{equation}\label{eq: Pansu: bound of Euclidean difference 1}
 \|\gamma_0(1)-\gamma_\epsilon(1)\| \leq \epsilon^{\alpha_\infty}(\e^{\hat{C}L}-1)
\end{equation}

Next, by Lemma~\ref{lem: projection of curves pansu}, we have $v\coloneqq\gamma_0(1)^{-1}*_0\gamma_\epsilon(1) \in \mathfrak{i}\subseteq V_{\leq \beta}$, and by~\eqref{eq: Pansu: bound difference of products}:
\begin{align} \label{eq: Pansu: bound of v}
 \nonumber\|v\| &= \|L^{*_0}_{\gamma_0(1)^{-1}}(\gamma_0(1)*_0v)-L^{*_0}_{\gamma_0(1)^{-1}}(\gamma_0(1))\| \\
 \nonumber&\overset{\eqref{eq: Pansu: bound difference of products}}{\leq} \tilde{C} \|\gamma_0(1)*_0v - \gamma_0(1)\|\\
 & = \tilde{C}\|\gamma_\epsilon(1)-\gamma_0(1)\|.
\end{align}

Since $v \in V_{\leq \beta}$, by Theorem~\ref{thm: Ball-Box for Carnot} there exists a constant $\bar{C}\geq 1$ depending only on $G, (V_i)_{i=1}^s$, and $\norm{\cdot}$ such that
\begin{equation}\label{eq: Pansu: bound of rho_0}
 \rho_0(0,v) \overset{\eqref{thm: eq: Ball box for Carnot}}{\leq} \bar{C} \norm{v}^{1/\beta}.
\end{equation}
Finally, we obtain the first bound of~\eqref{eq: thm: quantitative pansu}:
\begin{align*}
 \rho_0(p,q) &\leq \rho_0(p,\gamma_0(1))+ \rho_0(\gamma_0(1),\gamma_\epsilon(1)) \\
 &\leq L_{\rho_0}(\gamma_0) + \rho_0(0,v) \\
 &\overset{\eqref{eq: length rho_0 < length rho_epsilon},\eqref{eq: Pansu: bound of rho_0}}{\leq} \rho_\epsilon(p,q) + \bar{C}\|v\|^{1/\beta}\\
 &\overset{\eqref{eq: Pansu: bound of v}}{\leq} \rho_\epsilon(p,q) + \bar{C}\tilde{C}\|\gamma_0(1)-\gamma_\epsilon(1)\|^{1/\beta}\\
 &\overset{\eqref{eq: Pansu: bound of Euclidean difference 1}}{\leq} \rho_\epsilon(p,q) + \bar{C}\tilde{C}(\e^{\hat{C}L}-1)^{1/\beta}\epsilon^{\alpha_\infty/\beta}. 
\end{align*}


We prove the \textbf{second inequality}: $\rho_\epsilon(p,q) \leq \rho_0(p,q) +C\epsilon^{\alpha_{\infty}/\beta}.$ Let $\gamma_0:[0,1] \rightarrow \mathfrak{g}$ be a $\rho_0$-geodesic from $p$ to $q$ parametrized by constant speed with control $v\in L^2([0,1],V_1)$. Since $\gamma_0$ is parametrized by constant speed, we have $\norm{u(t)}^{(0)} = \rho_0(p,q) \leq L$ for almost every $t$. By Proposition~\ref{prop: pi_1 is a submetry and homo}, the projection $\pi_1 : (\mathfrak{g}, \rho_\epsilon) \rightarrow (V_1, \|\cdot\|_\infty)$ is a submetry and therefore, by Proposition~\ref{prop: lift of curves}, we can lift $\pi_1 \circ \gamma_0$ to a curve $\gamma_\epsilon : I \rightarrow \mathfrak{g}$ such that
\begin{equation}\label{eq: Pansu part 2: projection of gamma coincide}
 \pi_1 \circ \gamma_0 = \pi_1 \circ \gamma_\epsilon.
\end{equation} 

We claim that the control of $\gamma_\epsilon$ with respect to $*_\epsilon$ is a function $u \in L^\infty([0,1], \Delta^{(\epsilon)})$ such that
\begin{equation}\label{eq: Pansu part 2: control of gamma_eps}
 \pi_1 \circ u = v, \text{ and }\|u\|^{(\epsilon)} = \|v\| \text{ almost everywhere.}
\end{equation}
Indeed, since $\gamma_\epsilon$ is a rectifiable curve in a sub-Finsler Lie group, it has a measurable control $u : [0,1] \rightarrow \Delta^{(\epsilon)}.$ We show that $\pi_1 \circ u = v$ by proving that both functions are controls of the curve $\pi_1 \circ \gamma_0$ for the abelian product on $V_1$. Recalling that $\pi_1:(\mathfrak{g},*_\epsilon) \rightarrow (V_1, +)$ is a homomorphism by Proposition~\ref{prop: pi_1 is a submetry and homo}, we calculate:
\begin{align*}
 \frac{\dd}{\dd t}\pi_1 \circ \gamma_0(t) &= \dd(\pi_1 \circ L_{\gamma_0(t)}^{*_0})_0v(t)\\
 &=\dd(L^+_{\pi_1(\gamma_0(t))})_0\pi_1\circ v(t)\\
 &=\dd(L^+_{\pi_1(\gamma_0(t))})_0 v(t),
\end{align*}
and
\begin{align*}
 \frac{\dd}{\dd t}\pi_1 \circ \gamma_0(t) &\overset{\eqref{eq: Pansu part 2: projection of gamma coincide}}{=} \frac{\dd}{\dd t}\pi_1 \circ \gamma_\epsilon(t)\\
 &= \dd(\pi_1 \circ L_{\gamma_0(t)}^{*_\epsilon})_0u(t)\\
 &=\dd(L^+_{\pi_1(\gamma_\epsilon(t))})_0\pi_1\circ u(t)\\
 &\overset{\eqref{eq: Pansu part 2: projection of gamma coincide}}{=}\dd(L^+_{\pi_1(\gamma_0(t))})_0 \pi_1\circ u(t).
\end{align*}
Since controls in sub-Finsler Lie groups are unique, we have $\pi_1 \circ u = v$.
Lastly, $\|u\|^{(\epsilon)} = \|v\|$ since the curves $\pi_1 \circ \gamma_0$ and $\gamma_\epsilon$ have the same length in their respective spaces $(V_1, +, \norm{\cdot}_\infty)$ and $(\mathfrak{g}, *_\epsilon, \rho_\epsilon)$. Consequently $u\in L^\infty([0,1], \Delta^{(\epsilon)}).$

In particular, the curves $\gamma_0$ and $\gamma_\epsilon$ have the same length in their respective spaces $(\mathfrak{g}, *_0, \rho_0)$ and $(\mathfrak{g}, *_\epsilon, \rho_\epsilon)$. Hence, we have
\begin{align}\label{eq: length gamma_eps = length gamma_0 <= 2}
 L_{\rho_\epsilon}(\gamma_\epsilon)=L_{\rho_0}(\gamma_0) = \rho_0(p,q) \leq 2,
\end{align}
and in particular $\norm{u}^{(\epsilon)} \overset{\eqref{eq: Pansu part 2: control of gamma_eps}}{=} \norm{\pi_1 \circ u} \leq 2$ almost everywhere.

We claim that the curves $\gamma_0$ and $\gamma_\epsilon$ lie within $B_{\norm{\cdot}}(0,1)$. Indeed, regarding the first curve, we obviously have that $\gamma_0(t) \in B_{\rho_0}(0,2)\subseteq B_{\norm{\cdot}}(0,1)$, since $p,q \in B_{\rho_0}(0,1)$ and $\gamma_0$ is a $\rho_0$-geodesic. Regarding the second curve, we bound:
\begin{align*}
 \rho_0(0, \gamma_\epsilon(t)) &\overset{\eqref{eq: Guivarch equation}}{\leq}C(\rho_\epsilon(0, \gamma_\epsilon(t)) + C\epsilon)\\
 &\leq C(\rho_\epsilon(0,p) + \rho_\epsilon(p,\gamma_\epsilon(t))+ C\epsilon)\\
 &\overset{\eqref{eq: Guivarch equation}}{\leq}C(C\rho_0(0,p)+C\epsilon+L_{\rho_\epsilon}(\gamma_\epsilon)+ C\epsilon)\\
 &\overset{\eqref{eq: length gamma_eps = length gamma_0 <= 2}}{\leq} C(C+C+2+C).
\end{align*}
Therefore $\gamma_\epsilon(t) \in B_{\rho_0}(3C^2+2C) \subseteq B_{\norm{\cdot}}(0,1).$

We proceed similarly as in the first part of the proof. We observe that $\gamma_0$ and $\gamma_\epsilon$ are integral curves of the vector fields $X^{\pi_1 \circ u,0}$ and $X^{u,\epsilon}$, respectively, as in Definition~\ref{def: VF Pansu}. Hence, we may apply Grönwall Lemma~\ref{lem: Pansu Gronwall} to bound 
\begin{equation}\label{eq: Pansu: bound of Euclidean difference 2}
 \|\gamma_0(1)-\gamma_\epsilon(1)\| \leq \epsilon^{\alpha_\infty}(\e^{\hat{C}L}-1).
\end{equation}
By Lemma~\ref{lem: projection of curves pansu}, $w\coloneqq \gamma_0(1)^{-1}*_\epsilon\gamma_\epsilon(1) \in \mathfrak{i} \subseteq V_{\leq \beta}$, and again by~\eqref{eq: Pansu: bound difference of products} we obtain
\begin{align}\label{eq: Pansu: bound of w}
 \nonumber\|w\| &= \norm{L_{\gamma_0^{-1}(1)}^{*_\epsilon}(\gamma_0(1)*_\epsilon w) -L_{\gamma_0^{-1}(1)}^{*_\epsilon}(\gamma_0(1))}\\
 \nonumber&\overset{\eqref{eq: Pansu: bound difference of products}}{\leq}\tilde{C}\norm{\gamma_0(1)*_\epsilon w - \gamma_0(1)}\\
 &=\tilde{C}\|\gamma_0(1)-\gamma_\epsilon(1)\|.
\end{align}
We obtain the second bound of~\eqref{eq: thm: quantitative pansu}: 
\begin{align*}
 \rho_\epsilon(p,q) &\leq \rho_\epsilon(p,\gamma_\epsilon(1))+ \rho_\epsilon(\gamma_0(1),\gamma_\epsilon(1)) \\
 &\leq L_{\rho_\epsilon}(\gamma_\epsilon) + \rho_\epsilon(0,w) \\
 &\overset{\eqref{eq: length gamma_eps = length gamma_0 <= 2},\eqref{eq: Pansu: bound of rho}}{\leq} \rho_0(p,q) + C'\max\{\|w\|^{1/\beta}, \epsilon\}\\
 &\overset{\eqref{eq: Pansu: bound of w}}{\leq} \rho_0(p,q) + C' \tilde{C}\max \{\|\gamma_0(1)-\gamma_\epsilon(1)\|^{1/\beta}, \epsilon\}\\
  &\overset{\eqref{eq: Pansu: bound of Euclidean difference 2}}{\leq} \rho_0(p,q) + C'\tilde{C}\max\{(\e^{\hat{C}L}-1)^{1/\beta}\epsilon^{\alpha_\infty/\beta}, \epsilon\}\\
  &\leq \rho_0(p,q) + C'\tilde{C}\max\{1,(\e^{\hat{C}L}-1)^{1/\beta}\}\max\{\epsilon^{\alpha_\infty/\beta}, \epsilon\}\\
  &\leq \rho_0(p,q) + C'\tilde{C}\max\{1,(\e^{\hat{C}L}-1)^{1/\beta}\}\epsilon^{\alpha_\infty/\beta},
\end{align*}
where we used that $\epsilon \leq 1$, and since $(G,d)$ is not a Carnot group, then by Proposition~\ref{prop: alpha_inf < beta} we have $\alpha_\infty/\beta < 1.$
\end{proof}

Next, we use Theorem~\ref{thm: quantitative Pansu} to prove Theorem~\ref{main thm: pansu}. We stress that an analogous argument can deduce Theorem~\ref{thm: quantitative Pansu} from Theorem~\ref{main thm: pansu}.
\begin{proof}[Proof of Theorem~\ref{main thm: pansu}]
 Let $p\in G, q \in G \setminus B_{d_\infty}(1,1)$, and define $$\epsilon \coloneqq \max\{d_\infty(1,p),d_\infty(1,q)\}^{-1} \leq 1.$$ Then $\delta_\epsilon p, \delta_\epsilon q \in B_{d_\infty}(1,1)$, and thus by Theorem~\ref{thm: quantitative Pansu}, there exists a constant $C>0$ such that
 \begin{align*}
  |d(p,q)- d_\infty(p,q)| &= \frac{1}{\epsilon}|\epsilon d(p,q)- \epsilon d_\infty(p,q)|\\
  &=\frac{1}{\epsilon}|\rho_\epsilon(\delta_\epsilon p,\delta_\epsilon q)- \rho_0(\delta_\epsilon p,\delta_\epsilon q)|\\
  &\overset{\eqref{eq: thm: quantitative pansu}}{\leq} \frac{1}{\epsilon}C \epsilon^{\alpha_\infty/\beta}\\
  &= C\max\{d_\infty(1,p),d_\infty(1,q)\}^{1-\alpha_\infty/\beta},
 \end{align*}
 proving~\eqref{eq: main pansu1}.
\end{proof}

We now explain how Theorem~\ref{thm: quantitative Pansu} implies Pansu Asymptotic Theorem (Theorem~\ref{thm: Pansu asymptotic thm}).
\begin{proof}[Proof of Theorem~\ref{thm: Pansu asymptotic thm}]
  Choose an asymptotic grading of $G$ and consider the Pansu limit metric $d_\infty$ and the contracted metrics $\rho_\epsilon$ on $G$.
  We start by proving that 
 \begin{align}\label{eq: bound_on_balls}
  \text{diam}_{d_\infty}\left(\bigcup_{0< \epsilon < 1}B_{\rho_{\epsilon}}(1,R)\right) < \infty, \quad \forall R >0.
 \end{align}
 
 Indeed, fix $R,\epsilon > 0$, and let $p \in B_{\rho_\epsilon}(1,R)$. By Guivarc'h Theorem~\ref{thm: Guivarch}, there exists a constant $C>0$, which does not depend on $R$ and $\epsilon$, so that we have
 \begin{align*}
  d_\infty(1,p) &\overset{\eqref{eq: Guivarch equation}}{\leq} C \rho_\epsilon(1, p) + C\epsilon
  \leq CR+C\epsilon.
 \end{align*}
 Thus $\bigcup_{0< \epsilon < 1}B_{\rho_{\epsilon}}(1,R) \subseteq B_{d_\infty}(1,C(R+1))$, proving claim~\eqref{eq: bound_on_balls}.

 Next, by Theorem~\ref{thm: quantitative Pansu}, for every non-negative sequence $\epsilon \searrow 0$, the metrics $\rho_\epsilon$ converge uniformly to $\rho_0$ on compact sets, and thus also on $d_\infty$-bounded sets.
 Hence, Proposition~\ref{prop: criterion_GH_conv} implies that 
 $$(G,\rho_\epsilon,1) \xrightarrow{\rm GH} (G,d_\infty,1), \quad {\rm as }\; \epsilon \rightarrow 0.$$
 We conclude by recalling that, by Remark~\ref{rem: dilations are isometries asymptotic}, $(G,\rho_\epsilon,1)$ and $(G,\epsilon d,1)$ are isometric, for every $\epsilon \in (0,1).$
\end{proof}

\section{Quantitative Mitchell Tangent Theorem}\label{section: quantitative mitchell}
Theorem~\ref{main thm: mitchell} follows from the following result comparing dilated metrics. After completing the proof of Theorem~\ref{main thm: mitchell}, we show how it implies Theorem~\ref{thm: Mitchell tangent thm}.
\begin{thm}\label{thm: quantitative Mitchell}
 Let $G$ be a sub-Finsler Lie group. With respect to a tangent grading, consider the tangent metric $d_0$ on $G$ and the dilated metrics $d_\epsilon$. Then, there are a neighborhood $\Omega$ of $1$ in $G$ and $C>0$ such that 
 \begin{align}\label{thm: eq: quantitative mitchell0}
     -Cd_\epsilon(p,q)^{1/\beta}\epsilon^{\alpha_0/\beta}&\leq d_\epsilon(p,q)-d_0(p,q)
     \leq Cd_0(p,q)^{1/\beta}\epsilon^{\alpha_0/\beta}, \quad \forall \epsilon \in [0,1], \; \forall p,q \in \Omega,
 \end{align}
  where $\alpha_{0}$ and $\beta$ are the constants from Definition~\ref{def: constant alpha mitchell} and~\ref{def: constant beta}.
\end{thm}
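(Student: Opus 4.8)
The plan is to follow the proof of Theorem~\ref{thm: quantitative Pansu}, replacing the contracted metrics by the dilated metrics $d_\epsilon$ and the products $*_\epsilon$ by the products $\cdot_\epsilon$. This is in fact somewhat simpler, because the horizontal subspace $\Delta = W_1$ and the norm $\norm{\cdot}$ are the \emph{same} for every $d_\epsilon$, so no layer projection of controls and no $\max$ with $\epsilon$ in the Ball-Box step are needed. First I would dispose of the degenerate case: if $(G,d)$ is a Carnot group then $\beta = 0$, $(W_i)_i$ is a stratification with $\Delta = W_1$, the dilations $\delta_\epsilon$ are automorphisms of $(\mathfrak{g},\llbracket\cdot,\cdot\rrbracket)$, so $\llbracket\cdot,\cdot\rrbracket^{(\epsilon)} = \llbracket\cdot,\cdot\rrbracket$, hence $\cdot_\epsilon = \cdot$ and $d_\epsilon = d_0 = d$ for every $\epsilon \in [0,1]$; then \eqref{thm: eq: quantitative mitchell0} is trivial. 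So assume $(G,d)$ is not a Carnot group, so that $\beta \geq 1$ and $\alpha_0 < \infty$, and fix a Carnot quotient ideal $\mathfrak{i} \subseteq W_{\leq\beta}$ with quotient map $\pi \colon \mathfrak{g} \to \mathfrak{g}/\mathfrak{i}$.

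Next I would fix the neighborhood $\Omega$. Using \eqref{eq: dilated metric are bilipschitz} and \eqref{cor: equicontinuity of distances}, the analyticity of $(\lambda,x,y)\mapsto x\cdot_\lambda y$ from Lemma~\ref{lem: analyticity of product}, and the Ball-Box Theorem \eqref{thm: eq: Ball box for Carnot} for $(G_0,d_0)$, I would take $\Omega$ a bounded neighborhood of $1$ small enough that, for all $p,q \in \Omega$ and $\epsilon \in [0,1]$: every $d_0$- and $d_\epsilon$-geodesic between $p$ and $q$, and every integral curve of $Y^{u,\epsilon}$ or $Y^{u,0}$ (Definition~\ref{def: VF Mitchell}) issuing from a point of $\Omega$ with $u$ the control of such a geodesic, stays in $B_{\rm E}(0,1)$, so that Lemma~\ref{lem: Mitchell Gronwall} applies; the comparisons $d_\epsilon(0,w) \leq C d_0(0,w) \lesssim \norm{w}^{1/\beta}$ hold for $w \in W_{\leq\beta}$ in the relevant range (the first from \eqref{eq: dilated metric are bilipschitz}, the second from \eqref{thm: eq: Ball box for Carnot} since $\norm{w}\le 1$); the Lipschitz estimate $\norm{a^{-1}\cdot_\epsilon b} \lesssim \norm{b-a}$ holds uniformly in $\epsilon$ (from Lemma~\ref{lem: analyticity of product}); and $d_\epsilon(p,q)$ is bounded uniformly in $\epsilon$, so that $\e^{CL}-1 \lesssim L$ whenever $L = d_\epsilon(p,q)$ or $d_0(p,q)$. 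A standard maximality argument shows that the integral curves above indeed exist on all of $[0,1]$ and remain in the domains of the local products $\cdot_\epsilon$.

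The core argument is symmetric in the two inequalities. For the upper bound of \eqref{thm: eq: quantitative mitchell0}, I take a constant-speed $d_0$-geodesic $\gamma_0$ from $p$ to $q = \gamma_0(1)$ with control $v \in L^2([0,1],\Delta)$, $\norm{v(t)}\equiv d_0(p,q)$, and let $\gamma_\epsilon$ be the integral curve of $Y^{v,\epsilon}$ with $\gamma_\epsilon(0)=p$; since $v(t)\in\Delta$ and the norm is the same, $\gamma_\epsilon$ is $d_\epsilon$-horizontal with $L_{d_\epsilon}(\gamma_\epsilon)=\int_0^1\norm{v(t)}\,\dd t = d_0(p,q)$, whence $d_\epsilon(p,q) \leq d_0(p,q) + d_\epsilon(\gamma_\epsilon(1),q)$. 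For the lower bound, I symmetrically start from a constant-speed $d_\epsilon$-geodesic $\gamma_\epsilon$ from $p$ to $q=\gamma_\epsilon(1)$ with control $u$, let $\gamma_0$ be the integral curve of $Y^{u,0}$ from $p$, and get $d_0(p,q) \leq d_\epsilon(p,q) + d_0(\gamma_0(1),q)$. In both cases $\gamma_0$ and $\gamma_\epsilon$ are integral curves of $Y^{\bullet,0}$ and $Y^{\bullet,\epsilon}$ for the \emph{same} control from the same point, so Lemma~\ref{lem: Mitchell Gronwall}\eqref{lem: Mitchell Gronwall2} yields $\norm{\gamma_0(1)-\gamma_\epsilon(1)}_{\rm E} \leq \epsilon^{\alpha_0}(\e^{CL}-1) \lesssim \epsilon^{\alpha_0}L$, with $L$ the common geodesic length. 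Exactly as in Lemma~\ref{lem: projection of curves pansu} — using that $\llbracket\cdot,\cdot\rrbracket^{(\epsilon,\mathfrak{i})}$ is independent of $\epsilon$ (Lemma~\ref{lem: properties of CQI associated to ALG}\eqref{lem: properties of CQI associated to ALG 4}) and that $\pi$ is a local Lie group homomorphism for each $\cdot_\epsilon$ (Lemma~\ref{lem: properties of CQI associated to ALG}\eqref{lem: properties of CQI associated to ALG 5}), with no layer projection needed since $\Delta = W_1$ — one checks $\pi\circ\gamma_0 = \pi\circ\gamma_\epsilon$, so that $w := \gamma_0(1)^{-1}\cdot_\tau\gamma_\epsilon(1) \in \ker\pi = \mathfrak{i} \subseteq W_{\leq\beta}$ for $\tau\in\{0,\epsilon\}$ and, by symmetry and left-invariance of $d_\tau$, $d_\tau(\gamma_0(1),\gamma_\epsilon(1)) = d_\tau(0,w)$. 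Then $\norm{w} \lesssim \norm{\gamma_0(1)-\gamma_\epsilon(1)} \lesssim \epsilon^{\alpha_0}L$ and $d_\tau(0,w) \lesssim \norm{w}^{1/\beta} \lesssim (\epsilon^{\alpha_0}L)^{1/\beta}$; plugging this into the two displayed chains (with $L = d_0(p,q)$, resp. $L = d_\epsilon(p,q)$) gives $d_\epsilon(p,q)-d_0(p,q) \leq C d_0(p,q)^{1/\beta}\epsilon^{\alpha_0/\beta}$, resp. $d_0(p,q)-d_\epsilon(p,q) \leq C d_\epsilon(p,q)^{1/\beta}\epsilon^{\alpha_0/\beta}$, i.e. \eqref{thm: eq: quantitative mitchell0}.

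I expect the main obstacle not to be any single estimate — the Grönwall bound (Lemma~\ref{lem: Mitchell Gronwall}), the Ball-Box bound (Theorem~\ref{thm: Ball-Box for Carnot}) and the Carnot-quotient-ideal identities (Lemma~\ref{lem: properties of CQI associated to ALG}) are all in hand — but the \emph{localization bookkeeping}: since $d_0$ and $d_\epsilon$ are defined only near $1$, one must shrink $\Omega$ enough that all geodesics, all the curves $\gamma_\epsilon$ built from their controls, and all the auxiliary curves $t\mapsto\gamma_0(t)^{-1}\cdot_\epsilon\gamma_\epsilon(t)$ remain where the local products, the vector fields $Y^{\bullet,\epsilon}$, and the Ball-Box comparisons are valid, with every constant uniform in $\epsilon\in[0,1]$. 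A minor additional point is recording the tangent-grading analogue of Lemma~\ref{lem: projection of curves pansu}, which is easier here precisely because $\pi_1|_\Delta = \id$, so the control need not be projected.
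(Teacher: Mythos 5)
Your proposal follows essentially the same route as the paper's proof: transfer the control of a $d_0$- (resp.\ $d_\epsilon$-) geodesic to the other product, apply the Mitchell Gr\"onwall Lemma~\ref{lem: Mitchell Gronwall}, use the Carnot quotient ideal (the tangent analogue of Lemma~\ref{lem: projection of curves pansu}, which the paper records as Lemma~\ref{lem: projection of curves mitchell}) to place the endpoint discrepancy in $\mathfrak{i}\subseteq W_{\leq\beta}$, and conclude with the Ball-Box bound together with the bilipschitz comparison \eqref{eq: dilated metric are bilipschitz} (the paper's Lemma~\ref{lemma: uniform bound on dilated metrics}); the localization issues you flag are exactly what the paper resolves with the geodetically connected ball of Section~\ref{subsection: geodetically connected set} and the choice of $r,r'$ in the proof. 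The argument and the resulting asymmetric bounds with $L=d_0(p,q)$, resp.\ $L=d_\epsilon(p,q)$, match the paper's, so the proposal is correct and not a genuinely different approach.
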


For the rest of this section, we are in the assumptions of Theorem~\ref{thm: quantitative Mitchell}. In addition, we denote the sub-Finsler structure associated to the metrics $d_\epsilon$, for $\epsilon \in [0,1]$, by $(\Delta,\norm{\cdot})$, and we denote the tangent grading by $(W_1,\ldots,W_s)$.

Our proof of Theorem~\ref{thm: quantitative Mitchell} follows the same strategy as that of Theorem~\ref{thm: quantitative Pansu}. Therefore, we need to restate and prove the lemmata required for Theorem \ref{thm: quantitative Pansu} in the context of tangent gradings and dilated metrics.

\begin{lem}\label{lem: projection of curves mitchell}
 Let $\epsilon \in (0,1]$, let $\gamma_\epsilon$ be a $d_\epsilon$-admissible curve with control $u \in L^2([0,1],\Delta)$ such that the solution $\gamma_0$ of \begin{equation*}
 \begin{cases}
  \gamma_0(0) = \gamma_\epsilon(0)\\
  \dot{\gamma}_0(t) = (L^{\cdot_0}_{\gamma_0(t)})_*u(t)
 \end{cases}
\end{equation*}
is well defined. Let $\mathfrak{i}$ be a Carnot quotient ideal (according to Definition~\ref{def: Carnot quotient ideal}). Denote by $\pi : \mathfrak{g} \rightarrow \mathfrak{g}/\mathfrak{i}$ the quotient map. Then 
\begin{enumerate}
 \item $\pi \circ \gamma_0 = \pi \circ \gamma_\epsilon$.
 \item $\gamma_0(1)-\gamma_\epsilon(1), \; \gamma_0(1)^{-1} \cdot_0\gamma_\epsilon(1) , \;\gamma_0(1)^{-1} \cdot_\epsilon\gamma_\epsilon(1) \in \mathfrak{i}$.
\end{enumerate} 
\end{lem}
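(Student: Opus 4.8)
The plan is to follow verbatim the structure of the proof of Lemma~\ref{lem: projection of curves pansu}, substituting the properties of Carnot quotient ideals attached to tangent gradings (Lemma~\ref{lem: properties of CQI associated to ALG}) for their asymptotic counterparts. Write $\cdot_{\mathfrak{i}}$ for the Dynkin product on $\mathfrak{g}/\mathfrak{i}$ associated with the quotient bracket $\llbracket\cdot,\cdot\rrbracket^{(\epsilon,\mathfrak{i})}$, which by Lemma~\ref{lem: properties of CQI associated to ALG}\eqref{lem: properties of CQI associated to ALG 4} is the same for all $\epsilon\in[0,1]$, and let $L^{\mathfrak{i}}_{v}$ denote left translation by $v$ in $(\mathfrak{g}/\mathfrak{i},\cdot_{\mathfrak{i}})$. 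By Lemma~\ref{lem: properties of CQI associated to ALG}\eqref{lem: properties of CQI associated to ALG 2}, $\mathfrak{i}$ is an ideal of each $(\mathfrak{g},\llbracket\cdot,\cdot\rrbracket^{(\tau)})$, $\tau\in[0,1]$, and by Lemma~\ref{lem: properties of CQI associated to ALG}\eqref{lem: properties of CQI associated to ALG 5} the quotient map $\pi\colon(\mathfrak{g},\cdot_{\tau})\to(\mathfrak{g}/\mathfrak{i},\cdot_{\mathfrak{i}})$ is a local Lie group homomorphism, so that $\pi\circ L^{\cdot_{\tau}}_{g}=L^{\mathfrak{i}}_{\pi(g)}\circ\pi$ near the identity.

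For part~(1) it suffices to check that $\pi\circ\gamma_{\epsilon}$ and $\pi\circ\gamma_{0}$ solve the same Cauchy problem on $[0,1]$. Differentiating and using the local homomorphism property together with the $\epsilon$-independence of $\cdot_{\mathfrak{i}}$,
\begin{align*}
 \frac{\dd}{\dd t}(\pi\circ\gamma_{\epsilon})(t)
 &=(\dd\pi)\dot\gamma_{\epsilon}(t)
 =\dd\bigl(\pi\circ L^{\cdot_{\epsilon}}_{\gamma_{\epsilon}(t)}\bigr)u(t)
 =\dd\bigl(L^{\mathfrak{i}}_{\pi(\gamma_{\epsilon}(t))}\bigr)\,\pi(u(t)),
\end{align*}
and likewise $\frac{\dd}{\dd t}(\pi\circ\gamma_{0})(t)=\dd\bigl(L^{\mathfrak{i}}_{\pi(\gamma_{0}(t))}\bigr)\,\pi(u(t))$. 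In contrast with the Pansu case, the control fed into both equations is $u$ itself rather than $\pi_{1}\circ u$: since $(W_{i})_{i}$ is a tangent grading we have $\Delta=W_{1}$ and $\pi_{1}|_{\Delta}=\id$, so there is no need for an analogue of Lemma~\ref{lem: properties of CQI associated to CLG}\eqref{projection of distribution is the same}. Because $\pi(\gamma_{0}(0))=\pi(\gamma_{\epsilon}(0))$, uniqueness of integral curves yields $\pi\circ\gamma_{0}=\pi\circ\gamma_{\epsilon}$.

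Part~(2) then follows as in the Pansu case: $\gamma_{0}(1)-\gamma_{\epsilon}(1)\in\ker\pi=\mathfrak{i}$ is immediate from~(1), and for $\tau\in\{0,\epsilon\}$, applying the (local) homomorphism $\pi$ and using $\pi(\gamma_{0}(1))=\pi(\gamma_{\epsilon}(1))$ gives $\pi\bigl(\gamma_{0}(1)^{-1}\cdot_{\tau}\gamma_{\epsilon}(1)\bigr)=\pi(\gamma_{0}(1))^{-1}\cdot_{\mathfrak{i}}\pi(\gamma_{\epsilon}(1))=0$, hence $\gamma_{0}(1)^{-1}\cdot_{\tau}\gamma_{\epsilon}(1)\in\mathfrak{i}$. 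The only genuine difference from Lemma~\ref{lem: projection of curves pansu}, and the point that needs care, is that here $G$ need not be nilpotent, so the products $\cdot_{\tau}$, $\cdot_{\mathfrak{i}}$, the identity $\pi\circ L^{\cdot_{\tau}}_{g}=L^{\mathfrak{i}}_{\pi(g)}\circ\pi$, and the inverses above are only defined on a neighborhood of the identity; one must therefore keep $\gamma_{\epsilon}$, $\gamma_{0}$ and the relevant products within such a neighborhood. This is ensured by the standing hypothesis that $\gamma_{0}$ is well defined, together with the fact that $\gamma_{\epsilon}$, being $d_{\epsilon}$-admissible, lies in a fixed small $d_{0}$-ball, after possibly shrinking the neighborhood $\Omega$ of Theorem~\ref{thm: quantitative Mitchell} in which the lemma is applied.
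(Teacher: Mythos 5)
Your proposal is correct and follows essentially the same argument as the paper: part (1) by showing the projected curves solve the same Cauchy problem via Lemma~\ref{lem: properties of CQI associated to ALG}, and part (2) by applying the (local) homomorphism $\pi$; your extra remark that $\Delta=W_1$ removes the need for the $\pi_1$-projection step, and your attention to the merely local definition of the products, are points the paper handles implicitly but do not change the route.
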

\begin{proof}We follow the same strategy of the proof of Lemma~\ref{lem: projection of curves pansu}.

(1). It is sufficient to show that the projected curves are solutions of the same Cauchy problem. Recall that $\mathfrak{i}$ is an ideal for each Lie algebra $(\mathfrak{g},\llbracket\cdot,\cdot\rrbracket^{(\epsilon)})$, see Lemma~\ref{lem: properties of CQI associated to ALG}\eqref{lem: properties of CQI associated to ALG 2}. Moreover, the quotient bracket is independent of $\epsilon$, see Lemma~\ref{lem: properties of CQI associated to ALG}\eqref{lem: properties of CQI associated to ALG 4}. We denote by $\cdot_\mathfrak{i}$ the Dynkin product on $\mathfrak{g}/\mathfrak{i}$, and we denote the left translation by an element $v \in \mathfrak{g}/\mathfrak{i}$ with respect to this product by $L_v^{\mathfrak{i}}$. Observe also that by Lemma~\ref{lem: properties of CQI associated to ALG}\eqref{lem: properties of CQI associated to ALG 5}, the quotient map $\pi : \mathfrak{g} \rightarrow \mathfrak{g}/\mathfrak{i}$ is a local Lie group homomorphism. Therefore, for almost every $t\in [0,1]$ we have
 \begin{align*}
  \frac{\dd}{\dd t}{(\pi \circ \gamma_\epsilon)}(t) &= (\dd\pi)\dot{\gamma_\epsilon}(t) \\
  &= (\dd\pi)(\dd L_{\gamma_\epsilon(t)}^{\cdot_\epsilon})u(t) \\
  &=\dd(\pi \circ L_{\gamma_\epsilon(t)}^{\cdot_\epsilon})u(t) \\
  &=\dd(L_{\pi(\gamma_\epsilon(t))}^{\mathfrak{i}}\circ \pi)u(t),
 \end{align*}
 and
 \begin{align*}
  \frac{\dd}{\dd t}{(\pi \circ \gamma_0)}(t) &= (\dd\pi)\dot{\gamma_0}(t) \\
  &= (\dd\pi)(\dd L_{\gamma_0(t)}^{\cdot_0})u(t) \\
  &=\dd(\pi \circ L_{\gamma_0(t)}^{\cdot_0})u(t) \\
  &= \dd(L_{\pi(\gamma_0(t))}^{\mathfrak{i}}\circ \pi) u(t).
 \end{align*}

 (2). The fact that $\gamma_0(1)-\gamma_\epsilon(1) \in \mathfrak{i}$ is an immediate consequence of (1). Using (1) and the fact that $\pi$ is a Lie group homomorphism, we obtain for $\tau \in \{0, \epsilon\}$:
 \begin{align*}
 \pi(\gamma_0(1)^{-1}&\cdot_\tau \gamma_\epsilon(1)) = (\pi \circ\gamma_0(1))^{-1} \cdot_{\mathfrak{i}} (\pi \circ\gamma_\epsilon(1)) = 0.
 \end{align*}
 Therefore $\gamma_0(1)^{-1} \cdot_\tau \gamma_\epsilon(1) \in \mathfrak{i}.$
\end{proof}

We also need a uniform bound similar to the one of Lemma~\ref{lemma: uniform bound on contracted metrics}, but for dilated metrics.
\begin{lem}\label{lemma: uniform bound on dilated metrics}
 For every norm $\norm{\cdot}$ on $\mathfrak{g}$, there are constants $R>0$, $C>0$ such that
 \begin{align}\label{eq: lemma unif bound on dilated}
  d_\epsilon(0,v) \leq C \norm{v}^{1/\beta}, \quad \forall \epsilon \in [0,1],\;\forall v \in B_{d_0}(0,R) \cap W_{\leq \beta}.
 \end{align}
 where $\beta$ is the constant from Definition~\ref{def: constant beta}.
\end{lem}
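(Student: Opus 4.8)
The statement to prove is Lemma~\ref{lemma: uniform bound on dilated metrics}: a uniform bound $d_\epsilon(0,v)\leq C\norm{v}^{1/\beta}$ for $v\in B_{d_0}(0,R)\cap W_{\leq\beta}$, uniformly in $\epsilon\in[0,1]$. This is the tangent-cone analogue of Lemma~\ref{lemma: uniform bound on contracted metrics}, whose proof relied on Guivarc'h's Theorem (Theorem~\ref{thm: Guivarch}) to compare $\rho_\epsilon$ and $\rho_0$ up to a multiplicative constant and an additive $O(\epsilon)$ term. In the dilated setting, the relevant substitute is the equicontinuity estimate~\eqref{eq: dilated metric are bilipschitz}, which says that on a fixed $d_0$-ball $B_{d_0}(0,r)$ one has $\frac1C d_0(0,p)\leq d_\epsilon(0,p)\leq C d_0(0,p)$ for all $\epsilon\in(0,1]$, with no additive error. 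This is actually cleaner than the Pansu case, so I expect the proof to be shorter.

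The plan is as follows. First, handle the trivial case $\beta=0$, where $W_{\leq\beta}=\{0\}$ and the inequality holds vacuously; so assume $\beta\geq 1$. Fix $r>0$ and $C\geq 1$ so that the biLipschitz estimate~\eqref{eq: dilated metric are bilipschitz} holds on $B_{d_0}(0,r)$. By the definition of $\beta$ (Definition~\ref{def: constant beta}), there is a Carnot quotient ideal $\mathfrak{i}\subseteq W_{\leq\beta}$; since $\mathfrak{g}/\mathfrak{i}$ is stratified by $(\pi(W_i))_i$, the Ball-Box Theorem for Carnot groups (Theorem~\ref{thm: Ball-Box for Carnot}) applied on the quotient, combined with the fact that $W_{\leq\beta}$ maps into $\pi(W_1)\oplus\dots\oplus\pi(W_\beta)$, gives a constant $C_1$ with $d_0(0,v)\leq C_1\norm{v}^{1/\beta}$ for all $v\in W_{\leq\beta}$ — this is verbatim the argument in the Pansu lemma via~\eqref{thm: eq: Ball box for Carnot}. (Alternatively: directly cite that $d_0$ restricted to $W_{\leq\beta}$ is a Carnot metric and apply Theorem~\ref{thm: Ball-Box for Carnot}.)

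Now I would argue as in Lemma~\ref{lemma: uniform bound on contracted metrics}, but this time the argument is even more direct: pick $R\leq r$ (possibly shrinking $r$), and for $v\in B_{d_0}(0,R)\cap W_{\leq\beta}$ apply~\eqref{eq: dilated metric are bilipschitz} to get
\[
 d_\epsilon(0,v)\leq C\, d_0(0,v)\leq C C_1\norm{v}^{1/\beta},\qquad\forall\epsilon\in(0,1],
\]
and for $\epsilon=0$ the bound $d_0(0,v)\leq C_1\norm{v}^{1/\beta}$ is the Ball-Box estimate itself. Setting the constant in the statement to $CC_1$ (and, say, $R=r$) completes the proof. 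Note that, unlike the Pansu case, no contradiction/compactness argument is needed because~\eqref{eq: dilated metric are bilipschitz} already has no additive $\epsilon$-term — the additive error that forced the contradiction argument in Lemma~\ref{lemma: uniform bound on contracted metrics} is absent here, since $d_\epsilon(0,\cdot)$ and $d_0(0,\cdot)$ are genuinely biLipschitz-comparable near the origin.

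The only mild subtlety — and the place I'd be careful — is that $v\in W_{\leq\beta}\subseteq\mathfrak{g}$ need not lie in the ideal $\mathfrak{i}$, so one cannot literally reduce to the Carnot quotient to estimate $d_0(0,v)$; rather, one uses that $d_0$ is itself the metric of a Carnot group stratified by $(W_i)_i$ (when the tangent grading is a stratification) or, in general, that $d_0$ is the osculating Carnot metric with stratification $(\Delta^{[i]}/\Delta^{[i-1]})_i$, to which Theorem~\ref{thm: Ball-Box for Carnot} applies; the exponent $1/\beta$ then comes from $W_{\leq\beta}$ sitting inside the first $\beta$ layers, so $\norm{(v)_i}^{1/i}\leq \norm{(v)_i}^{1/\beta}\lesssim\norm{v}^{1/\beta}$ for $i\leq\beta$ and $v\in W_{\leq\beta}$ with $\norm{v}\leq 1$. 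This is exactly the computation already carried out in~\eqref{lem: bound on contracted metric equation 1}, transported to the tangent setting. Everything else is routine.
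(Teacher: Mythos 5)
Your proposal is correct and follows essentially the same route as the paper: the paper fixes $R$ small enough that $\exp$ is a diffeomorphism on $B_{d_0}(0,R)$, applies the Ball-Box Theorem for Carnot groups to get $d_0(0,v)\leq C_1\norm{v}^{1/\beta}$ for $v\in B_{d_0}(0,R)\cap W_{\leq\beta}$, and concludes via the biLipschitz estimate~\eqref{eq: dilated metric are bilipschitz}, exactly as you do. Your closing remark is also the right correction to your own first sketch: the Ball-Box bound is applied directly to $d_0$ as the osculating Carnot metric stratified by $(W_i)_i$ (with the exponent $1/\beta$ coming from $W_{\leq\beta}$ lying in the first $\beta$ layers and $v$ ranging in a bounded set), not to the quotient by the ideal $\mathfrak{i}$, which plays no role in this lemma.
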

\begin{proof}Fix $R>0$ small enough so that $\exp|_{B_{d_0}(0,R)}$ is a diffeomorphism with its image. Let $B \coloneqq B_{d_0}(0,R) \cap W_{\leq \beta}.$ Since $B \subseteq W_{\leq \beta}$, by the Ball-Box Theorem for Carnot groups, see Theorem~\ref{thm: Ball-Box for Carnot}, there is a constant $C_1$ such that
 \begin{equation}\label{lem: bound on dilated metric equations}
  d_0(0,v) \leq C_1 \norm{v}^{1/\beta}, \quad \forall v \in B.
 \end{equation}
The claim then follows by additionally applying \eqref{eq: dilated metric are bilipschitz}: there exists a constant $C_2$ such that
\begin{equation*}
 d_\epsilon(0,v)\overset{\eqref{eq: dilated metric are bilipschitz}}{\leq}C_2d_0(0,v) \overset{\eqref{lem: bound on dilated metric equations}}{\leq}C_1C_2\norm{v}^{1/\beta}, \quad \forall v \in B.
\end{equation*}
\end{proof}
\subsubsection{A geodetically connected subset}\label{subsection: geodetically connected set}
For every $R>0$ small enough so that $\exp|_{B_{d_0}(0,R)}$ is a diffeomorphism, we construct a set $\tilde{\Omega}$ that is $d_\epsilon$-geodetically connected within $B_{d_0}(0,R)$, for all $\epsilon \in [0,1]$, in the sense that for every $p,q \in \tilde{\Omega}$ there is a $d_\epsilon$-geodesic valued in $B_{d_0}(0,R).$

Fix $R>0$ small enough, and let $C>1$ be the constant from~\eqref{eq: dilated metric are bilipschitz}. Define
\begin{equation*}\label{eq: def geodetically connected set}
 \tilde{\Omega} \coloneqq B_{d_0}(0,r), \quad \text{with } r=r(R)\coloneqq \frac{R}{4C^2}. 
\end{equation*}
We claim that $\tilde{\Omega}$ is $d_\epsilon$-geodetically connected within $B_{d_0}(0,R)$, for all $\epsilon \in [0,1]$. Indeed, fix $p,q \in \tilde{\Omega}$. Assume $\epsilon \neq 0$, and let $\tilde{\gamma} : [0,1] \rightarrow (G,d)$ be a geodesic from $\delta_\epsilon(p)$ to $\delta_\epsilon(q)$. We claim that 
\begin{equation}\label{eq: geodetically connected subset 1}
    \tilde{\gamma}(t) \in B_{d_0}(0, R\epsilon), \quad \forall t \in [0,1].
\end{equation}
Indeed, we show that as long as $\tilde{\gamma}(t)$ lies within $B_{d_0}(0,R)$, it actually lies in $B_{d_0}(0,3C^2r\epsilon)$: suppose that $\tilde{\gamma}(t) \in B_{d_0}(0,R)$, then
\begin{align*}
 d_0(0,\tilde{\gamma}(t)) &\overset{\eqref{eq: dilated metric are bilipschitz}}{\leq}Cd(0,\tilde{\gamma}(t))\\
 &\leq C(d(0,\delta_\epsilon p)+ L_{d}(\tilde{\gamma}))\\
 &=C(d(0,\delta_\epsilon p)+ d(\delta_\epsilon p,\delta_\epsilon q))\\
 &\leq C(2d(0,\delta_\epsilon p)+ d(0,\delta_\epsilon q))\\
 &\overset{\eqref{eq: dilated metric are bilipschitz}}{\leq}C^2(2d_0(0,\delta_\epsilon p)+d_0(0,\delta_\epsilon q))\\
 &\leq 3C^2r\epsilon.
\end{align*}
Since $\tilde{\gamma}$ is continous and $\tilde{\gamma}(0)=\delta_\epsilon p \in B_{d_0}(0,R)$, it follows that 
\[\tilde{\gamma}(t) \in B_{d_0}(0,3C^2r\epsilon) \subseteq B_{d_0}(0,R\epsilon), \quad \forall t \in [0,1],\]
proving \eqref{eq: geodetically connected subset 1}.
Therefore, since $\delta_\epsilon^{-1} : (B_{d_0}(0,R\epsilon),\epsilon^{-1}d) \rightarrow (B_{d_0}(0,R),d_\epsilon)$ is an isometry, the curve $\gamma \coloneqq \delta_\epsilon^{-1} \circ \tilde{\gamma}$ is a $d_\epsilon$-geodesic from $p$ to $q$ in $B_{d_0}(0,R)$. 
Next assume $\epsilon = 0$, and let $\gamma_0$ be a $d_0$-geodesic between $p$ and $q$. Then by triangle inequality:
\begin{align*}
 d_0(0, \gamma_0(t)) \leq 2r < R.
\end{align*}
We deduced that $\tilde{\Omega}$ has the specified property.

\subsection{Proofs of Theorem~\ref{thm: quantitative Mitchell} and Theorem~\ref{main thm: mitchell}}
\begin{proof}[Proof of Theorem~\ref{thm: quantitative Mitchell}]
We start by constructing the set $\Omega$. Fix $R$ small enough so that $\exp|_{B_{d_0}(0,R)}$ is a diffeomorphism and the map $(\epsilon, p, q) \mapsto p^{-1} \cdot_\epsilon q$ is analytic on $V \coloneqq [-1,1] \times \overline{B_{d_0}(0,R)}\times \overline{B_{d_0}(0,R)}$ (such $R$ exists by Lemma~\ref{lem: brackets extend to eps=0} and Lemma~\ref{lem: analyticity of product}). By definition of $\beta$, there is a Carnot quotient ideal (according to Definition~\ref{def: Carnot quotient ideal}) $\mathfrak{i} \triangleleft \mathfrak{g}$ with $\mathfrak{i} \subseteq W_{\leq \beta}$. Let $\| \cdot \|$ be a norm on $\mathfrak{g}$ with unit ball so large that $B_{d_0}(0,R) \subseteq B_{\|\cdot\|}(0,1)$. Let $C >1$ be the constant from~\eqref{eq: dilated metric are bilipschitz}.

We claim that there is an $r>0$ with $r(C^2+2C+1) <R$ such that:
\begin{equation}\label{claim: product lies in small ball Mitchell}
 p^{-1} \cdot_\epsilon q \in B_{d_0}(0,R), \quad \forall p,q \in B_{d_0}(0,r(C^2+2C+1)), \; \forall \epsilon \in [-1,1].
\end{equation}
Indeed, since the map $(\epsilon,p,q) \mapsto p^{-1}\cdot_\epsilon q$ is analytic on the compact set $V$, there exists a constant $\tilde{C}$ so that $(\epsilon,p,q) \mapsto p^{-1}\cdot_\epsilon q$ is $\tilde{C}$-Lipschitz on $V$. In addition, by the Ball-Box Theorem for Carnot group, see Theorem~\ref{thm: Ball-Box for Carnot}, there is a constant $\tilde{D}\geq 1$ such that $\forall (\epsilon,p,q) \in V$ we have
\begin{align*}
 d_0(0,p^{-1}\cdot_\epsilon q)&\overset{\eqref{thm: eq: Ball box for Carnot}}{\leq}\tilde{D}\norm{p^{-1} \cdot_\epsilon q}^{1/s} \\
 &= \tilde{D}\norm{p^{-1} \cdot_\epsilon q - 0 \cdot_\epsilon 0}^{1/s} \\
 &\leq \tilde{C}\tilde{D}(\norm{p^{-1}}+\norm{q})^{1/s}\\
 &\overset{\eqref{thm: eq: Ball box for Carnot}}{\leq}\tilde{C}\tilde{D}^2(d_0(0,p^{-1})+ d_0(0,q))^{1/s}\\
 &\leq 2\tilde{C}\tilde{D}^2\max\{d_0(0,p),d_0(0,q)\}^{1/s}.
\end{align*}
Therefore there is an $r>0$ small enough such that claim~\eqref{claim: product lies in small ball Mitchell} holds.

Let $\hat{C}$ be the constant from Lemma~\ref{lem: Mitchell Gronwall} and let $C_*$ be a constant such that 
\begin{align}\label{def: constant C_*}
    x<2Cr \implies \e^{\hat{C}x}-1< C_*x.
\end{align}

Next, let $r' \coloneqq \frac{r}{4C^2}$. Then the ball $B_{d_0}(0,r')$ is $d_\epsilon$-geodetically connected within $B_{d_0}(0,r)$ for every $\epsilon$, by the discussion of Section~\ref{subsection: geodetically connected set}. We define $\Omega \coloneqq B_{d_0}(0,r').$

Since $(\epsilon,p,q) \mapsto p^{-1}\cdot_\epsilon q$ is $\tilde{C}$-Lipschitz on $V$ and $r(C^2+2C+1) <R$, we have:
\begin{align}\label{eq: Mitchell: bound difference of products}
 \norm{v^{-1}\cdot_\tau w - v^{-1}\cdot_\tau z} \leq \tilde{C}\norm{w-z}, \quad \forall \tau \in [-1,1], \; \forall v,w,z \in B_{d_0}(0,r(C^2+2C+1)).
\end{align}

 From now on, fix $p,q \in \Omega$ and $\epsilon \in [0,1].$

We prove the \textbf{first inequality}: $d_0(p,q) \leq d_\epsilon(p,q) +C\epsilon^{\alpha_{0}/\beta}.$ Let $\gamma_\epsilon:[0,1]\rightarrow B_{d_0}(0,r)$ be a $d_\epsilon$-geodesic from $p$ to $q$ parametrized by constant speed with control $u \in L^2([0,1], \Delta)$, for which $d_\epsilon(p,q) = L_{d_\epsilon}(\gamma_\epsilon) = \norm{u}_2 \eqqcolon L< 2Cr$. Let $\gamma_0:[0,1] \rightarrow \mathfrak{g}$ be the solution of \begin{equation*}
 \begin{cases}
  \gamma_0(0) = \gamma_\epsilon(0)\\
  \dot{\gamma}_0(t) = (L^{\cdot_0}_{\gamma_0(t)})_*u(t).
 \end{cases}
\end{equation*}
Notice that
\begin{align}
 \label{eq: Mitchell: length of curves is the same 1}L_{d_0}(\gamma_0) &= L_{d_\epsilon}(\gamma_\epsilon)\\
 \nonumber&=d_\epsilon(p,q)\\
 \nonumber&\leq d_\epsilon(0,p)+d_\epsilon(0,q)\\
 \nonumber&\overset{\eqref{eq: dilated metric are bilipschitz}}{\leq}C(d_0(0,p)+d_0(0,q))\\
 \nonumber&\leq 2Cr'.
\end{align}
Hence $d_0(0,\gamma_0(t)) \leq d_0(0,p) + d_0(p,\gamma_0(t))\leq r' + 2Cr' < r(2C+1)<R$.

We have established that the curves $\gamma_0$ and $\gamma_\epsilon$ lie in the bounded set 
$$B_{d_0}(0,r(2C+1)) \subseteq B_{d_0}(0,R) \subseteq B_{\norm{\cdot}}(0,1).$$
In addition we observe that $\gamma_0$ and $\gamma_\epsilon$ are integral curves of the vector fields $Y^{u,0}$ and $Y^{u,\epsilon}$ respectively, as in Definition~\ref{def: VF Mitchell}.
Hence, we may use Grönwall Lemma~\ref{lem: Mitchell Gronwall} to bound
\begin{equation}\label{eq: bound of euclidean difference Mitchell1}
 \norm{\gamma_0(1)- \gamma_\epsilon(1)} \leq \epsilon^{\alpha_0}(\e^{\hat{C}L}-1) \overset{\eqref{def: constant C_*}}{\leq} C_*L\epsilon^{\alpha_0}, 
\end{equation}
for some constant $\hat{C}>0$.

Next, by Lemma~\ref{lem: projection of curves mitchell}: $\gamma_0^{-1}(1) \cdot_0 \gamma_\epsilon(1) \in \mathfrak{i} \subseteq W_{\leq \beta}$. Therefore, by Theorem~\ref{thm: Ball-Box for Carnot} there exists a constant $C'$ depending only on $G, (W_i)_{i=1}^s,$ and $\norm{\cdot}$ such that
\begin{equation}\label{eq: Mitchell: bound of dist of curves1}
 d_0(0,\gamma_0^{-1}(1) \cdot_0 \gamma_\epsilon(1)) \overset{\eqref{thm: eq: Ball box for Carnot}}{\leq} C'\norm{\gamma_0^{-1}(1) \cdot_0 \gamma_\epsilon(1)}^{1/\beta}.
\end{equation}

We need one last bound, which we obtain by applying~\eqref{eq: Mitchell: bound difference of products}:
\begin{align}\label{eq: Mitchell: bound of norm of curves1}
 \nonumber \norm{\gamma_0^{-1}(1) \cdot_0\gamma_\epsilon(1)} & = \norm{L_{\gamma_0^{-1}(1)}^{\cdot_0}( \gamma_0(1) \cdot_0 \gamma_0^{-1}(1) \cdot_0\gamma_\epsilon(1)) - L_{\gamma_0^{-1}(1)}^{\cdot_0} (\gamma_0(1)) } \\
 \nonumber&\overset{\eqref{eq: Mitchell: bound difference of products}}{\leq}\tilde{C}\norm{\gamma_0(1) \cdot_0 \gamma_0^{-1}(1) \cdot_0\gamma_\epsilon(1) - \gamma_0(1)}\\
 &= \tilde{C}\norm{\gamma_\epsilon(1)- \gamma_0(1)}.
\end{align}

Finally, we obtain the first inequality:
\begin{align*}
 d_0(p,q) &\leq d_0(p,\gamma_0(1))+ d_0(\gamma_0(1),\gamma_\epsilon(1)) \\
 &\leq L_{d_0}(\gamma_0) + d_0(0,\gamma_0^{-1}(1) \cdot_0\gamma_\epsilon(1)) \\
 &\overset{\eqref{eq: Mitchell: length of curves is the same 1},\eqref{eq: Mitchell: bound of dist of curves1}}{\leq} L_{d_\epsilon}(\gamma_\epsilon) + C'\|\gamma_0^{-1}(1) \cdot_0\gamma_\epsilon(1)\|^{1/\beta}\\
 &\overset{\eqref{eq: Mitchell: bound of norm of curves1}}{\leq} d_\epsilon(p,q) + C'\tilde{C}\|\gamma_0(1)-\gamma_\epsilon(1)\|^{1/\beta}\\
 &\overset{\eqref{eq: bound of euclidean difference Mitchell1}}{\leq} d_\epsilon(p,q) +  C'\tilde{C}C_*^{1/\beta}L^{1/\beta}\epsilon^{\alpha_{0}/\beta}.
\end{align*}

We prove the \textbf{second inequality}: $d_\epsilon(p,q) \leq d_0(p,q) +C\epsilon^{\alpha_{0}/\beta}.$ Let $\gamma_0:[0,1] \rightarrow B_{d_0}(0,r)$ be a $d_0$-geodesic from $p$ to $q$ parametrized by constant speed with control $v\in L^2([0,1],\Delta)$, for which $d_0(p,q) =L_{d_0}(\gamma_0) = \norm{v}_2 \eqqcolon L' < 2r$. We proceed as in the first inequality; let $\gamma_\epsilon:[0,1] \rightarrow \mathfrak{g}$ be the solution of \begin{equation*}
 \begin{cases}
  \gamma_\epsilon(0) = \gamma_0(0)\\
  \dot{\gamma}_\epsilon(t) = (L^{\cdot_\epsilon}_{\gamma_\epsilon(t)})_*v(t).
 \end{cases}
\end{equation*}
We claim that
\begin{equation}\label{eq: Mitchell: claim gamma epsilon contained}
 \gamma_\epsilon(t) \in B_{d_0}(0,r'(C^2+2C)), \; \forall t.
\end{equation}

Indeed,
\begin{align}
 \label{eq: Mithell: length of curves is the same2}L_{d_\epsilon}(\gamma_\epsilon) &= L_{d_0}(\gamma_0)\\
 \nonumber&\leq d_0(p,q)<2r',
\end{align}
and
\begin{align*}
 d_\epsilon(0,p) \overset{\eqref{eq: dilated metric are bilipschitz}}{\leq}Cd_0(0,p) < Cr'.
\end{align*}
Hence, \begin{align*}
 d_\epsilon(0, \gamma_\epsilon(t))
 \leq d_\epsilon(0, p) + d_\epsilon(p, \gamma_\epsilon(t))
 \leq Cr' + L_{d_\epsilon}(\gamma_\epsilon)\leq r'(C+2),
\end{align*}
and thus $\gamma_\epsilon$ is contained in $B_{d_\epsilon}(0,r'(C+2))$. Since $Cr'(C+2)=r'(C^2+2C) < R,$ by~\eqref{cor: equicontinuity of distances} we have
\[B_{d_\epsilon}(0,r'(C+2)) \overset{\eqref{cor: equicontinuity of distances}}{\subseteq} B_{d_0}(0,r'(C^2+2C)),\]
proving claim~\eqref{eq: Mitchell: claim gamma epsilon contained}.

We proceed by establishing the necessary bounds as in the first part of the proof.

We observe that both $\gamma_0$ and $\gamma_\epsilon$ are contained in $B_{d_0}(0,r(C^2+2C)) \subseteq B_{d_0}(0,R) \subseteq B_{\norm{\cdot}}(0,1)$, and that $\gamma_0$ and $\gamma_\epsilon$ are integral curves of the vector fields $Y^{u,0}$ and $Y^{u, \epsilon}$ respectively, as in Definition~\ref{def: VF Mitchell}. Hence, we may use Grönwall Lemma~\ref{lem: Mitchell Gronwall} to bound
\begin{equation}\label{eq: bound of euclidean difference Mitchell2}
 \norm{\gamma_0(1)- \gamma_\epsilon(1)} \leq  \epsilon^{\alpha_0}(\e^{\hat{C}L'}-1) \overset{\eqref{def: constant C_*}}{\leq} C_*L'\e^{\alpha_0}.  
\end{equation}

Next, since $\gamma_0$ and $\gamma_\epsilon$ are contained in $B_{d_0}(0,r(C^2+2C))$, we have
\[\gamma_0^{-1}(1) \cdot_0 \gamma_\epsilon(1) \overset{\eqref{claim: product lies in small ball Mitchell}}{\in} B_{d_0}(0,R).\] 
In addition, by Lemma~\ref{lem: projection of curves mitchell}, $\gamma_0^{-1}(1) \cdot_0 \gamma_\epsilon(1) \in \mathfrak{i} \subseteq W_{\leq \beta}$. Therefore, we may apply Lemma~\ref{lemma: uniform bound on dilated metrics} to obtain the bound
\begin{equation}\label{eq: Mitchell: bound of dist of curves2}
 d_0(0,\gamma_0^{-1}(1) \cdot_0 \gamma_\epsilon(1)) \overset{\eqref{eq: lemma unif bound on dilated}}{\leq} C'\norm{\gamma_0^{-1}(1) \cdot_0 \gamma_\epsilon(1)}^{1/\beta},
\end{equation}
for some constant $C'$ that does not depend on $\epsilon$.

We need one last bound, which we obtain by applying~\eqref{eq: Mitchell: bound difference of products}:
\begin{align}\label{eq: Mitchell: bound of norm of curves2}
 \nonumber \norm{\gamma_0^{-1}(1) \cdot_\epsilon\gamma_\epsilon(1)} & = \norm{L_{\gamma_0^{-1}(1)}^{\cdot_0}( \gamma_0(1) \cdot_0 \gamma_0^{-1}(1) \cdot_\epsilon\gamma_\epsilon(1)) - L_{\gamma_0^{-1}(1)}^{\cdot_0} (\gamma_0(1)) } \\
 \nonumber&\overset{\eqref{eq: Mitchell: bound difference of products}}{\leq}\tilde{C}\norm{\gamma_0(1) \cdot_0 \gamma_0^{-1}(1) \cdot_\epsilon\gamma_\epsilon(1) - \gamma_0(1)}\\
 &= \tilde{C}\norm{\gamma_\epsilon(1)- \gamma_0(1)}.
\end{align}
We obtain the second inequality:
\begin{align*}
 d_\epsilon(p,q) &\leq d_\epsilon(p,\gamma_\epsilon(1))+ d_\epsilon(\gamma_0(1),\gamma_\epsilon(1)) \\
 &\leq L_{d_\epsilon}(\gamma_\epsilon) + d_\epsilon(0,\gamma_0^{-1}(1) \cdot_\epsilon \gamma_\epsilon(1)) \\
 &\overset{\eqref{eq: Mithell: length of curves is the same2},\eqref{eq: Mitchell: bound of dist of curves2}}{\leq} L_{d_0}(\gamma_0) + C'\|\gamma_0^{-1}(1) \cdot_\epsilon \gamma_\epsilon(1)\|^{1/\beta}\\
 &\overset{\eqref{eq: Mitchell: bound of norm of curves2}}{\leq} d_0(p,q) + C' \tilde{C}\|\gamma_0(1)-\gamma_\epsilon(1)\|^{1/\beta}\\
  &\overset{\eqref{eq: bound of euclidean difference Mitchell2}}{\leq} d_0(p,q) + C'\tilde{C}C_*^{1/\beta}L'^{1/\beta} \epsilon^{\alpha_{0}/\beta}.
\end{align*}
\end{proof}

Next, we use Theorem~\ref{thm: quantitative Mitchell} to prove Theorem~\ref{main thm: mitchell}. We stress that an analogous argument can deduce Theorem~\ref{thm: quantitative Mitchell} from Theorem~\ref{main thm: mitchell}.
\begin{proof}[Proof of Theorem~\ref{main thm: mitchell}]
 Let $\Omega$ be the set from Theorem~\ref{thm: quantitative Mitchell}, let $r\in(0,1]$ such that $B_{d_0}(1,2r) \subseteq \Omega$, and let $p,q \in B_{d_0}(1,r)$ with $d_0(1,p) \geq d_0(1,q)$. We define $\epsilon \coloneqq \frac{1}{r}d_0(1,p)$. Observe that $\delta_\epsilon^{-1}p, \delta_\epsilon^{-1}q \in B_{d_0}(1,2r) \subseteq \Omega$. Indeed,
 \[d_0(1, \delta_\epsilon^{-1}p) = rd_0(1,p)^{-1}d_0(1,p) < 2r,\]
 and similarly 
 \[d_0(1, \delta_\epsilon^{-1}q) = rd_0(1,p)^{-1}d_0(1,q)<2r.\]Moreover we have $\epsilon \leq 1$. Therefore, by Theorem~\ref{thm: quantitative Mitchell} there is a $C>0$ so that
 \begin{align*}
     d(p,q)-d_0(p,q) &= \epsilon(d_\epsilon(\delta_\epsilon^{-1}p, \delta_\epsilon^{-1}q)-d_0(\delta_\epsilon^{-1}p, \delta_\epsilon^{-1}q))\\
     &\overset{\eqref{thm: eq: quantitative mitchell0}}{\geq} -Cd_\epsilon(\delta_\epsilon^{-1}p, \delta_\epsilon^{-1}q)^{1/\beta}\epsilon^{1+\alpha_0/\beta}\\
     &=-Cd(p,q)^{1/\beta}\epsilon^{1+(\alpha_0-1)/\beta}\\
     &=-Cr^{-1-(\alpha_0-1)/\beta}d(p,q)^{1/\beta} d_0(1,p)^{1+(\alpha_0-1)/\beta},
 \end{align*}
 and
  \begin{align*}
     d(p,q)-d_0(p,q) &= \epsilon(d_\epsilon(\delta_\epsilon^{-1}p, \delta_\epsilon^{-1}q)-d_0(\delta_\epsilon^{-1}p, \delta_\epsilon^{-1}q))\\
     &\overset{\eqref{thm: eq: quantitative mitchell0}}{\leq} Cd_0(\delta_\epsilon^{-1}p, \delta_\epsilon^{-1}q)^{1/\beta}\epsilon^{1+\alpha_0/\beta}\\
     &=Cd_0(p,q)^{1/\beta}\epsilon^{1+(\alpha_0-1)/\beta}\\
     &=Cr^{-1-(\alpha_0-1)/\beta}d_0(p,q)^{1/\beta} d_0(1,p)^{1+(\alpha_0-1)/\beta}.
 \end{align*}
\end{proof}
We also obtain the following corollary:
\begin{corollary}\label{cor: mitchell}
 Let $(G,d)$ be a sub-Finsler Lie group equipped with an $s$-step distribution. With respect to a tangent grading, consider the tangent metric $d_0$ on a neighborhood of the identity in $G$. Then there are constants $r\in (0,1]$ and $C>0$ such that
 \begin{align}\label{eq: cor mitchell2}
  |d(p,q)-d_0(p,q)| \leq C\max\{d_0(1,p),d_0(1,q)\}^{1+1/s}, \quad \forall p,q \in B_{d_0}(1,r).
 \end{align}
\end{corollary}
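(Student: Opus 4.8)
The plan is to derive Corollary~\ref{cor: mitchell} directly from Theorem~\ref{main thm: mitchell}, more precisely from the bound~\eqref{eq: cor mitchell1}, by checking that the exponent $1+\alpha_0/\beta$ appearing there is always at least $1+1/s$ and that this dominance points in the favourable direction once we restrict to a sufficiently small ball, where $\max\{d_0(1,p),d_0(1,q)\}\leq 1$.

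First I would record the two elementary estimates $\alpha_0\geq 1$ and $\beta\leq s$. For the first, Proposition~\ref{prop: properties of linear gradings}\eqref{prop: bracket of ALG containment} gives $[W_{p_1},\ldots,W_{p_k}]\subseteq W_{\leq|p|}=W_{|p|}\oplus W_{\leq|p|-1}$ for every $k\in\mathbb{N}$ and every $p\in\mathbb{N}^k$, so $j=1$ is admissible in the maximum defining $\alpha_0$ in~\eqref{def_alpha0}. For the second, since by hypothesis $s=\text{step}(\Delta)$ and a tangent grading of $(\mathfrak{g},\Delta)$ has exactly $\text{step}(\Delta)$ layers (Definition~\ref{def: adapted linear grading}), Definition~\ref{def: constant beta} applies and gives $\beta\in\{0,\ldots,s\}$. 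If $\beta=0$, then by Example~\ref{example: CQI}\eqref{example: CQI 1} the group $(G,d)$ is a Carnot group, hence $d=d_0$ and~\eqref{eq: cor mitchell2} is trivial; so we may assume $\beta\geq 1$, in which case $1+\alpha_0/\beta\geq 1+1/\beta\geq 1+1/s$.

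Next I would fix $r\in(0,1]$ small enough that $B_{d_0}(1,r)$ is contained in the neighbourhood $U$ of Theorem~\ref{main thm: mitchell}, and let $C$ be the constant of that theorem. For $p,q\in B_{d_0}(1,r)$ we have $m\coloneqq\max\{d_0(1,p),d_0(1,q)\}<r\leq 1$, and since the function $a\mapsto m^{a}$ is nonincreasing on $[0,\infty)$ whenever $m\in[0,1]$, the inequality $1+\alpha_0/\beta\geq 1+1/s$ yields $m^{1+\alpha_0/\beta}\leq m^{1+1/s}$. Substituting this into~\eqref{eq: cor mitchell1} gives $|d(p,q)-d_0(p,q)|\leq C\,m^{1+\alpha_0/\beta}\leq C\,m^{1+1/s}$, which is exactly~\eqref{eq: cor mitchell2}. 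There is no genuine obstacle in this argument: the only points demanding a moment's attention are the degenerate Carnot case $\beta=0$ and the identification of the ``$s$'' of the hypothesis (the step of $\Delta$) with the number of layers of the tangent grading, both of which are immediate from the definitions.
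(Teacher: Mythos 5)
Your proposal is correct and follows essentially the same route as the paper: the corollary is deduced directly from the bound \eqref{eq: cor mitchell1} of Theorem~\ref{main thm: mitchell} together with the observation that $\alpha_0/\beta\geq 1/s$ for every tangent grading. You merely spell out what the paper leaves implicit ($\alpha_0\geq 1$ via Proposition~\ref{prop: properties of linear gradings}\eqref{prop: bracket of ALG containment}, $\beta\leq s$ from Definition~\ref{def: constant beta}, the degenerate Carnot case, and the need for $r\leq 1$ so that the exponent comparison goes the right way), all of which is accurate.
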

\begin{proof}
 It is a consequence of Theorem~\ref{main thm: mitchell} and the fact that by definitions of $\alpha_0$ and $\beta$, we have $\alpha_0/\beta \geq 1/s$ for every tangent grading.
\end{proof}

We now explain how Theorem~\ref{main thm: mitchell} implies Mitchell Tangent Theorem (Theorem~\ref{thm: Mitchell tangent thm}) and the quantitative estimate~\eqref{eq: thm Mitchell tangent thm}.
\begin{proof}[Proof of Theorem~\ref{thm: Mitchell tangent thm}]
Let $U \subseteq G$ be a neighborhood of the identity where the estimate~\eqref{eq: cor mitchell1} holds. For $\epsilon \in (0,1]$, consider the dilations 
\[\delta_{1/\epsilon}: U \rightarrow G_0,\]
as in Section~\ref{subsection: gradings}, where we identify $U$ with a neighborhood of the identity in $G_0$ as in Section~\ref{subsection: osculating carnot group}.

We show that $\forall R>0$, $\forall \tau >0$, the map 
\[\delta_{1/\epsilon}:\left(B_{\frac{1}{\epsilon}d}(1,R), \frac{1}{\epsilon} d\right) \rightarrow (G_0,d_0)\]
is a $(1,\tau)$-quasi-isometric embedding, for $\epsilon >0$ small enough.
Indeed, fix $R>0$. Then for $\epsilon >0$ small enough so that
\[B_{\frac{1}{\epsilon}d}(1,R) = B_d(1,\epsilon R) \subseteq U,\]
and for $p,q \in B_{\frac{1}{\epsilon}d}(1,R)$, by Theorem~\ref{main thm: mitchell} there is a $C$ such that
\begin{align*}
    \left| \frac{1}{\epsilon}d(p,q) - d_0(\delta_{1/\epsilon}p, \delta_{1/\epsilon}q)\right| &= \frac{1}{\epsilon}\left| d(p,q) - d_0(p,q)\right|\\
    &\overset{\eqref{eq: cor mitchell1}}{\leq} C\frac{1}{\epsilon}(\epsilon R)^{1+\alpha_0/\beta} \rightarrow 0, \quad \text{ as } \epsilon \rightarrow 0.
\end{align*}

Next, we show $\forall R >0$, $\forall \tau >0$, and for $\epsilon$ small enough
\[B_{d_0}(1,R) \subseteq \delta_{1/\epsilon}\left(B_{\frac{1}{\epsilon}d}(1,R + \tau)\right).\]
Indeed, let $q \in B_{d_0}(1,R)$ and consider $p\coloneqq \delta_\epsilon q \in B_{d_0}(1,\epsilon R)$, for $\epsilon >0 $ small enough so that $p\in \Omega$. Then by Theorem~\ref{main thm: mitchell}, there is a $C$ such that
\begin{align*}
    \frac{1}{\epsilon}d(1,p) \overset{\eqref{eq: cor mitchell1}}{\leq} d_0(1,q) + CR^{1+\alpha_0/\beta}\epsilon^{\alpha_0/\beta} \rightarrow d_0(1,q), \quad \text{ as } \epsilon \rightarrow 0.
\end{align*}

Therefore $(G, \frac{1}{\epsilon}d)\overset{\text{GH}}{\rightarrow}(G_0,d_0)$ as $\epsilon \rightarrow 0$.
\end{proof}

\section{Applications and examples}\label{section: applications}
In this final section, we provide examples illustrating how our new results yield sharper bounds than those obtained by previous results in the literature. The first family of examples estimates the rate of convergence to the asymptotic cone of products of the form $G \times H$ between a Carnot group $G$ and a simply connected nilpotent group $H$, in a way that does not depend on $G$.
\begin{prop}\label{prop: pansu convergence doesnt depend on carnot factor}
 Let $(G,(V_i^G)_{i=1}^r)$ be an $r$-step stratified group and $(H,\Delta_H)$ be a simply connected $s$-step nilpotent Lie group equipped with a bracket generating distribution. Let $d$ be a sub-Finsler metric on $G\times H$ associated to the distribution $V_1^G \times \Delta_H$, and let $d_\infty$ be the corresponding Pansu limit metric. Then there exists an asymptotic grading such that
 \[|d(p,q)-d_\infty(p,q)|=O(\max\{d_\infty(1,p), d_\infty(1,q)\}^{1-1/s}), \quad {\rm as} \; p,q \rightarrow \infty.\]
\end{prop}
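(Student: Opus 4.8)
The plan is to apply Theorem~\ref{main thm: pansu} to $G\times H$ once we exhibit an asymptotic grading of $\mathfrak g\coloneqq\mathrm{Lie}(G\times H)=\mathfrak g_G\oplus\mathfrak g_H$ for which the constant $\beta$ of Definition~\ref{def: constant beta} satisfies $\beta\leq s$. Since for \emph{any} asymptotic grading one has $\alpha_\infty\geq 1$ — indeed $\alpha_{(1,\infty)}\geq1$ by Proposition~\ref{prop: properties of linear gradings}\eqref{prop: bracket of CLG containment} (taking $j=1$ in the definition of $\alpha_{(1,\infty)}$, since $V_{|p|}\oplus V_{\geq|p|+1}=V_{\geq|p|}$) and $\alpha_{(2,\infty)}\geq1$ because $v-\pi_1(v)\in[\mathfrak g,\mathfrak g]=\mathfrak g^{(2)}$ trivially — the inequality $\beta\leq s$ yields $\alpha_\infty/\beta\geq 1/s$, hence the exponent $1-\alpha_\infty/\beta\leq 1-1/s$. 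For $p,q$ with $\max\{d_\infty(1,p),d_\infty(1,q)\}\geq1$, which is automatic as $p,q\to\infty$, the bound of Theorem~\ref{main thm: pansu} becomes $O(\max\{d_\infty(1,p),d_\infty(1,q)\}^{1-1/s})$, which is the assertion.

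To construct the grading, fix any asymptotic grading $(V_j^H)_{j=1}^s$ of the nilpotent Lie algebra $\mathfrak g_H$ (for instance choosing, for each $j$, a linear complement of $\mathfrak g_H^{(j+1)}$ inside $\mathfrak g_H^{(j)}$), and recall that $(V_i^G)_{i=1}^r$ is the given stratification of $\mathfrak g_G$, which is in particular an asymptotic grading. Setting $V_j^G\coloneqq\{0\}$ for $j>r$ and $V_j^H\coloneqq\{0\}$ for $j>s$, define $V_j\coloneqq V_j^G\oplus V_j^H$ for $1\leq j\leq\max\{r,s\}$. Because lower central series split along direct products, $\mathfrak g^{(j)}=\mathfrak g_G^{(j)}\oplus\mathfrak g_H^{(j)}$, and combining $\mathfrak g_G^{(j)}=V_j^G\oplus\mathfrak g_G^{(j+1)}$ with $\mathfrak g_H^{(j)}=V_j^H\oplus\mathfrak g_H^{(j+1)}$ one checks immediately that $\mathfrak g^{(j)}=V_j\oplus\mathfrak g^{(j+1)}$, so $(V_j)_j$ is an asymptotic grading of $\mathfrak g$. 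Note also that $\Delta\coloneqq V_1^G\times\Delta_H$ is bracket generating in $\mathfrak g$: since $\{0\}\in\Delta_H$ and $\{0\}\in V_1^G$, this subspace contains both $V_1^G\times\{0\}$ and $\{0\}\times\Delta_H$, so the subalgebra it generates contains $\langle V_1^G\rangle\times\{0\}=\mathfrak g_G\times\{0\}$ and $\{0\}\times\langle\Delta_H\rangle=\{0\}\times\mathfrak g_H$.

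The key step is to observe that $\mathfrak i\coloneqq\{0\}\times\mathfrak g_H\subseteq\mathfrak g$ is a Carnot quotient ideal of $(G\times H,\Delta)$ with respect to $(V_j)_j$ in the sense of Definition~\ref{def: Carnot quotient ideal}: it is an ideal of the product $\mathfrak g$; the quotient $\mathfrak g/\mathfrak i$ is canonically $\mathfrak g_G$, and the images $\pi(V_j)$ under $\pi\colon\mathfrak g\to\mathfrak g/\mathfrak i$ are exactly $V_j^G$ (with $\pi(V_j)=\{0\}$ for $j>r$), so $\mathfrak g/\mathfrak i$ is stratified by $(\pi(V_j))_j$, namely by the stratification $(V_i^G)_{i=1}^r$ of $\mathfrak g_G$; and $V_1+\mathfrak i=(V_1^G\oplus V_1^H)+\mathfrak g_H=V_1^G\oplus\mathfrak g_H\supseteq V_1^G\times\Delta_H=\Delta$, which is condition~\eqref{def: CQI projection}. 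Since $\mathfrak i=\bigoplus_{j=1}^sV_j^H\subseteq\bigoplus_{j=1}^sV_j=V_{\leq s}$, Definition~\ref{def: constant beta} gives $\beta\leq s$, and the proof concludes as in the first paragraph.

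There is no serious obstacle here: the whole content is recognizing that the Carnot quotient ideal $\mathfrak g_H$ lies in $V_{\leq s}$ irrespective of the step $r$ of $G$, which is precisely what makes the estimate independent of the Carnot factor. The only mildly delicate bookkeeping is that the "padded" grading $(V_j^G\oplus V_j^H)_j$ is genuinely an asymptotic grading — which reduces to the direct-product behaviour of the lower central series — and the harmless convention that a stratification may carry trailing zero layers (equivalently, one restricts the index set to $1\leq j\leq r$ when reading off the stratification of $\mathfrak g/\mathfrak i$).
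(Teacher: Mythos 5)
Your proof is correct and follows essentially the same route as the paper: take the product grading $V_j=V_j^G\times V_j^H$ (padded with zero layers), observe that $\{0\}\times\mathfrak g_H$ is a Carnot quotient ideal contained in $V_{\leq s}$ so that $\beta\leq s$, note $\alpha_\infty\geq 1$, and apply Theorem~\ref{main thm: pansu}. You merely spell out the verifications (asymptotic-grading property of the product, the three Carnot-quotient-ideal conditions, $\alpha_{(1,\infty)},\alpha_{(2,\infty)}\geq1$) that the paper leaves implicit.
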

\begin{proof}
  Let $(V_i^H)_{i=1}^s$ be an asymptotic grading of $H$. Then $(V_i)_{i=1}^{\max\{r,s\}}$ defined by $V_i = V_i^G \times V_i^H$ is an asymptotic grading of $G \times H$. With respect to this grading $H$ is a Carnot quotient ideal of $G\times H$ (according to Definition~\ref{def: Carnot quotient ideal}), and thus by Definition~\ref{def: constant beta} we have $\beta \leq s$. Since $\alpha_\infty \geq 1$ we have $\alpha_\infty/\beta \geq 1/s$, and therefore Theorem~\ref{main thm: pansu} implies 
 \[|d(p,q)-d_\infty(p,q)|=O(\max\{d_\infty(1,p), d_\infty(1,q)\}^{1-1/s}), \quad {\rm as} \; p,q \rightarrow \infty.\]
\end{proof}
We stress that the nilpotency step of $G\times H$ as in Proposition~\ref{prop: pansu convergence doesnt depend on carnot factor} is $\max\{r,s\}$, which could be larger than $s$.

Next, we consider the very specific product as in Example~\ref{example: beta}\eqref{example: CQI2}.
\begin{example}
 Let $\mathfrak{g} = N_{5,2,2} \times N_{5,2,1}$ and $(V_i)_{i=1}^4$ be the sub-Finsler group equipped with the asymptotic grading from Example~\ref{example: beta}\eqref{example: CQI2}.
 Then by Proposition~\ref{prop: pansu convergence doesnt depend on carnot factor}:
 \[|d(1,p) - d_\infty(1,p)| = O(d(1,p)^{2/3}), \quad {\rm as}\; p \rightarrow \infty,\]
 which is an improvement on the bound
 \[|d(1,p) - d_\infty(1,p)| = O(d(1,p)^{3/4}), \quad {\rm as}\; p \rightarrow \infty\]
 which one obtains by using \cite[Proposition 4.1]{BL12}.
\end{example}
We stress that different choices of gradings can give different estimates in both the quantitative version of Pansu Asymptotic Theorem and the quantitative version of Mitchell Tangent Theorem. Regarding the asymptotic theorem~\ref{main thm: pansu}, the exponent $1-\alpha_\infty/\beta$ clearly varies with the choice of the asymptotic grading. This number is at most $1-1/s$, and the smaller it is, the sharper the bound. Excluding the Carnot case, this number is at least $1/s$. In the next example, we show that this is the case for a specific grading.
\begin{example}
 Let $(G,(V_i)_{i=1}^s)$ be a stratified group. Let $d^{(k)}$ be a sub-Finsler distance on $G$ associated to the distribution $\Delta_{(k)} \coloneqq V_1 \oplus V_k$, for $k \in \{2,\ldots,s\}$, and let $d_\infty^{(k)}$ be the corresponding Pansu limit metric. With respect to the stratification, we have $\beta = s$ and $\alpha_\infty = k-1$. Thus Theorem~\ref{main thm: pansu} implies 
 \[|d^{(k)}(p,q)-d^{(k)}_\infty(p,q)|=O(\max\{d^{(k)}_\infty(1,p), d^{(k)}_\infty(1,q)\}^{(s-k+1)/s}), \quad {\rm as} \; p,q \rightarrow \infty.\]
 Note that for $k=s$, the exponent in the above equation is $1/s$, which by Proposition~\ref{prop: alpha_inf < beta} is the minimum value of $1-\alpha_\infty/\beta$ for groups which are not Carnot.
\end{example}

We also obtain a result similar to Proposition~\ref{prop: pansu convergence doesnt depend on carnot factor}, but for tangent spaces.
\begin{prop}\label{prop: mitchell convergence doesnt depend on carnot factor}
 Let $(G,(W_i^G)_{i=1}^r)$ be an $r$-step stratified group and $(H,\Delta_H)$ be a Lie group equipped with an $s$-step bracket generating distribution. Let $d$ be a sub-Finsler metric on $G\times H$ associated to the distribution $V_1^G \times \Delta_H$, and let $d_0$ be the corresponding tangent metric. Then there exists a tangent grading such that
 \[|d(p,q)-d_0(p,q)|=O(\max\{d_0(1,p), d_0(1,q)\}^{1+1/s}), \quad {\rm as} \; p,q \rightarrow 0.\]
\end{prop}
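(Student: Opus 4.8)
The plan is to mirror the proof of Proposition~\ref{prop: pansu convergence doesnt depend on carnot factor}, replacing the asymptotic grading of $G\times H$ by a product tangent grading. Write $\mathfrak{g} = \mathfrak{g}^G \times \mathfrak{h}$ and $\Delta = W_1^G \times \Delta_H$, and fix a tangent grading $(W_i^H)_{i=1}^s$ of $(\mathfrak{h},\Delta_H)$; this exists because $\Delta_H$ is bracket generating, and necessarily $W_1^H = \Delta_H$. I would then set $W_i \coloneqq W_i^G \times W_i^H$ for $i \in \{1,\dots,\max\{r,s\}\}$, with the convention $W_i^G = \{0\}$ when $i>r$ and $W_i^H = \{0\}$ when $i>s$, and claim that $(W_i)_i$ is a tangent grading of $(\mathfrak{g},\Delta)$.

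To verify the claim, the key observation is that iterated brackets split over a product, so $\ad_\Delta^{k-1}\Delta = \ad_{W_1^G}^{k-1}W_1^G \times \ad_{\Delta_H}^{k-1}\Delta_H$; since $(W_i^G)_i$ is a stratification of $\mathfrak{g}^G$ we have $\ad_{W_1^G}^{k-1}W_1^G = W_k^G$ for all $k$, with $W_k^G=\{0\}$ once $k>r$. Summing over $k$ yields $\Delta^{[j]} = \big(\bigoplus_{i\le j}W_i^G\big) \times \Delta_H^{[j]}$ for every $j$, and combining this with the tangent grading identity $\Delta_H^{[j]} = W_j^H \oplus \Delta_H^{[j-1]}$ for $H$ gives $\Delta^{[j]} = W_j \oplus \Delta^{[j-1]}$; a short check also shows that the step of $\Delta$ in $\mathfrak{g}$ equals $\max\{r,s\}$, which matches the length of $(W_i)_i$.

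Next I would check that $\mathfrak{i} \coloneqq \{0\}\times \mathfrak{h}$ is a Carnot quotient ideal of $(\mathfrak{g},\Delta)$ with respect to $(W_i)_i$ in the sense of Definition~\ref{def: Carnot quotient ideal}: it is obviously an ideal of $\mathfrak{g}$; the quotient $\mathfrak{g}/\mathfrak{i} \cong \mathfrak{g}^G$ is stratified by $(\pi(W_j))_j = (W_j^G)_j$, which is the given stratification of $G$ (up to trailing zero layers when $s>r$); and condition~\eqref{def: CQI projection} is automatic since $(W_i)_i$ is a tangent grading, so that $\Delta = W_1$ and $\pi_1|_{W_1}=\id$. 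Because $\mathfrak{h} = \bigoplus_{i=1}^s W_i^H$, we have $\mathfrak{i} \subseteq W_{\le s}$, whence $\beta \le s$ by Definition~\ref{def: constant beta}.

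To conclude, note that $\alpha_0 \ge 1$ for any tangent grading, since the case $j=1$ in Definition~\ref{def: constant alpha mitchell} is exactly Proposition~\ref{prop: properties of linear gradings}\eqref{prop: bracket of ALG containment}; hence $1+\alpha_0/\beta \ge 1+1/s$. Feeding this into the bound~\eqref{eq: cor mitchell1} of Theorem~\ref{main thm: mitchell}, and using that $\max\{d_0(1,p),d_0(1,q)\}\le 1$ once $p,q$ are close enough to the identity so that raising to a larger exponent only decreases the quantity, gives $|d(p,q)-d_0(p,q)| \le C\max\{d_0(1,p),d_0(1,q)\}^{1+1/s}$, as desired. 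The main obstacle is the bookkeeping needed to confirm that $(W_i)_i$ is genuinely a tangent grading — in particular handling the trailing zero layers that appear when $s>r$ and pinning down the step of $\Delta$ as $\max\{r,s\}$ — but this is a direct computation exploiting that brackets split over the product and that $G$ is stratified.
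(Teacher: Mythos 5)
Your argument is correct and follows essentially the same route as the paper: form the product tangent grading $W_i = W_i^G\times W_i^H$, observe that $\{0\}\times\mathfrak{h}\subseteq W_{\le s}$ is a Carnot quotient ideal so $\beta\le s$, note $\alpha_0\ge 1$, and feed $1+\alpha_0/\beta\ge 1+1/s$ into Theorem~\ref{main thm: mitchell}. The extra bookkeeping you supply (the splitting of iterated brackets over the product, the step being $\max\{r,s\}$, the trailing zero layers when $s>r$, and the remark that the base is eventually $\le 1$) is exactly what the paper leaves implicit.
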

\begin{proof}
 The proof is similar to the one of Proposition~\ref{prop: pansu convergence doesnt depend on carnot factor}.
 Let $(W_i^H)_{i=1}^s$ be a tangent grading of $H$. Then $(W_i)_{i=1}^{\max\{r,s\}}$ defined by $W_i = W_i^G \times W_i^H$ is a tangent grading of $G \times H$. With respect to this grading $H$ is a Carnot quotient ideal of $G\times H$ (according to Definition~\ref{def: Carnot quotient ideal}), and thus by Definition~\ref{def: constant beta} we have $\beta \leq s$. Since $\alpha_0 \geq 1$ we have $\alpha_0/\beta \geq 1/s$, and therefore Theorem~\ref{main thm: mitchell} implies 
 \[|d(p,q)-d_0(p,q)|=O(\max\{d_0(1,p), d_0(1,q)\}^{1+1/s}), \quad {\rm as} \; p,q \rightarrow 0.\]
\end{proof}
\printbibliography
\end{document}